\DeclareFontFamily{U}{min}{}
\DeclareFontShape{U}{min}{m}{n}{<-> dmjhira}{}
\newcommand{\yo}{\text{\usefont{U}{min}{m}{n}\symbol{'110}}}
\newtheorem{thm}{Theorem}[section]
\newtheorem{letterthm}{Theorem}
\newtheorem{lem}{Lemma}[section]
\newtheorem*{lem*}{Lemma}
\newtheorem{prop}{Proposition}[section]
\newtheorem*{prop*}{Proposition}
\newtheorem{cor}{Corollary}[section]
\theoremstyle{definition}
\newtheorem{defn}{Definition}[section]
\newtheorem{exam}{Example}[section]
\newtheorem*{notn}{Notation}
\newtheoremstyle{slogan}
  {\topsep}   
  {\topsep}   
  {\normalfont}  
  {5pt}       
  {\bfseries \color{blue!70!black}} 
  {.}         
  {5pt plus 1pt minus 1pt} 
  {}          
\theoremstyle{slogan}
\newtheorem{slogan}{Slogan}
\newtheoremstyle{warning}
  {\topsep}   
  {\topsep}   
  {\normalfont}  
  {5pt}       
  {\bfseries \color{orange!70!black}} 
  {.}         
  {5pt plus 1pt minus 1pt} 
  {}          
\theoremstyle{warning}
\newtheorem*{warning}{Warning}
\newcommand{\C}{\mathcal{C}}
\newcommand{\D}{\mathcal{D}}
\newcommand{\OO}{\mathcal{O}}
\newcommand{\set}{\mathsf{set}}
\newcommand{\Set}{\mathsf{Set}}
\newcommand{\Ab}{\mathsf{Ab}}
\newcommand{\CMon}{\mathsf{CMon}}
\newcommand{\etale}{\mathsf{\acute{e}tal\acute{e}}}
\newcommand{\Sh}[1]{\mathsf{Sh}(#1)}
\newcommand{\fet}{\mathsf{f\acute{e}t}}
\newcommand{\Sm}{\mathsf{Sm}}
\newcommand{\A}[1]{\mathcal{A}_{#1}}
\newcommand{\U}[1]{\mathcal{P}_{#1}}
\newcommand{\Mack}[1]{\mathsf{Mack}(#1)}
\newcommand{\Tamb}[1]{\mathsf{Tamb}(#1)}
\newcommand{\SMack}[1]{\mathsf{SMack}(#1)}
\newcommand{\STamb}[1]{\mathsf{STamb}(#1)}
\newcommand{\Fun}[2]{\mathsf{Fun}(#1,#2)}
\newcommand{\op}{\text{op}}
\DeclareMathOperator{\Lan}{Lan}
\DeclareMathOperator{\Hom}{Hom}
\DeclareMathOperator{\id}{id}
\DeclareMathOperator{\Spec}{Spec}
\newcommand{\xRightarrow}[2][]{\ext@arrow 0359\Rightarrowfill@{#1}{#2}}
\title{Norms of Generalized Mackey and Tambara Functors}
\author{Ben Spitz}
\date{}
\begin{document}

\maketitle

\begin{abstract}
  Let $G$ be a finite group. A $G$-Tambara functor can be defined as a product-preserving functor $\U{G} \to \Set$ (satisfying one additional condition), where $\U{G}$ is a category that is constructed in a straightforward way from the category of finite $G$-sets.

  By replacing the category of finite $G$-sets with other categories, we obtain a more general notion of ``Tambara functor''. This more general notion subsumes the notion of motivic Tambara functors introduced by Bachmann. In this article, we extend a result of Hoyer about $G$-Tambara functors to this more general context.
\end{abstract}

\tableofcontents

\section{Introduction}
\label{section:intro}

Let $G$ be a finite group. In $G$-equivariant homotopy theory, the primary algebraic invariants of interest are $G$-Mackey functors and $G$-Tambara functors. These are generalizations of abelian groups and commutative rings (respectively) ``in the equivariant direction'', meaning that for $G=1$ the respective notions coincide. Whenever one would see an abelian group in ordinary homotopy theory, one can expect to find a Mackey functor in equivariant homotopy theory; likewise for commutative rings and Tambara functors.

\begin{figure}[h!]
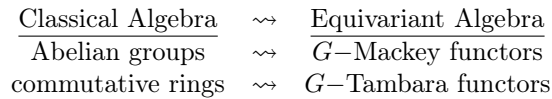

  \[\begin{array}{ccc}
      {\text{\underline{Classical Algebra}}} & \rightsquigarrow & {\text{\underline{Equivariant Algebra}}} \\
      {\text{Abelian groups}}                & \rightsquigarrow & {G{-}\text{Mackey functors}}             \\
      {\text{commutative rings}}             & \rightsquigarrow & {G{-}\text{Tambara functors}}
    \end{array}\]
  \caption{A dictionary between classical and equivariant algebra.}
\end{figure}

An abelian group is a commutative monoid with inverses, and a commutative monoid is simply an algebraic structure with exactly one operation of each arity. Said another way, operations $M^n \to M$ on a commutative monoid $M$ are in bijective correspondence with functions $n \to 1$; there is exactly one of each.

\bigskip

More generally, if $M$ is a commutative monoid and $n, m$ are natural numbers, the operations $M^n \to M^m$ are indexed by isomorphism classes of \emph{spans} from $n$ to $m$ (diagrams of the form $n \leftarrow \bullet \rightarrow m$) in finite sets. Explicitly, a span
\[n \xleftarrow{f} k \xrightarrow{g} m\]
encodes the operation
\[(x_j)_{j=1}^n \mapsto \left(\sum_{z \in g^{-1}(i)} x_{f(z)}\right)_{i=1}^m : M^n \to M^m.\]

There is a category $\A{\set}$ (to be defined more carefully later) whose objects are finite sets and whose morphisms are spans; it will turn out that the category of product-preserving functors $\A{\set} \to \Set$ is equivalent to the category $\CMon$ of commutative monoids. In other words, $\A{\set}$ is the Lawvere theory of commutative monoids, and $\CMon$ is the category of Lawvere algebras of $\A{\set}$.

\bigskip

Among commutative monoids, there are some which admit inverses, and these objects are called abelian groups. The corresponding subcategory of the category of product-preserving functors $\A{\set} \to \Set$ therefore gives a ``purely syntactic'' construction of $\Ab$.

\bigskip

In a very similar way, the the operations of a $G$-Mackey functor (ignoring additive inverses) are indexed by spans of finite $G$-sets. To define $G$-Mackey functors, we begin by generalizing the construction of the Lawvere theory $\A{\set}$ of commutative monoids, replacing finite sets with finite $G$-sets to obtain a category $\A{G{-}\set}$. Then we take the Lawvere algebras of $\A{G{-}\set}$, and consider only those which admit additive inverses (in a precise sense to be defined later). These objects are called $G$-Mackey functors.

\bigskip

There is an analogous story with Tambara functors: we take the construction of the Lawvere theory $\U{\set}$ of commutative semirings, and replace $\set$ with $G{-}\set$ to obtain a category $\U{G{-}\set}$. Then we take the Lawvere algebras of $\U{G{-}\set}$, and consider only those which admit additive inverses. These objects are called $G$-Tambara functors.

\bigskip

We need not stop here. Instead of replacing $\set$ by $G{-}\set$, we could use any category $\C$, so long as we can still interpret the constructions of $\A{\C}$ and $\U{\C}$, and so long as we can still interpret what it means for a Lawvere algebra of $\A{\C}$ or $\U{\C}$ to admit additive inverses. The structure needed by $\C$ to interpret all of this is that $\C$ is \emph{locally cartesian closed with finite disjoint coproducts} (abbreviated LCCDC).

\bigskip

This procedure gives us a definition of ``$\C$-Mackey functors'' and ``$\C$-Tambara functors'' for any LCCDC category $\C$. Among possible choices for $\C$, of particular interest are the category of finite $G$-sets (for $G$ some finite group) and the category of finite {\'e}tale $S$-schemes (for some scheme $S$). The former recovers precisely the classical notions of $G$-Mackey functors and $G$-Tambara functors; on the other hand, the latter yields precisely the notion of motivic Tambara functors, as first defined in \cite{Bachmann-GW}.

\bigskip

You are currently reading \Cref{section:intro}.

\bigskip

In \Cref{section:Preliminaries}, we set up the theory of Mackey and Tambara functors indexed over an LCCDC category, as well as some category-theoretic machinery we will need in the proofs to follow.

\bigskip

In \Cref{section:Incompleteness}, we introduce the notion of an \emph{index}, which encodes the data necessary to define (bi-)incomplete Tambara functors (\`a la Blumberg-Hill) indexed over an LCCDC category. We also introduce the notion of \emph{separability}, a crucial property of the category of finite $G$-sets which gives rise to many important features in the theory of Mackey and Tambara functors.

\bigskip

\Cref{section:Hoyer} contains our main result: we take an important theorem of Hoyer from the theory of $G$-Mackey/Tambara functors and boost it up to the general (bi-)incomplete LCCDC setting.

\bigskip

It is worth taking a moment here to discuss the history of this result. Mazur \cite{Mazur} originally proved a version of this theorem in the case that $G$ is a cyclic $p$-group. Hoyer \cite{Hoyer} then reformulated and reproved this result for an arbitrary finite group $G$. Chan \cite{Chan} later generalized Hoyer's result to the bi-incomplete setting. We will henceforth refer to these pre-existing instances of the result as the \emph{Hoyer-Mazur-Chan Theorem}. Here, we will prove a further generalization, which includes both (bi-)incompleteness and the replacement of $G{-}\set$ with an arbitrary LCCDC category.

\bigskip

\begin{letterthm}[cf. \Cref{actual-main-theorem}]
  \label{main-theorem-intro-version}
  Let $\C$ be a locally essentially small LCCDC category, and let $\mathcal{O} = (\mathcal{O}_a, \mathcal{O}_m)$ be a compatible pair of indexing subcategories of $\C$. For each morphism $i : x \to y$ lying in $\mathcal{O}_m$, there is a square
  \[\begin{tikzcd}[ampersand replacement=\&]
      {\STamb{\C/x,\mathcal{O}/x}} \& {\STamb{\C/y,\mathcal{O}/y}} \\
      {\SMack{\C/x,\mathcal{O}_a/x}} \& {\SMack{\C/y, \mathcal{O}_a/y}}
      \arrow[from=1-1, to=1-2]
      \arrow[from=1-1, to=2-1]
      \arrow[from=2-1, to=2-2]
      \arrow[from=1-2, to=2-2]
    \end{tikzcd}\]
  of functors between categories of semi-Mackey and semi-Tambara functors which commutes up to natural isomorphism. The vertical maps are the canonical forgetful functors and the top map is left adjoint to the canonical ``restriction along $i$'' functor. When $(\C,\mathcal{O})$ satisfies additional hypotheses and $i$ is an epimorphism, we can remove ``semi'' everywhere, i.e. these functors restrict to the categories of Mackey and Tambara functors.
\end{letterthm}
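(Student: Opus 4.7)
My plan is to recognize the two horizontal arrows as left adjoints of precomposition functors induced by a natural ``post-compose with $i$'' construction on the Lawvere-type categories $\U{\C/-}$ and $\A{\C/-}$, and then to deduce the commutation by identifying both composites as left adjoints to a single functor. Unpacking the setup from \Cref{section:Preliminaries}, $\STamb{\C/x, \mathcal{O}/x}$ should be the category of product-preserving functors $\U{\C/x, \mathcal{O}/x} \to \Set$, and $\SMack{\C/x, \mathcal{O}_a/x}$ likewise on $\A{\C/x, \mathcal{O}_a/x}$, with the forgetful given by precomposition along the canonical inclusion $\A \hookrightarrow \U$. Post-composition with $i$ gives a ``base change'' functor $i_\sharp : \U{\C/x, \mathcal{O}/x} \to \U{\C/y, \mathcal{O}/y}$, sending $(a \to x)$ to $(a \to y)$ and extended to bispans using $i \in \mathcal{O}_m$ together with compatibility of the index; the analogous $i_\sharp$ exists on the $\A$-side. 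Restriction along $i$ on each row is precomposition along $i_\sharp$, and the square of restrictions-and-forgetfuls commutes strictly because $i_\sharp$ is uniformly defined on $\A$ and $\U$ and respects the inclusion $\A \hookrightarrow \U$.

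Next, I would invoke the adjoint functor theorem to produce the horizontal left adjoints $F, F'$ (the norms): the target categories are locally presentable under the locally essentially small hypothesis on $\C$, and the restriction functors preserve limits because these are computed pointwise in $\Set$ and preserved by precomposition. To prove $G' F \cong F' G$ (with $G, G'$ the forgetfuls), I would apply a mates argument: assuming that $G$ and $G'$ themselves admit right adjoints obtained as product-preserving right Kan extensions along $\A \hookrightarrow \U$, both composites $G' F$ and $F' G$ are left adjoints to natural compositions of these right adjoints with the restrictions, and those two right-adjoint composites coincide by a Beck--Chevalley argument whose only content is that $i_\sharp$ commutes with $\A \hookrightarrow \U$. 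Uniqueness of left adjoints then delivers the required natural isomorphism. If the existence of product-preserving right Kan extensions along $\A \hookrightarrow \U$ is problematic, a backup plan is to write $F$ and $F'$ as explicit product-preserving left Kan extensions along $i_\sharp$ and compare coend formulas directly; the computation reduces to the same commutation of $i_\sharp$ with the inclusion.

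The main obstacle is the final clause, which drops the ``semi'' prefix under extra hypotheses and the assumption that $i$ is an epimorphism. Here I must show that $F$ sends Tambara functors (semi-Tambaras whose underlying semi-Mackey is group-complete) to Tambara functors, and likewise for $F'$ on Mackey. The strategy is to exhibit a canonical source of additive inverses in $F(X)$, manufactured from those of $X$ via the explicit formula for the norm. The epi hypothesis should be what prevents the norm from producing new ``polynomial'' structure that cannot be inverted, and the additional hypotheses on $(\C, \mathcal{O})$ (presumably the separability condition introduced in \Cref{section:Incompleteness}) should be what allows the group-completeness of $X$ to be transported across $i_\sharp$. This step cannot be handled by abstract nonsense alone, and I expect it to be the most delicate part of the proof, requiring a careful local analysis of the image of $F$ at objects of $\U{\C/y, \mathcal{O}/y}$ lying over $y$.
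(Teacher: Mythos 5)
Your proposal identifies the bottom horizontal arrow incorrectly, and this is not a minor slip --- it is the entire content of the theorem. You define $i_\sharp$ on both the $\U$-side and the $\A$-side as post-composition with $i$, i.e.\ as $\Sigma_i$, so the square of restrictions and forgetfuls you set up has $(\A{\Sigma_i})^*$ as the bottom restriction, and your mates argument would try to produce $\Lan_{\A{\Sigma_i}}$ (the left adjoint of that restriction) as the bottom arrow. But the theorem's bottom arrow is $\Lan_{\A{\Pi_i}}$ --- left Kan extension along the functor induced by the \emph{dependent product} $\Pi_i$, not the dependent sum $\Sigma_i$. As the paper emphasizes (and as Hoyer already observed for $G{-}\set$), this bottom functor ``is not left adjoint to any functor $\Mack{G{-}\set} \to \Mack{H{-}\set}$'', because $\Pi_i$ need not preserve coproducts, so $\A{\Pi_i}$ need not preserve products and $(\A{\Pi_i})^*$ need not land in semi-Mackey functors. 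There is simply no restriction functor whose left adjoint is $\Lan_{\A{\Pi_i}}$, so the mates formalism cannot reach it.

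Consequently the step ``both composites $G'F$ and $F'G$ are left adjoints to natural compositions of these right adjoints with the restrictions, and those coincide by a Beck--Chevalley argument'' cannot be executed as written. If you push through with your $F' = \Lan_{\A{\Sigma_i}}$, the mate of the (correct, strictly commuting) restriction square $U_x \circ \mathcal{R}_i = R_i \circ U_y$ is only a comparison map $\Lan_{\A{\Sigma_i}} \circ U_x \to U_y \circ \Lan_{\U{\Sigma_i}}$, and there is no Beck--Chevalley condition in sight to force it to be an isomorphism --- indeed it is generally not, since the paper's theorem identifies the right-hand side with $\Lan_{\A{\Pi_i}} \circ U_x$ instead. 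The non-trivial input, which any proof must supply, is the interplay between $\Sigma_i$ and $\Pi_i$ that is packaged in the distributor (exponential) diagrams defining composition in $\U{\C}$, and in particular the generalized Hoyer Lemma ($\Pi_{\varepsilon^{\mathrm{ind}}_b} \circ (\Sigma_i/i^*b) \cong (\eta^{\mathrm{coind}}_b)^* \circ (\Pi_i/i^*b)$, \Cref{prop:Hoyer-Lemma}). The paper's proof makes this explicit by exhibiting a natural transformation $t : \Sigma_i e \Rightarrow e \Pi_i$ (\Cref{prop:t}) at the level of the indexing categories, using it to build the universal cocone $\omega$ for the Kan extension, and checking universality directly on representables.

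Your outline of the final ``drop the semi'' clause is roughly pointed in the right direction (invertibility preserved using separability of $\C$ and $i$ epi), and your use of local essential smallness to secure the existence of the Kan extensions is fine, but the central gap above --- not realizing that the top and bottom arrows are Kan extensions along genuinely different functors, $\Sigma_i$ above and $\Pi_i$ below --- means the main commutation claim is unproved.
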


This theorem recovers, in particular, Hoyer's theorem (\cite{Hoyer}, Theorem 2.1.1) and Chan's generalization (\cite{Chan}, Theorem 7.7) by choosing $\C$ to be the category of finite $G$-sets and $i$ to be the unique morphism $G/H \to G/G$ for $H$ a subgroup of $G$ (with appropriate choices of indexing categories). The new content of this theorem is the extension to arbitrary LCCDC categories. In particular, we can apply this theorem to the motivic setting to obtain a result about categories of motivic Mackey and Tambara functors, which we discuss in \Cref{section:Consequences}.
\begin{letterthm}[cf. \Cref{cor:apply-to-motivic}]
  Fix a base scheme $S$, and let $\fet_S$ denote the category of schemes finite {\'e}tale over $S$. For any morphism $f : X \to Y$ in $\fet_S$, we obtain a square of categories
  \[\begin{tikzcd}[ampersand replacement=\&]
      {T(X)} \& {T(Y)} \\
      {M(X)} \& {M(Y)}
      \arrow["{N_f}", from=1-1, to=1-2]
      \arrow["{U_Y}", from=1-2, to=2-2]
      \arrow["{U_X}"', from=1-1, to=2-1]
      \arrow[from=2-1, to=2-2]
    \end{tikzcd}\]
  which commutes up to natural isomorphism, where $M$ and $T$ denote the categories of motivic Mackey functors and motivic Tambara functors, respectively, as defined in \cite{Bachmann-GW}. The morphism $N_f$ is left adjoint to the canonical restriction functor, and the morphisms $U_X$ and $U_Y$ are the canonical forgetful functors.
\end{letterthm}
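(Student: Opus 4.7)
The plan is to apply the main theorem (Theorem A) with $\C = \fet_S$ and ``complete'' indexing $\mathcal{O}_a = \mathcal{O}_m = \fet_S$. The work breaks into three tasks: verify that $\fet_S$ satisfies the hypotheses of Theorem A; match the resulting categories against Bachmann's definitions of motivic (semi-)Mackey and (semi-)Tambara functors; and invoke Theorem A with $i = f$.

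For the first task, $\fet_S$ is locally essentially small, since finite étale algebras over any affine base form a set up to isomorphism. It admits finite disjoint coproducts (disjoint unions of schemes) and is locally cartesian closed: the exponential in a slice $\fet_S/Z$ is given by Weil restriction along a finite étale morphism, a classical construction. Together with the stability of finite étale morphisms under pullback, this exhibits $\fet_S$ as a locally essentially small LCCDC category, so Theorem A applies.

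For the second task, I would unwind the definitions of \cite{Bachmann-GW}: a motivic Tambara functor over $Z$ is defined there as a product-preserving functor (satisfying an additional condition) out of a category of polynomial spans in finite étale $Z$-schemes, which is precisely the $\U{\fet_S/Z}$ construction of \Cref{section:Preliminaries}, after invoking the canonical equivalence $\fet_S/Z \simeq \fet_Z$. The analogous identification for motivic Mackey functors is even more direct, via spans in place of polynomial diagrams. Once these identifications are in hand, applying Theorem A to $i = f$ produces the stated commutative square, with $N_f$ left adjoint to restriction and $U_X, U_Y$ the canonical forgetful functors.

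I expect the main obstacle to be the definition-chasing in the second task: Bachmann's conventions for polynomial diagrams, the direction of arrows, and the packaging of generating morphisms differ in cosmetic ways from the framework of this paper, so the translation requires careful bookkeeping but no genuinely new ideas. A secondary subtlety is the semi- versus full distinction: Bachmann's motivic Mackey/Tambara functors are typically defined to include additive inverses, so to state the result in terms of $M$ and $T$ (rather than their ``semi-'' analogues) one must verify the additional hypotheses of Theorem A in the motivic setting, or else interpret $M$ and $T$ as the semi-versions of Bachmann's categories, in which case the conclusion is immediate.
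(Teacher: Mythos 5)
Your overall strategy matches the paper's: confirm $\fet_S$ is locally essentially small and LCCDC, identify $\U{\fet_S/Z}$ with Bachmann's polynomial-span category via $\fet_S/Z \simeq \fet_Z$, and then specialize \Cref{actual-main-theorem} to $i = f$. That part is fine, and your LCCDC justification (Weil restriction for $\Pi$, disjoint unions for coproducts) is the right idea even if stated a little loosely.

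The gap is that you flag, but do not discharge, the ``additional hypotheses'' needed to pass from the semi-Mackey/semi-Tambara version to the genuine $M$ and $T$. These hypotheses are two-fold, and neither is routine. First, \Cref{actual-main-theorem} requires the index to be \emph{separable}; the paper must and does prove this for $\fet/S$ separately, via the lemma that a section of a separated finite \'etale morphism is complemented (the point being that $\Pi_f \nabla_x$ is separated finite \'etale and the section $j_f : \id_y \to \Pi_f\nabla_x$ is then an isomorphism onto a clopen). This is real content, not definition-chasing, and your proposal omits it entirely. Second, \Cref{actual-main-theorem} requires $i$ to be an epimorphism; not every morphism $f : X \to Y$ in $\fet_S$ qualifies (coprojections $X \hookrightarrow X \amalg X'$ are the basic counterexample, exactly as in the paper's example with $\varnothing \to G/e$). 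The paper's \Cref{cor:apply-to-motivic} accordingly restricts to finite \'etale \emph{covers}, and your proof needs this restriction too. Your fallback suggestion --- ``interpret $M$ and $T$ as the semi-versions'' --- does not rescue the statement, since the theorem explicitly asserts the result for Bachmann's motivic Mackey and Tambara functors, which are the group-complete versions.
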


\subsection*{Acknowledgements}

The author would like to thank Peter Haine, whose side comment in a casual conversation inspired a key simplification in the proof of \Cref{actual-main-theorem}. Sincere thanks also goes to Mike Hill for his guidance over many years, helpful discussions on every aspect of this project, and careful review of earlier versions of this article. Thanks also to J.D. Quigley for helpful comments and suggestions.

This work is primarily a recompilation of the author's disseration \cite{mythesis}, and was supported by NSF Grant DMS-2136090.

\section{Preliminaries}
\label{section:Preliminaries}

In this section, we introduce the notion of an LCCDC category (\Cref{defn:LCCDC}), and give an introduction to Mackey and Tambara functors. We also prove some basic facts about LCCDC categories and Mackey/Tambara functors which will be essential in the following sections.

Our motivating example of an LCCDC category is the category $G{-}\set$ of finite $G$-sets. This gives rise to the classical notions of $G$-Mackey functors and $G$-Tambara functors. Our other key example is $\fet/S$, the category of schemes finite \'etale over $S$, which gives rise to the notions of motivic Mackey and Tambara functors introduced by Bachmann \cite{Bachmann-GW}. Because LCCDC categories encompass both of these examples, they provide a framework for translating results between equivariant and motivic homotopy theory -- we perform one such translation in \Cref{section:Hoyer} and discuss its consequences in \Cref{section:Consequences}.

\subsection{An Introduction to LCC Categories}

A category $\C$ is said to be \emph{cartesian} if it admits all finite products (including the empty product, i.e. a terminal object). Dually, a category is said to be \emph{cocartesian} if it admits all finite coproducts. Of course, any complete category is in particular cartesian, but there are many important examples of categories which are cartesian but not complete -- e.g. the category of schemes.

``Cartesian'' is a property of categories, but it can also be viewed as a structure. Every cartesian category $\C$ has a ``canonical'' monoidal structure called the \emph{cartesian monoidal structure} -- the monoidal operation is given by the categorical product, and the unit is the terminal object. The word canonical is in quotes in the preceeding sentence because this description does not, strictly speaking, uniquely determine a monoidal structure on $\C$. However, it does determine a monoidal structure unique up to unique isomorphism, as things usually go in category theory.

So, in the case that $\C$ is cartesian, we may ask whether or not this canonical monoidal structure is \emph{closed} -- i.e. if, for all objects $x \in \C$, the functor $x \times {-} : \C \to \C$ admits a right adjoint. If this condition is met, we say that $\C$ is \emph{cartesian closed}. This is a much stronger condition than simply being cartesian -- for example, the category of schemes is cartesian but not cartesian closed.

Finally, for any property $P$ of categories, we can speak of categories which are ``locally $P$'', meaning that each slice category has property $P$. To recall:

\begin{defn}
  Given a category $\C$ and an object $x \in \C$, the slice category $\C/x$ is the category whose:
  \begin{itemize}
    \item Objects are morphisms in $\C$ with codomain $x$;
    \item Morphisms $\alpha \to \beta$ are morphisms $\gamma : a \to b$ in $\C$ such that $\beta \circ \gamma = \alpha$, where $a$ and $b$ are the domains of $\alpha$ and $\beta$, respectively, as in the diagram below
          \[\begin{tikzcd}[ampersand replacement=\&,column sep=tiny]
              a \&\& b \\
              \& x
              \arrow["\alpha"', from=1-1, to=2-2]
              \arrow["\beta", from=1-3, to=2-2]
              \arrow["\gamma", from=1-1, to=1-3]
            \end{tikzcd};\]
    \item Composition is performed as in $\C$.
  \end{itemize}
\end{defn}

There are a few particular instances of local properties we will be most interested in here, so we highlight these:

\begin{defn}
  A category $\C$ is said to be
  \begin{enumerate}
    \item \emph{locally cartesian} iff the slice category $\C/x$ is cartesian for all objects $x \in \C$;
    \item \emph{locally cocartesian} iff the slice category $\C/x$ is cocartesian for all objects $x \in \C$;
    \item \emph{locally cartesian closed} iff the slice category $\C/x$ is cartesian closed for all objects $x \in \C$.
  \end{enumerate}
  We will often abbreviate ``locally cartesian closed'' as \emph{LCC}.
\end{defn}

It is worth making explicit what these properties amount to. $\C/x$ always has a terminal object, namely $\id_x$. Now, given two objects $\alpha, \beta \in \C/x$, it turns out that their product in $\C/x$ (if it exists) is simply their pullback in $\C$, i.e. the limit of the diagram $\bullet \xrightarrow{\alpha} x \xleftarrow{\beta} \bullet$. Thus, $\C$ being locally cartesian is equivalent to $\C$ admitting all pullbacks.

Now consider a morphism $i : x \to y$ in $\C$. There is a canonical functor $\Sigma_i : \C/x \to \C/y$, given by $\Sigma_i(\alpha) = i \circ \alpha$, as in the diagram below:
\[\begin{tikzcd}[ampersand replacement=\&]
    \bullet \\
    x \& y
    \arrow["i"', from=2-1, to=2-2]
    \arrow["\alpha"', from=1-1, to=2-1]
    \arrow["{\Sigma_i \alpha}", from=1-1, to=2-2]
  \end{tikzcd}\]

Assuming that $\C$ is locally cartesian guarantees that this functor has a right adjoint $i^*$, given by sending $\beta \in \C/y$ to its pullback $i^*\beta$ in the diagram
\[\begin{tikzcd}[ampersand replacement=\&]
    \bullet \& \bullet \\
    x \& y
    \arrow["i"', from=2-1, to=2-2]
    \arrow["\beta", from=1-2, to=2-2]
    \arrow["{i^*\beta}"', from=1-1, to=2-1]
    \arrow[from=1-1, to=1-2]
    \arrow["\lrcorner"{anchor=center, pos=0.125}, draw=none, from=1-1, to=2-2]
  \end{tikzcd}\]
More precisely, there may be many functors $i^*$ which are right adjoint to $\Sigma_i$, but for each of them we have that $i^*\beta$ and $\beta$ fit in a pullback square with $i$ as above. And, as always, $i^*$ is unique up to unique isomorphism. In practice, however, we often find that the category $\C$ comes equipped with a canonical pullback construction, yielding a canonical choice of $i^*$.

To summarize the story so far: having that $\C$ is locally cartesian (i.e. $\C/x$ is cartesian for all $x$) is equivalent to saying that, for each morphism $i : x \to y$ in $\C$, the functor $\Sigma_i : \C/x \to \C/y$ admits a right adjoint $i^*$.

Being LCC is indeed a stronger condition -- it amounts to saying that the right adjoint to each $\Sigma_i$ admits a further right adjoint.

\begin{prop}
  \label{prop:characterization-of-LCC}
  A category $\C$ is LCC if and only if, for all morphisms $i : x \to y$ in $\C$, the functor $\Sigma_i : \C/x \to \C/y$ fits in an adjoint triple $\Sigma_i \dashv i^* \dashv \Pi_i$.
\end{prop}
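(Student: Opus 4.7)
The plan is to exploit the standard equivalence $\C/x \simeq (\C/y)/i$ available for any morphism $i : x \to y$ in $\C$. Under this equivalence, the forgetful functor $(\C/y)/i \to \C/y$ corresponds to $\Sigma_i$, and its right adjoint (the pullback-to-slice functor, sending $\beta \mapsto (\beta \times_y i \to i)$) corresponds to $i^*$. Thus the existence of a further right adjoint $\Pi_i$ to $i^*$ is really a question about the cartesian closed structure of $\C/y$.

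For $(\Leftarrow)$, I would assume the adjoint triple $\Sigma_i \dashv i^* \dashv \Pi_i$ exists for every $i$ and fix $y$. To show $\C/y$ is cartesian closed, note that finite products already exist (the terminal object is $\id_y$, and binary products are pullbacks over $y$), so it suffices to produce a right adjoint to $i \times_y (-) : \C/y \to \C/y$ for each $i \in \C/y$. Via the equivalence above, this product functor factors as $\Sigma_i \circ i^*$, so composing the adjunctions $\Sigma_i \dashv i^*$ and $i^* \dashv \Pi_i$ yields the desired right adjoint $\Pi_i \circ i^*$.

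The $(\Rightarrow)$ direction is the main obstacle: we have to produce $\Pi_i$ from the mere assumption that $\C/y$ is cartesian closed. The key reduction is that $i^*$, viewed through the equivalence $\C/x \simeq (\C/y)/i$, is precisely the functor $c^* : \D \to \D/c$, $z \mapsto (c \times z \to c)$, specialized to $\D = \C/y$ and $c = i$. The problem therefore reduces to the purely formal statement that in any cartesian closed category $\D$ with pullbacks, this $c^*$ admits a right adjoint. I would construct this right adjoint explicitly using the internal hom: for $\alpha : a \to c$ in $\D/c$, define $\Pi_c(\alpha)$ to be the pullback in $\D$ of $[c, \alpha] : [c, a] \to [c, c]$ along the name $\lceil \id_c \rceil : 1 \to [c, c]$ of $\id_c$. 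A routine calculation using the exponential adjunction — a morphism $c \times z \to a$ over $c$ transposes to a morphism $z \to [c, a]$ whose composite with $[c, \alpha]$ is constantly $\id_c$, and such morphisms are in bijection with morphisms $z \to \Pi_c(\alpha)$ by the universal property of the pullback — then confirms the adjunction $c^* \dashv \Pi_c$, which applied with $\D = \C/y$ and $c = i$ yields the required $\Pi_i$.
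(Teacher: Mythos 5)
Your proof is correct. The paper does not actually supply its own argument here---it defers to Johnstone's \emph{Elephant}, Corollary A1.5.3---and your approach matches the cited one: the core step (Johnstone's Lemma A1.5.2) is precisely the reduction you make, showing that in a cartesian closed $\D$ with pullbacks the functor $c^* : \D \to \D/c$ has a right adjoint obtained by pulling back $[c,\alpha]$ along $\lceil \id_c\rceil$, and then applying this with $\D = \C/y$, $c = i$ via the identification $\C/x \simeq (\C/y)/i$.
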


For a proof, we refer the reader to \cite{Elephant}, Corollary A1.5.3.

Again, $\Sigma_i$ is always precisely defined by $\alpha \mapsto i \circ \alpha$, but $i^*$ and $\Pi_i$ are only determined up to unique isomorphism, as are the particular choices of adjunction data (i.e. the unit and counit of each adjunction). In any case, these functors are often known by the names ``dependent sum'' ($\Sigma_i$), ``pullback'' ($i^*$), and ``dependent product'' ($\Pi_i$).

In light of \Cref{prop:characterization-of-LCC}, an LCC category comes with a plethora of functors between its slices. These operations (in some sense) encode the structure of Tambara functors, and so it will be necessary to get a handle on the behavior of these operations and the slice categories of an LCC category. We refer the curious reader to \cite{Gambino-Kock} for a thorough introduction to the topic, with more detail than we will be able to cover here.

First, we note one small fact of functor yoga. Using our previous observation that products in slice categories are given by pullback squares, we have that
\begin{prop}
  \label{prop:product-is-LR}
  For any morphism $i : x \to y$ in an LCC category $\C$, $\Sigma_i \circ i^*$ and $i \times {-}$ are isomorphic as endofunctors of $\C/y$.
\end{prop}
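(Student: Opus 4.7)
The plan is to unwind both sides of the claimed isomorphism directly in terms of pullbacks, and observe that they literally compute the same pullback square. Recall (as noted in the discussion preceding the proposition) that the product of $\alpha, \beta \in \C/y$ is given by their pullback in $\C$, equipped with the evident structural morphism to $y$. So, viewing $i : x \to y$ as an object of $\C/y$, the product $i \times \beta$ in $\C/y$ is a pullback
\[\begin{tikzcd}[ampersand replacement=\&]
  p \& b \\ x \& y
  \arrow[from=1-1, to=1-2]
  \arrow["q"', from=1-1, to=2-1]
  \arrow["\beta", from=1-2, to=2-2]
  \arrow["i"', from=2-1, to=2-2]
\end{tikzcd}\]
with structural morphism $i \circ q : p \to y$.

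First I would observe that this is exactly the pullback square defining $i^*\beta$: indeed, $i^*\beta$ is (by construction, see the paragraph above \Cref{prop:characterization-of-LCC}) the left vertical map $q$ of such a square, and then $\Sigma_i(i^*\beta) = i \circ q$, which is precisely the structural morphism of $i \times \beta$ to $y$. Thus, for any fixed $\beta$, one choice of the pullback makes $\Sigma_i(i^*\beta)$ and $i \times \beta$ \emph{equal} as objects of $\C/y$. Since both $i^*$ and the product are only defined up to unique isomorphism, in general one obtains a canonical isomorphism $\Sigma_i(i^*\beta) \xrightarrow{\cong} i \times \beta$ induced by the universal property of pullbacks.

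Next I would check naturality in $\beta$. A morphism $\gamma : \beta \to \beta'$ in $\C/y$ induces a unique morphism $i^*\beta \to i^*\beta'$ in $\C/x$ by the universal property of the pullback defining $i^*\beta'$, and applying $\Sigma_i$ gives a morphism $\Sigma_i(i^*\beta) \to \Sigma_i(i^*\beta')$ in $\C/y$. The morphism $i \times \gamma : i \times \beta \to i \times \beta'$ in $\C/y$ is defined by the same universal property (of the same pullback), so the two morphisms agree under the canonical isomorphism identified above. This gives the desired natural isomorphism $\Sigma_i \circ i^* \cong i \times (-)$ of endofunctors of $\C/y$.

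There is essentially no obstacle here — the content of the proposition is entirely the observation that ``pullback along $i$ then postcompose with $i$'' and ``product with $i$ in $\C/y$'' are two descriptions of the same pullback square. The only thing to be careful about is distinguishing the underlying object in $\C$ from its two different structural morphisms (one to $x$, giving $i^*\beta$, and one to $y$, giving $\Sigma_i(i^*\beta) = i \times \beta$), and ensuring that the naturality check is performed at the level of the universal property rather than by trying to track explicit pullback choices.
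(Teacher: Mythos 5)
Your proof is correct and takes essentially the same approach as the paper's: both observe that the pullback square defining $i^*\beta$ also computes the product $i \times \beta$ in $\C/y$, with $\Sigma_i i^*\beta = i \circ i^*\beta$ as the structure morphism. You additionally spell out the naturality check, which the paper leaves implicit.
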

\begin{proof}
  Let $\alpha$ be an arbitrary object of $\C/y$. Pulling back along $i$ yields a cartesian square
  \[\begin{tikzcd}[ampersand replacement=\&]
      \bullet \& \bullet \\
      \bullet \& \bullet
      \arrow["{i^* \alpha}"', from=1-1, to=2-1]
      \arrow["i"', from=2-1, to=2-2]
      \arrow["\alpha", from=1-2, to=2-2]
      \arrow[from=1-1, to=1-2]
      \arrow["\lrcorner"{anchor=center, pos=0.125}, draw=none, from=1-1, to=2-2]
    \end{tikzcd}\]
  Now $\Sigma_i i^* \alpha = i \circ i^* \alpha$ exhibits the top-left corner of the square as an object over $y$ -- since this square is cartesian, this is is also the product of $i$ and $\alpha$ in $\C/y$.
\end{proof}

Now, we make an observation about local properties of categories in general.

\begin{prop}
  \label{slice-of-locally-P-is-locally-P}
  Let $P$ be a property of categories which is invariant under isomorphism, and let $\C$ be a category which is locally $P$. Then for all objects $x \in \C$, $\C/x$ is locally $P$.
\end{prop}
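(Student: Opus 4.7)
The plan is to reduce everything to the well-known fact that an iterated slice is itself a slice. Fix $x \in \C$ and an object $\alpha : a \to x$ of $\C/x$. I would show that there is an isomorphism of categories
\[(\C/x)/\alpha \cong \C/a.\]
Once this is established, the proof concludes in one line: since $\C$ is locally $P$, the category $\C/a$ has property $P$; since $P$ is invariant under isomorphism, $(\C/x)/\alpha$ has property $P$; since $\alpha$ was arbitrary, $\C/x$ is locally $P$.

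To construct the isomorphism, I would unpack the definition of $(\C/x)/\alpha$. An object is a morphism $\beta \to \alpha$ in $\C/x$, which by definition is a morphism $\gamma : b \to a$ in $\C$ together with an object $\beta : b \to x$ of $\C/x$ such that $\alpha \circ \gamma = \beta$. But this last equation completely determines $\beta$ from $\gamma$, so the data reduces to a single morphism $\gamma : b \to a$, i.e. an object of $\C/a$. Similarly, a morphism in $(\C/x)/\alpha$ from $\gamma_1 : b_1 \to a$ to $\gamma_2 : b_2 \to a$ is a morphism $\delta : b_1 \to b_2$ in $\C$ making both the triangle over $a$ (from the outer slice) and the triangle over $x$ (from the inner slice) commute; but the latter is automatic from the former together with the defining equations $\alpha \circ \gamma_i = \beta_i$. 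Hence morphisms in $(\C/x)/\alpha$ are precisely morphisms in $\C/a$, and composition clearly agrees on the nose.

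I do not expect any genuine obstacle here — the proof is essentially a bookkeeping exercise in unwinding definitions. The only mildly subtle point is being careful that the ``morphism over $x$'' condition is automatically satisfied once the ``morphism over $a$'' condition holds, so that the two hom-sets really coincide rather than one being a proper subset of the other.
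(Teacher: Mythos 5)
Your proof is correct and follows exactly the same route as the paper's: both rest on the isomorphism $(\C/x)/\alpha \cong \C/a$ (``a slice of a slice is a slice'') and then invoke invariance of $P$ under isomorphism. You simply spell out the unwinding of the definitions in more detail than the paper does.
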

\begin{proof}
  The key idea is:
  \begin{slogan}
    \label{slogan:slice-of-slice-is-slice}
    A slice of a slice is a slice.
  \end{slogan}
  Let $\alpha : a \to x$ be an object of $\C/x$. Then $(\C/x)/\alpha$ is isomorphic to $\C/a$ by sending an object $\beta : b \to a$ of $\C/a$ to $\beta : \alpha \circ \beta \to \alpha$ in $(\C/x)/\alpha$ (and acting as the identity on morphisms). Since $\C/a$ is $P$ by assumption, $(\C/x)/\alpha$ is also $P$. Since $\alpha$ was arbitrary, we conclude that $\C/x$ is locally $P$.
\end{proof}

\begin{cor}
  \label{cor:slice-of-lcc-is-lcc}
  Every slice category of an LCC category is LCC.
\end{cor}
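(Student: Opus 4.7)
The plan is to deduce this corollary immediately from \Cref{slice-of-locally-P-is-locally-P} by taking $P$ to be the property of being cartesian closed. First I would verify that being cartesian closed is indeed invariant under isomorphism of categories: an equivalence preserves finite products (the terminal object and binary products are characterized by universal properties), and it preserves adjoint functors, so if $\D \simeq \D'$ and $x \times {-}$ has a right adjoint in $\D$, then the corresponding endofunctor has a right adjoint in $\D'$. Thus ``cartesian closed'' is invariant under isomorphism (even equivalence) of categories.

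Having checked this, the corollary follows directly: by hypothesis $\C$ is locally cartesian closed, i.e.\ locally $P$ for $P = $ ``cartesian closed''; so by \Cref{slice-of-locally-P-is-locally-P}, for every $x \in \C$ the slice $\C/x$ is locally $P$, i.e.\ LCC.

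There is no real obstacle here — the whole content is packaged in the preceding proposition, whose proof rests on the slogan ``a slice of a slice is a slice'' (the isomorphism $(\C/x)/\alpha \cong \C/a$ for $\alpha : a \to x$). If I wanted to give a fully self-contained proof without invoking \Cref{slice-of-locally-P-is-locally-P}, I would instead fix $x \in \C$ and $\alpha : a \to x$ in $\C/x$, use that $(\C/x)/\alpha \cong \C/a$, and observe that $\C/a$ is cartesian closed by hypothesis; hence $(\C/x)/\alpha$ is cartesian closed, and since $\alpha$ was arbitrary, $\C/x$ is LCC.
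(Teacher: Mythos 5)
Your proposal is correct and takes essentially the same route as the paper: the paper states this corollary without a separate proof precisely because it is an immediate instance of \Cref{slice-of-locally-P-is-locally-P} with $P$ taken to be ``cartesian closed,'' exactly as you do. Your extra check that cartesian closedness is invariant under isomorphism/equivalence is a hypothesis the proposition requires and the paper leaves implicit, so spelling it out is a small but sound bit of diligence rather than a departure.
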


Next, we make an interesting observation about LCC categories -- they admit no nontrivial morphisms to initial objects (just as how, in $\Set$, there are no functions from a nonempty set to the empty set).

\begin{prop}
  \label{prop:lcc-slice-over-empty}
  Let $\C$ be a locally cartesian closed category. Let $x \in \C$ be some object and let $\varnothing \in \C$ be an initial object. Then any morphism $x \to \varnothing$ is an isomorphism; i.e. $\C/\varnothing$ is equivalent to the terminal category.
\end{prop}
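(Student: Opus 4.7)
The plan is to reduce the claim to showing that $\C/\varnothing$ is equivalent to the terminal category. Since the objects of $\C/\varnothing$ are precisely the morphisms into $\varnothing$, once we know that every such object is (canonically) isomorphic to $\id_\varnothing$ in $\C/\varnothing$, any morphism $f : x \to \varnothing$ exhibits $x$ as (isomorphic to) $\varnothing$, and an isomorphism over $\varnothing$ has $f$ as its underlying morphism, forcing $f$ itself to be an isomorphism.

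The first substantive step is to observe that $\C/\varnothing$ is cartesian closed (this is immediate from $\C$ being LCC, applied to the object $\varnothing$), and in fact both the initial and the terminal object of $\C/\varnothing$ exist and coincide at $(\varnothing, \id_\varnothing)$. Being terminal is automatic from the universal property of a slice. To see that it is initial, note that for any object $(y,g) \in \C/\varnothing$, a morphism $(\varnothing, \id_\varnothing) \to (y, g)$ is a morphism $h : \varnothing \to y$ in $\C$ with $g \circ h = \id_\varnothing$; existence and uniqueness of $h$ follow from $\varnothing$ being initial in $\C$, and the triangle commutes automatically because any morphism $\varnothing \to \varnothing$ must equal $\id_\varnothing$.

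The final step invokes the standard strictness argument for initial objects in a cartesian closed category. If $\D$ is cartesian closed with initial object $0$ and terminal object $1$ satisfying $0 \cong 1$, then for every $y \in \D$ we have
\[y \;\cong\; y \times 1 \;\cong\; y \times 0 \;\cong\; 0,\]
where the last isomorphism uses that $y \times -$, being a left adjoint by cartesian closure, preserves the empty colimit. Applying this to $\D = \C/\varnothing$ shows that every object of $\C/\varnothing$ is isomorphic to $(\varnothing, \id_\varnothing)$, which is what we needed.

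I do not expect any serious obstacle here: the only subtlety is to use local cartesian closedness in the precise form that $\C/\varnothing$ itself is cartesian closed, rather than trying to work directly with the adjoint triples $\Sigma_f \dashv f^\ast \dashv \Pi_f$ of \Cref{prop:characterization-of-LCC}. Once we pass to the slice, the classical ``$0 \cong 1$ implies trivial'' argument for cartesian closed categories finishes the proof.
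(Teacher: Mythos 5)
Your proof is correct and follows essentially the same route as the paper: pass to the slice $\C/\varnothing$, observe it is cartesian closed with $\id_\varnothing$ a zero object, and invoke the standard strictness of initial objects in a CCC. The only cosmetic difference is in the final lemma, where you argue $y \cong y \times 1 \cong y \times 0 \cong 0$ using that $y \times {-}$ preserves the initial object, whereas the paper runs the exponential adjunction at the hom-set level ($\C(x,y) \cong \C(x\times 0, y) \cong \C(0,[x,y])$ is a singleton); both are standard and equivalent.
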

\begin{proof}
  In the category $\C/\varnothing$, $\id_\varnothing$ is both initial and terminal. Since $\C$ is locally cartesian closed, $\C/\varnothing$ is cartesian closed, and so we apply \Cref{lem:cc-with-zero-object} (below).
\end{proof}

\begin{lem}
  \label{lem:cc-with-zero-object}
  Let $\C$ be a cartesian closed category with an object $0 \in \C$ which is both initial and terminal. Then $\C$ is equivalent to the terminal category; i.e. every object in $\C$ is isomorphic to $0$.
\end{lem}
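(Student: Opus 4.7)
The plan is to exploit the cartesian closed structure via the adjunction $x \times (-) \dashv (-)^x$ for any object $x$. Since $x \times (-)$ is a left adjoint, it preserves all colimits that exist in $\C$; in particular, it preserves initial objects.

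Given an arbitrary $x \in \C$, I would compute $x \times 0$ in two ways. On one hand, because $0$ is terminal, the projection $x \times 0 \to x$ is an isomorphism, so $x \times 0 \cong x$. On the other hand, because $0$ is initial and $x \times (-)$ preserves initial objects, we have $x \times 0 \cong 0$. Composing these two isomorphisms gives $x \cong 0$, which is exactly the desired conclusion.

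The only mild subtlety is justifying that $x \times (-)$ preserves the initial object: this follows formally from the fact that it has a right adjoint $(-)^x$ (the internal hom), which is precisely the content of cartesian closedness. Once that is invoked, the rest of the argument is a one-line chain of isomorphisms, so there is no real obstacle; the lemma is essentially a two-step diagram chase.
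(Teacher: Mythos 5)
Your proof is correct, and it takes a slightly different route from the paper's. Both arguments hinge on the same adjunction $x \times (-) \dashv (-)^x$, but you apply it through the general principle that left adjoints preserve initial objects, obtaining the direct chain $x \cong x \times 0 \cong 0$. The paper instead unpacks the adjunction at the level of hom-sets: for arbitrary $y$, it computes $\C(x,y) \cong \C(x \times 0, y) \cong \C(0,[x,y]) \cong *$, concluding that $x$ is initial and hence isomorphic to $0$. Your version is a bit more economical because it builds the isomorphism $x \cong 0$ in one stroke rather than first establishing that every hom-set out of $x$ is a singleton; the paper's version is more self-contained in that it never invokes the meta-fact about left adjoints preserving colimits, instead chasing through the adjunction bijection explicitly. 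Either way the lemma falls out of cartesian closedness in two lines, and your argument is a clean alternative.
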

\begin{proof}
  Let $x,y \in \C$ be arbitrary. Then $\C(x,y) \cong \C(x \times 0, y)$ because $0$ is terminal, and $\C(x \times 0, y) \cong \C(0, [x,y])$ by cartesian closure. Finally, $\C(0,[x,y])$ is a singleton because $0$ is initial. Since $y$ was arbitrary, this shows that $x$ is initial, and thus $x \cong 0$. Since $x$ was arbitrary, this completes the proof.
\end{proof}

\subsubsection{Coproducts in LCC Categories}

Being locally cocartesian is a condition which (to the author's knowledge) does not admit a nice reformulation in terms of adjoint functors. However, it is worth noting that (in contrast with cartesian structure) cocartesian structure is automatically inherited by slice categories.

\begin{prop}
  \label{slice-of-cocartesian-is-cocartesian}
  Let $\C$ be a cocartesian category. Then $\C$ is locally cocartesian, and in particular the coproduct of a tuple of objects $(\alpha_i : a_i \to x)_{i =1}^n$ in $\C/x$ is the object $(\alpha_i)_{i=1}^n : \coprod_{i=1}^n a_i \to x$ with structure morphisms $\alpha_j \to \coprod_{i=1}^n \alpha_i$ equal to the structure morphisms $a_j \to \coprod_{i=1}^n a_i$ in $\C$.
\end{prop}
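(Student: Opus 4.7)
The plan is to verify the universal property of the coproduct directly. Given a tuple $(\alpha_i : a_i \to x)_{i=1}^n$ of objects in $\C/x$, form the coproduct $\coprod_{i=1}^n a_i$ in $\C$ together with its structure morphisms $\iota_j : a_j \to \coprod_{i=1}^n a_i$. The universal property of the coproduct in $\C$ produces a unique morphism $\alpha := (\alpha_i)_{i=1}^n : \coprod_{i=1}^n a_i \to x$ satisfying $\alpha \circ \iota_j = \alpha_j$ for each $j$. This last equation is precisely the condition that each $\iota_j$ be a valid slice morphism $\alpha_j \to \alpha$ in $\C/x$.

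To check that this is a coproduct in $\C/x$, consider an arbitrary object $\beta : b \to x$ of $\C/x$ together with slice morphisms $f_j : \alpha_j \to \beta$, i.e., morphisms $f_j : a_j \to b$ in $\C$ satisfying $\beta \circ f_j = \alpha_j$. The universal property of the coproduct in $\C$ produces a unique morphism $f : \coprod_{i=1}^n a_i \to b$ with $f \circ \iota_j = f_j$ for all $j$. To promote $f$ to a slice morphism $\alpha \to \beta$, we must verify $\beta \circ f = \alpha$; precomposing with $\iota_j$ gives $\beta \circ f \circ \iota_j = \beta \circ f_j = \alpha_j = \alpha \circ \iota_j$, so by the uniqueness half of the universal property of $\coprod_{i=1}^n a_i$, we conclude $\beta \circ f = \alpha$. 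Uniqueness of $f$ as a slice morphism is then immediate from its uniqueness as a morphism in $\C$.

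The $n=0$ case can be handled under the same umbrella: an initial object $\varnothing$ of $\C$ equipped with the unique morphism $\varnothing \to x$ serves as the initial object of $\C/x$, by essentially the same argument (vacuously, the required cocone condition imposes no constraint). This verifies that $\C/x$ has all finite coproducts of the claimed form, and since $x$ was arbitrary, that $\C$ is locally cocartesian. There is no real obstacle here — the statement is formal and the only mildly nontrivial point is to remember that the uniqueness clause of the coproduct universal property is what forces the induced map to respect the maps down to $x$.
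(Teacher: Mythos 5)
Your proof is correct and complete. The paper states this proposition without proof (treating it as a routine fact), so there is no paper argument to compare against; your direct verification of the universal property — in particular noticing that the uniqueness clause of the coproduct's universal property is what forces the induced map $f$ to satisfy $\beta \circ f = \alpha$ — is exactly the standard argument one would supply, and the $n=0$ (initial object) case is handled cleanly.
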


In this paper we will consider categories which are locally cartesian closed, cocartesian, and satisfying one additional condition, namely that finite coproducts are \emph{disjoint}.

\begin{defn}
  A cocartesian category $\C$ is said to have \emph{disjoint coproducts} if, for all coproduct diagrams $x \xrightarrow{i} x \amalg y \xleftarrow{j} y$, $i$ and $j$ are monomorphisms, and
  \[\begin{tikzcd}[ampersand replacement=\&]
      \varnothing \& x \\
      y \& {x \amalg y}
      \arrow[from=1-1, to=1-2]
      \arrow[from=1-1, to=2-1]
      \arrow["j"', from=2-1, to=2-2]
      \arrow["i", from=1-2, to=2-2]
    \end{tikzcd}\]
  is a cartesian square, where $\varnothing$ is an initial object of $\C$.
\end{defn}

Intuitively, this says that the coproduct in $\C$ behaves more like a disjoint union operation (in e.g. $\Set$) than a $\max$ operation (in e.g. a poset). Indeed, $\Set$ is an example of a cocartesian category with disjoint coproducts, while the poset $(\{0,1\}, \leq)$ is an example of a cocartesian category whose coproducts are not disjoint.

The importance of this condition for us is that in cocartesian, locally cartesian closed categories satisfying this disjointness property, slice categories over coproducts are well-behaved -- an object living over $a \amalg b$ splits into a piece living over $a$ and a piece living over $b$.

\begin{prop}
  \label{prop:disjoint-coproducts-slice-products}
  Let $\C$ be a category which is locally cartesian closed and cocartesian with disjoint coproducts. Then any coproduct diagram $x \xrightarrow{i} x \amalg y \xleftarrow{j} y$ induces an equivalence of categories
  \[\C/(x \amalg y) \xrightarrow{(i^*, j^*)} \C/x \times \C/y.\]
\end{prop}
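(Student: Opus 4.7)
The plan is to produce an explicit quasi-inverse $F : \C/x \times \C/y \to \C/(x \amalg y)$ and then verify the two composites are isomorphic to the respective identities, leveraging two facts that are available once we are LCC with disjoint coproducts: the pullback functors $i^*, j^*, \gamma^*$ are \emph{left} adjoints (to $\Pi_i, \Pi_j, \Pi_\gamma$, by \Cref{prop:characterization-of-LCC}) and hence preserve coproducts; and the structure morphisms $i,j$ are monic with pullback over $x \amalg y$ equal to an initial object.

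The candidate inverse is $F(\alpha,\beta) = \Sigma_i\alpha \amalg \Sigma_j\beta$, with the coproduct formed in $\C/(x\amalg y)$; by \Cref{slice-of-cocartesian-is-cocartesian} this is just the map $a \amalg b \to x \amalg y$ induced by $\alpha$ and $\beta$. First I would show that $(i^*,j^*) \circ F \cong \id$. Since $i^*$ preserves coproducts, it suffices to compute $i^*\Sigma_i\alpha$ and $i^*\Sigma_j\beta$ separately. For the first, the square
\[\begin{tikzcd}[ampersand replacement=\&]
a \& a \\ x \& x \amalg y
\arrow["\id", from=1-1, to=1-2]
\arrow["\alpha"', from=1-1, to=2-1]
\arrow["i\alpha", from=1-2, to=2-2]
\arrow["i"', from=2-1, to=2-2]
\end{tikzcd}\]
is cartesian because $i$ is a monomorphism, so $i^*\Sigma_i\alpha \cong \alpha$. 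For the second, factor the pullback of $j\beta$ along $i$ through the pullback of $j$ along $i$, which is $\varnothing \to x$ by disjointness; since any object of $\C$ mapping to $\varnothing$ is itself initial (\Cref{prop:lcc-slice-over-empty}), we get $i^*\Sigma_j\beta \cong (\varnothing \to x)$. This is the initial object of $\C/x$, so the coproduct $\alpha \amalg (\varnothing \to x) \cong \alpha$. The case of $j^*$ is symmetric.

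For the other composite, I would show $F(i^*\gamma, j^*\gamma) \cong \gamma$ for any $\gamma : c \to x \amalg y$. The key identity is that in $\C/(x\amalg y)$, the coproduct $i \amalg j$ equals $\id_{x\amalg y}$ (again \Cref{slice-of-cocartesian-is-cocartesian}). Applying $\gamma^*$, which preserves coproducts, yields $\gamma^*(i) \amalg \gamma^*(j) \cong \gamma^*(\id_{x\amalg y}) = \id_c$ in $\C/c$; applying the colimit-preserving $\Sigma_\gamma$ on top then gives $\Sigma_\gamma\gamma^*(i) \amalg \Sigma_\gamma\gamma^*(j) \cong \gamma$ in $\C/(x\amalg y)$. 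The final identification is that, as objects of $\C/(x\amalg y)$, $\Sigma_\gamma\gamma^*(i) = \Sigma_i i^*\gamma$ (both are the composite through the vertex of the defining pullback $c \times_{x\amalg y} x$), and likewise for $j$; hence the right-hand side is $F(i^*\gamma, j^*\gamma)$, as desired.

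The main obstacle is almost entirely bookkeeping: being careful to distinguish when a pullback is viewed as an object of $\C/x$ versus $\C/c$ versus $\C/(x\amalg y)$, and invoking the correct adjointness at each step. The genuine content of the argument lies in the two disjointness inputs (that $i$ is mono and that $i,j$ pullback to $\varnothing$), together with \Cref{prop:lcc-slice-over-empty} to propagate initiality through further pullbacks; everything else is forced by the adjoint triple of \Cref{prop:characterization-of-LCC}.
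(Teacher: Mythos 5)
Your proof is correct and follows the same basic plan as the paper's (same candidate quasi-inverse, same two disjointness inputs, same appeal to \Cref{prop:lcc-slice-over-empty}), with two small reorganizations worth noting. For $(i^*,j^*)\circ F \cong \id$, you establish $i^*\Sigma_i\alpha \cong \alpha$ in one stroke from the fact that $i$ is mono; the paper first records $i^*i \cong \id_x$ and then chases a two-square pasting. Your version is cleaner. For $F\circ(i^*,j^*)\cong\id$, the paper rewrites $\Sigma_i i^*\alpha \amalg \Sigma_j j^*\alpha \cong (i\times\alpha)\amalg(j\times\alpha) \cong (i\amalg j)\times\alpha$, invoking \emph{cartesian closure of the slice} so that $-\times\alpha$ preserves coproducts. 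You instead apply $\gamma^*$ (coproduct-preserving since it has a right adjoint $\Pi_\gamma$) to $i\amalg j \cong \id$, push forward by $\Sigma_\gamma$, and then swap $\Sigma_\gamma\gamma^*i = \Sigma_i i^*\gamma$. Both hinge on the same Beck-Chevalley/product fact (\Cref{prop:product-is-LR}); the paper makes the distributivity explicit via closure of the slice while you get the same effect from the existence of $\Pi_\gamma$. These are equivalent characterizations of LCC (cf.\ \Cref{prop:characterization-of-LCC}), so no gap — just a slightly different packaging.
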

We relegate the proof to \Cref{appendix:lcc-lemmas}.

It will be convenient to have a shorthand phrase for ``locally cartesian closed categories which are cocartesian with disjoint coproducts'', since these will be our main objects of study. Thus,

\begin{defn}\label{defn:LCCDC}
  We will say that a category $\C$ is LCCDC if it is locally cartesian closed, cocartesian, and has disjoint coproducts.
\end{defn}

Just as with LCC, LCCDC is a local property. That is,

\begin{prop}
  \label{prop:slice-of-lccdc-is-lccdc}
  Let $\C$ be an LCCDC category. Then, for all $x \in \C$, the slice category $\C/x$ is LCCDC.
\end{prop}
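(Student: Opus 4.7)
The plan is to verify each of the three defining properties of an LCCDC category for $\C/x$ separately. Local cartesian closure is immediate from \Cref{cor:slice-of-lcc-is-lcc}, which was set up for exactly this purpose. That $\C/x$ is cocartesian follows by applying \Cref{slice-of-cocartesian-is-cocartesian} to $\C$: this yields not only that $\C$ is locally cocartesian, but also an explicit description of the coproduct of $\alpha : a \to x$ and $\beta : b \to x$ in $\C/x$ as the copairing $(\alpha, \beta) : a \amalg b \to x$, whose structure morphisms from $\alpha$ and $\beta$ are literally the coproduct inclusions of $a$ and $b$ into $a \amalg b$ in $\C$.

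The substantive step is to verify that coproducts in $\C/x$ are disjoint, and my strategy is to reduce disjointness in $\C/x$ to disjointness in $\C$ via the forgetful functor $U : \C/x \to \C$. The properties of $U$ I would invoke (or quickly verify in passing) are: $U$ is faithful, and so reflects monomorphisms; $U$ creates connected limits, and in particular pullbacks are computed on underlying objects; and an initial object of $\C/x$ is given by $\varnothing \to x$, where $\varnothing$ is initial in $\C$ (the structure map $\varnothing \to x$ exists and is unique by initiality of $\varnothing$). Granted these facts, the disjointness data for $a \xrightarrow{i} a \amalg b \xleftarrow{j} b$ in $\C$ transfer verbatim: the morphisms $i$ and $j$, promoted to morphisms $\alpha \to \alpha \amalg \beta$ and $\beta \to \alpha \amalg \beta$ in $\C/x$, remain monomorphisms there; and the pullback square exhibiting $\varnothing \cong a \times_{a \amalg b} b$ in $\C$ lifts to a pullback square in $\C/x$ by equipping every corner with its (unique) structure morphism to $x$.

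The main ``obstacle'' is really just the category-theoretic bookkeeping establishing that monomorphisms, pullbacks, and initial objects in $\C/x$ correspond to their counterparts in $\C$ under $U$. Once that dictionary is in hand, the disjoint-coproducts condition on $\C/x$ is essentially a direct pullback along $U$ of the disjoint-coproducts condition on $\C$, completing the verification that $\C/x$ is LCCDC.
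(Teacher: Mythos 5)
Your proposal is correct and takes essentially the same route as the paper: LCC via \Cref{cor:slice-of-lcc-is-lcc}, cocartesianness via \Cref{slice-of-cocartesian-is-cocartesian}, and disjointness by transporting the witnessing data from $\C$ to $\C/x$. The one small divergence is in how disjointness is checked: you invoke the standard fact that the forgetful functor $\C/x \to \C$ creates connected limits (so the pullback square lifts verbatim), whereas the paper's proof sidesteps that fact by noting only that the domain of the pullback in $\C/x$ is, as an object of $\C$, a cone over the cospan and hence admits a map to $\varnothing$, then applies \Cref{prop:lcc-slice-over-empty} to force it to be $\varnothing$; both are valid, and the content is the same.
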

\begin{proof}
  That $\C/x$ is LCC is covered by \Cref{cor:slice-of-lcc-is-lcc}. \Cref{slice-of-cocartesian-is-cocartesian} says that $\C/x$ is cocartesian, and moreover that a coproduct diagram in $\C/x$ is a coproduct diagram $a \xrightarrow{i} a \amalg b \xleftarrow{j} b$ in $\C$ which happens to lie over $x$. The assumption that $\C$ has disjoint coproducts says that the cospan $a \xrightarrow{i} a \amalg b \xleftarrow{j} b$ has pullback $\varnothing$ in $\C$. Consequently, the pullback of $a \xrightarrow{i} a \amalg b \xleftarrow{j} b$ in $\C/x$ is an object over $x$ whose domain (as an object of $\C$) admits a map to $\varnothing$. By \Cref{prop:lcc-slice-over-empty}, this shows that the limit of $a \xrightarrow{i} a \amalg b \xleftarrow{j} b$ in $\C/x$ is also $\varnothing$, which is initial in $\C/x$ by \Cref{slice-of-cocartesian-is-cocartesian}. Since $i$ and $j$ are monic in $\C$, they are also monic in $\C/x$.
\end{proof}

We now record a couple straightforward lemmas about the mechanics of LCCDC categories.

\begin{lem}
  \label{lem:restriction-commutes-with-sum}
  Let $\C$ be an LCCDC category and let $f : x \to y$ and $g : x' \to y'$ be morphisms in $\C$. Let $i_x : x \to x \amalg x'$ and $i_y : y \to y \amalg y'$ be the canonical inclusions. Then
  \[\begin{tikzcd}[ampersand replacement=\&]
      x \& y \\
      {x \amalg x'} \& {y \amalg y'}
      \arrow["f", from=1-1, to=1-2]
      \arrow["{i_x}"', from=1-1, to=2-1]
      \arrow["{f \amalg g}"', from=2-1, to=2-2]
      \arrow["{i_y}", from=1-2, to=2-2]
    \end{tikzcd}\]
  is a cartesian square.
\end{lem}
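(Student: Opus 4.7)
The plan is to exploit the equivalence
\[\C/(y \amalg y') \xrightarrow{(i_y^*,\, i_{y'}^*)} \C/y \times \C/y'\]
of \Cref{prop:disjoint-coproducts-slice-products} to compute the pullback $i_y^*(f \amalg g)$ and identify it with $f$. Commutativity of the square itself is immediate from the universal property of the coproduct $f \amalg g$. Given commutativity, it suffices to exhibit an isomorphism $i_y^*(f \amalg g) \cong f$ in $\C/y$, since the canonical comparison map $x \to i_y^*(f \amalg g)$ induced by the commuting square will then automatically be an isomorphism by the universal property of the pullback.

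By \Cref{slice-of-cocartesian-is-cocartesian}, $f \amalg g$, viewed as an object of $\C/(y \amalg y')$, is the coproduct of $i_y \circ f$ and $i_{y'} \circ g$ in that slice. Since equivalences preserve coproducts and coproducts in $\C/y \times \C/y'$ are computed componentwise, it suffices to compute the images of $i_y \circ f$ and $i_{y'} \circ g$ separately under $(i_y^*, i_{y'}^*)$ and then take coproducts in each factor.

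These images are straightforward pullback calculations. Because $i_y$ and $i_{y'}$ are monic (a consequence of disjointness), pulling $i_y$ back along itself yields the identity on $y$, so $i_y^*(i_y \circ f) \cong f$. On the other hand, $i_y^* i_{y'} \cong \varnothing$ by disjointness, so the pasted pullback $i_y^*(i_{y'} \circ g)$ admits a map to $\varnothing$ and is therefore itself $\varnothing$ by \Cref{prop:lcc-slice-over-empty}. Symmetrically, $i_{y'}^*(i_y \circ f) \cong \varnothing$ and $i_{y'}^*(i_{y'} \circ g) \cong g$. Hence under the equivalence, $i_y \circ f$ corresponds to $(f,\, \varnothing \to y')$ and $i_{y'} \circ g$ corresponds to $(\varnothing \to y,\, g)$; taking componentwise coproducts and using that $\varnothing$ is a unit for $\amalg$, we find that $f \amalg g$ corresponds to $(f, g)$. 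In particular $i_y^*(f \amalg g) \cong f$, which is precisely the claim that the given square is cartesian. No step poses a genuine obstacle; the only mild subtlety is making sure each pullback is computed in the right factor, which is what the reduction through the equivalence to the two ``pure'' summands is designed to handle.
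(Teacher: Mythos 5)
Your overall plan—reduce to identifying a pullback by running it through the slice decomposition of \Cref{prop:disjoint-coproducts-slice-products}—is the same idea as the paper's, but you decompose along the codomain coproduct $y \amalg y'$ (pulling $f \amalg g$ back along $i_y$), whereas the paper decomposes along the domain coproduct $x \amalg x'$ (pulling $i_y$ back along $f \amalg g$). This asymmetry matters, and it is where your argument has a genuine gap.

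The problem is the claim that, given commutativity, exhibiting an abstract isomorphism $i_y^*(f \amalg g) \cong f$ in $\C/y$ makes the comparison map $x \to i_y^*(f \amalg g)$ ``automatically'' an isomorphism. This inference is false: two isomorphic objects of a slice category can have non-invertible maps between them, so knowing the pullback is abstractly the right object over $y$ does not by itself identify the comparison. Concretely, once you know the pullback, as an object of $\C/y$, is isomorphic to $f$, you obtain a cartesian square whose top edge is $f$, right edge is $i_y$, bottom is $f \amalg g$, and left edge is \emph{some} map $q : x \to x \amalg x'$ that you still must identify with $i_x$. You know $(f \amalg g) \circ q = i_y \circ f = (f \amalg g) \circ i_x$, but $f \amalg g$ need not be a monomorphism (take $f$ non-injective and $g$ from the empty set in $\Set$), so this does not force $q = i_x$. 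In other words, your argument pins down only one of the two legs of the pullback square and silently asserts the other.

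The paper's proof works precisely because it goes the other way: it computes the pullback as an object of $\C/(x \amalg x')$ via $(i_x^*, i_{x'}^*)$, shows this object is isomorphic to $i_x$, and then is left needing to identify only the top edge of the resulting cartesian square—which it does using the fact that $i_y$ \emph{is} a monomorphism (this is exactly where disjointness of coproducts is used). That monicity trick has no analogue in your direction. To repair your proof you could either switch to the paper's decomposition, or else track your chain of isomorphisms carefully enough to check that it is compatible with both projections $P \to y$ and $P \to x \amalg x'$ and hence really is the comparison map; but the latter is more work than the paper's route, and ``by the universal property of the pullback'' is not a substitute for it.
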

We again relegate this proof to \Cref{appendix:lcc-lemmas}.

\begin{cor}
  \label{cor:fold-square-is-cartesian}
  Let $\C$ be an LCCDC category and let $f : x \to y$ be a morphism in $\C$. Let $\nabla_x : x \amalg x \to x$ and $\nabla_y : y \amalg y \to y$ be the codiagonals. Then
  \[\begin{tikzcd}[ampersand replacement=\&]
      {x \amalg x} \& {y \amalg y} \\
      x \& y
      \arrow["{f \amalg f}", from=1-1, to=1-2]
      \arrow["{\nabla_x}"', from=1-1, to=2-1]
      \arrow["f"', from=2-1, to=2-2]
      \arrow["{\nabla_y}", from=1-2, to=2-2]
    \end{tikzcd}\]
  is a cartesian square.
\end{cor}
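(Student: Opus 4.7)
The plan is to verify the universal property of the pullback directly. The key tool is the equivalence $\C/(y \amalg y) \simeq \C/y \times \C/y$ from \Cref{prop:disjoint-coproducts-slice-products}, which will let me decompose any test cone into pieces indexed by the two summands.

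Suppose we are given a cone: an object $a$ together with morphisms $\phi : a \to x$ and $\psi : a \to y \amalg y$ satisfying $f \circ \phi = \nabla_y \circ \psi$. I would first apply \Cref{prop:disjoint-coproducts-slice-products} to $\psi$, identifying $a \cong a_1 \amalg a_2$ and $\psi \cong \psi_1 \amalg \psi_2$, where $\psi_k : a_k \to y$ is the pullback of $\psi$ along the $k$-th inclusion $y \to y \amalg y$. The hypothesis $f \circ \phi = \nabla_y \circ \psi$, restricted to each summand $a_k \hookrightarrow a$, forces $\psi_k = f \circ \phi|_{a_k}$. I would then define the required map $u : a \to x \amalg x$ summand-wise: on $a_k$, take $u|_{a_k}$ to be the $k$-th inclusion $x \to x \amalg x$ composed with $\phi|_{a_k}$. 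The two commutativity conditions $\nabla_x \circ u = \phi$ and $(f \amalg f) \circ u = \psi$ follow from the defining properties of the coproducts and the computation of $\psi_k$ above.

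For uniqueness, suppose $u' : a \to x \amalg x$ also satisfies these two equations. The equation $(f \amalg f) \circ u' = \psi$, when restricted to $a_k$, shows that $(f \amalg f) \circ u'|_{a_k}$ factors through the $k$-th inclusion $y \to y \amalg y$. By \Cref{lem:restriction-commutes-with-sum}, the $k$-th inclusion $x \to x \amalg x$ is the pullback of the $k$-th inclusion $y \to y \amalg y$ along $f \amalg f$, so the universal property of that cartesian square forces $u'|_{a_k}$ itself to factor through the $k$-th inclusion $x \to x \amalg x$, via some $v_k : a_k \to x$. The remaining constraint $\nabla_x \circ u' = \phi$ then pins down $v_k = \phi|_{a_k}$, whence $u' = u$.

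I do not anticipate a serious obstacle: the proof is essentially arrow chasing once the decomposition is in place. The only subtle point is that the decomposition of $a$ is dictated by $\psi$, so the same decomposition must be reused when analyzing a candidate $u'$; this is precisely what \Cref{lem:restriction-commutes-with-sum} guarantees, by linking factorizations through the coproduct inclusions on the top and bottom of the square $f \amalg f$.
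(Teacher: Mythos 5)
Your proof is correct, but it takes a different route from the paper's. The paper observes that since $f^*:\C/y\to\C/x$ is both a left and right adjoint, it preserves finite coproducts and terminal objects, so $f^*\nabla_y = f^*(\id_y\amalg\id_y) = \id_x\amalg\id_x = \nabla_x$; this immediately produces a cartesian square with the correct objects and three of the four arrows, after which the paper identifies the remaining top arrow by forming a further pullback along the two coprojections $y\to y\amalg y$ and invoking \Cref{prop:disjoint-coproducts-slice-products}. You instead verify the universal property of the pullback directly: decompose the test cone via the equivalence $\C/(y\amalg y)\simeq\C/y\times\C/y$ from \Cref{prop:disjoint-coproducts-slice-products}, build the mediating map summand-wise, and settle uniqueness by invoking \Cref{lem:restriction-commutes-with-sum} to force a candidate map into the expected factorization on each summand. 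The two arguments ultimately draw on the same structural facts (disjointness of coproducts, stability of the coproduct inclusions under pullback along $f\amalg f$), but the paper's version is more compact because the identity $f^*\nabla_y=\nabla_x$ does most of the work in one step, whereas your version makes the cone-chasing explicit -- arguably more transparent, at the cost of being longer. One small presentational caveat: when you write $\phi|_{a_k}$, $u'|_{a_k}$, etc., you are implicitly using that precomposition with the coprojections $a_k\hookrightarrow a$ is jointly monic and that maps out of $a$ correspond bijectively to pairs of maps out of $a_1,a_2$ -- this is exactly the coproduct property, and it holds, but in a written-out proof you would want to say so rather than rely on restriction notation.
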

\begin{proof}
  $f^* : \C/y \to \C/x$ is both a left and right adjoint, so it preserves finite coproducts and terminal objects. Thus, we have
  \[f^* \nabla_y = f^*(\id_y \amalg \id_y) = f^* \id_y \amalg f^* \id_y = \id_x \amalg \id_x = \nabla_x.\]
  This establishes that there is a cartesian square of the form
  \[\begin{tikzcd}
      {x \amalg x} & {y \amalg y} \\
      x & y
      \arrow[from=1-1, to=1-2]
      \arrow["{\nabla_x}"', from=1-1, to=2-1]
      \arrow["{\nabla_y}", from=1-2, to=2-2]
      \arrow["f"', from=2-1, to=2-2]
    \end{tikzcd}\]
  and we need only identify the top morphism. To do so, we form a further pullback
  \[\begin{tikzcd}
      \bullet & y \\
      {x \amalg x} & {y \amalg y} \\
      x & y
      \arrow[from=1-1, to=1-2]
      \arrow[from=1-1, to=2-1]
      \arrow["\lrcorner"{anchor=center, pos=0.125}, draw=none, from=1-1, to=2-2]
      \arrow["{i_1}", from=1-2, to=2-2]
      \arrow["{\id_y}", curve={height=-30pt}, from=1-2, to=3-2]
      \arrow[from=2-1, to=2-2]
      \arrow["{\nabla_x}"', from=2-1, to=3-1]
      \arrow["{\nabla_y}", from=2-2, to=3-2]
      \arrow["f"', from=3-1, to=3-2]
    \end{tikzcd}\]
  since the right-hand triangle commutes, the left-hand column must compose to $f^* \id_y = \id_x$, and thus the top morphism in this diagram must be $f$. We would get a similar result forming a further pullback along $i_2 : y \to y \amalg y$, and thus by \Cref{prop:disjoint-coproducts-slice-products} we have established the claim.
\end{proof}

\begin{lem}\label{lem:epi-from-initial}
  Let $\C$ be a category with finite disjoint coproducts. Let $\varnothing$ be an initial object of $\C$, and let $\varphi : \varnothing \to z$ be an epimorphism. Then $\varphi$ is an isomorphism.
\end{lem}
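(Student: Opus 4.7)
The plan is to show that $z$ must itself be initial, from which the conclusion is immediate since $\varphi$ will then be the unique morphism between two initial objects.

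First, I consider the two canonical inclusions $i_1, i_2 : z \to z \amalg z$. Because $\varnothing$ is initial, the composites $i_1 \circ \varphi$ and $i_2 \circ \varphi$ must coincide, both being the unique morphism $\varnothing \to z \amalg z$. Since $\varphi$ is an epimorphism, this forces $i_1 = i_2$.

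Next, I exploit both halves of the disjoint coproducts hypothesis. On one hand, the pullback of $i_1$ and $i_2$ is $\varnothing$. On the other hand, because $i_1$ is a monomorphism and $i_1 = i_2$, a pair of morphisms $(f,g) : w \to z$ satisfying $i_1 \circ f = i_2 \circ g$ is equivalent to a single morphism $w \to z$ (the equation reduces to $f = g$ by the monicity of $i_1$). So $z$, equipped with identity projections, also represents this pullback. By uniqueness of pullbacks, $z \cong \varnothing$, so $z$ is initial, and $\varphi$ is then an isomorphism.

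The only subtlety is recognizing that both parts of the disjoint coproducts definition are needed here: monicity of the inclusions collapses the pullback of $i_1$ with itself to $z$, while the cartesian square with $\varnothing$ identifies that same pullback with an initial object. With both in hand, no further calculation is required.
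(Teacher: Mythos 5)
Your argument is correct and follows essentially the same path as the paper's proof: use $\varphi$ epic to force $i_1 = i_2$, then exhibit the pullback of $i_1$ and $i_2$ once as $\varnothing$ (disjointness) and once as $z$ (monicity of $i_1$), giving $z \cong \varnothing$. The only cosmetic difference is that the paper first observes $\lvert \C(z,y)\rvert \leq 1$ for all $y$ and then specializes to $i_1 = i_2$, while you apply the epi condition directly to the pair $(i_1, i_2)$.
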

\begin{proof}
  Since $\varphi$ is an epimorphism and $\varnothing$ is initial, we have $\lvert \C(z,y) \rvert \leq 1$ for all objects $y$. In particular, the two coprojections $i_1, i_2 : z \to z \amalg z$ are equal. Note that the following two squares are both cartesian:
  \[\begin{tikzcd}
      \varnothing & z & z & z \\
      z & {z \amalg z} & z & {z \amalg z}
      \arrow[from=1-1, to=1-2]
      \arrow[from=1-1, to=2-1]
      \arrow["{i_2}", from=1-2, to=2-2]
      \arrow["\id", from=1-3, to=1-4]
      \arrow["\id"', from=1-3, to=2-3]
      \arrow["{i_1}", from=1-4, to=2-4]
      \arrow["{i_1}"', from=2-1, to=2-2]
      \arrow["{i_1}"', from=2-3, to=2-4]
    \end{tikzcd}\]
  Since $i_1 = i_2$, the top-left corners of these squares must be isomorphic, i.e. $\varnothing \cong z$. Thus $\varphi$ is an isomorphism.
\end{proof}

\begin{lem}\label{lem:dep-prod-of-initial}
  Let $i : x \to y$ be an epimorphism in an LCCDC category $\C$. Then $\Pi_i : \C/x \to \C/y$ sends initial objects to initial objects.
\end{lem}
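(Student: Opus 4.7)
The plan is to write $\Pi_i 0_x$ as an object $\gamma : c \to y$ of $\C/y$ and reduce to showing $c \cong \varnothing$, from which $\gamma$ must coincide with $0_y = (\varnothing \to y)$. First, I would extract from the counit of the adjunction $i^* \dashv \Pi_i$ at $0_x$ a morphism $i^* \Pi_i 0_x \to 0_x$ in $\C/x$, which at the level of $\C$ is a map $c \times_y x \to \varnothing$; by \Cref{prop:lcc-slice-over-empty}, this forces $c \times_y x \cong \varnothing$.

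Next I would bring in the hypothesis that $i$ is epi. Regarded as a morphism $i \to \id_y$ in $\C/y$, the arrow $i$ is an epimorphism in $\C/y$: if two morphisms $u, v : \id_y \to \delta$ in $\C/y$ become equal after precomposition with $i$, then the corresponding equation in $\C$ and epi-ness of $i$ in $\C$ force $u = v$. Since $\C$ is LCC, the pullback functor $\gamma^* : \C/y \to \C/c$ is a left adjoint (to $\Pi_\gamma$) and hence preserves epimorphisms, so $\gamma^* i : \gamma^* i \to \id_c$ is epi in $\C/c$. The previous reduction $c \times_y x \cong \varnothing$ identifies this morphism with the unique map $u_c : 0_c \to \id_c$ in $\C/c$.

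The last step --- and the main obstacle --- is to upgrade the epi-ness of $u_c$ in $\C/c$ (which does not obviously imply anything strong about $c$ alone) into the strict conclusion $c \cong \varnothing$. The trick is to apply epi-ness of $u_c$ to the two coproduct inclusions $\iota_1, \iota_2 : c \to c \amalg c$, which are sections of the codiagonal $\nabla_c : c \amalg c \to c$ in $\C/c$. The composites $\iota_1 \circ u_c$ and $\iota_2 \circ u_c$ are morphisms $0_c \to \nabla_c$ in $\C/c$ whose underlying maps $\varnothing \to c \amalg c$ coincide (by uniqueness of maps out of $\varnothing$), so $\iota_1 = \iota_2$. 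Finally, disjointness of coproducts (\Cref{defn:LCCDC}) says the pullback of $c \xrightarrow{\iota_1} c \amalg c \xleftarrow{\iota_2} c$ is $\varnothing$, while the fact that $\iota_1 = \iota_2$ is a monomorphism means this same pullback computes as $c$ (the kernel pair of a mono); therefore $c \cong \varnothing$.
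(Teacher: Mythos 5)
Your proof is correct and follows essentially the same strategy as the paper: compute $i^*\Pi_i\varnothing = \varnothing$ from the counit, observe that pulling back $i$ along $\Pi_i\varnothing$ yields an epimorphism out of an initial object (in the slice, since pullback is a left adjoint by local cartesian closure), and then conclude the codomain is initial via the coprojections $\iota_1 = \iota_2$ and disjointness of coproducts. The only cosmetic difference is that you inline the final step, whereas the paper isolates it as a separate lemma (\Cref{lem:epi-from-initial}) which it then invokes in the slice $\C/c$.
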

\begin{proof}
  We will use the symbol $\varnothing$ to denote all initial objects, with the specific meaning to be interpreted from context. The existence of the counit $i^* \Pi_i \varnothing \to \varnothing$ forces $i^* \Pi_i \varnothing = \varnothing$ by \Cref{prop:lcc-slice-over-empty}. Thus, we have a cartesian square
  \[\begin{tikzcd}
      \varnothing & z \\
      x & y
      \arrow["{(\Pi_i \varnothing)^* i}", from=1-1, to=1-2]
      \arrow[from=1-1, to=2-1]
      \arrow["\lrcorner"{anchor=center, pos=0.125}, draw=none, from=1-1, to=2-2]
      \arrow["{\Pi_i \varnothing}", from=1-2, to=2-2]
      \arrow["i"', from=2-1, to=2-2]
    \end{tikzcd}\]
  Since $i$ is epic and $(\Pi_i \varnothing)^*$ is a left adjoint, the top morphism is also epic. By \Cref{lem:epi-from-initial}, we conclude that $z = \varnothing$, i.e. $\Pi_i \varnothing = \varnothing$.
\end{proof}

\subsection{An Introduction to Mackey and Tambara Functors}

We will now introduce the notions of Mackey and Tambara functors, which are our key objects of study. This comes in two stages: first, we define categories $\A{\C}$ and $\U{\C}$, which syntactically encode the operations in Mackey and Tambara functors. Then, we define Mackey and Tambara functors to be certain types of functors from these categories to $\Set$.

For the remainder of this section, we fix an LCCDC category $\C$. The prototypical example of such a category for us is the category of finite $G$-sets, where $G$ is some group. Here are some other examples:

\begin{exam}
  \label{exam:etale-is-LCCDC}
  The category $\etale$ of topological spaces with local homeomorphisms between them is an LCCDC category. To see this, we make use of a crucial fact: if
  \[\begin{tikzcd}
      X && Y \\
      \\
      & Z
      \arrow["f", from=1-1, to=1-3]
      \arrow["h"', from=1-1, to=3-2]
      \arrow["g", from=1-3, to=3-2]
    \end{tikzcd}\]
  is a commutative diagram of topological spaces, and $h$ and $g$ are local homeomorphisms, then $f$ is a local homeomorphism. Thus, every slice category $\etale/X$ is the same as the category of \'etal\'e spaces over $X$, with all continuous maps between them. We conclude that $\etale/X$ is equivalent to $\Sh{X}$ (the category of sheaves of sets on $X$) by the \'etal\'e space construction \cite[\S{}II.5 and \S{}II.6]{SIGAL}. We know that $\Sh{X}$ is cartesian closed, so we conclude that $\etale$ is LCC. Additionally, $\etale$ clearly admits finite disjoint coproducts (by disjoint union).
\end{exam}

\begin{exam}
  Any Grothendieck topos is an LCCDC category. This is because Grothendieck topoi admit finite disjoint coproducts, Grothendieck topoi are cartesian closed, and every slice of a Grothendieck topos is a Grothendieck topos.
\end{exam}

\begin{exam}
  Let $\fet$ denote the category of schemes with finite {\'e}tale maps between them. Then $\fet$ is an LCCDC category (see \cite{Bachmann-GW}), and thus so is $\fet/S$ for any scheme $S$.
\end{exam}

\subsubsection{The Lindner Category}

\begin{defn}
  The \emph{Lindner category} of $\C$, denoted $\A{\C}$, is the category whose:
  \begin{itemize}
    \item Objects are the same as the objects of $\C$;
    \item Morphisms $x \to y$ are isomorphism classes of diagrams $x \leftarrow z \rightarrow y$ in $\C$, where two diagrams $x \leftarrow z \rightarrow y$ and $x \leftarrow z' \rightarrow y$ are said to be isomorphic if and only if there exists an isomorphism $z \to z'$ making
          \[\begin{tikzcd}[ampersand replacement=\&,row sep=tiny]
              \& z \\
              x \&\& y \\
              \& {z'}
              \arrow[from=1-2, to=2-1]
              \arrow[from=1-2, to=2-3]
              \arrow[from=3-2, to=2-1]
              \arrow[from=3-2, to=2-3]
              \arrow["\cong"{description}, from=1-2, to=3-2]
            \end{tikzcd}\]
          commute.
    \item Composition is given by pullback: given morphisms $[x \leftarrow z \rightarrow y]$ and $[w \leftarrow u \rightarrow x]$, we form a pullback
          \[\begin{tikzcd}[ampersand replacement=\&,row sep=tiny]
              \&\& \bullet \\
              \& u \&\& z \\
              w \&\& x \&\& y
              \arrow[from=2-2, to=3-1]
              \arrow[from=2-2, to=3-3]
              \arrow[from=2-4, to=3-3]
              \arrow[from=2-4, to=3-5]
              \arrow[from=1-3, to=2-2]
              \arrow[from=1-3, to=2-4]
              \arrow["\lrcorner"{anchor=center, pos=0.125, rotate=-45}, draw=none, from=1-3, to=3-3]
            \end{tikzcd}\]
          to obtain a diagram $w \leftarrow \bullet \rightarrow y$, whose isomorphism class is declared to be the composite $[x \leftarrow z \rightarrow y] \circ [w \leftarrow u \rightarrow x]$.
  \end{itemize}
\end{defn}

Since we only needed to construct pullbacks to define the category $\A{\C}$, this construction makes sense for any locally cartesian category $\C$. There are a few unsurprising facts to learn about the Lindner category. First, it is self-dual: ``flipping'' morphisms
\[[x \leftarrow z \rightarrow y] \mapsto [y \leftarrow z \rightarrow x]\]
yields an isomorphism $\A{\C}^\op \to \A{\C}$.

Next, $\A{\C}$ is essentially small whenever $\C$ is --- of course, the collection of objects of $\A{\C}$ is always in bijection with that of $\C$, and when $\C_0$ is a small skeleton of $\C$, any morphism $x \to y$ in $\A{\C}$ can be realized by a span $x \leftarrow z \rightarrow y$ in $\C$ with $z \in \C_0$, from which it follows that $\A{\C}(x,y)$ is small.

Our last unsurprising fact is that every morphism $[x \xleftarrow{f} z \xrightarrow{g} y]$ in $\A{\C}$ factors as $T_g \circ R_f$, where
\begin{align*}
  T_g & := [z \xleftarrow{\id_z} z \xrightarrow{g} y]  \\
  R_f & := [x \xleftarrow{f} z \xrightarrow{\id_z} z];
\end{align*}
in other words, we have
\[[x \xleftarrow{f} z \xrightarrow{g} y] = [z \xleftarrow{\id_z} z \xrightarrow{g} y] \circ [x \xleftarrow{f} z \xrightarrow{\id_z} z]\]
for all $f,g$.

There is another important perspective one can take on the category $\A{\C}$:

\begin{slogan}
  \label{slogan:A-is-syntax}
  The morphisms $T_g$ and $R_f$ in $\A{\C}$ syntactically model the functors $\Sigma_g$ and $f^*$ (respectively) between the slices of $\C$.
\end{slogan}

In other words, assigning to each object $x$ the slice category $\C/x$ and to each morphism $T_g \circ R_f$ the functor $\Sigma_g \circ f^*$ yields a faithful functor from $\A{\C}$ to the category of categories with isomorphism classes of functors between them. In particular, for every cartesian square
\[\begin{tikzcd}[ampersand replacement=\&]
    \bullet \& \bullet \\
    \bullet \& \bullet
    \arrow["f"', from=1-1, to=2-1]
    \arrow["g", from=1-1, to=1-2]
    \arrow["{g'}"', from=2-1, to=2-2]
    \arrow["{f'}", from=1-2, to=2-2]
    \arrow["\lrcorner"{anchor=center, pos=0.125}, draw=none, from=1-1, to=2-2]
  \end{tikzcd}\]
in $\C$, the equality $T_g \circ R_f = R_{f'} \circ T_{g'}$ is reflected by the fact that $\Sigma_g \circ f^* \cong (f')^* \circ \Sigma_{g'}$. This isomorphism of functors is sometimes known as the \emph{Beck-Chevalley isomorphism}.

As a result, facts about dependent sum and pullback translate to give facts about $\A{\C}$. With \Cref{prop:disjoint-coproducts-slice-products} in mind, we obtain:
\begin{prop}
  \label{product_in_A}
  If $\C$ is LCCDC, then $\A{\C}$ admits all finite products, given by the coproduct in $\C$. That is:
  \begin{itemize}
    \item Any initial object of $\C$ is terminal in $\A{\C}$;
    \item Given a coproduct diagram $x \xrightarrow{i_1} x \amalg y \xleftarrow{i_2} y$ in $\C$,
          \[x \xleftarrow{R_{i_1}} x \amalg y \xrightarrow{R_{i_2}} y\]
          is a product diagram in $\A{\C}$.
  \end{itemize}
\end{prop}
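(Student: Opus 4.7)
The plan is to verify both claims by reducing to \Cref{prop:disjoint-coproducts-slice-products} (for the product) and \Cref{prop:lcc-slice-over-empty} (for the terminal object), with \Cref{lem:restriction-commutes-with-sum} as the concrete computational input.

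For the terminal object claim, any morphism $z \to \varnothing$ in $\A{\C}$ is represented by a span $z \leftarrow w \to \varnothing$. \Cref{prop:lcc-slice-over-empty} forces $w \cong \varnothing$, after which both legs of the span are uniquely determined (the right leg is the unique endomorphism of an initial object, and the left leg is the unique map out of an initial object). So there is a single isomorphism class of spans $z \to \varnothing$, and $\varnothing$ is terminal in $\A{\C}$.

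For the product claim, given $\alpha = [z \xleftarrow{f_a} a \xrightarrow{g_a} x]$ and $\beta = [z \xleftarrow{f_b} b \xrightarrow{g_b} y]$, the natural candidate factorization is $\gamma := [z \xleftarrow{(f_a, f_b)} a \amalg b \xrightarrow{g_a \amalg g_b} x \amalg y]$. Computing $R_{i_1} \circ \gamma$ amounts to forming the pullback of $g_a \amalg g_b$ along $i_1$; by \Cref{lem:restriction-commutes-with-sum} this pullback is just $a$ sitting inside $a \amalg b$, so the composite span is exactly $\alpha$, and symmetrically for $\beta$. For uniqueness, suppose a second candidate $\gamma' = [z \xleftarrow{h} c \xrightarrow{k} x \amalg y]$ also projects to $\alpha$ and $\beta$. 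Viewing $c$ as an object of $\C/(x \amalg y)$ via $k$, \Cref{prop:disjoint-coproducts-slice-products} produces a canonical splitting $c \cong c_x \amalg c_y$ with $c_x := i_1^* c$ and $c_y := i_2^* c$, under which $k$ becomes $k|_{c_x} \amalg k|_{c_y}$. The hypotheses $R_{i_1} \circ \gamma' = \alpha$ and $R_{i_2} \circ \gamma' = \beta$ then say, by the definition of composition in $\A{\C}$, exactly that the spans $[z \leftarrow c_x \to x]$ and $[z \leftarrow c_y \to y]$ are isomorphic to $\alpha$ and $\beta$ respectively, and these isomorphisms assemble to an isomorphism of spans $\gamma' \cong \gamma$.

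The main obstacle I anticipate is the bookkeeping in the uniqueness step: one must check that the piecewise isomorphisms $c_x \cong a$ and $c_y \cong b$ combine into a genuine isomorphism of spans $c \cong a \amalg b$ compatible with \emph{both} the leg to $z$ and the leg to $x \amalg y$ simultaneously. This compatibility follows from the universal property of the coproduct together with the naturality of the equivalence in \Cref{prop:disjoint-coproducts-slice-products}, but it is the one place where one must resist the temptation to hand-wave.
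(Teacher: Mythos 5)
Your proof is correct and follows essentially the same path as the paper's: the terminal-object claim is handled identically via \Cref{prop:lcc-slice-over-empty}, and the product claim rests on the same decomposition principle from \Cref{prop:disjoint-coproducts-slice-products}. The one presentational difference is that the paper constructs the candidate inverse to $((R_{i_1})_*, (R_{i_2})_*)$ as a natural transformation and then defers the verification (``a direct translation of the proof of \Cref{prop:disjoint-coproducts-slice-products}''), whereas you verify the universal property of the product directly, spelling out the existence check via \Cref{lem:restriction-commutes-with-sum} and the uniqueness check via the canonical splitting $c \cong c_x \amalg c_y$. Both rely on the same ingredients and the same underlying equivalence $\C/(x \amalg y) \simeq \C/x \times \C/y$; yours makes a bit more of the bookkeeping explicit where the paper waves toward the appendix.
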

\begin{proof}
  First, let $\varnothing$ be initial in $\C$. A morphism $x \to \varnothing$ in $\A{\C}$ is given by a diagram $x \leftarrow z \rightarrow \varnothing$ in $\C$. \Cref{prop:lcc-slice-over-empty} nows tells us that this diagram is isomorphic to $x \leftarrow \varnothing \rightarrow \varnothing$, and thus is uniquely determined up to isomorphism. Thus, $\varnothing$ is terminal in $\A{\C}$.

  Next, we claim that the natural transformation
  \[\A{\C}({-},x \amalg y) \xrightarrow{((R_{i_1})_*, (R_{i_2})_*)} \A{\C}({-},x) \times \A{\C}({-},y)\]
  has inverse
  \[\A{\C}({-},x) \times \A{\C}({-},y) \xrightarrow{(T_{i_1})_* \times (T_{i_2})_*} \A{\C}({-},x \amalg y) \times \A{\C}({-},x \amalg y) \xrightarrow{+} \A{\C}({-},x \amalg y),\]
  where $+$ sends a pair of morphisms
  \[([t \leftarrow z \rightarrow x \amalg y], [t \leftarrow w \rightarrow x \amalg y])\]
  to
  \[([t \leftarrow z \amalg w \rightarrow x \amalg y]).\]
  Checking that these natural transformations are inverses is a direct translation of the proof of \Cref{prop:disjoint-coproducts-slice-products}.
\end{proof}

Since $\A{\C}$ is self-dual, these finite products are also finite coproducts, and indeed $\A{\C}$ admits all finite biproducts. We state \Cref{product_in_A} in the form above to align both with \Cref{prop:disjoint-coproducts-slice-products} and (later) with \Cref{product_in_U}.

By \Cref{prop:slice-of-lccdc-is-lccdc} and \Cref{product_in_A}, we see:

\begin{cor}
  If $\C$ is an LCCDC category, then we can also speak of $\A{\C/x}$ for any object $x \in \C$, which again admits all finite products.
\end{cor}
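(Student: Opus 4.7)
The statement is essentially a two-step application of the two propositions immediately preceding it, so my plan is to just assemble these in order. First, I would invoke \Cref{prop:slice-of-lccdc-is-lccdc} to note that, for any object $x \in \C$, the slice category $\C/x$ is itself LCCDC. This matters because the construction of the Lindner category $\A{-}$ requires at a minimum the existence of pullbacks (to define composition), so what I need to verify is that $\C/x$ has enough structure for $\A{\C/x}$ to even make sense; being LCCDC is more than sufficient.

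Once $\A{\C/x}$ is defined, the finite products claim follows immediately by applying \Cref{product_in_A} to the LCCDC category $\C/x$ in place of $\C$. Concretely, a terminal object of $\A{\C/x}$ is given by an initial object of $\C/x$, and binary products in $\A{\C/x}$ are computed from coproduct diagrams in $\C/x$ via the morphisms $R_{i_1}$ and $R_{i_2}$ --- where, by \Cref{slice-of-cocartesian-is-cocartesian}, these coproducts are inherited from $\C$. There is no real obstacle here; the only thing to be a little careful about is remembering that \Cref{product_in_A} is a general statement about LCCDC categories, not specifically about $\C$, so it applies verbatim to $\C/x$ once local LCCDC-ness has been established. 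The proof is therefore a single sentence citing the two propositions.
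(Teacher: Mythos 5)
Your proposal is correct and matches the paper's own argument exactly: the corollary is stated immediately after the observation that it follows from \Cref{prop:slice-of-lccdc-is-lccdc} and \Cref{product_in_A}, which is precisely the two-step assembly you describe. Nothing more is needed.
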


\subsubsection{Mackey Functors}

A Mackey functor is a functor $F : \A{\C} \to \Set$ satisfying two important conditions. The first is easy to state, and must be assumed in order to even interpret the second. Thus, we give functors satisfying just this first condition a name:

\begin{defn}
  A \emph{$\C$-semi-Mackey functor} is a finite-product-preserving functor $\A{\C} \to \Set$. The category of semi-Mackey functors (indexed by $\C$), denoted $\SMack{\C}$, is the full subcategory of $\Fun{\A{\C}}{\Set}$ spanned by the semi-Mackey functors.
\end{defn}

In light of \Cref{product_in_A}, a semi-Mackey functor $F$ satisfies $F(x \amalg y) \cong F(x) \times F(y)$ for all objects $x,y \in \A{\C}$, where $\amalg$ denotes the coproduct in $\C$. More specifically, if $x \xrightarrow{i_1} x \amalg y \xleftarrow{i_2} y$ is a coproduct diagram, then this isomorphism is given by $(F(R_{i_1}), F(R_{i_2})) : F(x \amalg y) \to F(x) \times F(y)$. Now, given a semi-Mackey functor $F$ and an object $x \in \A{\C}$, we can consider the morphism \[\nabla := (\id_x, \id_x) : x \amalg x \to x\] in $\C$, which yields the morphism $T_\nabla : x \amalg x \to x$ in $\A{\C}$. Then since $F$ is finite-product-preserving, we obtain a binary operation
\[F(x) \times F(x) \xrightarrow{(F(R_{i_1}), F(R_{i_2}))^{-1}} F(x \amalg x) \xrightarrow{F(T_\nabla)} F(x)\]
which we denote by $+_{F,x}$ (or simply $+$ when clear from context). It is not hard to check that this operation is associative and has an identity element, i.e.

\begin{prop}
  $+_{F,x}$ makes $F(x)$ into a commutative monoid.
\end{prop}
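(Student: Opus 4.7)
The plan is to verify the three commutative-monoid axioms — existence of a two-sided unit, associativity, and commutativity — separately, each time by producing a commuting diagram of morphisms in $\C$, applying the assignment $g \mapsto T_g$ to land in $\A{\C}$, then applying $F$ and reading off the result through the isomorphisms $F(x_1 \amalg \cdots \amalg x_n) \cong F(x_1) \times \cdots \times F(x_n)$ guaranteed by product-preservation of $F$ together with \Cref{product_in_A}. The whole argument rests on the basic dictionary: $+$ is encoded by $T_\nabla$, restriction along coprojections is encoded by the $R_{i_k}$, and any equality of codiagonal composites in $\C$ passes through to an equality of operations on $F(x)$.

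For the unit, let $\varepsilon_x : \varnothing \to x$ be the unique morphism from an initial object. Since $\varnothing$ is terminal in $\A{\C}$, $F(\varnothing)$ is a singleton; call its unique element $\star$ and set $0_x := F(T_{\varepsilon_x})(\star)$. To see that $0_x + a = a$, I would compute
\[F(T_\nabla) \circ F(T_{\varepsilon_x \amalg \id_x}) = F\bigl(T_{\nabla \circ (\varepsilon_x \amalg \id_x)}\bigr),\]
observe that in $\C$ the composite $\nabla \circ (\varepsilon_x \amalg \id_x) : \varnothing \amalg x \to x$ is the canonical isomorphism $\varnothing \amalg x \cong x$, and check that under the identification $F(\varnothing \amalg x) \cong F(\varnothing) \times F(x) \cong F(x)$ this sends $(\star, a) \mapsto a$. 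The equation $a + 0_x = a$ follows symmetrically.

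For associativity, both $(a+b)+c$ and $a+(b+c)$ are the image under $F$ of composites in $\A{\C}$ which, via functoriality of $T_{(-)}$, correspond to two bracketings of the threefold codiagonal $\nabla_3 : x \amalg x \amalg x \to x$. Since both bracketings equal $\nabla_3$ in $\C$, the two operations agree. For commutativity, the swap isomorphism $\tau : x \amalg x \to x \amalg x$ exchanges the coprojections and satisfies $\nabla \circ \tau = \nabla$; applying $F \circ T_{(-)}$ immediately yields $a + b = b + a$.

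There is no substantive mathematical obstacle: the content of the proposition is entirely that $F$, being product-preserving on $\A{\C}$, transports the ``syntactic monoid structure'' present on every object of $\A{\C}$ to a genuine monoid structure on $F(x)$. The only thing requiring care is the bookkeeping — tracking the canonical identifications $F(x^{\amalg n}) \cong F(x)^n$ and ensuring that each invocation of product-preservation of $F$ lines up with the corresponding coproduct decomposition in $\C$.
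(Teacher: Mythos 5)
Your proposal is sound, and it takes the same route the paper gestures at. Worth noting: the paper does not actually supply a proof of this proposition at all — it is preceded by the remark ``it is not hard to check that this operation is associative and has an identity element,'' and followed only by a discussion identifying the unit element and the one-line observation that it being a unit ``essentially follows from the categorical fact $\varnothing \amalg {-}$ is naturally isomorphic to the identity functor.'' So your write-up supplies detail the paper leaves implicit rather than diverging from a proof in the text.

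One step you describe as ``bookkeeping'' is genuinely the crux, and you should be explicit about how to discharge it. When you claim the composite $F(T_\nabla) \circ F(T_{\varepsilon_x \amalg \id_x}) : F(\varnothing \amalg x) \to F(x)$ computes $0_x + a$ under the product identifications, you are implicitly asserting that the square
\[
\begin{tikzcd}[ampersand replacement=\&]
F(\varnothing \amalg x) \ar[r,"\cong"] \ar[d,"F(T_{\varepsilon_x \amalg \id_x})"'] \& F(\varnothing) \times F(x) \ar[d,"F(T_{\varepsilon_x}) \times F(T_{\id_x})"] \\
F(x \amalg x) \ar[r,"\cong"] \& F(x) \times F(x)
\end{tikzcd}
\]
commutes, i.e.\ that $T$ of a coproduct of morphisms is the biproduct of the $T$'s in $\A{\C}$. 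This amounts to checking $R_{i_k} \circ T_{\varepsilon_x \amalg \id_x} = T_{\varepsilon_x} \circ R_{i_k}$ (and its mate), which in turn needs the square
\[
\begin{tikzcd}[ampersand replacement=\&]
\varnothing \ar[r] \ar[d,"\varepsilon_x"'] \& \varnothing \amalg x \ar[d,"\varepsilon_x \amalg \id_x"] \\
x \ar[r,"i_1"'] \& x \amalg x
\end{tikzcd}
\]
to be cartesian. That is exactly \Cref{lem:restriction-commutes-with-sum} specialized to $f = \varepsilon_x$, $g = \id_x$. The same lemma (and its companion \Cref{cor:fold-square-is-cartesian}) is what the paper itself invokes when it does carry out precisely this style of bookkeeping in the proof of \Cref{prop:smack-factors} in the appendix. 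The associativity and commutativity checks you sketch need analogous instances. So your proof is correct, but the honest dependency is on \Cref{lem:restriction-commutes-with-sum}, which is available earlier in the paper and which you should cite rather than folding into ``bookkeeping.'' (Your handling of the final reduction — that $T_\psi = R_{\psi^{-1}}$ for an isomorphism $\psi = \nabla \circ (\varepsilon_x \amalg \id_x) = (\varepsilon_x,\id_x)$, whose inverse is $i_2$ — is fine and requires nothing beyond the definition of morphisms in $\A{\C}$.)
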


We should make note of what this identity element is. Since $\varnothing$ (the initial object of $\C$) is terminal in $\A{\C}$, $F$ being finite-product-preserving implies that $F(\varnothing)$ is a singleton. Now given an object $x$, we take the unique morphism ${!} : \varnothing \to x$ and consider $T_! : \varnothing \to x$ in $\A{\C}$. $F(T_{!})$ is then a function from the singleton set $F(\varnothing)$ to $F(x)$. The element of $F(x)$ in the image of this function is the identity element of $(F(x),+)$. That this element actually is an identity essentially follows from the categorical fact $\varnothing \amalg {-}$ is naturally isomorphic to the identity functor $\C \to \C$.

So, each semi-Mackey functor $F : \A{\C} \to \Set$ comes with a canonical commutative monoid structure on each of its output objects, and actually even more is true -- we can fully upgrade $F$ to a functor $\A{\C} \to \CMon$.

\begin{prop}
  \label{prop:smack-factors}
  If $F : \A{\C} \to \Set$ is a semi-Mackey functor, then $F$ factors uniquely through the forgetful functor $\CMon \to \Set$. This unique factorization is given by endowing each output set $F(x)$ with the binary operation $+_{F,x}$.
\end{prop}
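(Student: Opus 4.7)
My plan is to leverage the fact that $\A{\C}$ is a semiadditive category (has finite biproducts) to both exhibit the factorization and establish its uniqueness.

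First, I would observe that $\A{\C}$ has finite biproducts: \Cref{product_in_A} gives finite products via $\amalg$ in $\C$, and by the span-flip self-duality these products are also coproducts. Consequently, each object $x \in \A{\C}$ admits a canonical internal commutative monoid structure whose addition is the codiagonal $T_{\nabla_x} : x \amalg x \to x$ and whose unit is the unique morphism $T_! : \varnothing \to x$, and moreover every morphism $\varphi : x \to y$ in $\A{\C}$ is automatically a homomorphism, in the sense that $\varphi \circ T_{\nabla_x} = T_{\nabla_y} \circ (\varphi \amalg \varphi)$ in $\A{\C}$ (where $\varphi \amalg \varphi$ denotes the biproduct of $\varphi$ with itself), together with the analogous equation for units. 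This is a classical property of semiadditive categories; a direct span-level verification would use \Cref{cor:fold-square-is-cartesian}.

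For existence, I take $\widetilde{F}(x) := (F(x), +_{F,x})$ on objects and $\widetilde{F}(\varphi) := F(\varphi)$ on morphisms. The commutative monoid axioms hold on each $(F(x), +_{F,x})$ by the preceding proposition, and that each $F(\varphi)$ is a monoid homomorphism then follows by applying the product-preserving functor $F$ to the homomorphism equation above and unwinding the natural isomorphism $F(x \amalg x) \cong F(x) \times F(x)$.

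For uniqueness, suppose $\widetilde{F}' : \A{\C} \to \CMon$ is any lift of $F$. Since $\CMon \to \Set$ reflects limits and $F$ preserves finite products, $\widetilde{F}'$ also preserves finite products. Additionally, $\widetilde{F}'(\varnothing)$ is terminal in $\CMon$, hence the zero object, so $\widetilde{F}'$ preserves zero objects; combined with product-preservation, this forces $\widetilde{F}'$ to preserve all finite biproducts. In particular, $\widetilde{F}'(T_{\nabla_x})$ is the codiagonal on $\widetilde{F}'(x)$ in $\CMon$, which equals the addition of the commutative monoid $\widetilde{F}'(x)$. Since the underlying map of $\widetilde{F}'(T_{\nabla_x})$ is $F(T_{\nabla_x})$, the addition on $\widetilde{F}'(x)$ is forced to equal $+_{F,x}$. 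The main obstacle is the span-level verification that every morphism in $\A{\C}$ is a homomorphism for the canonical internal monoid structure; while standard from the biproduct perspective, carrying it out explicitly requires a careful pullback computation using \Cref{cor:fold-square-is-cartesian}, and this is the technical heart of the proof.
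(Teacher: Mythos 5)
Your proof is correct and takes a genuinely different route from the paper's. The paper verifies that $F(\varphi)$ is a monoid homomorphism by case-splitting on the morphism type ($\varphi = T_f$ reduces to \Cref{lem:restriction-commutes-with-sum}, while $\varphi = R_g$ reduces to \Cref{cor:fold-square-is-cartesian}) and establishes uniqueness via Eckmann-Hilton; you instead exploit the semiadditivity of $\A{\C}$ directly. Two points are worth noting. First, the homomorphism equation $\varphi \circ T_{\nabla_x} = T_{\nabla_y} \circ (\varphi \amalg \varphi)$ in fact holds in \emph{any} cocartesian category — precompose with the coprojections $T_{i_1}, T_{i_2}$ and use the coproduct universal property — so no pullback computation is required at that step. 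Semiadditivity enters only to identify the coproduct morphism $\varphi \amalg \varphi$ with the product morphism $\varphi \times \varphi$, which is what the product-preserving $F$ actually needs to see; and that identification follows formally from the biproduct identities $R_{i_j} T_{i_k} = \delta_{jk}$, which the paper's \Cref{product_in_A} (monicity of $i_j$ plus disjointness of coproducts) already gives you. Second, your uniqueness argument replaces Eckmann-Hilton with the observation that any $\CMon$-lift of $F$ must preserve finite products (since $\CMon \to \Set$ creates limits) and the zero object, hence biproducts, hence codiagonals, hence the addition — both arguments work, and yours cleanly isolates the role of the semiadditive structure, a fact the paper records after \Cref{product_in_A} but does not itself lean on. So the obstacle you flag at the end is avoidable: commit to the abstract route and the gap closes in a few lines of biproduct formalism. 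If you instead insist on a span-level verification, note that pinning down $\varphi \amalg \varphi$ as $T_{f \amalg f} R_{g \amalg g}$ requires \Cref{lem:restriction-commutes-with-sum} in addition to \Cref{cor:fold-square-is-cartesian}; the paper's proof is essentially that computation, organized by case rather than by biproduct abstraction.
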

We relegate the proof to \Cref{appendix:lcc-lemmas}.

Noting that the forgetful functor $\CMon \to \Set$ preserves and reflects products, we have

\begin{cor}
  \label{cor:equivalent-characterization-semimackey}
  $\SMack{\C}$ is isomorphic to the category of finite-product-preserving functors $\A{\C} \to \CMon$ via postcomposition with the forgetful functor $\CMon \to \Set$.
\end{cor}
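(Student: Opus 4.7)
The plan is to exhibit postcomposition with $U : \CMon \to \Set$ as a functor $\Phi : \Fun{\A{\C}}{\CMon}^{\text{fpp}} \to \SMack{\C}$ (where $\text{fpp}$ denotes the full subcategory of finite-product-preserving functors) and show it is a bijection on both objects and hom-sets. The two inputs from the preceding discussion that make this almost formal are: (a) $U$ preserves products (as a right adjoint to the free functor), and (b) $U$ reflects products and is faithful and conservative, since a morphism of commutative monoids is an isomorphism iff its underlying function is a bijection, and products in $\CMon$ are constructed on underlying sets.

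\textbf{Well-definedness of $\Phi$.} If $\tilde F : \A{\C} \to \CMon$ preserves finite products, then $U \circ \tilde F$ is a finite-product-preserving functor $\A{\C} \to \Set$ by (a), hence a semi-Mackey functor. This handles objects. On morphisms, $\Phi$ sends a natural transformation $\eta : \tilde F \to \tilde G$ to $U \eta$; naturality transfers immediately.

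\textbf{Object-bijectivity.} Given $F \in \SMack{\C}$, \Cref{prop:smack-factors} produces a unique functor $\tilde F : \A{\C} \to \CMon$ with $U \circ \tilde F = F$. This is the only candidate for an $\Phi$-preimage, so I need only check that $\tilde F$ lies in the finite-product-preserving subcategory. Given a finite product diagram in $\A{\C}$, apply $\tilde F$ to get a diagram in $\CMon$; applying $U$ yields the diagram $F$ assigns, which is a product diagram in $\Set$ by hypothesis. Since $U$ reflects finite products by (b), $\tilde F$ sends the original diagram to a product in $\CMon$.

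\textbf{Morphism-bijectivity.} Faithfulness of $\Phi$ on morphisms is immediate from faithfulness of $U$. For fullness, suppose $\eta : U\tilde F \to U \tilde G$ is a natural transformation. I need each component $\eta_x : F(x) \to G(x)$ to be a homomorphism of commutative monoids, i.e. compatible with $+_{F,x}, +_{G,x}$ and with the identity elements. By construction in the discussion preceding \Cref{prop:smack-factors}, $+_{F,x}$ factors as $F(T_\nabla) \circ (F(R_{i_1}), F(R_{i_2}))^{-1}$, and similarly for $G$. Naturality of $\eta$ with respect to $R_{i_1}, R_{i_2}$, and $T_\nabla$ gives commuting squares whose composition forces the additivity identity $\eta_x(a +_F b) = \eta_x(a) +_G \eta_x(b)$; naturality with respect to $T_! : \varnothing \to x$ forces preservation of the identity element (using that $F(\varnothing)$ and $G(\varnothing)$ are singletons). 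Thus $\eta$ is the image under $U$ of a natural transformation $\tilde F \to \tilde G$ in $\Fun{\A{\C}}{\CMon}$, completing the proof.

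The argument is essentially all formal once \Cref{prop:smack-factors} is in hand; the only mild subtlety is making the conservativity/reflection properties of $U$ explicit to lift ``finite-product-preserving'' across the forgetful functor. I do not anticipate a real obstacle here.
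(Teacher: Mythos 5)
Your proof is correct and follows the same route as the paper: invoke \Cref{prop:smack-factors} for the unique lift of each semi-Mackey functor to a $\CMon$-valued functor, and use that the forgetful functor $\CMon \to \Set$ preserves and reflects finite products. The paper leaves the hom-set bijectivity (your fullness step) implicit as an exercise, whereas you spell out the naturality argument showing that any natural transformation of underlying $\Set$-valued functors automatically respects the induced monoid structure.
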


We are now ready to state the full definition of a Mackey functor:

\begin{defn}
  A \emph{$\C$-Mackey functor} is a semi-Mackey functor $F : \A{\C} \to \Set$ such that $(F(x),+)$ is an abelian group for all $x$. The category of Mackey functors (indexed by $\C$), denoted $\Mack{\C}$, is the full subcategory of $\SMack{\C}$ spanned by the Mackey functors.
\end{defn}

In other words, a semi-Mackey functor is Mackey if and only if, for all objects $x$, the binary operation $+_{F,x}$ admits inverses. \Cref{cor:equivalent-characterization-semimackey} tells us that, equivalently, $\Mack{\C}$ can be viewed as the category of finite-product-preserving functors $\A{\C} \to \Ab$.

Since $\Ab$ is a reflective and coreflective subcategory of $\CMon$, it follows that $\Mack{\C}$ is reflective and coreflective in $\SMack{\C}$. That is:

\begin{prop}
  The inclusion $\Mack{\C} \to \SMack{\C}$ admits both a left and right adjoint.
\end{prop}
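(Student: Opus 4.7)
The plan is to leverage \Cref{cor:equivalent-characterization-semimackey}, which identifies $\SMack{\C}$ with the full subcategory of $\Fun{\A{\C}}{\CMon}$ spanned by finite-product-preserving functors; the analogous identification of $\Mack{\C}$ with the finite-product-preserving functors $\A{\C} \to \Ab$ is immediate from the definition of $\Mack{\C}$. Under these identifications, the inclusion $\Mack{\C} \hookrightarrow \SMack{\C}$ is simply postcomposition with the inclusion $\iota : \Ab \hookrightarrow \CMon$. So it suffices to produce adjoints $L \dashv \iota \dashv R$ such that both $L$ and $R$ preserve finite products: then postcomposition with $L$ and $R$ will descend to well-defined functors between the full subcategories of product-preserving functors, and the pointwise adjunctions will lift formally to adjunctions on the functor categories.

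I would then recall that $\iota : \Ab \hookrightarrow \CMon$ does admit both adjoints: the left adjoint $L$ is the Grothendieck group completion, and the right adjoint $R$ sends a commutative monoid to its submonoid of invertible elements (any monoid homomorphism from an abelian group lands in the invertible part of the target, so this submonoid absorbs all such maps). The right adjoint $R$ automatically preserves all small limits, hence finite products. For $L$, a one-line Yoneda argument does the job: for any commutative monoids $M, N$ and any abelian group $A$, we have
\[\Hom_{\Ab}(L(M \times N), A) \cong \Hom_{\CMon}(M \times N, A) \cong \Hom_{\Ab}(L(M) \times L(N), A),\]
from which $L(M \times N) \cong L(M) \times L(N)$ by Yoneda.

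With these adjoints in hand, postcomposition yields functors $L_*, R_* : \SMack{\C} \to \Mack{\C}$. For $F \in \Mack{\C}$ and $G \in \SMack{\C}$, a natural transformation $L_* G \to F$ is the data of abelian group homomorphisms $L(G(x)) \to F(x)$ natural in $x$, which by $L \dashv \iota$ correspond bijectively to monoid homomorphisms $G(x) \to \iota F(x)$ natural in $x$, i.e. natural transformations $G \to \iota_* F$. The argument for $R_*$ is symmetric. The only genuinely non-formal step is the verification that $L$ preserves finite products; everything else is routine manipulation of pointwise adjunctions between full subcategories of functor categories.
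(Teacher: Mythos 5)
Your proposal is correct and, at its core, follows the same route as the paper: identify both $\SMack{\C}$ and $\Mack{\C}$ as full subcategories of a functor category via \Cref{cor:equivalent-characterization-semimackey}, observe that the group completion $L$ and units functor $R$ between $\CMon$ and $\Ab$ are both finite-product-preserving, and then argue that postcomposition formally lifts the adjunctions $L \dashv \iota \dashv R$ to the categories of (semi-)Mackey functors. The only place where you and the paper diverge is in the verification that $L$ preserves finite products. The paper notes directly that $\CMon$ and $\Ab$ both have finite biproducts, so $L$, being a left adjoint, preserves the coproduct $M \amalg N$, which coincides with the product $M \times N$ on both sides. Your ``one-line Yoneda argument'' is a repackaging of the same fact: the middle isomorphism $\Hom_{\CMon}(M \times N, A) \cong \Hom_{\Ab}(L(M) \times L(N), A)$ is asserted without justification, and unwinding it requires exactly the biproduct observation (namely that $\Hom_{\CMon}(M \times N, A) \cong \Hom_{\CMon}(M,A) \times \Hom_{\CMon}(N,A)$ because $M \times N$ is a coproduct in $\CMon$, and dually in $\Ab$). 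So the Yoneda wrapping doesn't avoid the key lemma, it just delays it. Apart from that, you are somewhat more explicit than the paper about the formal lifting of pointwise adjunctions, which the paper dismisses with ``it follows formally''; that added detail is fine.
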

\begin{proof}
  Let ${(-)}^+, {(-)}^* : \CMon \to \Ab$ denote the left and right adjoints (respectively) to the inclusion $\Ab \to \CMon$. To remind, for a commutative monoid $X$, $X^+$ is the abelian group generated by the elements of $X$ modulo the relations present in $X$, and $X^*$ is the submonoid of invertible elements in $X$.

  These functors both preserve products: ${(-)}^*$ clearly because it is a right adjoint, and ${(-)}^+$ because it is a left adjoint and $\CMon$ and $\Ab$ both have finite biproducts. Thus, postcomposition with these functors define functors $\SMack{\C} \to \Mack{\C}$. It follows formally that these functors are left and right adjoint to the inclusion $\Mack{\C} \to \SMack{\C}$.
\end{proof}

We denote the left adjoint of the inclusion $\Mack{\C} \to \SMack{\C}$ by ${(-)}^+$ and the right adjoint ${(-)}^*$.

\subsubsection{The Polynomial Category}
\label{subsection:polynomial-category}

We now produce from $\C$ a category $\U{\C}$, from which we will define the notion of $\C$-Tambara functors. When $\C = G{-}\set$, these will exactly coincide with the standard notions of $G$-Tambara functors.

\begin{defn}
  The \emph{Polynomial Category} of $\C$, denoted $\U{\C}$, is the category whose:
  \begin{itemize}
    \item Objects are the same as the objects of $\C$;
    \item Morphisms $x \to y$ are isomorphism classes of diagrams $x \leftarrow z \rightarrow w \rightarrow y$, where two diagrams $x \leftarrow z \rightarrow w \rightarrow y$ and $x \leftarrow z' \rightarrow w' \rightarrow y$ are said to be isomorphic if and only if there exist isomorphisms $z \to z'$ and $w \to w'$ making
          \[\begin{tikzcd}[ampersand replacement=\&,row sep=tiny]
              \& z \& w \\
              x \&\&\& y \\
              \& {z'} \& {w'}
              \arrow[from=1-2, to=1-3]
              \arrow[from=1-3, to=2-4]
              \arrow[from=3-2, to=2-1]
              \arrow[from=3-2, to=3-3]
              \arrow[from=3-3, to=2-4]
              \arrow[from=1-2, to=2-1]
              \arrow["\cong"{description}, from=1-2, to=3-2]
              \arrow["\cong"{description}, from=1-3, to=3-3]
            \end{tikzcd}\]
          commute.
  \end{itemize}
\end{defn}

In order to define the composition in $\U{\C}$, we will temporarily introduce an auxilliary construction of a category $\U{\C}'$. The objects of $\U{\C}'$ are the same as those of $\U{\C}$, i.e. they are the same as the objects of $\C$. Each morphism $f : x \to y$ in $\C$ will give rise to three distinguished morphisms in $\U{\C}'$, denoted by $T_f : x \to y$, $N_f : x \to y$, and $R_f : y \to x$. The category $\U{\C}'$ will be generated by these morphisms, modulo some relations which we will now describe.

Just as with $\A{\C}$, these generating morphisms are meant to syntactically encode the functors $\Sigma_f$, $f_*$, and $\Pi_f$ between slices of $\C$ which are given to us by the locally cartesian closed structure. As such, the morphisms of type $T$ and $R$ will compose exactly as in $\A{\C}$, and it suffices to explain how composition with morphisms of type $N$ works.

First of all, we set $N_a \circ N_b = N_{a \circ b}$ for any pair of composable morphisms $(a,b)$ in $\C$. Next, given any cartesian square
\[\begin{tikzcd}[ampersand replacement=\&]
    a \& b \\
    c \& d
    \arrow["{g'}", from=1-1, to=1-2]
    \arrow["{f'}"', from=1-1, to=2-1]
    \arrow["f"', from=2-1, to=2-2]
    \arrow["g", from=1-2, to=2-2]
    \arrow["\lrcorner"{anchor=center, pos=0.125}, draw=none, from=1-1, to=2-2]
  \end{tikzcd}\]
we set $R_g \circ N_f = N_{f'} \circ R_{g'}$.

Finally, we introduce a complicated composition relation. Given a pair of composable morphisms $x \xrightarrow{f} y \xrightarrow{g} z$ in $\C$, we take a dependent product along $g$ to get
\[\begin{tikzcd}[ampersand replacement=\&]
    \& \bullet \\
    y \& z
    \arrow["g"', from=2-1, to=2-2]
    \arrow["{\Pi_g f}", from=1-2, to=2-2]
  \end{tikzcd}\]
and then form a pullback along $g$ to get
\[\begin{tikzcd}[ampersand replacement=\&]
    \bullet \& \bullet \\
    y \& z
    \arrow["g"', from=2-1, to=2-2]
    \arrow["{\Pi_g f}", from=1-2, to=2-2]
    \arrow["{(\Pi_g f)^* g}", from=1-1, to=1-2]
    \arrow["{g^*\Pi_g f}"', from=1-1, to=2-1]
    \arrow["\lrcorner"{anchor=center, pos=0.125}, draw=none, from=1-1, to=2-2]
  \end{tikzcd}\]
Now the counit of the adjunction $g^* \dashv \Pi_g$ gives a morphism
\[\varepsilon^{\text{coind}}_{f} : g^* \Pi_g f \to f\]
in $\C/y$, and so in total we have a commutative diagram (called a \emph{distributor diagram}\footnote{These are also sometimes called \emph{exponential diagrams} in the literature.})
\[\begin{tikzcd}[ampersand replacement=\&]
    \& \bullet \& \bullet \\
    x \& y \& z
    \arrow["g"', from=2-2, to=2-3]
    \arrow["{\Pi_g f}", from=1-3, to=2-3]
    \arrow["{(\Pi_g f)^* g}", from=1-2, to=1-3]
    \arrow[from=1-2, to=2-2]
    \arrow["\lrcorner"{anchor=center, pos=0.125}, draw=none, from=1-2, to=2-3]
    \arrow["f"', from=2-1, to=2-2]
    \arrow["{\varepsilon^{\text{coind}}_f}"', from=1-2, to=2-1]
  \end{tikzcd}\]
in $\C$. We then declare that
\[N_g \circ T_f = T_{\Pi_g f} \circ N_{(\Pi_g f)^* g} \circ R_{\varepsilon^{\text{coind}}_f}.\]

With these generators and relations (plus the relations between $T$'s and $R$'s as in $\A{\C}$) in place, we certainly obtain some category $\U{\C}'$. Moreover, every morphism in $\U{\C}'$ can be written in the form $T_f N_g R_h$, since a composite of two such morphisms can be reduced as

\begin{align*}
  (TNR)(TNR) & \rightsquigarrow TN(TR)NR  \\
             & \rightsquigarrow T(TNR)RNR \\
             & \rightsquigarrow TNRNR     \\
             & \rightsquigarrow TN(NR)R   \\
             & \rightsquigarrow TNR.
\end{align*}

Moreover, it turns out that two parallel morphisms $T_f N_g R_h$ and $T_{f'} N_{g'} R_{h'}$ are equal if and only if the bispans
\[\bullet \xleftarrow{h} \bullet \xrightarrow{g} \bullet \xrightarrow{f} \bullet\]
and
\[\bullet \xleftarrow{h'} \bullet \xrightarrow{g'} \bullet \xrightarrow{f'} \bullet\]
are isomorphic -- see \cite[Lemma 2.15]{Gambino-Kock} for a proof.
Thus, for any objects $x,y$, we obtain a bijective correspondence between the Hom-sets $\U{\C}(x,y)$ and $\U{\C}'(x,y)$. We can then transport the composition operation from $\U{\C}'$ to $\U{\C}$, and henceforth entirely identify these categories. With this identification in place, we have that
\begin{align*}
  T_f & = [x \xleftarrow{\id_x} x \xrightarrow{\id_x} x \xrightarrow{f} y] \\
  N_f & = [x \xleftarrow{\id_x} x \xrightarrow{f} y \xrightarrow{\id_y} y] \\
  R_f & = [y \xleftarrow{f} x \xrightarrow{\id_x} x \xrightarrow{\id_x} x]
\end{align*}
and
\[T_f N_g R_h = [x \xleftarrow{h} z \xrightarrow{g} w \xrightarrow{h} y].\]

\begin{slogan}
  \label{slogan:P-is-syntax}
  The morphisms $T_h$, $N_g$, and $R_f$ in $\U{\C}$ syntactically model the functors $\Sigma_h$, $\Pi_g$, and $f^*$ (respectively) between the slices of $\C$.
\end{slogan}

For a careful account of the category $\U{\C}$, its construction, and its properties, we refer the reader to \cite{Gambino-Kock} (for general LCC categories) and \cite{Strickland} (in the case of $G{-}\set$).

As opposed to $\A{\C}$, $\U{\C}$ is typically not self-dual. However, it is still true that $\U{\C}$ is essentially small whenever $\C$ is. And, as with $\A{\C}$, the existence of finite disjoint coproducts in $\C$ induces finite products in $\U{\C}$.

\begin{prop}
  \label{product_in_U}
  If $\C$ is LCCDC, then $\U{\C}$ admits all finite products, given by the coproduct in $\C$. That is:
  \begin{itemize}
    \item Any initial object of $\C$ is terminal in $\U{\C}$;
    \item Given a coproduct diagram $x \xrightarrow{i_1} x \amalg y \xleftarrow{i_2} y$ in $\C$,
          \[x \xleftarrow{R_{i_1}} x \amalg y \xrightarrow{R_{i_2}} y\]
          is a product diagram in $\U{\C}$.
  \end{itemize}
\end{prop}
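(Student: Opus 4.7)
The plan is to mirror the proof of \Cref{product_in_A} closely, using that morphisms in $\U{\C}$ are bispans $x \leftarrow z \to w \to y$ instead of spans. The key additional ingredient beyond the Lindner case is that both the right object $w$ (over $x \amalg y$) and the middle object $z$ (over $w$) must decompose disjointly along the coproduct.

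For the terminal object, a morphism $t \to \varnothing$ is represented by a bispan $t \leftarrow z \to w \to \varnothing$; by \Cref{prop:lcc-slice-over-empty}, $w \cong \varnothing$, which in turn forces $z \cong \varnothing$, so the bispan is determined up to isomorphism.

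For the product, I would define an inverse $\psi$ to the natural map $\phi := ((R_{i_1})_*, (R_{i_2})_*)$ by sending a pair of bispans $[t \leftarrow z_1 \to w_1 \to x]$ and $[t \leftarrow z_2 \to w_2 \to y]$ to the bispan $[t \leftarrow z_1 \amalg z_2 \to w_1 \amalg w_2 \to x \amalg y]$, where the leftmost arrow is induced by the universal property of the coproduct. To verify $\phi \circ \psi = \id$, I would apply the Beck--Chevalley relations in $\U{\C}$, namely $R_g T_f = T_{f'} R_{g'}$ and $R_g N_f = N_{f'} R_{g'}$ for appropriate pullback squares, to push $R_{i_1}$ past $T_{c_1 \amalg c_2}$ and then past $N_{b_1 \amalg b_2}$; \Cref{lem:restriction-commutes-with-sum} identifies the resulting iterated pullbacks as $w_1$ and $z_1$, recovering $[t \leftarrow z_1 \to w_1 \to x]$ on the nose (and symmetrically on the $y$-component). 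For $\psi \circ \phi = \id$, the same Beck--Chevalley computation shows that $R_{i_k} \circ [t \leftarrow z \to w \to x \amalg y]$ is represented by a bispan whose middle terms $z_k$ and $w_k$ are obtained by iterated pullback along $i_k$; then \Cref{prop:disjoint-coproducts-slice-products} (applied to $w \to x \amalg y$ in $\C$, and to $z \to w$ in $\C/w$, which is LCCDC by \Cref{prop:slice-of-lccdc-is-lccdc}) yields $w \cong w_1 \amalg w_2$ and $z \cong z_1 \amalg z_2$, so applying $\psi$ reconstructs the original bispan up to canonical isomorphism.

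The main obstacle I anticipate is the bookkeeping in the iterated pullbacks and verifying that $\psi$ descends to isomorphism classes of bispans; naturality of the bijection in $t$ is automatic because $\phi$ is defined by postcomposition.
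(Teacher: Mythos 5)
Your proposal is correct and follows essentially the same approach as the paper: the paper's proof of \Cref{product_in_U} simply asserts that the argument of \Cref{product_in_A} carries over formally (since the comparison maps are built from $T$'s and $R$'s), and your argument is the faithful unpacking of that assertion, tracking the additional $N$-component via the $\U{\C}$ Beck--Chevalley relation $R_g N_f = N_{f'} R_{g'}$. One minor simplification: the detour through $\C/w$ and \Cref{prop:slice-of-lccdc-is-lccdc} is unnecessary, since the decomposition of $z \to w \cong w_1 \amalg w_2$ follows directly from \Cref{prop:disjoint-coproducts-slice-products} applied in $\C$.
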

\begin{proof}
  The proof is exactly the same as that of \Cref{product_in_A}, since the natural transformations involved are all of the form $T_*$ and $R_*$, which compose in $\U{\C}$ in exactly the same way that they do in $\A{\C}$.
\end{proof}

\subsubsection{Tambara Functors}

\begin{defn}
  A \emph{$\C$-semi-Tambara functor} is a finite-product-preserving functor $\U{\C} \to \Set$. We write $\STamb{\C}$ to denote the category of $\C$-semi-Tambara functors (which is a full subcategory of $\Fun{\U{\C}}{\Set}$).
\end{defn}

Now given a semi-Tambara functor $F : \U{\C} \to \Set$ and an object $x \in \U{\C}$, the morphism $T_\nabla : x \amalg x \to x$ in $\U{\C}$ yields a binary operation $+_{F,x}$ on $F(x)$, and the morphism $N_\nabla : x \amalg x \to x$ yields a binary operation $\cdot_{F,x}$ on $F(x)$. As before, $(F(x), +_{F,x})$ is a commutative monoid, and by the same argument so is $(F(x), \cdot_{F,x})$. Distributor diagrams are so named because the composition relation they impose in $\U{\C}$ says in particular that $\cdot_{F,x}$ distributes over $+_{F,x}$. Thus, $(F(x), +, \cdot)$ is a commutative semiring\footnote{A semiring is a triple $(A,+,\cdot)$ satisfying the same axioms as a ring, except that we do not require the existence of additive inverses. These are also sometimes known as \emph{rig}s.} for all objects $x$.

\begin{defn}
  A \emph{Tambara functor} (indexed by $\C$) is a semi-Tambara functor $F$ such that $(F(x),+,\cdot)$ is a ring for all $x$. We write $\Tamb{\C}$ to denote the category of $\C$-Tambara functors (which is a full subcategory of $\STamb{\C}$).
\end{defn}

\begin{warning}
  As opposed to (semi-)Mackey functors, Tambara functors \emph{cannot} be viewed as functors into commutative monoids, or semirings, etc. This is because, given a semi-Tambara functor $F : \U{\C} \to \Set$, the functions $F(N_f)$ will generally not respect the additive structure, and the functions $F(T_f)$ will generally not respect the multiplicative structure. Indeed, in the other direction, if $F : \U{\C} \to \CMon$ preserves products, the composition $\U{\C} \to \CMon \to \Set$ with the forgetful functor will be a semi-Tambara functor, and the Eckman-Hilton argument will show that $+$ and $\cdot$ coincide on each $F(x)$. Then by uniqueness of identity elements, we will have that $0 = 1$ in each commutative semiring $F(x)$, and thus $F(x) = 0$ for all $x$.
\end{warning}

Since the morphisms of type $T$ and $R$ in $\U{\C}$ compose exactly as in $\A{\C}$, $\A{\C}$ embeds as a wide subcategory of $\U{\C}$. More explictly, this embedding functor $e : \A{\C} \to \U{\C}$ acts as the identity on objects and acts on morphisms by
\[[x \xleftarrow{f} z \xrightarrow{g} y] \mapsto [x \xleftarrow{f} z \xrightarrow{\id} z \xrightarrow{g} y].\]
\Cref{product_in_A} and \Cref{product_in_U} tell us that $e$ sends product diagrams in $\A{\C}$ to product diagrams in $\U{\C}$; i.e. $e$ preserves products. Thus, precomposition with $e$ yields a ``forgetful functor''
\[\STamb{\C} \to \SMack{\C}\]
which we will denote by $U$. This forgetful functor also preserves the binary operation $+$, i.e. $+_{F,x}$ and $+_{U(F),x}$ are equal for all semi-Tambara functors $F$ and all objects $x$. Thus, $U$ restricts to a functor $\Tamb{\C} \to \Mack{\C}$ (which we will also denote by $U$). From this point of view, a Tambara functor is just a semi-Tambara functor whose underlying semi-Mackey functor is Mackey. In total, we obtain a pullback diagram
\[\begin{tikzcd}
    {\Tamb{\C}} && {\STamb{\C}} \\
    \\
    {\Mack{\C}} && {\SMack{\C}}
    \arrow[hook, from=1-1, to=1-3]
    \arrow["U"', from=1-1, to=3-1]
    \arrow["U", from=1-3, to=3-3]
    \arrow[hook, from=3-1, to=3-3]
  \end{tikzcd}\]

For $\C = \fet/S$, this recovers precisely the notion of motivic Tambara functors introduced by Bachmann \cite{Bachmann-GW}.

\begin{prop}
  Fix a scheme $S$, and let $\fet$ be the category of schemes with finite {\'e}tale morphisms between them. Then a $\fet/S$-Tambara functor is precisely the notion of \emph{Tambara functor} defined in \cite{Bachmann-GW} (which are also called \emph{naive motivic Tambara functors} in \cite{Bachmann-MTF}).
\end{prop}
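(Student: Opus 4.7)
The plan is to unpack both definitions and verify that they coincide object-for-object and morphism-for-morphism. On our side, a $\fet/S$-Tambara functor is a finite-product-preserving functor $\U{\fet/S} \to \Set$ whose underlying semi-Mackey functor is Mackey (i.e., each $F(x)$ has additive inverses). Bachmann's definition in \cite{Bachmann-GW} is phrased in terms of a category of bispans in $\fet/S$ together with a ring-valued hypothesis on each evaluation. So the whole content of the proposition is a comparison between the category $\U{\fet/S}$ we constructed in \Cref{subsection:polynomial-category} and the bispan category Bachmann uses, and a check that the ``ring-valued'' and ``has additive inverses'' conditions match.

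First, I would recall Bachmann's bispan category explicitly: objects are the same as those of $\fet/S$, morphisms $x \to y$ are isomorphism classes of diagrams $x \leftarrow z \to w \to y$ in $\fet/S$, and composition is defined using the fact that $\fet/S$ is LCCDC (see \Cref{exam:etale-is-LCCDC}-style reasoning applied to $\fet$). This is exactly the presentation of $\U{\C}$ given in \Cref{subsection:polynomial-category}, with composition governed by distributor diagrams coming from the locally cartesian closed structure on $\fet/S$. So I would point out that Bachmann's bispan category and $\U{\fet/S}$ are given by identical constructions: both use isomorphism classes of bispans with composition defined via pullback and dependent product in $\fet/S$. The check that the two agree is therefore a matter of matching notation rather than a substantive calculation, and can be done by citing \cite[Lemma 2.15]{Gambino-Kock} exactly as was done in the construction of $\U{\C}$.

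Second, I would match the additional condition. Bachmann requires that each evaluation $F(x)$ be a ring (not merely a semiring) with respect to the induced addition coming from $T_\nabla$. Our definition requires the same thing: a Tambara functor is a semi-Tambara functor whose underlying semi-Mackey functor is Mackey, and via \Cref{prop:smack-factors} this is precisely the condition that each $(F(x), +_{F,x})$ be an abelian group, which together with the multiplicative structure makes $F(x)$ a (commutative) ring. So the two ring-valued conditions are literally the same condition.

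The main (really only) obstacle is bookkeeping: Bachmann's paper may use slightly different notation or conventions for the direction of morphisms, for which side is ``restriction'' versus ``norm'', and for how the distributor is built. I would carefully verify that the restriction/norm/transfer morphisms $R_f$, $N_f$, $T_f$ defined here correspond to Bachmann's $f^*$, $f_\otimes$, $f_\oplus$ (or whatever his notation is), and that the distributor relation $N_g T_f = T_{\Pi_g f} N_{(\Pi_g f)^* g} R_{\varepsilon^{\text{coind}}_f}$ coincides with Bachmann's distributivity law. Once these identifications are made, the comparison is immediate, and the proposition follows.
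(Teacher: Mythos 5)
The paper does not actually prove this proposition — it is stated without proof as a definitional observation, much as Bachmann's own construction is cited elsewhere (e.g.\ in the examples following \Cref{defn:LCCDC}) rather than re-derived. So there is no ``paper proof'' to compare against; your sketch supplies the kind of verification the paper implicitly delegates to the reader.

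Your outline is the right one: check that Bachmann's bispan category for $\fet/S$ agrees with $\U{\fet/S}$, then check that his ring-valuedness hypothesis matches the Mackey condition. Two small cautions worth attending to if you were to flesh this out. First, \cite{Gambino-Kock} establishes that the ``generators-and-relations'' category $\U{\C}'$ and the ``isomorphism classes of bispans'' category $\U{\C}$ agree for an arbitrary LCC $\C$; what still needs saying is that Bachmann's construction is an instance of this, which requires confirming that $\fet/S$ is LCCDC (the paper simply cites \cite{Bachmann-GW} for that) and that Bachmann's composition law is the same dependent-product/distributor recipe. Second, you should be explicit about where finite-product-preservation lives in Bachmann's packaging: in the present paper's formalism, product-preservation is what gives each $F(x)$ a semiring structure in the first place, so the two ``ring-valued'' conditions coincide only once one has matched the product-preservation conditions as well. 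If Bachmann instead builds the additive and multiplicative structure into the target category (e.g.\ taking functors into commutative rings, or stating an additivity axiom directly), the equivalence is still straightforward but is not literally ``the same condition''; it is an equivalent condition, and one should say why.
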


\section{(Bi-)Incompleteness and Separability}
\label{section:Incompleteness}

In equivariant homotopy theory, $G$-Tambara functors arise as the $\pi_0$ of genuine $G{-}E_\infty$ ring spectra, by which we mean an algebra in genuine $G$-spectra for a so-called ``$G{-}E_\infty$'' operad. The operations encoded by such an operad give rise to the norm maps in the resulting Tambara structure on $\pi_0$, and $G{-}E_\infty$ ring spectra arise naturally in practice (they played a key role in the work of Hill, Hopkins, and Ravenel in their resolution of the Kervaire Invariant One Problem \cite{HHR}). However, it is also common to encounter equivariant ring spectra without quite as much structure: for example, a Bousfield localization of a $G{-}E_\infty$ ring spectrum always admits a homotopy-coherent multiplication (on $\pi_0$, this gives norms along codiagonals $x \amalg x \to x$), but will not always retain the structure of a $G{-}E_\infty$ algebra. So, we can ask: what are the possible collections of norm maps that an equivariant spectrum may admit? The answer is certainly not ``any collection whatsoever'', since for example the collection of norm maps which a given spectrum admits will be closed under composition.

In \cite{Blumberg-Hill-Ninfty}, Blumberg and Hill axiomatize these possible collections of norm maps and define the operads indexing these collections of operations, which they christened ``$N_\infty$ operads''. Furthermore, in combination with the following work of Bonventre-Pereira \cite{BonventrePereira}, Gutierrez-White \cite{GutierrezWhite}, and Rubin \cite{Rubin2021}, they show that the homotopy category of $N_\infty$ operads (for a fixed finite group $G$) is equivalent to a finite poset (the poset of \emph{indexing systems} for the group $G$). For a fixed $N_\infty$ operad $\mathcal{O}$, the $\pi_0$ of an $\mathcal{O}$-algebra in geniune $G$-spectra will be a Tambara functor admitting some norms but perhaps not all -- such objects were christened ``incomplete Tambara functors'' in \cite{Blumberg-Hill-Incomplete}. This unifies the study of Tambara functors and Green functors -- the maximal $N_\infty$ operad parametrizes the operations of a Tambara functor, while the minimal $N_\infty$ operad parametrizes the operations of a Green functor. Thus, general $N_\infty$ operads interpolate between these two structures.

The very same indexing systems which describe admissable collections of norms can also be viewed as describing admissable collections of transfers in a Mackey functor, and so we can also consider ``incomplete Mackey functors'', and from here we could ask what is possible if one wishes to simultaneously restrict the admissible norms \emph{and} transfers of a Tambara functor. This was first explored by Blumberg and Hill in \cite{Blumberg-Hill-BiIncomplete}, and the resulting notion of \emph{bi-incomplete Tambara functors} was introduced. These bi-incomplete Tambara functors arise, for example, as the $\pi_0$ of algebras in equivariant spectra for an $N_\infty$ operad (specifying the admissable norms), where the equivariant stable homotopy category is developed with respect to a not-necessarily-complete $G$-universe (specifying the admissable transfers).

(Bi-)incompleteness also turns out to be very important in the study of motivic Tambara functors. When motivic Tambara functors were first introduced in \cite{Bachmann-GW}, Bachmann considered only complete Tambara functors indexed by $\fet/S$, since his main objects of interest had this structure. Later, in \cite{Bachmann-MTF} and \cite{Bachmann-Hoyois}, Bachmann and Bachmann-Hoyois investigate incomplete and bi-incomplete Tambara functors indexed by $\Sm/S$, $\fet/S$, and related categories. Actually, the categories over which they index are not neccesarily LCCDC (e.g. $\Sm/S$ is not, because it does not admit dependent products along all morphisms), so it is not quite right to say (in our terminology) that Bachmann and Hoyois study bi-incomplete Tambara functors indexed over $\Sm/S$. Instead, their observation is that $\Sm/S$ does admit dependent products along finite {\'e}tale morphisms, and so a version of the polynomial category can still be constructed so as to ensure that every dependent product which needs to be computed does indeed exist, from which these sorts of Tambara functors can be defined. This exactly parallels (at a more categorical level) the idea of bi-incomplete Tambara functors, as we will see in this section. In forthcoming work, we will investigate this not-quite-LCCDC situation further.

The point of developing this theory here is to prove (in the proceeding section) a new theorem for generalized Tambara functors, e.g. naive motivic Tambara functors. Mazur \cite{Mazur} (for cyclic $p$-groups) and Hoyer \cite{Hoyer} (for arbitrary finite groups) showed that Tambara functors are the same as $G$-commutative monoids in Mackey functors, in the sense of \cite{Hill-Hopkins}. Later, Chan \cite{Chan} generalized their work to prove the conjecture of Blumberg and Hill that bi-incomplete Tambara functors are the same as $\OO$-commutative monoids in incomplete Mackey functors. Our work in the proceeding section will generalize this further to the context of bi-incomplete Tambara functors indexed over arbitrary LCCDC categories.

In this section, we generalize the theory of (bi-)incomplete Mackey and Tambara functors to the LCCDC context. We also introduce the notion of \emph{separability} (\Cref{defn:separable}), a condition on LCCDC categories which gives rise to desireable behavior in our two primary examples of interest (the categories $G{-}\set$ and $\fet/S$).

\subsection{Indexing Subcategories and Compatibility}

\begin{defn}
  \label{defn:indexing-category}
  Let $\C$ be a cocartesian and locally cartesian category. A subcategory $\OO$ of $\C$ is said to be an \emph{indexing category on $\C$} (or an \emph{indexing subcategory of $\C$}) if it is:
  \begin{enumerate}
    \item Wide, i.e. all objects of $\C$ are also objects of $\OO$;
    \item Pullback stable, i.e. for all morphisms $f$ in $\OO$ and all cartesian squares
          \[\begin{tikzcd}[ampersand replacement=\&]
              \bullet \& \bullet \\
              \bullet \& \bullet
              \arrow[from=1-1, to=1-2]
              \arrow["f", from=1-2, to=2-2]
              \arrow["{f'}"', from=1-1, to=2-1]
              \arrow[from=2-1, to=2-2]
              \arrow["\lrcorner"{anchor=center, pos=0.125}, draw=none, from=1-1, to=2-2]
            \end{tikzcd}\]
          in $\C$, $f'$ is in $\OO$;
    \item Finite-coproduct complete, i.e. the initial object of $\C$ is also initial in $\OO$, and every binary coproduct diagram of $\C$ is a binary coproduct diagram in $\OO$ (and in particular lies in $\OO$).
  \end{enumerate}
\end{defn}

\begin{defn}
  Let $\C$ be a cocartesian and locally cartesian category and let $\OO$ be an indexing category on $\C$. The category $\A{\C,\OO}$ is defined to be the wide subcategory of $\A{\C}$ containing precisely the morphisms $T_f R_g$ such that $f \in \OO$.
\end{defn}

The second condition in \Cref{defn:indexing-category} ensures that $\A{\C,\OO}$ is closed under composition; and the first condition ensures that $\A{\C,\OO}$ contains all identity morphisms of $\A{\C}$. The third condition ensures that $\A{\C,\OO}$ is finite-product complete.

\begin{prop}
  \label{prop:ACOO-fpc-in-AC}
  Let $\C$ be an LCCDC category and let $\OO$ be an indexing category on $\C$. Then $\A{\C,\OO}$ is a finite-product complete subcategory of $\A{\C}$ -- in particular, $\A{\C,\OO}$ admits all finite products and the inclusion $\A{\C,\OO} \to \A{\C}$ is finite-product-preserving.
\end{prop}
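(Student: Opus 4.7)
The plan is to derive this from \Cref{product_in_A}: since $\A{\C,\OO}$ is a subcategory of $\A{\C}$, uniqueness of the induced map from any cone is automatic, so the task reduces to verifying (a) that the specific product diagrams exhibited in \Cref{product_in_A} lie inside $\A{\C,\OO}$, and (b) that the unique map induced by any cone in $\A{\C,\OO}$ also lies in $\A{\C,\OO}$.

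For the terminal object, the unique morphism $x \to \varnothing$ in $\A{\C}$ is the span $x \xleftarrow{!} \varnothing \xrightarrow{\id_\varnothing} \varnothing$, i.e.\ $T_{\id_\varnothing} R_{!}$. Since $\OO$ is a subcategory of $\C$, it contains all identities, so this morphism lies in $\A{\C,\OO}$.

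For binary products, the projections produced by \Cref{product_in_A} are $R_{i_k} = T_{\id} R_{i_k}$, whose $T$-part is an identity and hence in $\OO$, so the projections lie in $\A{\C,\OO}$. Given a cone in $\A{\C,\OO}$ consisting of $\alpha = T_f R_g : t \to x$ and $\beta = T_{f'} R_{g'} : t \to y$ (with $f, f' \in \OO$), the proof of \Cref{product_in_A} tells us that the unique induced map $t \to x \amalg y$ is the span
\[t \xleftarrow{[g,g']} z \amalg w \xrightarrow{[i_1 f,\, i_2 f']} x \amalg y,\]
i.e.\ $T_{[i_1 f,\, i_2 f']} R_{[g, g']}$. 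To confirm this morphism lies in $\A{\C,\OO}$ we need $[i_1 f,\, i_2 f'] \in \OO$. By the finite-coproduct completeness clause of \Cref{defn:indexing-category}, the inclusions $i_1, i_2$ lie in $\OO$; hence $i_1 f, i_2 f' \in \OO$ by closure of $\OO$ under composition. The same clause says that $z \to z \amalg w \leftarrow w$ is a coproduct diagram in $\OO$, so the universal property (applied inside $\OO$) produces the copairing $[i_1 f,\, i_2 f']$ as a morphism of $\OO$, as required.

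There is no serious obstacle here; the content is really the observation that the three clauses of \Cref{defn:indexing-category} are exactly what is needed to close the product construction of \Cref{product_in_A} inside $\A{\C,\OO}$. Note that wideness plus the subcategory property handle the identities, finite-coproduct completeness handles both the coproduct inclusions and the copairing, and pullback stability plays no role in this particular result (it will of course be essential elsewhere, to ensure that $\A{\C,\OO}$ is genuinely closed under composition of spans).
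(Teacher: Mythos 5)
The paper states this proposition without a detailed proof, offering only the one-sentence remark beforehand that the three clauses of \Cref{defn:indexing-category} respectively ensure $\A{\C,\OO}$ contains identities, is closed under composition, and is finite-product complete. Your argument supplies precisely the missing details for the third claim, tracing the product construction of \Cref{product_in_A} and checking each morphism lands in $\A{\C,\OO}$; this is the same approach the paper gestures at, and it is correct. One small phrasing point: in the terminal-object step you write that $\OO$ contains all identities ``since $\OO$ is a subcategory of $\C$,'' but a non-wide subcategory only contains identities of its own objects -- wideness is the operative hypothesis here, which you do acknowledge explicitly in your closing paragraph, so this is a matter of wording rather than a gap.
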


In fact, something more general is true. Indexing subcategories of $\C$ form a (possibly large) poset $\mathcal{I}_\C$ under inclusion, and any inclusion of indexing categories yields a finite-product-complete inclusion of Lindner categories.

\begin{prop}
  \label{prop:ACOO-fpc-in-ACOO'}
  Let $\C$ be an LCCDC category and let $\OO \subseteq \OO'$ be an inclusion of indexing subcategories of $\C$. Then $\A{\C,\OO}$ is a finite-product complete subcategory of $\A{\C,\OO'}$.
\end{prop}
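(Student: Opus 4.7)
The plan is to reduce the claim to \Cref{prop:ACOO-fpc-in-AC} applied twice, since that proposition already pins down exactly how finite products in $\A{\C,\OO}$ are computed.

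First I would check that $\A{\C,\OO}$ really is a subcategory of $\A{\C,\OO'}$. Both Lindner subcategories have the same objects as $\C$, and a morphism $T_f R_g$ of $\A{\C,\OO}$ is declared to lie in the subcategory precisely when $f \in \OO$. Since $\OO \subseteq \OO'$, any such $f$ also lies in $\OO'$, so the same morphism lies in $\A{\C,\OO'}$. Identities and composition are already known to behave correctly in both.

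Next I would apply \Cref{prop:ACOO-fpc-in-AC} separately to $\OO$ and $\OO'$ to conclude that each of $\A{\C,\OO}$ and $\A{\C,\OO'}$ admits all finite products, computed by the coproduct in $\C$: a terminal object in both is any initial object $\varnothing$ of $\C$, and the product of $x,y$ is $x \amalg y$ with projections $R_{i_1}, R_{i_2}$ coming from a coproduct diagram $x \xrightarrow{i_1} x \amalg y \xleftarrow{i_2} y$ in $\C$. The morphism $R_{i_k}$ has the form $T_{\id} R_{i_k}$, and since $\OO$ is wide it contains the identities, so these projections indeed lie in $\A{\C,\OO}$ (and likewise in $\A{\C,\OO'}$). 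Similarly the unique morphism $x \to \varnothing$ lies in $\A{\C,\OO}$ by \Cref{prop:lcc-slice-over-empty}, since it is represented by $T_{\id_\varnothing} R_!$.

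Finally I would observe that the inclusion $\A{\C,\OO} \hookrightarrow \A{\C,\OO'}$ is the identity on objects and sends each $R_{i_k}$ to the morphism of the same name in $\A{\C,\OO'}$. Hence the terminal cone and the product cones exhibited inside $\A{\C,\OO}$ map to the corresponding terminal and product cones exhibited inside $\A{\C,\OO'}$, which shows that the inclusion preserves all finite products. I do not anticipate any real obstacle here: the content is entirely packaged into \Cref{prop:ACOO-fpc-in-AC}, and the main point is just to remark that the formulas describing products do not depend on which indexing subcategory we work in, only on the ambient coproduct structure of $\C$.
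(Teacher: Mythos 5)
Your proof is correct, and it matches the approach the paper intends: the key observation is that finite products in any $\A{\C,\OO''}$ are always computed by the same formula — terminal object $\varnothing$, binary products $x \amalg y$ with projections $R_{i_1}, R_{i_2}$ — regardless of the indexing subcategory $\OO''$, so any inclusion of Lindner subcategories automatically preserves these specific product cones. Your check that $R_{i_k} = T_{\id}R_{i_k}$ lies in $\A{\C,\OO}$ because $\OO$ is wide, and that $T_{\id_\varnothing}R_!$ lies there for the same reason, is the right verification. One minor presentational wrinkle: the paper declares \Cref{prop:ACOO-fpc-in-AC} to be the \emph{special case} $\OO' = \C$ of \Cref{prop:ACOO-fpc-in-ACOO'}, so as stated your reduction to \Cref{prop:ACOO-fpc-in-AC} is nominally circular in the paper's logical ordering. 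This isn't a real gap — \Cref{prop:ACOO-fpc-in-AC} is proved directly from \Cref{product_in_A} and the three axioms of an indexing category (as the paper's surrounding remarks make clear), and that direct argument is exactly what you are rehearsing — but to present this cleanly you should either phrase the argument as a direct appeal to \Cref{product_in_A} plus the definition of indexing category, or note explicitly that \Cref{prop:ACOO-fpc-in-AC} has an independent proof.
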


Clearly, for any subclass $\mathcal{S} \subseteq \mathcal{I}_\C$, we have $\bigcap \mathcal{S} \in \mathcal{I}_\C$. In particular, $\mathcal{I}_\C$ has a maximum element, namely $\C$ itself, and a minimal element $\OO^{\mathrm{triv}}$, which consists of finite coproducts of morphisms isomorphic to codiagonal maps. In other words, the morphisms in $\OO^{\mathrm{triv}}$ are precisely those of the form
\[\left(\coprod_{i=1}^{a_1} x_1\right) \amalg \dots \amalg \left(\coprod_{i=1}^{a_n} x_n\right) \xrightarrow{(f_1)_{i=1}^{a_1} \amalg \dots \amalg (f_n)_{i=1}^{a_n}} y_1 \amalg \dots \amalg y_n\]
where each $f_i : x_i \to y_i$ is an isomorphism and each $a_i$ is a natural number. \Cref{prop:ACOO-fpc-in-AC} is the special case of \Cref{prop:ACOO-fpc-in-ACOO'} for an inclusion $\OO \subseteq \C$.

\subsubsection{Incomplete Mackey Functors}

The purpose of defining these ``incomplete Lindner categories'' is to index the operations of ``incomplete Mackey functors''. So, we make the following definition.

\begin{defn}
  Let $\C$ be an LCCDC category and let $\OO$ be an indexing category on $\C$. An \emph{$(\C,\OO)$-semi-Mackey functor} is a finite-product-preserving functor $\A{\C,\OO} \to \Set$. The category of $(\C,\OO)$-semi-Mackey functors (denoted $\SMack{\C,\OO}$) is the full subcategory of $\Fun{\A{\C,\OO}}{\Set}$ spanned by the $\OO$-semi-Mackey functors.
\end{defn}

Since $\A{\C,\OO}$ is finite-product-complete in $\A{\C,\OO}$, it contains all the morphisms needed in the definition of the binary operation $+$. Thus, we can also define the notion of a $(\C,\OO)$-Mackey functor.

\begin{defn}
  Let $\C$ be an LCCDC category and let $\OO$ be an indexing category on $\C$. A \emph{$(\C,\OO)$-Mackey functor} is a $(\C,\OO)$-semi-Mackey functor $F$ such that $(F(x),+)$ is an abelian group for all objects $x$. The category of $(\C,\OO)$-Mackey functors (denoted $\Mack{\C,\OO}$) is the full subcategory of $\SMack{\C,\OO}$ spanned by the $(\C,\OO)$-Mackey functors.
\end{defn}

Exactly as before, a $(\C,\OO)$-semi-Mackey functor factors uniquely through $\CMon$, and consequently the inclusion of $(\C,\OO)$-Mackey functors into $(\C,\OO)$-semi-Mackey functors has both a left and right adjoint.

\begin{prop}
  Let $\C$ be an LCCDC category, and let $\OO$ be an indexing subcategory of $\C$. Then every $(\C,\OO)$-semi-Mackey functor $F : \A{\C,\OO} \to \Set$ factors uniquely through the forgetful functor $\CMon \to \Set$.
\end{prop}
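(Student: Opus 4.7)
The plan is to parallel the proof of \Cref{prop:smack-factors} (the special case $\OO = \C$), replacing each invocation of $\A{\C}$ with $\A{\C,\OO}$. To do this, the main task is to verify that every morphism of $\A{\C}$ used in that argument to endow $F(x)$ with its commutative monoid structure and to check functoriality actually lies in $\A{\C,\OO}$. Once this verification is in place, the commutative monoid axioms follow formally from the fact (\Cref{prop:ACOO-fpc-in-AC}) that $\A{\C,\OO}$ inherits its finite products from $\A{\C}$.

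The morphisms used to build the binary operation are $T_\nabla : x \amalg x \to x$, the coprojections $R_{i_1}, R_{i_2} : x \amalg x \to x$, and $T_! : \varnothing \to x$. Morphisms of type $R$ are in $\A{\C,\OO}$ essentially by definition (take $T_f = T_{\id}$). For the morphisms of type $T$, I need the codiagonal $\nabla$ and the map $! : \varnothing \to x$ to lie in $\OO$. Both follow from $\OO$ being wide and finite-coproduct-complete: since $x \xrightarrow{i_1} x \amalg x \xleftarrow{i_2} x$ is a binary coproduct diagram of $\OO$, the universal property in $\OO$ applied to $(\id_x, \id_x)$ produces a morphism in $\OO$ which must coincide with $\nabla$; and since $\varnothing$ is initial in $\OO$, the unique map $\varnothing \to x$ in $\C$ must already live in $\OO$. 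Hence $T_\nabla, T_! \in \A{\C,\OO}$, and the definition of $+_{F,x}$ and of the identity element go through verbatim.

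It remains to show that for every $\varphi : x \to y$ in $\A{\C,\OO}$, the function $F(\varphi)$ is a monoid homomorphism. Writing $\varphi = T_f R_g$ with $f \in \OO$, this reduces to the identity $\varphi \circ T_{\nabla_x} = T_{\nabla_y} \circ (\varphi \amalg \varphi)$ in $\A{\C,\OO}$, together with the analogous identity for $T_!$. The first identity follows from \Cref{cor:fold-square-is-cartesian}: the cartesian square relating $\nabla_x, \nabla_y, f, f \amalg f$ gives the required Beck--Chevalley-style equality $T_f R_{g}$-wise in $\A{\C}$, and the morphisms $f \amalg f$, $g \amalg g$, and $\nabla$ all remain in $\OO$ by finite-coproduct-completeness, so the identity is in fact witnessed inside $\A{\C,\OO}$. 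The identity-preservation square is even easier, since $\varnothing$ is a zero object and the requisite maps are forced by initiality and \Cref{prop:lcc-slice-over-empty}.

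Finally, uniqueness is automatic: any lift $\tilde F : \A{\C,\OO} \to \CMon$ of $F$ through the faithful forgetful functor must equip $\tilde F(x)$ with the operation represented by $\tilde F(T_\nabla)$ under the product-preservation isomorphism, and this forces $+_{\tilde F, x} = +_{F,x}$. The main obstacle in executing the plan is thus purely bookkeeping -- confirming each of the handful of auxiliary morphisms lies in the subcategory $\A{\C,\OO}$ -- rather than any new conceptual content beyond \Cref{prop:smack-factors}.
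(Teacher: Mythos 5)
Your approach is correct and is exactly what the paper leaves implicit (``Exactly as before''): replay the proof of \Cref{prop:smack-factors}, observing that every auxiliary morphism lands in $\A{\C,\OO}$. Your verification that $\nabla$ and $!$ lie in $\OO$ via finite-coproduct-completeness and that $R$-type morphisms are always in $\A{\C,\OO}$ (take $T_{\id}$) is right and is the only new content.

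One small correction: you claim that $g \amalg g$ ``remains in $\OO$ by finite-coproduct-completeness,'' but for a general morphism $T_f R_g \in \A{\C,\OO}$ only $f$ is required to lie in $\OO$; $g$ is arbitrary in $\C$, so $g \amalg g$ need not lie in $\OO$. That claim is both unjustified and unnecessary: the identity $R_g T_\nabla = T_\nabla R_{g \amalg g}$ is witnessed in $\A{\C,\OO}$ simply because both composites have $T$-component $\nabla \in \OO$, while $g$ and $g \amalg g$ only ever occur as $R$-components, which carry no membership requirement. Also, ``faithfulness of $\CMon \to \Set$'' alone doesn't give uniqueness of the lift --- two lifts could a priori disagree on the monoid structure assigned to each $F(x)$ --- so you should spell out that the Eckmann--Hilton argument of \Cref{prop:smack-factors} is what forces the lifted operation to coincide with $+_{F,x}$, and that it applies here because $T_\nabla$ has been shown to lie in $\A{\C,\OO}$.
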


\begin{prop}
  Let $\C$ be an LCCDC category, and let $\OO$ be an indexing subcategory of $\C$. Then the inclusion $\Mack{\C,\OO} \to \SMack{\C,\OO}$ has both a left and right adjoint, given by post-composition with the left and right adjoints (respectively) of the inclusion $\Ab \to \CMon$.
\end{prop}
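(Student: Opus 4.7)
The plan is to mirror the earlier proof of the analogous proposition for the non-incomplete setting (the proposition stating that $\Mack{\C} \to \SMack{\C}$ has both adjoints), using the preceding proposition about factorization through $\CMon$ as the only additional input.

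First I would invoke the preceding proposition: by \Cref{cor:equivalent-characterization-semimackey}'s analogue in the incomplete setting, $\SMack{\C,\OO}$ is isomorphic to the category of finite-product-preserving functors $\A{\C,\OO} \to \CMon$, and $\Mack{\C,\OO}$ is isomorphic to the category of finite-product-preserving functors $\A{\C,\OO} \to \Ab$, both via postcomposition with the forgetful functor $\CMon \to \Set$ (which preserves and reflects finite products). Under these identifications, the inclusion $\Mack{\C,\OO} \hookrightarrow \SMack{\C,\OO}$ corresponds to postcomposition with the inclusion $\Ab \hookrightarrow \CMon$.

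Next I would recall that this latter inclusion $\Ab \hookrightarrow \CMon$ has a left adjoint $(-)^+$ (group completion) and a right adjoint $(-)^*$ (the submonoid of invertible elements). The functor $(-)^*$ preserves all limits, in particular finite products, by virtue of being a right adjoint. The functor $(-)^+$ preserves finite products because it is a left adjoint, $\CMon$ and $\Ab$ both have finite biproducts, and finite coproducts agree with finite products in each. Therefore postcomposition with each of $(-)^+$ and $(-)^*$ carries finite-product-preserving functors $\A{\C,\OO} \to \CMon$ to finite-product-preserving functors $\A{\C,\OO} \to \Ab$, hence defines functors $\SMack{\C,\OO} \to \Mack{\C,\OO}$.

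The final step is to observe that these functors are adjoint to the inclusion. This is a purely formal consequence of the adjunctions $(-)^+ \dashv (\Ab \hookrightarrow \CMon) \dashv (-)^*$: for any diagram category $\mathcal{D}$, postcomposition with a pair of adjoint functors yields an adjoint pair on functor categories, and this restricts to the finite-product-preserving subcategories because each of the three functors on $\CMon/\Ab$ preserves finite products. No real obstacle arises; the only point meriting any care is the product-preservation of $(-)^+$, which is standard but must be noted since it is not automatic for left adjoints.
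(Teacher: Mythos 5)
Your proof is correct and takes essentially the same route as the paper. The paper states this result without a separate proof, indicating it holds ``exactly as before'' by the same argument used for the complete case ($\Mack{\C} \to \SMack{\C}$); that earlier argument is precisely what you reproduce, including the key observation that $(-)^+$ preserves finite products because $\CMon$ and $\Ab$ have finite biproducts.
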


In light of \Cref{prop:ACOO-fpc-in-ACOO'}, an inclusion of indexing subcategories also gives a forgetful functor between the corresponding categories of (semi-)Mackey functors:

\begin{prop}
  \label{prop:forget-mack-to-mack}
  Let $\C$ be an LCCDC category, and let $\OO \subseteq \OO'$ be an inclusion of indexing categories on $\C$. Then precomposition with the inclusion $\A{\C,\OO} \to \A{\C,\OO'}$ yields a forgetful functor from $(\C,\OO')$-(semi-)Mackey functors to $(\C,\OO)$-(semi-)Mackey functors, forming a commutative square
  \[\begin{tikzcd}[ampersand replacement=\&]
      {\Mack{\C,\OO'}} \& {\SMack{\C,\OO'}} \\
      {\Mack{\C,\OO}} \& {\SMack{\C,\OO}}
      \arrow[from=1-1, to=1-2]
      \arrow[from=1-1, to=2-1]
      \arrow[from=2-1, to=2-2]
      \arrow[from=1-2, to=2-2]
    \end{tikzcd}\]
\end{prop}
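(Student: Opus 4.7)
The plan is to deduce the statement from the fact, established in \Cref{prop:ACOO-fpc-in-ACOO'}, that the inclusion $\iota : \A{\C,\OO} \to \A{\C,\OO'}$ preserves finite products, together with the observation that the abelian group structure on each component of a (semi-)Mackey functor is determined entirely by codiagonals, which lie in every indexing subcategory.

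First, I would handle the semi-Mackey case. Precomposition with any finite-product-preserving functor between finite-product-complete categories preserves the property of being finite-product-preserving, so $F \mapsto F \circ \iota$ gives a well-defined functor $\SMack{\C,\OO'} \to \SMack{\C,\OO}$; call it $\iota^*$. It is functorial because precomposition is a functor on functor categories.

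Next, I would verify that $\iota^*$ restricts to Mackey functors. Given $F \in \Mack{\C,\OO'}$ and $x \in \C$, the binary operation $+_{F,x}$ is defined via the morphism $T_\nabla$ for $\nabla : x \amalg x \to x$ together with the canonical isomorphism $F(x \amalg x) \cong F(x) \times F(x)$ coming from $(R_{i_1}, R_{i_2})$. Since $\nabla \in \OO^{\mathrm{triv}} \subseteq \OO$ and $R_{i_1}, R_{i_2}$ use no nontrivial $T$-component, all the relevant morphisms lie in $\A{\C,\OO}$. Hence $+_{F \circ \iota, x}$ coincides with $+_{F,x}$, and so $(F\circ\iota)(x) = F(x)$ inherits its abelian group structure unchanged. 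Consequently $F \circ \iota \in \Mack{\C,\OO}$, so $\iota^*$ restricts to a functor $\Mack{\C,\OO'} \to \Mack{\C,\OO}$.

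Finally, commutativity of the square is essentially tautological: the horizontal arrows are both precomposition with $\iota$ (one in $\Fun{-}{\Set}$ restricted to semi-Mackey functors, the other restricted to Mackey functors), while the vertical arrows are full-subcategory inclusions. Both legs of the square therefore send a Mackey functor $F$ to the functor $F \circ \iota$ regarded as an object of $\SMack{\C,\OO}$, giving equality (not merely natural isomorphism) on the nose. The only step requiring any thought is the verification that the additive structure is preserved under restriction, which as noted is immediate from the fact that $\OO^{\mathrm{triv}}$ is contained in every indexing subcategory.
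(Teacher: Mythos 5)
Your proposal is correct and takes the same approach the paper leaves implicit: the paper states this proposition without proof, immediately after \Cref{prop:ACOO-fpc-in-ACOO'}, as a straightforward consequence of the fact that the inclusion $\A{\C,\OO}\to\A{\C,\OO'}$ is finite-product-preserving and that the additive structure is built from codiagonals and coprojections, both of which lie in every indexing subcategory. You have supplied exactly the details the paper elides.
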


\subsubsection{Compatible Pairs of Indexing Categories}

\begin{defn}
  Let $\C$ be an LCCDC category, and let $(\OO_a, \OO_m)$ be a pair of indexing subcategories of $\C$. We say that $(\OO_a, \OO_m)$ is \emph{compatible} if, for all morphisms $i : x \to y$ in $\OO_m$ and all morphisms $\alpha : a \to x$ in $\OO_a$, $\Pi_i \alpha$ lies in $\OO_a$.
\end{defn}

\begin{warning}
  Compatibility is not a symmetric notion, i.e. it is possible for one of $(\OO_a, \OO_m)$ and $(\OO_m, \OO_a)$ to be compatible but not the other.
\end{warning}

The definition of indexing category was cooked up precisely so that the morphisms in the indexing category could be used as the $T$-components of spans, yielding a nice subcategory of the entire Lindner category. Likewise, a compatible pair $(\OO_a, \OO_m)$ of indexing categories yields a nice subcategory of the polynomial category, where the morphisms in $\OO_a$ are the $T$-components of morphisms and the morphisms in $\OO_m$ are the $N$-components.

\begin{defn}
  Let $\C$ be an LCCDC category and let $\OO = (\OO_a, \OO_m)$ be a compatible pair of indexing subcategories of $\C$. $\U{\C,\OO}$ is the wide subcategory of $\U{\C}$ containing precisely the morphisms $T_f N_g R_h$ with $f \in \OO_a$ and $g \in \OO_m$.
\end{defn}

The well-definedness of the incomplete Linder category was obvious, but it is less immediately clear that the above description of $\U{\C,\OO}$ is actually well-defined. In particular, we must check that morphisms of the form $T_f N_g R_h$ with $f \in \OO_a$ and $g \in \OO_m$ are closed under composition. The only nontrivial part of this check is that a composition $N_g \circ T_f$ with $g \in \OO_m$ and $f \in \OO_a$ is still of this desired form. Recalling the discussion of distributor diagrams from \Cref{subsection:polynomial-category}, we have
\[N_g \circ T_f = T_{\Pi_g f} \circ N_{(\Pi_g f)^* g} \circ R_{\varepsilon^{\text{coind}}_f}.\]
But we know that $\Pi_g f \in \OO_a$ by the compatiblity condition, and so that $(\Pi_g f)^* g \in \OO_m$ by pullback stability, whence things are fine.

The canonical partial order on indexing categories induces a partial order on compatible pairs of indexing categories: we say $(\OO_a, \OO_m) \leq (\OO'_a, \OO'_m)$ if and only if $\OO_a \subseteq \OO'_a$ and $\OO_m \subseteq \OO'_m$. We use $\mathcal{B}_\C$ to denote the poset of compatible pairs of indexing categories.

\begin{prop}
  Let $\C$ be an LCCDC category and let $\OO \leq \OO'$ be a morphism in $\mathcal{B}_\C$. Then $\U{\C,\OO}$ is a finite-product-complete subcategory of $\U{\C,\OO'}$.
\end{prop}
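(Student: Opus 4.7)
The plan is to proceed in three stages: verify containment, identify a candidate product structure living in $\U{\C,\OO}$, and then verify the universal property in the indexed setting.

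First, containment $\U{\C,\OO} \subseteq \U{\C,\OO'}$ is immediate from the definitions together with the hypothesis $\OO \leq \OO'$: any morphism of the form $T_f N_g R_h$ with $f \in \OO_a$ and $g \in \OO_m$ automatically satisfies $f \in \OO'_a$ and $g \in \OO'_m$. Second, I would apply \Cref{product_in_U} to $\U{\C}$ to identify the candidate product structure as coproducts in $\C$: the projections are $R_{i_1}, R_{i_2}$ and the terminal object is an initial object of $\C$. Since $R_f = T_{\id} N_{\id} R_f$ and wideness of $\OO_a, \OO_m$ includes all identities, these projections lie in $\U{\C,\OO}$ (and likewise in $\U{\C,\OO'}$). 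Hence, once we know $\U{\C,\OO}$ has products, the inclusion into $\U{\C,\OO'}$ will preserve them, since both compute products via the same coproduct diagrams in $\C$.

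The main obstacle is the universal property: given a cone $(\alpha,\beta)$ on $(x,y)$ in $\U{\C,\OO}$, the unique factoring morphism $\gamma \in \U{\C}(a, x \amalg y)$ provided by \Cref{product_in_U} must be shown to lie in $\U{\C,\OO}$. I would represent $\gamma$ as a bispan $[a \leftarrow z \xrightarrow{g} w \xrightarrow{f} x \amalg y]$ and use \Cref{prop:disjoint-coproducts-slice-products} applied at $x \amalg y$ to split $w = w_1 \amalg w_2$ with components $f_i : w_i \to x$ (resp.\ $y$), and likewise $z = z_1 \amalg z_2$ with $g = g_1 \amalg g_2$. Tracing the composition with $R_{i_j}$ through the bispan calculus (using the Beck--Chevalley identities encoded by \Cref{lem:restriction-commutes-with-sum}) identifies the bispans $[a \leftarrow z_j \xrightarrow{g_j} w_j \xrightarrow{f_j} \ast]$ with $\alpha$ and $\beta$ respectively. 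The hypothesis $\alpha,\beta \in \U{\C,\OO}$ then gives $f_1, f_2 \in \OO_a$ and $g_1, g_2 \in \OO_m$, and we must deduce $f = f_1 \amalg f_2 \in \OO_a$ and $g = g_1 \amalg g_2 \in \OO_m$.

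The remaining substep, which I view as the true content of the proof, is to check that indexing subcategories are closed under binary coproducts of morphisms. This is not literally stated in \Cref{defn:indexing-category}, but it follows from the finite-coproduct-completeness clause: given $f_1 : x_1 \to y_1$ and $f_2 : x_2 \to y_2$ in $\OO_a$, the morphisms $i_1 \circ f_1$ and $i_2 \circ f_2$ into $y_1 \amalg y_2$ lie in $\OO_a$ by wideness of the inclusions and closure under composition, and the universal property of the coproduct diagram $x_1 \to x_1 \amalg x_2 \leftarrow x_2$ (which is a coproduct diagram in $\OO_a$ by hypothesis) then produces $f_1 \amalg f_2$ in $\OO_a$. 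Applying the same argument in $\OO_m$ concludes the universal-property check, and hence the proof.
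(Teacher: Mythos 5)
The paper states this proposition without proof, so there is no official argument to compare against; your proposal is a correct filling-in of the verification, and the structure (containment, candidate products from \Cref{product_in_U}, then the universal-property check) is exactly the right shape. Your identification of the "true content" is also right: the only non-formal step is that indexing subcategories are closed under binary coproducts of morphisms, and your derivation of this from the finite-coproduct-completeness clause via the universal property is sound.

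Two small points worth tightening. First, the phrase "by wideness of the inclusions" misattributes the reason the coprojections $i_1, i_2$ lie in $\OO_a$; wideness is a condition on objects. The correct citation is the finite-coproduct-completeness clause of \Cref{defn:indexing-category}, which asserts that binary coproduct diagrams of $\C$ literally lie in $\OO$. Second, when you pass from ``$\alpha, \beta \in \U{\C,\OO}$'' to ``$f_1, f_2 \in \OO_a$ and $g_1, g_2 \in \OO_m$,'' you are implicitly using that membership of a bispan component in $\OO_a$ (resp.\ $\OO_m$) is invariant under the isomorphisms that relate two representatives of the same bispan class. This requires knowing that indexing subcategories contain all isomorphisms of $\C$, which does hold (any isomorphism is a pullback of an identity along an identity, so follows from wideness together with pullback-stability), but it is worth saying. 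With these two clarifications the argument is complete.
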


Again, $\mathcal{B}_\C$ has a maximum element, namely $(\C,\C)$, and so the above proposition implies (in particular) that finite products in $\U{\C,\OO}$ are the same as finite products in $\U{\C}$. And, analogously to the complete setting, we have

\begin{prop}
  Let $\C$ be an LCCDC category and let $\OO_a$ be an indexing subcategory of $\C$. Then, for any indexing subcategory $\OO_m$ of $\C$ such that $(\OO_a, \OO_m)$ is compatible, $\A{\C,\OO_a}$ naturally embeds as a wide, finite-product-complete subcategory of $\U{\C,\OO_a,\OO_m}$, via sending a morphism $[x \xleftarrow{f} z \xrightarrow{g} y]$ to $[x \xleftarrow{f} z \xrightarrow{\id} z \xrightarrow{g} y]$.
\end{prop}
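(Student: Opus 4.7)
The plan is to show this is essentially the restriction of the already-established embedding $e : \A{\C} \to \U{\C}$, and then verify the three claims: (i) well-definedness as a functor into $\U{\C,\OO_a,\OO_m}$, (ii) wideness and faithfulness, and (iii) finite-product-completeness.

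First I would check that the assignment actually lands in $\U{\C,\OO_a,\OO_m}$. A morphism $[x \xleftarrow{f} z \xrightarrow{g} y]$ in $\A{\C,\OO_a}$ means, by definition, that $g \in \OO_a$. Its image under $e$ is $[x \xleftarrow{f} z \xrightarrow{\id_z} z \xrightarrow{g} y] = T_g \circ N_{\id_z} \circ R_f$. Since $\OO_m$ is wide, $\id_z \in \OO_m$; since $g \in \OO_a$, this bispan satisfies the membership condition for $\U{\C,\OO_a,\OO_m}$. Well-definedness on isomorphism classes, functoriality, and wideness on objects are all inherited from $e : \A{\C} \to \U{\C}$ (which was already constructed), since our functor factors as $\A{\C,\OO_a} \hookrightarrow \A{\C} \xrightarrow{e} \U{\C}$ with image contained in $\U{\C,\OO_a,\OO_m}$.

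For faithfulness, I would invoke the bispan classification already cited in the paper (Lemma 2.15 of Gambino-Kock): two bispans represent equal morphisms in $\U{\C}$ iff they are isomorphic. Two spans $[x \xleftarrow{f} z \xrightarrow{g} y]$ and $[x \xleftarrow{f'} z' \xrightarrow{g'} y]$ becoming equal in $\U{\C,\OO_a,\OO_m}$ therefore means the bispans $x \xleftarrow{f} z \xrightarrow{\id} z \xrightarrow{g} y$ and $x \xleftarrow{f'} z' \xrightarrow{\id} z' \xrightarrow{g'} y$ are isomorphic; such an isomorphism necessarily identifies the middle arrows (both identities) via a single iso $z \to z'$, which then witnesses the span isomorphism, so $f = f'$ and $g = g'$ as spans.

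Finally, for finite-product-completeness, I would appeal to \Cref{product_in_A} and \Cref{product_in_U}, which assert that both $\A{\C,\OO_a}$ and $\U{\C,\OO_a,\OO_m}$ have their finite products computed via coproducts in $\C$, with the product projections given by $R$-morphisms $R_{i_1}, R_{i_2}$ along the coprojections (these $R$'s lie in the incomplete Lindner/polynomial subcategories because every indexing category is wide, hence contains all identities and in particular all $R$-type components are allowed). Our embedding sends a terminal object $\varnothing$ to $\varnothing$ and sends the projection $R_{i_k}$ (viewed as $[x \amalg y \xleftarrow{i_k} x \xrightarrow{\id} x]$) to the corresponding polynomial morphism $[x \amalg y \xleftarrow{i_k} x \xrightarrow{\id} x \xrightarrow{\id} x]$, which is exactly $R_{i_k}$ in $\U{\C,\OO_a,\OO_m}$. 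Hence the embedding preserves the structure maps of finite products on the nose, completing the proof. The only step requiring any real thought is the faithfulness via the bispan classification; everything else is bookkeeping to verify that the membership conditions defining the incomplete subcategories are respected.
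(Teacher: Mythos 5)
Your proof is correct. The paper states this proposition without proof, so there is nothing to compare against directly; your approach is the natural one and covers everything needed: well-definedness via the $\OO_m$-wideness giving $\id_z \in \OO_m$, inheritance of functoriality and faithfulness from the already-established $e : \A{\C} \to \U{\C}$ restricted along the inclusions, and product preservation via the fact that both incomplete subcategories compute products via the same coproduct diagrams with $R$-type projections (\Cref{prop:ACOO-fpc-in-AC} and its $\U{}$ analogue).

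One small quibble: in the finite-product step you write that the $R$-type projections lie in the incomplete subcategories ``because every indexing category is wide, hence \ldots all $R$-type components are allowed.'' The definition of $\A{\C,\OO}$ (resp.\ $\U{\C,\OO}$) places a membership constraint only on the $T$-component (resp.\ the $T$- and $N$-components); the $R$-component is never restricted, so wideness is not what licenses it. What wideness \emph{does} buy you is that the morphism $R_{i_k} = T_{\id}R_{i_k}$ has $T$-component (and in the $\U{}$ case, $N$-component) an identity, which lies in $\OO_a$ (resp.\ $\OO_m$) because any wide subcategory contains all identities. The conclusion is right; the justification is just pointing at the wrong constraint. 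Also note that your separate faithfulness argument, while correct, is subsumed by the factorization through $e$: a restriction of a faithful functor to subcategories is automatically faithful, so re-running the bispan-classification argument is unnecessary.
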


For any indexing category $\OO$, the pair $(\OO, \OO^{\text{triv}})$ is compatible, because forming a dependent product along a codiagonal is the same as forming a product in a slice category -- since $\OO$ is pullback-stable and closed under composition, it follows that it is closed under dependent products along codiagonals.

It is also the case that $(\C, \OO)$ is compatible for all $\OO$ -- the compatibility condition becomes trivial. Overall, we see that $\mathcal{B}_\C$ has a minimum element $(\OO^{\text{triv}}, \OO^{\text{triv}})$ and a maximum element $(\C, \C)$.

For the remainder of this section, we will often have in play a triple $(\C, \OO_a, \OO_m)$ of an LCCDC category $\C$ and a compatible pair $(\OO_a, \OO_m)$ of indexing categories on $\C$. It will become cumbersome to continue writing out such a triple of data in full, so we now introduce a definition to simply the notation in our exposition.

\begin{defn}
  An \emph{index} is a triple $(\C, \OO_a, \OO_m)$, where $\C$ is an LCCDC category and $(\OO_a, \OO_m)$ is a compatible pair of indexing categories on $\C$. We will often abuse notation and use the same symbol (in this case, $\C$) to refer to both the index and the underlying LCCDC category. In this case we will use $\C_a$ and $\C_m$ to denote the two components of the compatible pair of indexing categories.
\end{defn}

If $\C$ is simply an LCCDC category, we may also view it as an index $(\C, \C, \C)$ (the ``fully complete index'' on $\C$). We may do this in the following work when clear from context.

\begin{defn}
  For an index $\C$, we will use $\A{\C}$ to denote the category $\A{\C,\C_a}$ and $\U{\C}$ to denote the category $\U{\C,\C_a,\C_m}$. Note that, when $\C$ is the fully complete index on an LCCDC category, $\A{\C}$ and $\U{\C}$ agree with their definitions from \Cref{section:Preliminaries}.
\end{defn}

\subsubsection{Bi-Incomplete Tambara Functors}

The point of introducing indices was to define a generalized notion of bi-incomplete Tambara functors, which we will now do.

\begin{defn}
  Let $\C$ be an index. A \emph{$\C$-semi-Tambara functor} is finite-product-preserving functor $\U{\C} \to \Set$. The category of $\C$-semi-Tambara functors (denoted $\STamb{\C}$) is the full subcategory of $\Fun{\U{\C}}{\Set}$ spanned by the $\C$-semi-Tambara functors.
\end{defn}

Since finite products in $\U{\C}$ are the same as finite products in $\U{\C}$, each $\C$-semi-Tambara functor comes equipped with a semiring structure on each of its output sets. Thus, we can define

\begin{defn}
  Let $\C$ be an index. A \emph{$\C$-Tambara functor} is a $\C$-semi-Tambara functor $F$ such that $(F(x),+)$ is an abelian group for all objects $x$. The category of $\C$-Tambara functors (denoted $\Tamb{\C}$) is the full subcategory of $\STamb{\C}$ spanned by the $\C$-Tambara functors.
\end{defn}

The rest of the basic setup falls into place exactly as expected.

\begin{prop}
  \label{prop:forget-tamb-to-mack}
  Let $\C$ be an index. Precomposition with the canonical embedding $\A{\C} \to \U{\C}$ defines a forgetful functor $U : \STamb{\C} \to \SMack{\C}$ which identifies $\Tamb{\C}$ as the full subcategory of $\STamb{\C}$ lying over $\Mack{\C}$.
\end{prop}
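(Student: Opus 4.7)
The plan is to unwind this result into two pieces: first, that precomposition with the embedding $e : \A{\C} \to \U{\C}$ actually lands in semi-Mackey functors, and second, that it restricts to a functor $\Tamb{\C} \to \Mack{\C}$ via a pullback square.

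For the first piece, I would invoke the proposition preceding this one, which states that $\A{\C}$ embeds as a wide, finite-product-complete subcategory of $\U{\C}$. In particular, $e$ preserves finite products. Thus, precomposing any finite-product-preserving functor $F : \U{\C} \to \Set$ with $e$ yields a finite-product-preserving functor $U(F) = F \circ e : \A{\C} \to \Set$, i.e. a $\C$-semi-Mackey functor. Functoriality of precomposition then promotes this to a functor $U : \STamb{\C} \to \SMack{\C}$.

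The heart of the proof, and what I expect to be the main (though still easy) step, is identifying the additive monoid structures. Given $F \in \STamb{\C}$ and $x$ an object, the additive monoid structure on $F(x)$ is defined via the morphism $T_\nabla : x \amalg x \to x$ in $\U{\C}$ together with the canonical isomorphism $F(x \amalg x) \cong F(x) \times F(x)$ coming from the product diagram $x \xleftarrow{R_{i_1}} x \amalg y \xrightarrow{R_{i_2}} y$ of \Cref{product_in_U}. Since $e$ acts as the identity on objects and sends $T_\nabla$ and $R_{i_j}$ in $\A{\C}$ to the morphisms of the same name in $\U{\C}$, and since $e$ takes the product diagram of \Cref{product_in_A} to the product diagram of \Cref{product_in_U}, the additive monoid structure on $U(F)(x) = F(x)$ computed from the $\A{\C}$-side agrees on the nose with the one computed from the $\U{\C}$-side.

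It follows immediately that $F \in \STamb{\C}$ is a $\C$-Tambara functor if and only if $(F(x), +)$ is an abelian group for every $x$, if and only if $U(F)$ is a $\C$-Mackey functor. Since $\Tamb{\C} \subseteq \STamb{\C}$ and $\Mack{\C} \subseteq \SMack{\C}$ are full subcategories, this equivalence exhibits $\Tamb{\C}$ as the pullback of $\Mack{\C} \hookrightarrow \SMack{\C} \xleftarrow{U} \STamb{\C}$, i.e. as the full subcategory of $\STamb{\C}$ lying over $\Mack{\C}$, and the restricted functor $\Tamb{\C} \to \Mack{\C}$ is obtained by the universal property of this pullback.
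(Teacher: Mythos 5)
Your proof is correct and matches the paper's implicit argument. The paper states this proposition without a formal proof, but its content is exactly what you lay out: the discussion at the end of Section 2 (for the complete index) already establishes that $e$ preserves products (via \Cref{product_in_A} and \Cref{product_in_U}), that the induced $U$ preserves the additive structure $+_{F,x}$ on the nose, and hence that a semi-Tambara functor is Tambara iff its underlying semi-Mackey functor is Mackey; the incomplete version is the same argument with $\A{\C,\OO_a}$ and $\U{\C,\OO_a,\OO_m}$ in place of $\A{\C}$ and $\U{\C}$, using the immediately preceding proposition exactly as you cite it.
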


\begin{prop}
  \label{prop:comm-cube-of-forgetfuls}
  Let $\C$ be an LCCDC category, and let $\OO \leq \OO'$ be an inclusion of pairs of compatible indexing categories on $\C$. Then precomposition with the inclusion $\U{\C,\OO} \to \U{\C,\OO'}$ yields a forgetful functor from $(\C,\OO')$-(semi-)Tambara functors to $(\C,\OO)$-(semi-)Tambara functors, forming a commutative square
  \[\begin{tikzcd}[ampersand replacement=\&]
      {\Tamb{\C,\OO'}} \& {\STamb{\C,\OO'}} \\
      {\Tamb{\C,\OO}} \& {\STamb{\C,\OO}}
      \arrow[from=1-1, to=1-2]
      \arrow[from=1-1, to=2-1]
      \arrow[from=2-1, to=2-2]
      \arrow[from=1-2, to=2-2]
    \end{tikzcd}\]
  This assembles with the forgetful functors of \Cref{prop:forget-tamb-to-mack} and the square of \Cref{prop:forget-mack-to-mack} to form a commutative cube
  \[\begin{tikzcd}[row sep={40,between origins}, column sep={60,between origins}]
      & \Tamb{\C, \OO'} \ar[rr]\ar{dd}\ar{dl} & & \STamb{\C, \OO'} \ar{dd}\ar{dl} \\
      \Tamb{\C, \OO} \ar[crossing over,rr] \ar{dd} & & \STamb{\C, \OO} \\
      & \Mack{\C, \OO'}  \ar{rr} \ar{dl} & &  \SMack{\C, \OO'} \ar{dl} \\
      \Mack{\C, \OO} \ar{rr} && \SMack{\C, \OO} \ar[from=uu,crossing over]
    \end{tikzcd}\]
\end{prop}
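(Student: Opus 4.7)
The plan is to prove this by unpacking the definition of $\U{\C,\OO}$ as a subcategory of $\U{\C,\OO'}$ (established in the immediately preceding proposition) and verifying that precomposition with the inclusion $\iota : \U{\C,\OO} \hookrightarrow \U{\C,\OO'}$ behaves correctly at each level. The entire result is essentially bookkeeping on top of that preceding claim; I would execute it in four short steps: well-definedness at the semi-Tambara level, restriction to Tambara functors, commutativity of the square, and assembly into the cube.

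For well-definedness, since $\iota$ preserves finite products, precomposition with $\iota$ sends finite-product-preserving functors $\U{\C,\OO'} \to \Set$ to finite-product-preserving functors $\U{\C,\OO} \to \Set$, yielding a functor $\iota^{*} : \STamb{\C,\OO'} \to \STamb{\C,\OO}$. For the Tambara restriction, $\iota$ acts as the identity on objects, so $(\iota^{*} F)(x) = F(x)$ as sets for any $F : \U{\C,\OO'} \to \Set$ and any object $x$. The binary operation $+_{F,x}$ is induced by $T_{\nabla_x}$, where $\nabla_x : x \amalg x \to x$ is the codiagonal. The codiagonal lies in $\OO^{\text{triv}}$ (take $n = 1$ and $f_1 = \id_x$ in the description of $\OO^{\text{triv}}$ given earlier), hence in both $\OO_a$ and $\OO'_a$; so $T_{\nabla_x}$ lies in $\U{\C,\OO}$ and is mapped by $\iota$ to the corresponding morphism in $\U{\C,\OO'}$. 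Consequently $+_{F,x} = +_{\iota^{*} F, x}$, so if $(F(x), +)$ is an abelian group then so is $((\iota^{*} F)(x), +)$, giving the required restriction $\iota^{*} : \Tamb{\C,\OO'} \to \Tamb{\C,\OO}$.

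The square in the statement commutes on the nose: both paths send a Tambara functor $F$ to $F \circ \iota$, viewed as a semi-Tambara functor. For the cube, the top face is this square, the bottom face is the analogous square for (semi-)Mackey functors from \Cref{prop:forget-mack-to-mack}, and the four vertical faces arise from the forgetful functors $U$ of \Cref{prop:forget-tamb-to-mack}, which are defined by precomposition with the canonical embeddings $e : \A{\C,\OO} \hookrightarrow \U{\C,\OO}$ and $e' : \A{\C,\OO'} \hookrightarrow \U{\C,\OO'}$. Each vertical face commutes because the square of inclusions relating $e$, $e'$, and the two $\OO$-level inclusions commutes on the nose --- this is immediate from the formula for $e$ on morphisms, together with the fact that the $\OO$-level inclusions are identity-on-Hom-sets when restricted from $\A{\C,\OO'}$ to $\A{\C,\OO}$ or from $\U{\C,\OO'}$ to $\U{\C,\OO}$. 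Precomposing with these commuting squares yields the required commuting vertical faces. The only conceptual obstacle, insofar as there is one, is keeping track of which indexing category contains which generating morphism; once one observes that every codiagonal lies in $\OO^{\text{triv}}$, everything else falls into place.
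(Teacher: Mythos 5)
Your proof is correct, and the paper in fact omits any proof of this proposition, treating it as routine bookkeeping following directly from the preceding propositions. Your verification that the codiagonal lies in every indexing category (via $\OO^{\mathrm{triv}}$, or alternatively directly from the finite-coproduct-completeness clause of \Cref{defn:indexing-category}), so that the additive structure is preserved on the nose by precomposition, is exactly the right observation to make the whole cube commute strictly.
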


We note here some special cases of interest. The complete index $(G{-}\set, G{-}\set, G{-}\set)$ yields the usual notion of $G$-Tambara functors, while $(G{-}\set, G{-}\set, \mathcal{O}^{\mathrm{triv}})$ yields $G$-Green functors. Likewise, $(\fet/S, \fet/S, \fet/S)$ yields (naive) motivic Tambara functors, and $(\fet/S, \fet/S, \mathcal{O}^{\mathrm{triv}})$ yields what we call \emph{(naive) motivic Green functors}.

\subsection{Compatibility with Slices}

As mentioned, we wish to develop this theory of bi-incomplete Tambara functors for the purpose of generalizing a theorem of Hoyer, Mazur, and Chan. This Hoyer-Mazur-Chan theorem concerns the norm functor $\mathcal{N}_H^G : \Tamb{H{-}\set} \to \Tamb{G{-}\set}$ for $H \leq G$ an inclusion of finite groups, and this norm functor is given by left Kan extension along the induction functor $H{-}\set \to G{-}\set$. At first glance, it is not clear how this should be generalized to the context of LCCDC categories -- what relationships would we have in general between two LCCDC categories like $H{-}\set$ and $G{-}\set$? However, there is an interesting observation to be made about this setup which greatly clarifies the situation.

\begin{prop}
  \label{prop:slices-of-G-set}
  Let $G$ be a finite group. Given a morphism $f : X \to G/H$ in $G{-}\set$, the fiber $f^{-1}(H)$ above the trivial coset is a sub-$H$-set of $X$. Sending $f$ to $f^{-1}(H)$ defines an equivalence of categories $G{-}\set/(G/H) \to H{-}\set$ (where we act on morphisms by restriction).
\end{prop}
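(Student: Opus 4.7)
The plan is to exhibit the candidate inverse functor explicitly as the induction construction $Y \mapsto G \times_H Y$, and then verify that the unit and counit of the proposed equivalence are isomorphisms.

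First, for the sub-$H$-set claim: if $x \in f^{-1}(H)$ and $h \in H$, then $f(hx) = hf(x) = h \cdot H = H$ (since $H$ as a coset is fixed under left multiplication by $H$), so $f^{-1}(H)$ is closed under the $H$-action inherited from $X$. For morphisms $\varphi : f \to f'$ in $G{-}\set/(G/H)$, the relation $f' \circ \varphi = f$ ensures that $\varphi$ restricts to an $H$-equivariant map $f^{-1}(H) \to (f')^{-1}(H)$, so we obtain a functor $F : G{-}\set/(G/H) \to H{-}\set$.

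Next, define $I : H{-}\set \to G{-}\set/(G/H)$ by sending $Y$ to the induced $G$-set $G \times_H Y$ (quotient of $G \times Y$ by the diagonal $H$-action $h \cdot (g,y) = (gh^{-1}, hy)$, equipped with left $G$-action on the first coordinate) together with the structure map $[g,y] \mapsto gH$, which is well-defined and $G$-equivariant. Functoriality is immediate since an $H$-map $Y \to Y'$ induces a $G$-map $G \times_H Y \to G \times_H Y'$ compatible with the projection to $G/H$.

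For the natural isomorphism $F \circ I \cong \id$: the fiber of $G \times_H Y \to G/H$ over the trivial coset consists of classes $[g,y]$ with $gH = H$, i.e. $g \in H$; using the relation $[g,y] = [e, gy]$ we see this fiber is canonically $\{[e,y] : y \in Y\}$, and $y \mapsto [e,y]$ is an $H$-equivariant bijection $Y \to (G \times_H Y)|_H$, natural in $Y$.

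For the natural isomorphism $I \circ F \cong \id$: given $f : X \to G/H$, define $\eta_f : G \times_H f^{-1}(H) \to X$ by $[g, x] \mapsto gx$. This is well-defined (since $[gh^{-1}, hx] \mapsto gh^{-1} \cdot hx = gx$), $G$-equivariant, and lies over $G/H$ because $f(gx) = gf(x) = gH$. For surjectivity, pick any $x \in X$; since $G$ acts transitively on $G/H$, there is $g \in G$ with $f(x) = gH$, and then $f(g^{-1}x) = H$, so $x = g \cdot (g^{-1}x) = \eta_f([g, g^{-1}x])$. For injectivity, if $gx = g'x'$ with $x, x' \in f^{-1}(H)$, then $gH = f(gx) = f(g'x') = g'H$, so $g' = gh$ for some $h \in H$; then $gx = ghx'$ gives $x = hx'$, hence $[g,x] = [gh, h^{-1}x] = [g', x']$. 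Naturality in $f$ is straightforward from $G$-equivariance.

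The only real obstacle is the bookkeeping for $\eta_f$; once one observes that the $H$-equivariance of $f$ forces $f^{-1}(H)$ to be $H$-stable and that the transitive $G$-action on $G/H$ lets us translate any $x \in X$ into $f^{-1}(H)$, both well-definedness and bijectivity follow essentially mechanically.
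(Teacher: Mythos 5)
Your proof is correct. The paper states \Cref{prop:slices-of-G-set} without proof, treating it as a standard fact (it is the familiar equivalence underlying the induction/restriction adjunction between $G$-sets and $H$-sets), so there is no argument in the paper to compare against. Your explicit construction of the quasi-inverse as $Y \mapsto G \times_H Y$ with structure map $[g,y] \mapsto gH$, together with the verification that $[g,x] \mapsto gx$ and $y \mapsto [e,y]$ are natural isomorphisms, is exactly the standard argument one would write down, and all the well-definedness, equivariance, and bijectivity checks you carry out are right. The one place you abbreviate is naturality of $\eta_f$ and of the unit in $Y$; both do follow immediately from $G$-equivariance (respectively $H$-equivariance) of the morphisms involved, so this is a harmless omission.
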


Now, via the equivalences $G{-}\set/(G/G) \cong G{-}\set$ and $G{-}\set/(G/H) \cong H{-}\set$, the restriction functor $G{-}\set \to H{-}\set$ corresponds to pulling back along the unique map $i : G/H \to G/G$. Thus, induction (which is left adjoint to restriction) corresponds to $\Sigma_i$. From this perspective, we see that the Hoyer-Mazur-Chan theorem is really about a single LCCDC category ($G{-}\set$) and Tambara functors indexed over its slice categories.

For this reason, if we want to prove a bi-incomplete version of the generalized Hoyer-Mazur-Chan theorem, we need to understand how indexing categories interact with slices. The news here is good -- everything works very smoothly.

\begin{defn}
  Let $\OO$ be an indexing subcategory of $\C$. For an object $x \in \C$, we use $\OO/x$ to denote the subcategory of $\C/x$ consisting of precisely those morphisms
  \[\begin{tikzcd}[column sep=tiny]
      a && b \\
      & x
      \arrow["f", from=1-1, to=1-3]
      \arrow[from=1-1, to=2-2]
      \arrow[from=1-3, to=2-2]
    \end{tikzcd}\]
  with $f \in \OO$. Note that $\OO$ is wide in $\C$, so $\OO/x$ is wide in $\C/x$: the object $a \to x$ itself need not lie in $\OO$!
\end{defn}

This does overload our notation -- the meaning of $\OO/x$ now depends on whether $\OO$ is viewed as an indexing subcategory of $\C$ or independently as a category with no relationship to $\C$. In what follows, we will take care to make sure it is clear from context which is meant.

\begin{prop}
  Let $\C$ be an LCCDC category, and let $\OO_a$ be an indexing subcategory of $\C$. Then, for each object $x \in \C$, $\OO_a/x$ is an indexing subcategory of $\C/x$. Moreover, for any indexing subcategory $\OO_m$ such that $(\OO_a, \OO_m)$ is compatible, $(\OO_a/x, \OO_m/x)$ is a compatible pair of indexing subcategories of $\C/x$.
\end{prop}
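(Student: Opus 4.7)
The plan is to verify the three defining conditions of an indexing subcategory for $\OO_a/x \subseteq \C/x$ and then verify the compatibility condition for $(\OO_a/x, \OO_m/x)$, in each case reducing the claim to the analogous property already assumed for $\OO_a$, $\OO_m$ inside $\C$ using the slogan that ``a slice of a slice is a slice'' (\Cref{slogan:slice-of-slice-is-slice}).

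First I would handle wideness and finite-coproduct completeness. Wideness of $\OO_a/x$ in $\C/x$ is immediate from the definition. For finite coproducts, \Cref{slice-of-cocartesian-is-cocartesian} tells us that colimits of finite diagrams of objects in $\C/x$ are computed on underlying objects in $\C$: the initial object of $\C/x$ is the unique morphism $\varnothing \to x$, and a binary coproduct of $(a \to x)$ and $(b \to x)$ is $(a \amalg b \to x)$ with the usual coprojections from $a$ and $b$. The initial object $\varnothing \to x$ lies in $\OO_a/x$ because $\varnothing \to a$ lies in $\OO_a$ (as $\varnothing$ is initial in $\OO_a$), and the coproduct coprojections of $\C/x$ have the underlying coprojections in $\C$, which lie in $\OO_a$ by finite-coproduct completeness of $\OO_a$ in $\C$.

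Next I would handle pullback stability. Pullbacks in $\C/x$ are created by the canonical projection $\C/x \to \C$ (this is a standard consequence of \Cref{slogan:slice-of-slice-is-slice}: limits in $\C/x$ are computed as limits of the underlying diagram in $\C$, equipped with the induced map to $x$). Therefore if $f : (a \to x) \to (b \to x)$ is a morphism of $\OO_a/x$ and $f' : (a' \to x) \to (b' \to x)$ is obtained as a pullback of $f$ along some morphism $(b' \to x) \to (b \to x)$ in $\C/x$, then on underlying morphisms in $\C$, $f'$ is a pullback of $f$. Pullback stability of $\OO_a$ in $\C$ then puts $f'$ in $\OO_a$, so $f' \in \OO_a/x$.

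Finally I would prove the compatibility claim. Let $i : (a \to x) \to (b \to x)$ be a morphism of $\OO_m/x$ and $\alpha : (c \to x) \to (a \to x)$ a morphism of $\OO_a/x$. The key observation is that, under the canonical equivalences $(\C/x)/(b \to x) \simeq \C/b$ and $(\C/x)/(a \to x) \simeq \C/a$ from \Cref{slogan:slice-of-slice-is-slice}, the functor $\Sigma_i : (\C/x)/(a \to x) \to (\C/x)/(b \to x)$ corresponds to the functor $\Sigma_i : \C/a \to \C/b$ (both are ``compose with $i$''). Because adjoints are unique up to canonical isomorphism, the corresponding right adjoints agree as well, and likewise for the further right adjoints: the dependent product $\Pi_i \alpha$ computed in $\C/x$ is canonically identified, via the slice-of-slice equivalence, with $\Pi_i \alpha$ computed in $\C$. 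By the assumed compatibility of $(\OO_a, \OO_m)$ in $\C$, this underlying object lies in $\OO_a$, and so $\Pi_i \alpha$ lies in $\OO_a/x$, as desired.

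The only delicate point is the last one: one must observe that the LCC structure is ``inherited on the nose'' by slicing, in the sense that the adjoint triples $\Sigma_i \dashv i^* \dashv \Pi_i$ of \Cref{prop:characterization-of-LCC} for a morphism $i$ in $\C/x$ are (canonically isomorphic to) the corresponding adjoint triples in $\C$ after transport across the slice-of-slice equivalence. Everything else is bookkeeping, relying only on the fact that the forgetful projection $\C/x \to \C$ creates the relevant finite limits and colimits.
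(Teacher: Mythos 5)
Your proof is correct and follows essentially the same route as the paper's: verify wideness, pullback-stability, and finite-coproduct-completeness by noting that the relevant structure in $\C/x$ (initial object, coproduct diagrams, pullbacks) is computed on underlying objects/morphisms in $\C$, and then reduce compatibility to compatibility in $\C$ via the slice-of-slice identification. The one place you are more explicit than the paper is the compatibility step, where you spell out that the adjoint triple $\Sigma_i \dashv i^* \dashv \Pi_i$ for a morphism of $\C/x$ agrees (under $(\C/x)/\alpha \cong \C/a$) with the one in $\C$ — the paper takes this for granted in the final line of its argument, so your elaboration is a welcome clarification rather than a divergence.
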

\begin{proof}
  Let $(\OO_a, \OO_m)$ be a compatible pair of indexing subcategories of $\C$. Then $\OO_a/x$ is wide (because $\OO_a$ is wide) and pullback-stable (because pullbacks in $\C/x$ are computed as in $\C$). \Cref{slice-of-cocartesian-is-cocartesian} shows that the initial object of $\C/x$ is $\varnothing$ (which is also initial in $\OO/x$), and binary coproduct diagrams in $\C/x$ are simply coproduct diagrams in $\C$ which happen to lie over $x$ -- since the binary coproduct diagrams in $\C$ all lie in $\OO_a$, the binary coproduct diagrams in $\C/x$ all lie in $\OO_a/x$. Thus, $\OO_a/x$ is an indexing subcategory of $\C/x$. We only used that $\OO_a$ is indexing, so $\OO_m/x$ is also an indexing subcategory of $\C/x$.

  For $(\OO_a/x, \OO_m/x)$ to be compatible means that, for all morphisms $i : \alpha \to \beta$ in $\OO_m/x$ and all morphisms $g : \gamma \to \alpha$ in $\OO_a/x$, $\Pi_i g$ lies in $\OO_a/x$. Diagramatically, this setup looks like
  \[\begin{tikzcd}
      \bullet & \bullet & \bullet \\
      & x
      \arrow["g", from=1-1, to=1-2]
      \arrow["\gamma"', from=1-1, to=2-2]
      \arrow["i", from=1-2, to=1-3]
      \arrow["\alpha"', from=1-2, to=2-2]
      \arrow["\beta", from=1-3, to=2-2]
    \end{tikzcd}\]
  with $g \in \OO_a$ and $i \in \OO_m$. Then compatibility of $(\OO_a, \OO_m)$ says that $\Pi_i g \in \OO_a$, as desired.
\end{proof}

In light of the above proposition, when $\C$ is an index, we obtain an index $\C/x$ for each object $x \in \C$.

\begin{prop}
  \label{prop:functor-induced-on-A}
  Let $\C_1$ and $\C_2$ be LCCDC categories, let $\OO_1$ be an indexing category on $\C_1$, and let $\OO_2$ be an indexing category on $\C_2$. Let $F : \C_1 \to \C_2$ be a functor which preserves cartesian squares, and sends morphisms in $\OO_1$ to morphisms in $\OO_2$. Then $F$ induces a functor $\A{F} : \A{\C_1, \OO_1} \to \A{\C_2, \OO_2}$ by sending a morphism $T_f R_g$ to $T_{F(f)} R_{F(g)}$. If $F$ preserves finite coproducts, then $\A{F}$ preserves finite products.
\end{prop}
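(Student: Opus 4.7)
The plan is to verify that the proposed assignment defines a functor and then deduce product preservation from coproduct preservation in $\C_1$.

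First I would define $\A{F}$ explicitly: on objects it acts as $F$ does, and on a morphism represented by a span $[x \xleftarrow{g} z \xrightarrow{f} y]$ (with $f \in \OO_1$) it returns $[F(x) \xleftarrow{F(g)} F(z) \xrightarrow{F(f)} F(y)]$. This must first be checked to be well-defined, which splits into two easy steps. For well-definedness on isomorphism classes, any functor sends isomorphisms to isomorphisms, so an isomorphism of spans witnessing equality in $\A{\C_1, \OO_1}$ is mapped to an isomorphism of spans in $\A{\C_2}$. For the target actually landing in $\A{\C_2, \OO_2}$, the assumption that $F$ sends $\OO_1$ to $\OO_2$ shows $F(f) \in \OO_2$, so $\A{F}([x \xleftarrow{g} z \xrightarrow{f} y]) = T_{F(f)} R_{F(g)}$ is a legitimate morphism of $\A{\C_2, \OO_2}$.

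Next I would verify functoriality. Identities are preserved because $F(\id_x) = \id_{F(x)}$. For composition, the composite of two spans is formed by a pullback square in $\C_1$; the crucial hypothesis that $F$ preserves cartesian squares guarantees that applying $F$ to this pullback yields a pullback in $\C_2$, which is exactly the square used to compute the composite of the image spans. Thus $\A{F}$ respects composition. Moreover, pullback stability of $\OO_2$ plus the hypothesis that $F$ sends $\OO_1$ to $\OO_2$ ensures that the composite still has its $T$-component in $\OO_2$, so this all takes place inside $\A{\C_2, \OO_2}$.

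Finally, for preservation of finite products under the extra assumption that $F$ preserves finite coproducts: by \Cref{product_in_A}, finite products in $\A{\C_i, \OO_i}$ are computed by finite coproducts in $\C_i$, with the projections being morphisms of the form $R_{i_k}$ for the coproduct inclusions $i_k$. Applied to the terminal object, $F$ sends an initial object of $\C_1$ to an initial object of $\C_2$ (empty coproduct), hence terminal in $\A{\C_2, \OO_2}$. Applied to a binary coproduct diagram $x \xrightarrow{i_1} x \amalg y \xleftarrow{i_2} y$, $F$ produces a binary coproduct diagram $F(x) \xrightarrow{F(i_1)} F(x \amalg y) \xleftarrow{F(i_2)} F(y)$ in $\C_2$, which by the same proposition is a binary product diagram in $\A{\C_2, \OO_2}$ after applying $R_{(-)}$; but $\A{F}(R_{i_k}) = R_{F(i_k)}$, so $\A{F}$ sends the product diagram in $\A{\C_1, \OO_1}$ to a product diagram in $\A{\C_2, \OO_2}$.

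I do not expect a serious obstacle; the only subtle point is that ``preserves cartesian squares'' is used in exactly the right spot to make composition well-defined, and that pullback-stability of $\OO_2$ is what keeps composites of $\OO_2$-spans in $\OO_2$. The rest is bookkeeping.
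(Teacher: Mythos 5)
The paper states this proposition without proof, so there is no official argument to compare against; your proposal supplies exactly the argument that the author leaves implicit, and it is correct. Well-definedness on isomorphism classes, the use of ``preserves cartesian squares'' to make composition functorial, the stability observation keeping the $T$-components in $\OO_2$, and the reduction of product-preservation to coproduct-preservation via \Cref{product_in_A} are all the right ingredients deployed in the right places, so there is nothing to fix.
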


\begin{cor}
  \label{cor:pi-induced-on-A}
  Let $\C$ be an index. For any morphism $f : x \to y$ in $\C_a$, $\Sigma_f$ induces a finite-product-preserving functor between $\A{\C/x} \to \A{\C/y}$. For any morphism $i : x \to y$ in $\C_m$, $\Pi_i$ induces a functor $\A{\C/x} \to \A{\C/y}$.
\end{cor}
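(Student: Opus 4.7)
The plan is to deduce both parts from \Cref{prop:functor-induced-on-A} by taking $\C_1 = \C/x$, $\C_2 = \C/y$, $\OO_1 = \C_a/x$, $\OO_2 = \C_a/y$ (all of which are indexing data by the immediately preceding proposition), and applying it to $F = \Sigma_f$ and $F = \Pi_i$ in turn.

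For $\Sigma_f$, the hypotheses of \Cref{prop:functor-induced-on-A} are bookkeeping. Cartesian squares in $\C/x$ are cartesian squares in $\C$ that happen to land over $x$, and $\Sigma_f$ only reroutes the structure maps via postcomposition with $f$; thus $\Sigma_f$ preserves cartesian squares, and also trivially preserves the property ``underlying morphism in $\C$ lies in $\C_a$'', so it sends $\C_a/x$-morphisms to $\C_a/y$-morphisms. Moreover, $\Sigma_f$ is left adjoint to $f^*$ by \Cref{prop:characterization-of-LCC}, so it preserves finite coproducts, yielding the finite-product-preserving conclusion.

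For $\Pi_i$, preservation of cartesian squares is automatic since $\Pi_i$ is a right adjoint. The substantive check is that $\Pi_i$ sends $\C_a/x$-morphisms to $\C_a/y$-morphisms, and this is where compatibility of $(\C_a, \C_m)$ enters. Given a morphism $g : a \to b$ in $\C_a$ lying over $\beta : b \to x$, I plan to view $g$ as an object of $\C/b$ and to identify the induced functor $(\C/x)/\beta \to (\C/y)/\Pi_i\beta$, under the canonical equivalences $(\C/x)/\beta \cong \C/b$ and $(\C/y)/\Pi_i\beta \cong \C/\Pi_i b$, with the composite
\[\C/b \xrightarrow{(\varepsilon_\beta^{\text{coind}})^*} \C/(i^* \Pi_i b) \xrightarrow{\Pi_{(\Pi_i \beta)^* i}} \C/\Pi_i b,\]
where the first functor is pullback along the counit of $i^* \dashv \Pi_i$ and the second is dependent product along the pullback of $i$ (this factorization is a standard piece of polynomial-functor calculus, cf.\ \cite{Gambino-Kock}). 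With this formula in hand, pullback-stability of $\C_a$ preserves $g \in \C_a$ under the first functor, pullback-stability of $\C_m$ gives $(\Pi_i \beta)^* i \in \C_m$, and compatibility of $(\C_a, \C_m)$ then ensures that applying $\Pi_{(\Pi_i \beta)^* i}$ yields a $\C_a$-morphism, i.e.\ $\Pi_i g \in \C_a$, as required. The main obstacle is justifying the distributor-style factorization above; once that is in place, the conclusion is immediate.
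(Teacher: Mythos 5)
Your plan is correct and follows the same top-level strategy as the paper's proof, namely to invoke \Cref{prop:functor-induced-on-A}. For $\Sigma_f$ your argument matches the paper's almost verbatim (the paper cites \Cref{prop:induction-is-cartesian} for preservation of cartesian squares; your direct ``postcomposition reroutes structure maps'' reasoning is an equally valid substitute). For $\Pi_i$, the paper simply asserts that $\Pi_i$ sends $\C_a/x$-morphisms to $\C_a/y$-morphisms ``by the compatibility condition,'' whereas you correctly notice that compatibility as literally stated governs $\Pi_i$ applied to \emph{objects} of $\C/x$, not to morphisms of $\C/x$, so some unwinding is required. Your distributor-style factorization of $\Pi_i/\beta$, under the slice-of-slice identifications $(\C/x)/\beta \cong \C/b$ and $(\C/y)/\Pi_i\beta \cong \C/\operatorname{dom}(\Pi_i\beta)$, as $\Pi_{(\Pi_i\beta)^*i}\circ(\varepsilon^{\mathrm{coind}}_\beta)^*$ is exactly right; if you prefer not to treat Gambino--Kock as a black box, you can derive it by the same adjoint-transposition argument as \Cref{lem:slice-adjunction} (applied on the other side of the adjunction): the left adjoint of $\Pi_i/\beta$ is $\Sigma_{\varepsilon^{\mathrm{coind}}_\beta}\circ(i^*/\Pi_i\beta)$, which under slice-of-slice becomes $\Sigma_{\varepsilon^{\mathrm{coind}}_\beta}\circ\bigl((\Pi_i\beta)^*i\bigr)^*$, whose right adjoint is precisely your composite. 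Given the factorization, your appeal to pullback-stability of $\C_a$ and $\C_m$ followed by compatibility is the correct deduction. One cosmetic slip: you write $\C/(i^*\Pi_ib)$ where you mean $\C/\operatorname{dom}(i^*\Pi_i\beta)$, conflating an object of $\C/x$ with its underlying object of $\C$.
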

\begin{proof}
  $\Sigma_f$ preserves cartesian squares by \Cref{prop:induction-is-cartesian}, and sends morphisms in $\C_a/x$ to morphisms in $\C_a/y$ since it acts as the identity on morphisms. Additionally, $\Sigma_f$ preserves finite coproducts because it is a left adjoint.

  $\Pi_i$ preserves cartesian squares because it is a right adjoint, and sends morphisms in $\C_a/x$ to morphisms in $\C_a/y$ by the compatibility condition.
\end{proof}

\begin{prop}
  \label{prop:functor-induced-on-P}
  Let $\C_1$ and $\C_2$ be indices. Let $F : \C_1 \to \C_2$ be a functor which preserves cartesian squares, distributor diagrams, and sends morphisms in $(\C_1)_a$ (resp. $(\C_1)_m$) to morphisms in $(\C_2)_a$ (resp. $(\C_2)_m$). Then $F$ induces a functor $\U{F} : \U{\C_1} \to \U{\C_2}$ by sending a morphism $T_f N_g R_h$ to $T_{F(f)} N_{F(g)} R_{F(f)}$. If $F$ preserves finite coproducts, then $\U{F}$ preserves finite products.
\end{prop}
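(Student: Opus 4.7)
The plan is to build $\U{F}$ by leveraging the generators-and-relations presentation $\U{\C_1}'$ introduced in \Cref{subsection:polynomial-category} and then transport across the identification $\U{\C_1}' = \U{\C_1}$. The finite-product claim then reduces via \Cref{product_in_U} to the assumption that $F$ preserves finite coproducts.

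First I would define the candidate functor on objects by $x \mapsto F(x)$ and on the three families of generating morphisms of $\U{\C_1}'$ by $T_f \mapsto T_{F(f)}$, $N_g \mapsto N_{F(g)}$, $R_h \mapsto R_{F(h)}$. The hypotheses that $F$ carries $(\C_1)_a$ into $(\C_2)_a$ and $(\C_1)_m$ into $(\C_2)_m$ are exactly what is required to ensure these images land in the subcategory $\U{\C_2} \subseteq \U{\C_2, \C_2, \C_2}$.

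Next I would verify the defining relations of $\U{\C_1}'$. The relations $T_a T_b = T_{a \circ b}$, $N_a N_b = N_{a \circ b}$, and $R_b R_a = R_{a \circ b}$ follow from $F$ being a functor. The Beck--Chevalley-type relations $R_g T_f = T_{f'} R_{g'}$ and $R_g N_f = N_{f'} R_{g'}$ (both indexed by cartesian squares) are preserved because $F$ sends cartesian squares to cartesian squares, so the image of such a relation is the corresponding relation in $\U{\C_2}'$. The distributor relation $N_g T_f = T_{\Pi_g f} \circ N_{(\Pi_g f)^* g} \circ R_{\varepsilon^{\text{coind}}_f}$ is preserved precisely because $F$ is assumed to send distributor diagrams to distributor diagrams: the image diagram exhibits $F(\Pi_g f)$ and $F((\Pi_g f)^* g)$ as playing the roles of $\Pi_{F(g)} F(f)$ and $(\Pi_{F(g)} F(f))^* F(g)$ up to the canonical isomorphism, so the image of the relation becomes the corresponding relation in $\U{\C_2}'$. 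With all generating relations accounted for, we obtain a well-defined functor $\U{\C_1}' \to \U{\C_2}'$, which transports to the desired $\U{F} : \U{\C_1} \to \U{\C_2}$.

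Finally, for the product-preservation claim, I would apply \Cref{product_in_U} on both sides. A product diagram in $\U{\C_1}$ is of the form $x \xleftarrow{R_{i_1}} x \amalg y \xrightarrow{R_{i_2}} y$ coming from a coproduct in $\C_1$, and the terminal object is an initial object of $\C_1$. Applying $\U{F}$ yields $F(x) \xleftarrow{R_{F(i_1)}} F(x \amalg y) \xrightarrow{R_{F(i_2)}} F(y)$; since $F$ preserves finite coproducts, the maps $F(i_1), F(i_2)$ form a coproduct in $\C_2$, and \Cref{product_in_U} then identifies the image as a product diagram in $\U{\C_2}$. Likewise, $F$ sends an initial object to an initial object (empty coproduct), giving a terminal object in $\U{\C_2}$. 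The whole argument is essentially bookkeeping: each of the three kinds of relation is preserved for a different one of the three hypotheses on $F$, and no conceptual obstacle arises — the hypothesis that $F$ preserves distributor diagrams was engineered for exactly this purpose.
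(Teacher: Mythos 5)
The paper states this proposition without proof, so there is no author argument to compare against; your proposal supplies a proof, and the approach is the natural one implicitly envisioned by how the hypotheses on $F$ are tailored. Your structure is sound: define $\U{F}$ on the generators $T$, $N$, $R$ via $F$, check that each family of defining relations of the polynomial category is preserved, and observe that each of the three hypotheses on $F$ handles one family. A few small remarks. First, you should work directly with a presentation of the incomplete polynomial category $\U{\C_1} = \U{\C_1, (\C_1)_a, (\C_1)_m}$ rather than passing through the presentation of the \emph{full} polynomial category $\U{\C_1}'$ and restricting, since the hypothesis ``$F$ preserves distributor diagrams'' is (as the paper defines it) only a statement about equations holding in the incomplete category; fortunately the relations for the incomplete category are the same list, just with the appropriate side conditions $f \in (\C_1)_a$, $g \in (\C_1)_m$. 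Second, your discussion of the distributor relation is a little more elaborate than necessary: the paper's definition of ``preserves distributor diagrams'' is stated precisely as ``$N_f T_g = T_a N_b R_c$ in $\U{\C_1}$ implies $N_{F(f)} T_{F(g)} = T_{F(a)} N_{F(b)} R_{F(c)}$ in $\U{\C_2}$,'' so this relation check is immediate by fiat rather than requiring an argument about canonical isomorphisms of dependent products. Third, you should also note that $F$ preserving cartesian squares is needed not just for the $R_g T_f$ and $R_g N_f$ Beck--Chevalley relations but also to ensure that isomorphic bispans go to equal morphisms, i.e.\ that the assignment is well defined on the equivalence classes $[x \leftarrow z \to w \to y]$; this is automatic once the assignment is defined via the generators-and-relations presentation, so your framing handles it, but it is worth saying. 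The product-preservation step via \Cref{product_in_U} and preservation of finite coproducts (including the empty one) is correct. (Incidentally, the proposition as printed has a typo, $R_{F(f)}$ where $R_{F(h)}$ is meant; your proof correctly uses $R_{F(h)}$.)
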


Here, ``preserves distributor diagrams'' means that, given $N_f T_g = T_a N_b R_c$ in $\U{\C_1}$, we also have $N_{F(f)} T_{F(g)} = T_{F(a)} N_{F(b)} R_{F(c)}$ in $\U{\C_2}$.

\begin{cor}
  \label{cor:sigma-induced-on-P}
  Let $\C$ be an index. For any morphism $i : x \to y$ in $\OO_m$, $\Sigma_i$ induces a finite-product-preserving functor $\U{\C/x} \to \U{\C/y}$.
\end{cor}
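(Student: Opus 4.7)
The plan is to deduce the corollary directly from \Cref{prop:functor-induced-on-P}, applied to the functor $\Sigma_i : \C/x \to \C/y$. Viewing $\C/x$ and $\C/y$ as indices via the compatible pairs $(\C_a/x, \C_m/x)$ and $(\C_a/y, \C_m/y)$ (as supplied by the preceding proposition), the task reduces to checking that $\Sigma_i$ (i) preserves cartesian squares, (ii) preserves distributor diagrams, (iii) sends morphisms in $\C_a/x$ (resp.\ $\C_m/x$) to morphisms in $\C_a/y$ (resp.\ $\C_m/y$), and (iv) preserves finite coproducts.

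Items (iii) and (iv) are immediate. For (iii), $\Sigma_i$ acts as the identity on underlying morphisms of $\C$---it merely re-interprets an object over $x$ as an object over $y$ by post-composition with $i$---so membership in $\C_a$ or $\C_m$ is preserved on the nose. For (iv), $\Sigma_i$ is a left adjoint to $i^*$ by the locally cartesian closed structure, so it preserves all colimits.

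The substantive content is (i) and (ii), both of which reduce to the slogan that \emph{a slice of a slice is a slice} (\Cref{slogan:slice-of-slice-is-slice}). Concretely, the forgetful functor $\C/x \to \C$ creates pullbacks, so a cartesian square in $\C/x$ is exactly a cartesian square in $\C$ whose corners come equipped with compatible maps to $x$. Likewise, for $\alpha : a \to x$ an object of $\C/x$, the equivalence $(\C/x)/\alpha \simeq \C/a$ identifies dependent products computed in $\C/x$ with those computed in $\C$, so a distributor diagram in $\C/x$ is exactly a distributor diagram in $\C$ whose objects happen to lie over $x$. Applying $\Sigma_i$ simply re-equips the relevant objects with maps to $y$ via post-composition with $i$, leaving every underlying cartesian square and every underlying distributor diagram in $\C$ unchanged.

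There is no genuine obstacle here; once the preparatory work culminating in \Cref{prop:functor-induced-on-P} is in place, the corollary is a routine verification. The main conceptual point is simply that $\Sigma_i$ is a relabeling of objects which is inert on the LCCDC structure inherited from $\C$, so it trivially transports all the data needed to assemble a morphism in the polynomial category.
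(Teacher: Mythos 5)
Your proposal follows essentially the same route as the paper: invoke \Cref{prop:functor-induced-on-P}, dispatch the indexing-category and coproduct conditions by noting that $\Sigma_i$ is the identity on underlying morphisms and is a left adjoint, and reduce preservation of cartesian squares and distributor diagrams to the ``slice of a slice is a slice'' observation (\Cref{slogan:slice-of-slice-is-slice}), so that both are computed in $\C$ independently of the structure map to $x$ or $y$. The paper's write-up spells out the identification of $\Pi_g : (\C/x)/\beta \to (\C/x)/\gamma$ with $\Pi_g : \C/b \to \C/c$ and of the coinduction counits a bit more explicitly, but your terser formulation captures the same content.
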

\begin{proof}
  We know that $\Sigma_i$ preserves cartesian squares by \Cref{prop:induction-is-cartesian}. $\Sigma_i$ sends morphisms in $\C_a/x$ (resp. $\C_m/x$) to morphisms in $\C_a/y$ (resp. $\C_m/y$) because it acts as the identity on morphisms. $\Sigma_i$ preserves finite coproducts because it is a left adjoint. All that remains to be shown is that $\Sigma_i$ preserves distributor diagrams, which turns out to be (essentially) just an application of \Cref{slogan:slice-of-slice-is-slice}. So, begin with a composable pair of morphisms $\alpha \xrightarrow{f} \beta \xrightarrow{g} \gamma$ in $\C/x$. Diagramatically, in $\C$, we have
  \[\begin{tikzcd}
      a & b & c \\
      & x
      \arrow["f", from=1-1, to=1-2]
      \arrow["\alpha"', from=1-1, to=2-2]
      \arrow["g", from=1-2, to=1-3]
      \arrow["\beta"', from=1-2, to=2-2]
      \arrow["\gamma", from=1-3, to=2-2]
    \end{tikzcd}\]
  We then form $\Pi_g f \in (\C/x)/\gamma$, whence $g^* \Pi_g f \in (\C/x)/\beta$, and put these together to form our distributor diagram in $\C/x$:
  \[\begin{tikzcd}
      & \bullet & \bullet \\
      a & b & c \\
      & x
      \arrow[from=1-2, to=1-3]
      \arrow["{\varepsilon^{\mathrm{coind}}_f}"', from=1-2, to=2-1]
      \arrow["{g^*\Pi_g f}", from=1-2, to=2-2]
      \arrow["{\Pi_gf}", from=1-3, to=2-3]
      \arrow["f", from=2-1, to=2-2]
      \arrow["\alpha"', from=2-1, to=3-2]
      \arrow["g", from=2-2, to=2-3]
      \arrow["\beta"', from=2-2, to=3-2]
      \arrow["\gamma", from=2-3, to=3-2]
    \end{tikzcd}\]
  Here, we are viewing $\Pi_g$ as a functor $(\C/x)/\beta \to (\C/x)/\gamma$, defined by being right adjoint to $g^* : (\C/x)/\gamma \to (\C/x)/\beta$. But by \Cref{slogan:slice-of-slice-is-slice}, we have isomorphisms $(\C/x)/\beta \cong \C/b$ and $(\C/x)/\gamma \cong \C/c$ under which $g^* : (\C/x)/\gamma \to (\C/x)/\beta$ is identified with $g^* : \C/c \to \C/b$. Thus, $\Pi_g : (\C/x)/\beta \to (\C/x)/\gamma$ is also identified with $\Pi_g : \C/b \to \C/c$. Likewise, the counit $\varepsilon^{\mathrm{coind}}_f : g^* \Pi_g f \to f$ in $(\C/x)/\alpha$ coincides with the counit $\varepsilon^{\mathrm{coind}}_f : g^* \Pi_g f \to f$ in $\C/b$. In other words, the entire construction of the distributor diagram can be performed simply in $\C$, forgetting the structure maps to $x$.

  Thus, when we form the distributor diagram for the composable pair $(\Sigma_i f, \Sigma_i g)$ in $\C/y$, we can also perform this construction directly in $\C$, forgetting the structure maps to $y$. But $\Sigma_i$ acts as the identity on morphisms, so this distributor diagram we end up constructing for $(\Sigma_i f, \Sigma_i g)$ is precisely the same as the distributor diagram we constructed originally for $(f,g)$ (which is the same as its image under $\Sigma_i$).
\end{proof}

Precomposition with $\U{\Sigma_i} : \U{\C/x} \to \U{\C/y}$ gives the ``restriction'' operation on Tambara functors.

\begin{prop}
  \label{prop:restrict-tamb-to-tamb}
  Let $\C$ be an index. For any morphism $i : x \to y$ in $\C_m$, precomposition with $\U{\Sigma_i} : \U{\C/x} \to \U{\C/y}$ gives a functor $\STamb{\C/y} \to \STamb{\C/x}$ which restricts to a functor $\Tamb{\C/y} \to \Tamb{\C/x}$.
\end{prop}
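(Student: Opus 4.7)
The plan is to leverage the setup already done in \Cref{cor:sigma-induced-on-P} and reduce the claim to two essentially formal observations.

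First, by \Cref{cor:sigma-induced-on-P} we know that $\U{\Sigma_i} : \U{\C/x} \to \U{\C/y}$ is a well-defined, finite-product-preserving functor. Since precomposition with any functor defines a functor on functor categories, we get $(-) \circ \U{\Sigma_i} : \Fun{\U{\C/y}}{\Set} \to \Fun{\U{\C/x}}{\Set}$. The easy observation is that this precomposition sends finite-product-preserving functors to finite-product-preserving functors: if $F : \U{\C/y} \to \Set$ preserves finite products and $\U{\Sigma_i}$ preserves finite products, then their composite does too. This gives the desired functor $\STamb{\C/y} \to \STamb{\C/x}$.

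Second, I would verify that this functor restricts to Tambara functors, i.e., that if $F \in \Tamb{\C/y}$ then $F \circ \U{\Sigma_i}$ is Tambara. For each object $\alpha \in \C/x$, the underlying set is $(F \circ \U{\Sigma_i})(\alpha) = F(\Sigma_i \alpha)$, which already carries an abelian group structure under $+$ because $F$ is Tambara. The key point is to identify this abelian group structure with the one the composite $F \circ \U{\Sigma_i}$ inherits through the semi-Mackey structure. The addition on $(F \circ \U{\Sigma_i})(\alpha)$ comes from applying $F \circ \U{\Sigma_i}$ to the morphism $T_{\nabla_\alpha}$, where $\nabla_\alpha : \alpha \amalg \alpha \to \alpha$ is the codiagonal in $\C/x$. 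Since $\Sigma_i$ is a left adjoint, it preserves coproducts, so $\Sigma_i(\alpha \amalg \alpha) = \Sigma_i\alpha \amalg \Sigma_i\alpha$ and $\Sigma_i \nabla_\alpha = \nabla_{\Sigma_i \alpha}$. Hence $\U{\Sigma_i}(T_{\nabla_\alpha}) = T_{\nabla_{\Sigma_i \alpha}}$, and $F$ sends this to the same morphism defining $+$ on $F(\Sigma_i \alpha)$. Thus the two monoid structures on $F(\Sigma_i \alpha)$ agree and are an abelian group.

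There is no real obstacle here; the result is essentially bookkeeping on top of \Cref{cor:sigma-induced-on-P}. The only point requiring mild attention is the compatibility of codiagonals with $\Sigma_i$, and this is immediate from $\Sigma_i$ being a left adjoint (hence preserving the coproduct $\alpha \amalg \alpha$ and the universal map out of it).
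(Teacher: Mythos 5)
Your proof is correct and follows the same route as the paper: invoke \Cref{cor:sigma-induced-on-P} for finite-product preservation of $\U{\Sigma_i}$, then observe that $+_{F\circ\U{\Sigma_i},\alpha}$ coincides with $+_{F,\Sigma_i\alpha}$. You simply spell out the codiagonal compatibility that the paper leaves implicit.
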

\begin{proof}
  Let $F \in \Tamb{\C/y}$. Its image under precomposition with $\U{\Sigma_i}$ is a semi-Tambara functor because $\U{\Sigma_i}$ preserves finite products. For any object $\alpha \in \C/x$, the operation $+_{F \circ \U{\Sigma_i}, \alpha}$ is exactly equal to the operation $+_{F, \Sigma_i \alpha}$, which has additive inverses by assumption.
\end{proof}

\subsection{Separability}

A crucial tool in the study of $G$-Tambara functors is the result of Mazur \cite{Mazur} which states in particular that, for all $G$-Tambara functors $S$, all morphisms $X \to Y$ between transitive $G$-sets, and all $a,b \in S(X)$, $S(N_f)(a)+S(N_f)(b)$ is a summand of $S(N_f)(a+b)$. This is in fact true of semi-Tambara functors, and implies that in a semi-Tambara functor, norms between transitive $G$-sets preserve additively invertible elements. We would like to have a similar result in the generalized context, but to do so an additional assumption on the index is required.

\begin{defn}
  A morphism $f : a \to c$ in a category $\C$ is said to be \emph{complemented} if there exists a morphism $g : b \to c$ such that $a \xrightarrow{f} c \xleftarrow{g} b$ is a coproduct diagram.
\end{defn}

\begin{defn}\label{defn:separable}
  Let $\C$ be an index. For any $f : x \to y$ in $\C_m$, letting $\nabla_x : x \amalg x \to x$ denote the codiagonal, we obtain an object $\Pi_f \nabla_x \in \C/y$. Let $j_f : \id_y \to \Pi_f \nabla_x$ denote the adjunct of the first coprojection $f^* \id_y \cong \id_x \to \nabla_x$ (noting that $\nabla_x$ is the coproduct $\id_x \amalg \id_x$ in $\C/x$). We say that $\C$ is \emph{separable} if $j_f$ is complemented in $\C/y$ for all $f \in \C_m$.
\end{defn}

We note that $j_f$ is always a monomorphism, because $\id_y$ is terminal in $\C/y$. So, if $\C$ is such that all monomorphisms in slices of $\C$ are complemented, then $\C$ is separable. For example, this is true of the category of finite $G$-sets.

\begin{prop}
  If $\C = G{-}\set$, then $\C$ is separable.
\end{prop}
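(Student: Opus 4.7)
The plan is to invoke the parenthetical observation made just before the statement: it suffices to show that every monomorphism in every slice category of $G{-}\set$ is complemented, since $j_f$ is automatically a monomorphism (as $\id_y$ is terminal in $\C/y$). Since $G{-}\set$ is LCCDC, every slice $G{-}\set/Y$ is itself cocartesian by \Cref{slice-of-cocartesian-is-cocartesian}, with coproducts computed as in $G{-}\set$. So I would reduce to the following concrete claim: for any $G$-equivariant injection $m : A \hookrightarrow B$ between finite $G$-sets, there is a $G$-equivariant map $n : C \to B$ such that $A \xrightarrow{m} B \xleftarrow{n} C$ is a coproduct diagram in $G{-}\set$.

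For this concrete claim, I would take $C := B \setminus m(A)$, equipped with the subspace $G$-action, and $n$ the inclusion. The key point is that $m(A)$ is a $G$-subset of $B$ (the image of a $G$-equivariant map is $G$-stable), hence its set-theoretic complement is also $G$-stable, so $C$ inherits a canonical $G$-action. Then $B = m(A) \amalg C$ as $G$-sets, and since $m$ factors as $A \xrightarrow{\cong} m(A) \hookrightarrow B$, one gets that $(m,n)$ is a coproduct diagram. This step is entirely routine and involves no real obstacle.

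To conclude, I would lift this to the slice category: given a monomorphism in $G{-}\set/Y$ corresponding to an injection $m : A \hookrightarrow B$ of $G$-sets over structure maps $\alpha : A \to Y$ and $\beta : B \to Y$ with $\beta \circ m = \alpha$, the complement $C = B \setminus m(A)$ inherits a map $\gamma := \beta|_C : C \to Y$. By \Cref{slice-of-cocartesian-is-cocartesian}, $\alpha \xrightarrow{m} \beta \xleftarrow{n} \gamma$ is then a coproduct diagram in $G{-}\set/Y$. Thus every mono in every slice of $G{-}\set$ is complemented, and in particular $j_f$ is complemented for every $f$, so $G{-}\set$ is separable.

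I do not anticipate any real obstacle; the only thing to verify carefully is the $G$-stability of the complement $B \setminus m(A)$, which is immediate from the $G$-equivariance of $m$. The proof is essentially a direct appeal to the observation in the paragraph preceding the proposition, combined with the elementary fact that subobjects of finite $G$-sets are always complemented.
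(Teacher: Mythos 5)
Your proof is correct, and it reaches the same reduction as the paper's (``monomorphisms in slices of $G{-}\set$ are complemented''), but by a slightly different route. The paper's proof is structural: it appeals to the equivalence $G{-}\set/Y \simeq G_1{-}\set \times \dots \times G_n{-}\set$ (coming from the orbit decomposition of $Y$ together with \Cref{prop:slices-of-G-set} and \Cref{prop:disjoint-coproducts-slice-products}) and then invokes the fact that monos in each factor $G_i{-}\set$ are complemented, letting the equivalence transport the conclusion to the slice. You instead work directly inside the slice: you observe that a mono in $G{-}\set/Y$ is just a $G$-equivariant injection over $Y$, take the set-theoretic complement of the image, note it is $G$-stable and inherits a structure map to $Y$, and use \Cref{slice-of-cocartesian-is-cocartesian} to conclude this gives a coproduct diagram in the slice. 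Your approach is more elementary and avoids the detour through the orbit decomposition; the paper's is arguably cleaner bookkeeping since it reduces slice-level claims to a single base case in $H{-}\set$. Both are sound. One small thing worth making explicit (which you implicitly use) is that a morphism in $G{-}\set/Y$ is a monomorphism precisely when its underlying map of $G$-sets is injective; this follows quickly from faithfulness of the projection $G{-}\set/Y \to G{-}\set$ and a short diagram chase, but it's the one point a careful reader would want spelled out.
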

\begin{proof}
  In this case (by \Cref{prop:slices-of-G-set} and \Cref{prop:disjoint-coproducts-slice-products}), a slice $\C/y$ is equivalent to $G_1{-}\set \times \dots \times G_n{-}\set$ for some finite collection of finite groups $G_1, \dots, G_n$. Since all monomorphisms in each $G_i{-}\set$ are complemented, all monomorphisms in $\C/y$ are complemented.
\end{proof}

Separability also holds in the motivic context, although this is slightly less trivial.

\begin{prop}
  If $\C = \fet/S$, then $\C$ is separable.
\end{prop}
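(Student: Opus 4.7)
The plan is to work in the slice $(\fet/S)/Y \simeq \fet/Y$ (by Slogan~\ref{slogan:slice-of-slice-is-slice}) and show that $j_f$ is a clopen immersion of schemes, i.e.\ an open-and-closed immersion; once this is established, the complement of $j_f$ in $\Pi_f \nabla_X$ as a scheme will serve as the desired categorical complement. Fix a morphism $f : X \to Y$ in $\fet/S$; by hypothesis $f$ is finite {\'e}tale. The object $\Pi_f \nabla_X$ is some finite {\'e}tale $Y$-scheme $W$, and $j_f$ is by construction a morphism $Y \to W$ over $Y$, i.e.\ a section of the structure map $W \to Y$.

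Next I would observe that any morphism in $\fet/Y$ between finite {\'e}tale $Y$-schemes is itself finite {\'e}tale (this is the standard cancellation: if $A \to Y$ and $B \to Y$ are both finite {\'e}tale and $\varphi : A \to B$ is a morphism over $Y$, then $\varphi$ is quasi-finite and unramified, hence {\'e}tale, and also proper since $A \to Y$ is proper while $B \to Y$ is separated, and proper plus quasi-finite equals finite). Applied to $j_f$, this shows $j_f$ is a finite {\'e}tale morphism of schemes. It is also a monomorphism, since any section of any morphism is a split monomorphism.

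Now I invoke the standard structural fact: an {\'e}tale monomorphism of schemes is an open immersion, and a finite morphism of schemes is proper; a morphism that is simultaneously an open immersion and proper identifies its source with a connected-components-style open-and-closed subscheme of its target. Hence $j_f$ factors as an isomorphism of $Y$ with a clopen subscheme $Z \subseteq W$. Its set-theoretic complement $W \setminus Z$ inherits the structure of an open-and-closed subscheme of $W$, hence is itself finite {\'e}tale over $Y$, and one obtains a coproduct decomposition $W \cong Z \amalg (W \setminus Z) \cong Y \amalg (W \setminus Z)$ of schemes over $Y$. Since disjoint unions of schemes realize coproducts in $\fet/Y$, this exhibits $j_f$ as complemented, as required.

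The only genuine obstacle is the invocation of the scheme-theoretic lemma ``finite {\'e}tale + monomorphism = clopen immersion'' (equivalently, a section of a finite {\'e}tale morphism is a summand); everything else is bookkeeping via the slice-of-slice identification and the fact that morphisms between finite {\'e}tale $Y$-schemes are automatically finite {\'e}tale. I would simply cite this lemma (it is, e.g., a direct consequence of the equivalence between $\fet/Y$ and finite continuous $\pi_1^{\mathrm{et}}(Y)$-sets in the connected case, where every monomorphism of $G$-sets is a summand, recovering the parallel with the proof just given for $G{-}\set$).
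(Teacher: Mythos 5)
Your proposal is correct and takes essentially the same route as the paper: both arguments reduce to showing that $j_f$ is a finite \'etale monomorphism (equivalently a clopen immersion), with the complement then furnished by the complementary clopen subscheme. The paper isolates exactly this fact as \Cref{lemma:section-of-separated-is-complemented} (a section of a separated finite \'etale morphism is complemented) and proves it by the same open-plus-closed argument you sketch. The one cosmetic difference is where the cancellation lemma gets applied: you argue up front that every morphism in $\fet/Y$ is automatically finite \'etale and then specialize to the monomorphism $j_f$, whereas the paper applies cancellation directly to the section $g$ with $\id_Y = f\circ g$ to deduce finiteness and \'etaleness; both hinge on the same standard cancellation properties. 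You also correctly identify the Galois-correspondence heuristic (reducing to the $G$-set case), which matches how the paper motivates separability.
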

\begin{proof}
  For all $x \in \fet/S$, $\nabla_x$ is a separated finite {\'e}tale morphism. Then for any finite {\'e}tale $f : x \to y$, $\Pi_f \nabla_x$ is separated finite {\'e}tale (\cite[\S7.6, Proposition 5]{NeronModels} for separated and \cite[\S3]{Bachmann-GW} for finite {\'e}tale). Now we apply \Cref{lemma:section-of-separated-is-complemented} below.
\end{proof}

\begin{lem}\label{lemma:section-of-separated-is-complemented}
  Let $f : X \to Y$ be a separated finite {\'e}tale morphism of schemes. Then any section of $f$ is complemented.
\end{lem}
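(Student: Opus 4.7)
The plan is to show that under the hypotheses, any section $s : Y \to X$ of $f$ is in fact a clopen immersion, i.e. both an open and a closed immersion. Once this is established, the complement of the image $s(Y)$ in $X$ provides a scheme $U$ equipped with a clopen immersion $g : U \hookrightarrow X$ such that $Y \xrightarrow{s} X \xleftarrow{g} U$ is a coproduct diagram in $\mathsf{Sch}$, and hence also in $\fet/S$ (since a clopen subscheme of a finite étale $S$-scheme is again finite étale over $S$, and disjoint unions are preserved by the inclusion $\fet/S \hookrightarrow \mathsf{Sch}$).

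To see that $s$ is a closed immersion, I would use the standard fact that a section of a separated morphism is always a closed immersion: explicitly, $s$ fits in a cartesian square in which $s$ is the pullback of the diagonal $\Delta_f : X \to X \times_Y X$ along the morphism $(s,\id_X) : X \to X \times_Y X$ (or one of its symmetric variants), and $\Delta_f$ is a closed immersion precisely because $f$ is separated. Closed immersions are stable under pullback, so $s$ is a closed immersion.

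To see that $s$ is an open immersion, I would first observe that $s$ is étale: since $f \circ s = \id_Y$ is étale and $f$ is étale, cancellation of étale morphisms gives that $s$ is étale. Then $s$ is also a monomorphism (being a section), and an étale monomorphism of schemes is an open immersion. Combining the two paragraphs, $s$ is a clopen immersion.

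I expect the steps above to be essentially invocations of standard facts from EGA / the Stacks project (separated gives closed, étale + mono gives open, cancellation of étale morphisms), so the main ``obstacle'' is really just assembling the right references rather than any nontrivial argument. The only subtlety worth flagging is that, having identified $X$ set-theoretically with $s(Y) \sqcup (X\setminus s(Y))$, one must check that the scheme structures agree — but since both $s(Y)$ and its complement are clopen subschemes of $X$ whose underlying sets partition $|X|$, this is immediate from the sheaf property of $\OO_X$, and the resulting coproduct lies in $\fet/S$ because open and closed subschemes of finite étale $S$-schemes are finite étale over $S$.
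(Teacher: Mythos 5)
Your proof is correct and lands in the same place as the paper's — showing the section is an isomorphism onto a clopen subscheme — but decomposes the argument a bit differently. The paper shows $g$ is étale (same cancellation you use), then invokes the cancellation property for finite morphisms ($\mathrm{id}_Y = f \circ g$ finite and $f$ separated imply $g$ finite) to conclude $g$ is finite étale; combined with the ``degree one'' observation coming from $g$ being a section, this makes $g$ a clopen embedding. You instead get the ``closed'' half directly: a section of a separated morphism is a closed immersion (pullback of the diagonal), and the ``open'' half from the clean black box ``étale monomorphism $\Rightarrow$ open immersion,'' with monomorphy coming for free from $s$ being a split mono. Both routes ultimately trade on $\Delta_{X/Y}$ being a closed immersion, but yours avoids any talk of degrees and is arguably tighter. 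Two small things worth polishing: the pullback square you describe should have the graph map $(\mathrm{id}_X, s\circ f) : X \to X\times_Y X$ (your $(s,\mathrm{id}_X)$ doesn't typecheck as written, though you flag the symmetry issue); and the final sentence's ``open and closed subschemes of finite étale $S$-schemes are finite étale'' is false if read separately for open and for closed — it is the \emph{clopen} subschemes that are finite étale, which is what you actually use.
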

\begin{proof}
  Let $g : Y \to X$ be a section of $f$. Thinking of $g$ as a morphism of $Y$-schemes, its domain and codomain are {\'e}tale, so $g$ is {\'e}tale. Since $g$ is a section of a morphism, it has degree $1$. Moreover, $\id_Y = f \circ g$ is finite, and $f$ is separated, so $g$ is finite. Since finite morphisms are closed and {\'e}tale morphisms are open, we conclude that $g$ is an isomorphism onto its image, which is clopen in $X$. Thus, $g$ is complemented.
\end{proof}

\section{A Generalized Hoyer-Mazur-Chan Theorem}
\label{section:Hoyer}

In \cite{Hoyer}, Hoyer establishes, for each inclusion of finite groups $H \leq G$, a commutative square

\[\begin{tikzcd}[ampersand replacement=\&]
    {\Tamb{H{-}\set}} \& {\Tamb{G{-}\set}} \\
    {\Mack{H{-}\set}} \& {\Mack{G{-}\set}}
    \arrow[from=1-1, to=1-2]
    \arrow[from=1-1, to=2-1]
    \arrow[from=2-1, to=2-2]
    \arrow[from=1-2, to=2-2]
  \end{tikzcd}\]
\noindent
where the vertical arrows are the forgetful functors, the top arrow is left adjoint to the restriction functor $\Tamb{G{-}\set} \to \Tamb{H{-}\set}$ (and is given by left Kan extension along induction $H{-}\set \to G{-}\set$), and the bottom arrow is given by left Kan extension along coinduction $H{-}\set \to G{-}\set$ (but is not left adjoint to any functor $\Mack{G{-}\set} \to \Mack{H{-}\set}$).

As discussed in \Cref{section:Incompleteness}, via the equivalences $G{-}\set/(G/G) \cong G{-}\set$ and $G{-}\set/(G/H) \cong H{-}\set$, the restriction functor $G{-}\set \to H{-}\set$ corresponds to pulling back along the unique map $i : G/H \to G/G$. Thus, induction (which is left adjoint to restriction) corresponds to $\Sigma_i$, and coinduction (which is right adjoint to restriction) corresponds to $\Pi_i$. We can now rewrite Hoyer's commutative square as

\[\begin{tikzcd}[ampersand replacement=\&]
    {\Tamb{G{-}\set/(G/H)}} \& {\Tamb{G{-}\set/(G/G)}} \\
    {\Mack{H{-}\set/(G/H)}} \& {\Mack{G{-}\set/(G/G)}}
    \arrow["{\Lan_{\U{\Sigma_i}}}", from=1-1, to=1-2]
    \arrow[from=1-1, to=2-1]
    \arrow["{\Lan_{\A{\Pi_i}}}"', from=2-1, to=2-2]
    \arrow[from=1-2, to=2-2]
  \end{tikzcd}\]

The goal of this section is to prove our main result, which is a generalized version of this theorem, where we replace $i : G/H \to G/G$ with an arbitrary morphism. However, our assumptions are not strong enough to ensure the the desired left Kan extension functors exist. So, in what follows, we will assume $\C$ is also locally essentially small, i.e. that $\C/x$ is essentially small for all $x$. All that is really needed is that all of the left Kan extensions referenced below exist, and this is simply a convenient sufficient condition for the existence of these Kan extensions. In the examples of primary interest to us ($\fet$ and $G{-}\set$ for any group $G$), this sufficient condition is satisfied.

\begin{thm}
  \label{actual-main-theorem}
  Let $\C$ be an index which is also locally essentially small. For each $i : x \to y$ in $\C_m$, we obtain a square
  \begin{equation}
    \label{eq:main-theorem-semi-square}
    \begin{tikzcd}
      {\STamb{\C/x}} & {\STamb{\C/y}} \\
      {\SMack{\C/x}} & {\SMack{\C/y}}
      \arrow["{\Lan_{\U{\Sigma_i}}}", from=1-1, to=1-2]
      \arrow[from=1-1, to=2-1]
      \arrow[from=1-2, to=2-2]
      \arrow["{\Lan_{\A{\Pi_i}}}"', from=2-1, to=2-2]
    \end{tikzcd}
  \end{equation}
  which commutes up to natural isomorphism, where the vertical maps are the forgetful functors of \Cref{prop:forget-tamb-to-mack}.

  Moreover, when $\C$ is separable and $i$ is an epimorphism, each functor in this square preserves Mackey structure, i.e. the above square restricts to give a square
  \[\begin{tikzcd}
      {\Tamb{\C/x}} & {\Tamb{\C/y}} \\
      {\Mack{\C/x}} & {\Mack{\C/y}}
      \arrow[from=1-1, to=1-2]
      \arrow[from=1-1, to=2-1]
      \arrow[from=1-2, to=2-2]
      \arrow[from=2-1, to=2-2]
    \end{tikzcd}\]
  which commutes up to natural isomorphism.
\end{thm}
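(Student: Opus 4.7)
The plan is to handle the two halves of the theorem in sequence: first establish that the Kan extensions exist as functors between the stated categories and that the semi-square commutes, then refine to the Mackey/Tambara setting under the extra hypotheses.

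Local essential smallness of $\C$ ensures that $\A{\C/x}$ and $\U{\C/x}$ are essentially small, so the pointwise left Kan extensions $\Lan_{\U{\Sigma_i}}$ and $\Lan_{\A{\Pi_i}}$ exist on the ambient presheaf categories. For the top arrow, $\U{\Sigma_i}$ is itself finite-product-preserving by \Cref{cor:sigma-induced-on-P}, so precomposition with it preserves semi-Tambara functors and its left adjoint restricts to a functor $\STamb{\C/x} \to \STamb{\C/y}$ by adjoint calculus. The bottom arrow is subtler because $\A{\Pi_i}$ is typically not product-preserving (since $\Pi_i$ does not preserve coproducts in $\C$); my preferred workaround is to lift to the $\CMon$-valued reformulation of \Cref{cor:equivalent-characterization-semimackey}, perform the Kan extension in $\CMon$, then postcompose with the forgetful $\CMon \to \Set$, which creates the relevant colimits.

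The heart of the proof is the natural isomorphism $U_y \circ \Lan_{\U{\Sigma_i}} \cong \Lan_{\A{\Pi_i}} \circ U_x$. My approach would be a direct pointwise comparison. Fix $F \in \STamb{\C/x}$ and $\beta \in \A{\C/y}$; expanding both sides via the pointwise Kan extension formula gives
\[\left(U_y \Lan_{\U{\Sigma_i}} F\right)(\beta) = \operatorname*{colim}_{(\alpha,\phi) \in \U{\Sigma_i}/e_y(\beta)} F(\alpha), \qquad \left(\Lan_{\A{\Pi_i}} U_x F\right)(\beta) = \operatorname*{colim}_{(\gamma,\psi) \in \A{\Pi_i}/\beta} F(e_x(\gamma)).\]
The key categorical input is that a bispan $[\Sigma_i\alpha \leftarrow z \to w \to \beta]$ in $\C/y$ representing a morphism in $\U{\Sigma_i}/e_y(\beta)$ can be reorganized --- via the adjunction triple $\Sigma_i \dashv i^* \dashv \Pi_i$ together with the distributor construction underlying composition in $\U{\C/y}$ --- into a span $[\Pi_i\gamma \leftarrow z' \to \beta]$ representing a morphism in $\A{\Pi_i}/\beta$, for a canonical $\gamma \in \C/x$ extracted from $(\alpha,\phi)$. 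Executed carefully, this reorganization yields a cofinal functor between the two indexing categories which matches $F$-values along the way, producing the desired isomorphism naturally in $\beta$. I expect the cofinality step to be the main obstacle: it requires controlling the fibers of the reorganization functor using disjointness of coproducts and the compatibility of $\C_a, \C_m$, and verifying the Beck-Chevalley-type coherences that render the isomorphism natural in $F$ as well.

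For the Mackey/Tambara refinement, once the semi-square commutes it suffices to show that both Kan extensions preserve pointwise additive invertibility. Separability (\Cref{defn:separable}) provides a Mazur-type direct-summand decomposition of norms along morphisms in $\C_m$, so norms of additively invertible elements in a semi-Tambara functor remain additively invertible; the assumption that $i$ is epic together with \Cref{lem:dep-prod-of-initial} ensures that $\Pi_i$ behaves correctly on initial objects, so no degenerate indexing terms obstruct the argument in the pointwise colimit formulas. Restricting the already-established semi-square to the full subcategories of Mackey and Tambara functors then yields the desired commutative square.
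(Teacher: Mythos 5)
Your overall structure (establish commutativity of the semi-square, then refine under separability and the epi hypothesis) matches the paper, but the route you propose for the key step differs substantially, and there are two places where I think the plan as stated would run into trouble.

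\textbf{The proposed $\CMon$-valued detour for the bottom arrow is problematic.} You write that to handle $\Lan_{\A{\Pi_i}}$ you would ``lift to the $\CMon$-valued reformulation\dots perform the Kan extension in $\CMon$, then postcompose with the forgetful $\CMon \to \Set$, which creates the relevant colimits.'' The forgetful functor $\CMon \to \Set$ does \emph{not} preserve (let alone create) general colimits --- only sifted ones --- and the Kan extension involves colimits indexed by arbitrary comma categories. So a $\CMon$-valued Kan extension followed by the forgetful will in general disagree with the $\Set$-valued pointwise Kan extension. The paper's footnote flags precisely this pitfall (``colimits are not computed pointwise'' in $\SMack$), and its solution is the opposite of yours: it drops down to the full presheaf categories $\Fun{\U{\C/x}}{\Set}$ and $\Fun{\A{\C/y}}{\Set}$, where everything is cocontinuous, and only at the end observes that the resulting square restricts to the semi-Mackey and semi-Tambara subcategories.

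\textbf{The cofinality comparison is where all the work lives, and it is not obviously the right framework.} Comparing the two colimit formulas requires more than a cofinal functor between the comma categories $\U{\Sigma_i}/e_y\beta$ and $\A{\Pi_i}/\beta$: the two diagrams being colimit-ed are $F(\alpha)$ and $F(e_x\gamma)$, i.e.\ genuinely different functors, so you would need a functor between the indexing categories together with a natural isomorphism of the composite diagrams, and then cofinality --- and then the whole package must also be natural in $\beta$ and in $F$. The paper sidesteps this by exploiting cocontinuity: it suffices to check the natural isomorphism on representables $\yo(\alpha) \in \Fun{\U{\C/x}}{\Set}$, which reduces everything to showing $\U{\C/y}(\Sigma_i\alpha, e(-))$ is $\Lan_{\Pi_i}\U{\C/x}(\alpha, e(-))$. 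That reduction is then handled by an explicitly constructed natural transformation $\omega$ (via the auxiliary $t_\alpha = N_{\varepsilon^{\mathrm{ind}}_{\Pi_i\alpha}} R_{\Sigma_i \varepsilon^{\mathrm{coind}}_\alpha}$, relying on \Cref{prop:Hoyer-Lemma}) whose universality is proved by exhibiting a forced formula for $\sigma$. This is the exact technical content you call ``the main obstacle'' and leave undone; the ``reorganization'' you gesture at would have to reproduce, in a less structured form, the $t$, $\omega$, $\sigma$ calculations of the paper, including the explicit adjunction juggling with $\eta^{\mathrm{coind}}$ and $\varepsilon^{\mathrm{ind}}$.

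\textbf{The Mackey/Tambara refinement is described at the right level.} The paper proceeds exactly as you suggest --- $\Pi_i$ preserves initial objects when $i$ is epi (\Cref{lem:dep-prod-of-initial}), separability gives the Mazur-type summand decomposition for the unit map $\eta : F(\alpha) \to (\Lan_{\Pi_i}F)(\Pi_i\alpha)$, whence $\eta$ preserves additively invertible elements, and then every element of $(\Lan_{\Pi_i}F)(\beta)$ is obtained from such an $\eta$-image by a monoid homomorphism. One small correction: the argument in the paper is specifically about the unit of the Kan extension adjunction, not directly about norms in a semi-Tambara functor, though separability is equivalently characterized via the semi-Tambara formulation.

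In short: right skeleton, genuinely different approach for the semi-square, and the two load-bearing steps in your version --- the $\CMon$-valued Kan extension workaround and the cofinality reorganization --- are respectively incorrect as stated and left as a gap.
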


There is prior work in this direction due to Chan \cite{Chan}, who proved this theorem for bi-incomplete $G$-Tambara functors (with $G$ any finite group), where $i$ is a morphism between transitive $G$-sets (note that such morphisms are necessarily surjective). Our generalization is to allow any underlying LCCDC category in the index $\C$, which places some restriction on the techniques we can use in the proof.

Our proof strategy is as follows: first, we note that all of these functors are restricted from functor categories (e.g. $\STamb{\C/x}$ is a full subcategory of $\Fun{\U{\C/x}}{\Set}$), and so square \eqref{eq:main-theorem-semi-square} is restricted from

\begin{equation*}
  \begin{tikzcd}
    \Fun{\U{\C/x}}{\Set} \rar["\Lan_{\U{\Sigma_i}}"] \dar["e^*"] & \Fun{\U{\C/y}}{\Set} \dar["e^*"] \\
    \Fun{\A{\C/x}}{\Set} \rar["\Lan_{\A{\Pi_i}}"] & \Fun{\A{\C/y}}{\Set}
  \end{tikzcd}
\end{equation*}

Thus, it suffices to show that this square commutes. But all of the functors here are cocontinuous\footnote{Notably, these functors are not all cocontinuous between the categories of semi-Mackey and semi-Tambara functors, where colimits are not computed pointwise! It is essential to consider their extensions to the full functor categories here.}, so it is equivalent to show that the two composites

\[\U{\C/x}^{\op} \xrightarrow{\yo} \Fun{\U{\C/x}}{\Set} \rightrightarrows \Fun{\A{\C/y}}{\Set}\]

are naturally isomorphic, where $\yo$ represents the Yoneda embedding.

Once we have established this, we must further show that $\Lan_{\A{\Pi_i}}$ sends Mackey functors to Mackey functors, which will require the additional assumption that $\C$ is separable and $i$ is an epimorphism.

\subsection{Comparing Kan Extensions along \texorpdfstring{$\Sigma$}{Σ} and \texorpdfstring{$\Pi$}{Π}}

In this section, we will complete the first step outlined above, that is, showing that the two composites

\[\U{\C/x}^{\op} \xrightarrow{\yo} \Fun{\U{\C/x}}{\Set} \rightrightarrows \Fun{\A{\C/y}}{\Set}\]

coming from the square

\begin{equation*}
  \begin{tikzcd}
    \Fun{\U{\C/x}}{\Set} \rar["\Lan_{\U{\Sigma_i}}"] \dar["e^*"] & \Fun{\U{\C/y}}{\Set} \dar["e^*"] \\
    \Fun{\A{\C/x}}{\Set} \rar["\Lan_{\A{\Pi_i}}"] & \Fun{\A{\C/y}}{\Set}
  \end{tikzcd}
\end{equation*}

are naturally isomorphic. For readability, we will abuse notation a bit and write $\A{\Pi_i}$ and $\U{\Sigma_i}$ simply as $\Pi_i$ and $\Sigma_i$, respectively, since this is their action on both objects and morphisms.

Taking an arbitrary object $\alpha \in \U{\C/x}^{\op}$ and going around the bottom-left of the square, we get

\[\Lan_{\Pi_i} e^* \yo \alpha = \Lan_{\Pi_i} e^* \U{\C/x}(\alpha,{-}) = \Lan_{\Pi_i} \U{\C/x}(\alpha,e({-})).\]

On the other hand, going around the top-right, we get

\[e^* \Lan_{\Sigma_i} \yo \alpha = e^* \Lan_{\Sigma_i} \U{\C/x}(\alpha,{-}) = e^* \U{\C/y}(\Sigma_i \alpha,{-}) = \U{\C/y}(\Sigma_i \alpha, e({-})).\]

So, we aim to show that, for all $\alpha \in \C/x$, \[\U{\C/y}(\Sigma_i \alpha,e({-})) : \A{\C/y} \to \Set\] is the left Kan extension of \[\U{\C/x}(\alpha,e({-})) : \A{\C/x} \to \Set\] along $\Pi_i : \A{\C/x} \to \A{\C/y}$, and then that this identification with the left Kan extension is natural in $\alpha$.

\begin{notn}
  For readability, we will henceforth elide writing the inclusion functors $e$, and use the shorthand notations
  \begin{align*}
    \U{x} & := \U{\C/x} \\
    \U{y} & := \U{\C/y} \\
    \A{x} & := \A{\C/x} \\
    \A{y} & := \A{\C/y}
  \end{align*}
\end{notn}

Thus, the desired Kan extension will be witnessed by a universal natural transformation $\omega$ as in the following triangle

\begin{equation}\label{eq:omega-triangle}
  \begin{tikzcd}[ampersand replacement=\&]
    {\A{x}} \&\& {\A{y}} \\
    \& \Set
    \arrow[""{name=0, anchor=center, inner sep=0}, "{\U{x}(\alpha, {-})}"', from=1-1, to=2-2]
    \arrow["{\U{y}(\Sigma_i \alpha, {-})}", from=1-3, to=2-2]
    \arrow["{\Pi_i}", from=1-1, to=1-3]
    \arrow["\omega"{description}, shorten <=8pt, shorten >=8pt, Rightarrow, from=0, to=1-3]
  \end{tikzcd}
\end{equation}

Universality here means that, for any functor $F : \A{y} \to \Set$ and any natural transformation $\tau : \U{x}(\alpha,{-}) \to F \circ \Pi_i$, there is a unique natural transformation $\sigma : \U{y}(\Sigma_i \alpha, {-}) \to F$ such that $\sigma \Pi_i \circ \omega = \tau$.

We will begin by constructing $\omega$, then proceed to show it is universal. In what follows, we fix adjunction data $\Sigma_i \dashv i^* \dashv \Pi_i$ in the form of unit/counit pairs.
\begin{notn}
  The unit and counit of $\Sigma_i \dashv i^*$ will be denoted $\eta^{\text{ind}}$ and $\varepsilon^{\text{ind}}$, respectively, and the unit and counit of $i^* \dashv \Pi$ will be denoted $\eta^{\text{coind}}$ and $\varepsilon^{\text{coind}}$, respectively.
\end{notn}
Our proof is similar to Chan's in \cite{Chan}, with some essential differences coming from the fact that $\C$ is arbitrary. A crucial tool in the proof below will be \Cref{prop:Hoyer-Lemma}, which generalizes \cite[Lemma 2.3.5]{Hoyer}. This will be proved in \Cref{appendix:lcc-lemmas}, but we state it here for accessibility to the reader:

\begin{prop*}[\Cref{prop:Hoyer-Lemma}]
  For any morphism $i : x \to y$ in an LCC category $\C$ and any object $b \in \C/y$, the functors $\Pi_{\varepsilon^\text{ind}_b} \circ (\Sigma_i / i^* b)$ and $(\eta^\text{coind}_b)^* \circ (\Pi_i / i^*b)$ are naturally isomorphic.
\end{prop*}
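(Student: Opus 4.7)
The plan is to prove the isomorphism objectwise by comparing representable functors, then verify naturality. Fix $i: x \to y$ and $b \in \C/y$ with structure map $\beta: b \to y$. Form the pullback defining $i^*b = (b' \to x)$ and let $\tilde{i}: b' \to b$ denote its top edge; under the slice-of-slice-is-slice equivalence (Slogan~\ref{slogan:slice-of-slice-is-slice}), $\varepsilon^{\text{ind}}_b$ corresponds to $\tilde{i}$ in $\C$. An object of $(\C/x)/i^*b$ can be identified with a morphism $p: c \to b'$ in $\C$.

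First I unpack both composites on such $p$. Since $\Sigma_i$ is the identity on underlying morphisms, $(\Sigma_i/i^*b)$ becomes the identity functor on $\C/b'$ under the slice equivalences, while $\Pi_{\varepsilon^{\text{ind}}_b}$ becomes the dependent product $\Pi_{\tilde{i}}: \C/b' \to \C/b$. So the first composite sends $p$ to $\Pi_{\tilde{i}}(p)$. For the second, one forms $\Pi_i p: \Pi_i c \to \Pi_i i^*b$ in $\C/y$ and then pulls back along $\eta^{\text{coind}}_b: b \to \Pi_i i^*b$.

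To compare, I test both against an arbitrary $e \in \C/b$. For the first, $\tilde{i}^* \dashv \Pi_{\tilde{i}}$ gives $\Hom_{\C/b}(e, \Pi_{\tilde{i}} p) \cong \Hom_{\C/b'}(\tilde{i}^*e, p)$. For the second, the pullback UMP expresses morphisms $e \to (\eta^{\text{coind}}_b)^*\Pi_i p$ in $\C/b$ as morphisms $\phi: e \to \Pi_i c$ in $\C/y$ (viewing $e$ over $y$ via $e \to b \to y$) satisfying $\Pi_i p \circ \phi = \eta^{\text{coind}}_b \circ (e \to b)$. Under $i^* \dashv \Pi_i$, naturality translates this to morphisms $\phi^\sharp: i^*e \to c$ in $\C/x$ satisfying $p \circ \phi^\sharp = (\eta^{\text{coind}}_b \circ (e \to b))^\sharp$; the triangle identity $\varepsilon^{\text{coind}}_{i^*b} \circ i^*\eta^{\text{coind}}_b = \id_{i^*b}$ reduces the right-hand side to $i^*(e \to b)$.

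The final step is the pasting law for pullbacks applied to $e \to b \to y$ against $i$: this identifies $i^*e \cong \tilde{i}^*e$ as objects of $\C$, and under this identification $i^*(e \to b)$ is precisely the structure map of $\tilde{i}^*e$ as an object of $\C/b'$. Hence the compatibility condition $p \circ \phi^\sharp = i^*(e \to b)$ is exactly the condition that $\phi^\sharp$ lies in $\C/b'$, giving $\Hom_{\C/b}(e, (\eta^{\text{coind}}_b)^*\Pi_i p) \cong \Hom_{\C/b'}(\tilde{i}^*e, p)$. Yoneda then yields the objectwise isomorphism, and naturality in $p$ follows from naturality of every adjunction isomorphism used. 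The main obstacle is the careful bookkeeping of which slice each morphism inhabits — in particular, verifying that the pullback compatibility for $\phi$, after transposition via $i^* \dashv \Pi_i$ and simplification by the triangle identity, aligns exactly with the structure-map condition recognizing $\tilde{i}^*e$ as an object over $b'$ rather than merely over $x$.
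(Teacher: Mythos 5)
Your proof is correct and takes essentially the same route as the paper's: both reduce to comparing the representable hom-functors $\Hom_{(\C/y)/b}(e,{-})$ and show each composite represents $\Hom_{\C/b'}(\tilde{i}^*e,p)$ via the $i^*\dashv\Pi_i$ adjunction, the triangle identity $\varepsilon^{\mathrm{coind}}_{i^*b}\circ i^*\eta^{\mathrm{coind}}_b=\mathrm{id}$, and the pasting law for pullbacks. The paper packages these ingredients as \Cref{lem:slice-adjunction} and \Cref{lem:pullback-along-eps-ind}, while you re-derive them inline after descending to the ordinary slices $\C/b'$ and $\C/b$, but the underlying argument is the same.
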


We will also make use of some key facts about the adjunction $\Sigma_i \dashv i^*$. Namely:

\begin{lem*}[\Cref{lem:slogan}, cf. \Cref{slogan:slice-of-slice-is-slice}]
  Let $\C$ be a category, and let $i : x \to y$ be a morphism in $\C$. For any object $\alpha \in \C/x$, the functor $\Sigma_i / \alpha : (\C/x)/\alpha \to (\C/y)/\Sigma_i \alpha$ is an isomorphism.
\end{lem*}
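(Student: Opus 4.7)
The plan is to unpack the definitions of both slice categories and observe that $\Sigma_i/\alpha$ is literally the identity functor under a natural reindexing, using the slogan (\Cref{slogan:slice-of-slice-is-slice}) that a slice of a slice is a slice.

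Write $\alpha : a \to x$. First I would describe $(\C/x)/\alpha$ explicitly: an object is a morphism $\beta : b \to a$ in $\C$ together with the witness that the composite $\alpha \circ \beta : b \to x$ serves as the structure map of $b$ over $x$; a morphism is just a morphism $\gamma : b \to b'$ in $\C$ commuting with the maps down to $a$ (the further commutation with $\alpha$ is then automatic). This exhibits an isomorphism of categories $\Phi_x : (\C/x)/\alpha \xrightarrow{\cong} \C/a$ sending $\beta$ to $\beta$ itself, as already noted in the proof of \Cref{slice-of-locally-P-is-locally-P}.

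Next I would apply the same observation to $\Sigma_i \alpha = i \circ \alpha : a \to y$: the same reasoning gives an isomorphism $\Phi_y : (\C/y)/\Sigma_i\alpha \xrightarrow{\cong} \C/a$, because the domain of the structure morphism is still $a$. Crucially, the functor $\Sigma_i/\alpha$ acts by postcomposition with $i$ at the level of structure maps down to $y$, but it does not change the underlying object $b$ nor the morphism $\beta : b \to a$ above $a$; its action on a morphism $\gamma : b \to b'$ in $(\C/x)/\alpha$ is again just $\gamma$. Thus the square
\[
\begin{tikzcd}[ampersand replacement=\&]
(\C/x)/\alpha \ar[r,"\Sigma_i/\alpha"] \ar[d,"\Phi_x"',"\cong"] \& (\C/y)/\Sigma_i\alpha \ar[d,"\Phi_y","\cong"'] \\
\C/a \ar[r,equal] \& \C/a
\end{tikzcd}
\]
commutes strictly, and therefore $\Sigma_i/\alpha$ is an isomorphism.

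There is no real obstacle here; the entire content is bookkeeping about which datum plays which role in the nested slice. The only thing to be careful about is verifying that $\Phi_x$ and $\Phi_y$ are honest isomorphisms of categories (not merely equivalences), which they are because the slice construction is defined on the nose, and that $\Sigma_i/\alpha$ genuinely acts as the identity on the underlying morphisms $\beta : b \to a$ rather than performing some nontrivial operation; both follow directly from the definition of $\Sigma_i$ as postcomposition.
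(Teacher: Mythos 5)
Your proof is correct and is essentially the same as the paper's: both arguments identify $(\C/x)/\alpha$ and $(\C/y)/\Sigma_i\alpha$ canonically with $\C/\operatorname{dom}(\alpha)$ and observe that $\Sigma_i/\alpha$ becomes the identity under these identifications. Your write-up simply spells out the bookkeeping in more detail than the paper does.
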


\begin{prop*}[\Cref{lem:dep-sum-pres-refl-pbs}, \Cref{prop:induction-is-cartesian}, \Cref{cor:adjunct-is-cartesian}]
  Let $\C$ be a locally cartesian category and let $i : x \to y$ be a morphism in $\C$. Then:
  \begin{enumerate}
    \item $\Sigma_i$ preserves and reflects cartesian squares;
    \item $i^*$ preserves cartesian squares;
    \item Each naturality square for the unit and counit of the adjunction $\Sigma_i \dashv i^*$ is cartesian;
    \item A commutative square (A) is cartesian if and only if its adjunct (B) is.
          \[\begin{tikzcd}[ampersand replacement=\&]
              {\Sigma_ia} \& c \& a \& {i^* c} \\
              {\Sigma_i b} \& d \& b \& {i^* d}
              \arrow[from=1-1, to=1-2]
              \arrow[""{name=0, anchor=center, inner sep=0}, "{\Sigma_i p}"', from=1-1, to=2-1]
              \arrow[from=2-1, to=2-2]
              \arrow[""{name=1, anchor=center, inner sep=0}, "q", from=1-2, to=2-2]
              \arrow[""{name=2, anchor=center, inner sep=0}, "p"', from=1-3, to=2-3]
              \arrow[from=1-3, to=1-4]
              \arrow[from=2-3, to=2-4]
              \arrow[""{name=3, anchor=center, inner sep=0}, "{i^* q}", from=1-4, to=2-4]
              \arrow["{(A)}"{description}, draw=none, from=0, to=1]
              \arrow["{(B)}"{description}, draw=none, from=2, to=3]
            \end{tikzcd}\]
  \end{enumerate}
\end{prop*}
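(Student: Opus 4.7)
My plan is to establish the four claims in order, using each as input to the next.

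For claim (1), I would use that for any object $z \in \C$, the forgetful functor $U_z : \C/z \to \C$ preserves and reflects cartesian squares: a pullback in $\C/z$ is computed by taking the pullback in $\C$ and equipping it with the canonical structure map to $z$. Because $\Sigma_i$ merely post-composes the structure map with $i$ while leaving the underlying objects and morphisms untouched, we have $U_y \circ \Sigma_i = U_x$. Consequently a square $S$ in $\C/x$ is cartesian if and only if $U_x(S) = U_y(\Sigma_i S)$ is cartesian in $\C$ if and only if $\Sigma_i(S)$ is cartesian in $\C/y$, which gives both preservation and reflection.

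Claim (2) is immediate: since $\C$ is locally cartesian, $i^*$ is a right adjoint to $\Sigma_i$, so it preserves all limits, and in particular cartesian squares.

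For claim (3), I would compute the unit and counit of $\Sigma_i \dashv i^*$ concretely in $\C$. Writing $i^* \Sigma_i \alpha$ as the pullback $a \times_y x$ (with $\alpha : a \to x$), the unit $\eta^{\mathrm{ind}}_\alpha$ is the canonical map $(\id_a, \alpha) : a \to a \times_y x$ from the pullback universal property. For a morphism $f : \alpha \to \beta$ in $\C/x$, I would paste the unit naturality square on the right with the square whose top and bottom are the first-projection maps $a \times_y x \to a$ and $b \times_y x \to b$ (and whose verticals are $f$ and $i^* \Sigma_i f$). This right square is cartesian because the pullback of $f : a \to b$ along the projection $b \times_y x \to b$ is easily checked to be $a \times_y x$. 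The outer rectangle is the identity--identity rectangle on $f$, trivially cartesian, so by the pullback pasting lemma the unit naturality square is cartesian. For the counit $\varepsilon^{\mathrm{ind}}_\beta : b \times_y x \to b$ (which is simply the first projection), an analogous pasting of the counit naturality square at $g : \beta \to \gamma$ with the defining pullback square of $i^* \gamma = c \times_y x$ produces the defining pullback square of $b \times_y x$ as the outer rectangle, and the pasting lemma again concludes.

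For claim (4), given a square (A), I would write its adjunct (B) as the composite rectangle whose left half is the unit naturality square at the left vertical of (A), and whose right half is $i^*$ applied to (A). If (A) is cartesian, the right half is cartesian by (2), the left half by (3), and (B) is cartesian by pasting. Conversely, if (B) is cartesian, one writes (A) as the composite of $\Sigma_i$ applied to (B) with the counit naturality square at the right vertical of (A); the former is cartesian by (1) from the hypothesis, the latter by (3), and (A) is cartesian by pasting. The main obstacle will be purely a matter of bookkeeping --- lining up the adjunction formulas with the pasting diagrams in the correct orientation --- but no step is conceptually deep.
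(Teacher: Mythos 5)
Your proof is correct and its overall philosophy (pasting pullback squares, factoring the adjunct through unit and counit naturality squares) matches the paper's, but you take a genuinely different route on two of the four parts. For part (1), the paper observes that a square in $\C/x$ being cartesian is the same as a certain cospan having a product in a further slice $(\C/x)/\delta$, and then invokes the isomorphism $\Sigma_i/\delta : (\C/x)/\delta \to (\C/y)/\Sigma_i \delta$ (\Cref{lem:slogan}); you instead note $U_y \circ \Sigma_i = U_x$ and use that each forgetful functor $U_z : \C/z \to \C$ creates connected limits, hence preserves and reflects pullbacks. Your argument is more elementary but relies on precisely the same underlying fact (a pullback in a slice is computed by taking the underlying pullback in $\C$), so this is a matter of taste. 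The more substantive difference is in part (3): the paper handles the counit by a prism-pasting argument in $\C$ and then deduces the unit case by applying $\Sigma_i$, invoking the triangle identity to factor the image through the counit naturality square, and finally using the reflection statement from (1) to descend back; you instead prove both the unit and counit cases by the same direct pasting of the naturality square against an explicit defining pullback square, avoiding the triangle-identity detour entirely. This is cleaner and more symmetric. One small point worth making explicit in a final write-up: your claim that the square with horizontals $p_1$ and verticals $i^*\Sigma_i f$, $f$ is cartesian follows most painlessly by pasting it below with the defining pullback square of $b \times_y x$ and recognizing the composite as the defining pullback square of $a \times_y x$ (using $\beta \circ f = \alpha$), rather than computing elements. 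With that spelled out, your argument is complete and correct.
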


With these notations and results in place, we are ready to continue the proof.

\subsubsection{The Natural Transformation \texorpdfstring{$\omega$}{ω}}

To define the desired natural transformation $\omega$ from \eqref{eq:omega-triangle}, we begin with an auxilliary construction.

\begin{prop}
  \label{prop:t}
  There is a natural transformation $t$ filling in the square
  \[\begin{tikzcd}[ampersand replacement=\&]
      {\A{x}} \&\& {\A{y}} \\
      \\
      {\U{x}} \&\& {\U{y}}
      \arrow["{\Pi_i}", from=1-1, to=1-3]
      \arrow["e"', from=1-1, to=3-1]
      \arrow["{\Sigma_i}"', from=3-1, to=3-3]
      \arrow["e", from=1-3, to=3-3]
      \arrow["t"{description}, shorten <=6pt, shorten >=6pt, Rightarrow, from=3-1, to=1-3]
    \end{tikzcd}\]
  defined by components
  \[t_{\alpha} := N_{\varepsilon^{\mathrm{ind}}_{\Pi_i \alpha}} R_{\Sigma_i \varepsilon^{\mathrm{coind}}_{\alpha}} = [\Sigma_i \alpha \xleftarrow{\Sigma_i \varepsilon^{\mathrm{coind}}_{\alpha}} \Sigma_i i^* \Pi_i \alpha \xrightarrow{\varepsilon^{\mathrm{ind}}_{\Pi_i \alpha}} \Pi_i \alpha \xrightarrow{\id} \Pi_i \alpha]\]
  for $\alpha \in \A{x}$.
\end{prop}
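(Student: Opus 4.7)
My first task would be to verify that each $t_\alpha$ is a well-defined morphism in $\U{y}$. The $R$- and $T$-components ($\Sigma_i\varepsilon^{\mathrm{coind}}_\alpha$ and $\id_{\Pi_i\alpha}$) are unrestricted or trivial, so the only check is that the $N$-component $\varepsilon^{\mathrm{ind}}_{\Pi_i\alpha}$ lies in $(\C/y)_m$. The naturality squares of $\varepsilon^{\mathrm{ind}}$ are cartesian by the preliminary results on the adjunction $\Sigma_i \dashv i^*$, so $\varepsilon^{\mathrm{ind}}_{\Pi_i\alpha}$ is a pullback of $i$; since $i \in \C_m$ and $\C_m$ (hence $(\C/y)_m$) is pullback-stable, this completes the check.

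For naturality, every morphism in $\A{x}$ factors as $T_aR_b$ with $a \in (\C/x)_a$ and $b$ arbitrary, so it suffices to verify the naturality square for $R$- and $T$-generators separately. The $R$-case is direct: given $R_b : \alpha \to \gamma$, I would commute $R_{\Pi_ib}$ past $N_{\varepsilon^{\mathrm{ind}}_{\Pi_i\alpha}}$ via Beck-Chevalley, using that the naturality square of $\varepsilon^{\mathrm{ind}}$ is cartesian, to obtain $N_{\varepsilon^{\mathrm{ind}}_{\Pi_i\gamma}} \circ R_{\Sigma_i i^* \Pi_i b}$. Composing the resulting two $R$-components and invoking the ordinary naturality of $\varepsilon^{\mathrm{coind}}$ for $i^* \dashv \Pi_i$ then matches the other side.

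The $T$-case is the main obstacle. For $T_a : \gamma \to \alpha$ in $\A{x}$, expanding the right-hand side $t_\alpha \circ T_{\Sigma_ia}$ requires two reductions. First, using that $\Sigma_i$ preserves and reflects cartesian squares, I would rewrite $R_{\Sigma_i\varepsilon^{\mathrm{coind}}_\alpha} \circ T_{\Sigma_ia}$ as $T_{\Sigma_ia'} \circ R_{\Sigma_ib'}$ via the Beck-Chevalley pullback computed in $\C/x$, producing morphisms $a' : \delta \to i^*\Pi_i\alpha$ and $b' : \delta \to \gamma$. Second, I would expand $N_{\varepsilon^{\mathrm{ind}}_{\Pi_i\alpha}} \circ T_{\Sigma_ia'}$ as a distributor diagram, whose $T$-component is $\Pi_{\varepsilon^{\mathrm{ind}}_{\Pi_i\alpha}}(\Sigma_ia')$. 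This is where \Cref{prop:Hoyer-Lemma} becomes essential: combined with the fact that $\Pi_i$ preserves the original pullback square and with the triangle identity $\Pi_i\varepsilon^{\mathrm{coind}}_\alpha \circ \eta^{\mathrm{coind}}_{\Pi_i\alpha} = \id$, it identifies $\Pi_{\varepsilon^{\mathrm{ind}}_{\Pi_i\alpha}}(\Sigma_ia')$ with $\Pi_ia$, matching the $T$-component of the left-hand side $T_{\Pi_ia} \circ t_\gamma$. The remaining $N$-component matches $\varepsilon^{\mathrm{ind}}_{\Pi_i\gamma}$ via cartesianness of the naturality square of $\varepsilon^{\mathrm{ind}}$ at $\Pi_ia$, and the $R$-component matches $\Sigma_i\varepsilon^{\mathrm{coind}}_\gamma$ by tracing the distributor counit through the Hoyer-Lemma identification. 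The real content of the proposition lives in this matching of components, and \Cref{prop:Hoyer-Lemma} is precisely the bridge that makes the three parts of the bispan line up.
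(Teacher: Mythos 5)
Your proposal is correct and follows essentially the same route as the paper: verify well-definedness by observing that $\varepsilon^{\mathrm{ind}}_{\Pi_i\alpha}$ is a pullback of $i$ (hence lies in $\C_m/y$), then check naturality by commuting $R$'s past $N$'s via cartesian naturality squares for $\varepsilon^{\mathrm{ind}}$, commuting $R$'s past $T$'s using that $\Sigma_i$ preserves cartesian squares, expanding $N T$ as a distributor diagram, and invoking \Cref{prop:Hoyer-Lemma} together with the triangle identity to identify $\Pi_{\varepsilon^{\mathrm{ind}}_{\Pi_i\alpha}}(\Sigma_i a')$ with $\Pi_i a$. The only difference is organizational: you split the naturality check into separate $T$- and $R$-generator cases and paste, while the paper treats a general $T_f R_g$ in one combined calculation; this changes nothing about the mathematical content.
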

\begin{proof}
  First, we must ensure that the definition of $t_\alpha$ above parses at all. That is, we must check that $\varepsilon^{\text{ind}}_{\Pi_i \alpha}$ lies in $\OO_m/y$. For this, we note that $\varepsilon^{\text{ind}}_{\id_y} : \Sigma_i i^* \id_y \to \id_y$ is simply the morphism $i : i \to \id_y$, and thus we have a naturality square for $\varepsilon^{\text{ind}}$
  \[\begin{tikzcd}[ampersand replacement=\&]
      {\Sigma_i i^* \Pi_i \alpha} \& {\Pi_i \alpha} \\
      i \& {\id_y}
      \arrow["{\varepsilon^{\text{ind}}_{\Pi_i \alpha}}", from=1-1, to=1-2]
      \arrow[from=1-2, to=2-2]
      \arrow["i"', from=2-1, to=2-2]
      \arrow[from=1-1, to=2-1]
    \end{tikzcd}\]
  By \Cref{prop:induction-is-cartesian}, this square is cartesian, and since $i \in \OO_m/y$, we conclude that $\varepsilon^{\text{ind}}_{\Pi_i \alpha} \in \OO_m/y$ by pullback-stability.

  Next, we must check naturality. So, let $\varphi = [\alpha \xleftarrow{g} \zeta \xrightarrow{f} \beta]$ be an arbitrary morphism in $\A{x}$. Then $\Sigma_i e \varphi = T_{\Sigma_i f} R_{\Sigma_i g}$ and $e \Pi_i \varphi = T_{\Pi_i f} R_{\Pi_i g}$, so we must show that
  \[\begin{tikzcd}[ampersand replacement=\&]
      {\Sigma_i \alpha} \&\& {\Pi_i \alpha} \\
      \\
      {\Sigma_i \beta} \&\& {\Pi_i \beta}
      \arrow["{t_\alpha}", from=1-1, to=1-3]
      \arrow["{T_{\Pi_i f} R_{\Pi_i g}}", from=1-3, to=3-3]
      \arrow["{T_{\Sigma_i f} R_{\Sigma_i g}}"', from=1-1, to=3-1]
      \arrow["{t_\beta}"', from=3-1, to=3-3]
    \end{tikzcd}\]
  commutes in $\U{y}$. First, we will go around the top-right. We have
  \[
    \begin{aligned}
      T_{\Pi_i f} R_{\Pi_i g} t_\alpha & = T_{\Pi_i f} R_{\Pi_i g} N_{\varepsilon^{\mathrm{ind}}_{\Pi_i \alpha}} R_{\Sigma_i \varepsilon^{\mathrm{coind}}_{\alpha}}             & (\text{definition of $t$})                            \\
                                       & = T_{\Pi_i f} N_{\varepsilon^{\mathrm{ind}}_{\Pi_i \zeta}} R_{\Sigma_i i^* \Pi_i g} R_{\Sigma_i \varepsilon^{\mathrm{coind}}_{\alpha}} & (*)                                                   \\
                                       & = T_{\Pi_i f} N_{\varepsilon^{\mathrm{ind}}_{\Pi_i \zeta}} R_{\Sigma_i(\varepsilon^{\mathrm{coind}}_{\alpha} \circ i^* \Pi_i g)}       & (R_a \circ R_b = R_{b \circ a})                       \\
                                       & = T_{\Pi_i f} N_{\varepsilon^{\mathrm{ind}}_{\Pi_i \zeta}} R_{\Sigma_i(g \circ \varepsilon^{\mathrm{coind}}_{\zeta})}                  & (\text{naturality of $\varepsilon^{\mathrm{coind}}$})
    \end{aligned}
  \]
  where the starred equality comes from the square
  \[\begin{tikzcd}[ampersand replacement=\&]
      {\Sigma_i i^* \Pi_i \zeta} \&\& {\Pi_i \zeta} \\
      \\
      {\Sigma_i i^* \Pi_i \alpha} \&\& {\Pi_i \alpha}
      \arrow["{\varepsilon^{\mathrm{ind}}_{\Pi_i \zeta}}", from=1-1, to=1-3]
      \arrow["{\Sigma_i i^* \Pi_i g}"', from=1-1, to=3-1]
      \arrow["{\varepsilon^{\text{ind}}_{\Pi_i \alpha}}"', from=3-1, to=3-3]
      \arrow["{\Pi_i g}", from=1-3, to=3-3]
    \end{tikzcd}\]
  which we know to be cartesian by \Cref{prop:induction-is-cartesian}.

  Next, we go around the bottom-left. To start, we form a cartesian square
  \begin{equation}\label{eq:pullback-of-coinduction-counit}
    \begin{tikzcd}[ampersand replacement=\&]
      \gamma \&\& {i^* \Pi_i \beta} \\
      \\
      \zeta \&\& \beta
      \arrow["{(\varepsilon^{\mathrm{coind}}_{\beta})^* f}", from=1-1, to=1-3]
      \arrow["{h'}"', from=1-1, to=3-1]
      \arrow["{\varepsilon^{\mathrm{coind}}_{\beta}}", from=1-3, to=3-3]
      \arrow["f"', from=3-1, to=3-3]
      \arrow["\lrcorner"{anchor=center, pos=0.125}, draw=none, from=1-1, to=3-3]
    \end{tikzcd}
  \end{equation}
  For convenience, we denote $(\varepsilon^{\text{coind}}_\beta)^* f$ by $f'$. Then by \Cref{prop:induction-is-cartesian},
  \begin{equation}\label{eq:sigma-applied-to-pullback-of-coinduction-counit}
    \begin{tikzcd}[ampersand replacement=\&]
      {\Sigma_i \gamma} \& {\Sigma_i i^* \Pi_i \beta} \\
      {\Sigma_i \zeta} \& {\Sigma_i \beta}
      \arrow["{\Sigma_i h'}"', from=1-1, to=2-1]
      \arrow["{\Sigma_i f}"', from=2-1, to=2-2]
      \arrow["{\Sigma_i f'}", from=1-1, to=1-2]
      \arrow["{\Sigma_i \varepsilon^{\mathrm{coind}}_{\beta}}", from=1-2, to=2-2]
    \end{tikzcd}
  \end{equation}
  is also cartesian. Thus,
  \begin{align*}
    t_\beta T_{\Sigma_i f} R_{\Sigma_i g} & = N_{\varepsilon^{\mathrm{ind}}_{\Pi_i \beta}} R_{\Sigma_i \varepsilon^{\mathrm{coind}}_{\beta}} T_{\Sigma_i f} R_{\Sigma_i g} & (\text{definition of $t$})                                                       \\
                                          & = N_{\varepsilon^{\mathrm{ind}}_{\Pi_i \beta}} T_{\Sigma_i f'} R_{\Sigma_i h'} R_{\Sigma_i g}                                  & (\text{\eqref{eq:sigma-applied-to-pullback-of-coinduction-counit} is cartesian}) \\
                                          & = N_{\varepsilon^{\mathrm{ind}}_{\Pi_i \beta}} T_{\Sigma_i f'} R_{\Sigma_i (g \circ h')}                                       & (R_a \circ R_b = R_{b \circ a})
  \end{align*}
  Next, we will commute the $N_{\varepsilon^{\text{ind}}_{\Pi_i \beta}}$ past the $T_{\Sigma_i f'}$. So, we form a distributor diagram
  \begin{equation}\label{initial-exp-diagram}
    \begin{tikzcd}[ampersand replacement=\&]
      \&\& \bullet \&\& \bullet \\
      \\
      {\Sigma_i \gamma} \&\& {\Sigma_i i^* \Pi_i \beta} \&\& {\Pi_i \beta}
      \arrow["{\varepsilon^{\text{ind}}_{\Pi_i \beta}}"', from=3-3, to=3-5]
      \arrow[from=1-3, to=3-3]
      \arrow["{\Pi_{\varepsilon^{\text{ind}}_{\Pi_i \beta}} \Sigma_i f'}", from=1-5, to=3-5]
      \arrow["{(\Pi_{\varepsilon^{\text{ind}}_{\Pi_i \beta}} \Sigma_i f')^* \varepsilon^{\text{ind}}_{\Pi_i \beta}}", from=1-3, to=1-5]
      \arrow["\lrcorner"{anchor=center, pos=0.125}, draw=none, from=1-3, to=3-5]
      \arrow["{\Sigma_i f'}"', from=3-1, to=3-3]
      \arrow["{\varepsilon^{\text{coind}}_{\Sigma_i f'}}"', from=1-3, to=3-1]
    \end{tikzcd}
  \end{equation}
  By \Cref{prop:Hoyer-Lemma}, we have  a natural isomorphism
  \[\begin{tikzcd}[ampersand replacement=\&]
      {(\C/x)/i^*\Pi_i\beta} \&\& {(\C/y)/\Sigma_i i^* \Pi_i \beta} \\
      \\
      {(\C/y)/\Pi_i i^* \Pi_i\beta} \&\& {(\C/y)/\Pi_i\beta}
      \arrow["{\Sigma_i/i^*\Pi_i\beta}", from=1-1, to=1-3]
      \arrow["{(\eta^{\text{coind}}_{\Pi_i\beta})^*}"', from=3-1, to=3-3]
      \arrow["{\Pi_{\varepsilon^{\text{ind}}_{\Pi_i \beta}}}", from=1-3, to=3-3]
      \arrow["\cong"{description}, draw=none, from=3-1, to=1-3]
      \arrow["{\Pi_i/i^*\Pi_i \beta}"', from=1-1, to=3-1]
    \end{tikzcd}\]
  of functors $(\C/x)/i^* \Pi_i \beta \to (\C/y)/\Pi_i \beta$. The fact that $\Pi_i$ preserves pullbacks gives an isomorphism
  \[(\Pi_i \varepsilon^{\text{coind}}_\beta)^* \to \Pi_i \circ (\varepsilon^{\text{coind}}_\beta)^*\]
  of functors $(\C/x)/\beta \to (\C/y)/\Pi_i i^* \Pi_i \beta$. We can paste this natural isomorphism onto the above square to obtain
  \[\begin{tikzcd}[ampersand replacement=\&]
      \&\& {(\C/x)/i^*\Pi_i\beta} \&\& {(\C/y)/\Sigma_i i^* \Pi_i \beta} \\
      \\
      {(\C/x)/\beta} \&\& {(\C/y)/\Pi_i i^* \Pi_i\beta} \&\& {(\C/y)/\Pi_i\beta}
      \arrow["{\Sigma_i/i^*\Pi_i\beta}", from=1-3, to=1-5]
      \arrow["{(\eta^{\text{coind}}_{\Pi_i\beta})^*}"', from=3-3, to=3-5]
      \arrow["{\Pi_{\varepsilon^{\text{ind}}_{\Pi_i \beta}}}", from=1-5, to=3-5]
      \arrow["\cong"{description}, draw=none, from=3-3, to=1-5]
      \arrow[""{name=0, anchor=center, inner sep=0}, "{\Pi_i/i^*\Pi_i \beta}"', from=1-3, to=3-3]
      \arrow["{(\Pi_i \varepsilon^{\text{coind}}_\beta)^*}"', from=3-1, to=3-3]
      \arrow["{(\varepsilon^{\text{coind}}_\beta)^*}", from=3-1, to=1-3]
      \arrow["\cong"{description}, draw=none, from=3-1, to=0]
    \end{tikzcd}\]
  Then, the unit-counit identities give us
  \[\begin{tikzcd}
      && {(\C/x)/i^*\Pi_i\beta} && {(\C/y)/\Sigma_i i^* \Pi_i \beta} \\
      \\
      {(\C/x)/\beta} && {(\C/y)/\Pi_i i^* \Pi_i\beta} && {(\C/y)/\Pi_i\beta}
      \arrow["{\Sigma_i/i^*\Pi_i\beta}", from=1-3, to=1-5]
      \arrow[""{name=0, anchor=center, inner sep=0}, "{\Pi_i/i^*\Pi_i \beta}"', from=1-3, to=3-3]
      \arrow["{\Pi_{\varepsilon^{\text{ind}}_{\Pi_i \beta}}}", from=1-5, to=3-5]
      \arrow["{(\varepsilon^{\text{coind}}_\beta)^*}", from=3-1, to=1-3]
      \arrow["{(\Pi_i \varepsilon^{\text{coind}}_\beta)^*}"', from=3-1, to=3-3]
      \arrow[""{name=1, anchor=center, inner sep=0}, "{\Pi_i/\beta}"{description}, curve={height=30pt}, from=3-1, to=3-5]
      \arrow["\cong"{description}, draw=none, from=3-3, to=1-5]
      \arrow["{(\eta^{\text{coind}}_{\Pi_i\beta})^*}"', from=3-3, to=3-5]
      \arrow["\cong"{description}, draw=none, from=3-1, to=0]
      \arrow["\cong"{description, pos=0.7}, draw=none, from=1, to=3-3]
    \end{tikzcd}\]
  From this, we have that
  \begin{align}\label{eq:magical-iso}
    \Pi_{\varepsilon^{\text{ind}}_{\Pi_i \beta}} \Sigma_i f' & = \Pi_{\varepsilon^{\text{ind}}_{\Pi_i \beta}} \Sigma_i (\varepsilon^{\text{coind}}_\beta)^* f \\
                                                             & \cong (\eta^{\text{coind}}_{\Pi_i \beta})^* \Pi_i (\varepsilon^{\text{coind}}_\beta)^* f       \\
                                                             & \cong (\eta^{\text{coind}}_{\Pi_i \beta})^* (\Pi_i \varepsilon^{\text{coind}}_\beta)^* f       \\
                                                             & \cong \Pi_i f
  \end{align}
  Now we take \eqref{initial-exp-diagram} and pull back along this isomorphism to obtain
  \[\begin{tikzcd}[ampersand replacement=\&]
      \&\&\& \bullet \&\& {\Pi_i \zeta} \\
      \&\& \bullet \&\& \bullet \\
      \\
      {\Sigma_i \gamma} \&\& {\Sigma_i i^* \Pi_i \beta} \&\& {\Pi_i \beta}
      \arrow["{\varepsilon^{\text{ind}}_{\Pi_i \beta}}"', from=4-3, to=4-5]
      \arrow[from=2-3, to=4-3]
      \arrow["{\Sigma_i f'}"', from=4-1, to=4-3]
      \arrow["{\varepsilon^{\text{coind}}_{\Sigma_i f'}}"', from=2-3, to=4-1]
      \arrow["{\Pi_{\varepsilon^{\text{ind}}_{\Pi_i \beta}} \Sigma_i f'}"', from=2-5, to=4-5]
      \arrow["{(\Pi_{\varepsilon^{\text{ind}}_{\Pi_i \beta}} \Sigma_i f')^* \varepsilon^{\text{ind}}_{\Pi_i \beta}}"', from=2-3, to=2-5]
      \arrow["{\Pi_i f}", from=1-6, to=4-5]
      \arrow["\cong"{description}, from=1-4, to=2-3]
      \arrow[from=1-4, to=1-6]
      \arrow["\cong"{description}, from=1-6, to=2-5]
      \arrow["\lrcorner"{anchor=center, pos=0.125}, draw=none, from=1-4, to=2-5]
    \end{tikzcd}\]
  The composite of the two cartesian squares in this diagram is a cartesian square
  \[\begin{tikzcd}[ampersand replacement=\&]
      \bullet \&\& {\Pi_i \zeta} \\
      \\
      {\Sigma_i i^* \Pi_i \beta} \&\& {\Pi_i \beta}
      \arrow[from=1-1, to=1-3]
      \arrow["{\Pi_i f}", from=1-3, to=3-3]
      \arrow[from=1-1, to=3-1]
      \arrow["{\varepsilon^{\text{ind}}_{\Pi_i \beta}}"', from=3-1, to=3-3]
      \arrow["\lrcorner"{anchor=center, pos=0.125}, draw=none, from=1-1, to=3-3]
    \end{tikzcd}\]
  But now by \Cref{prop:induction-is-cartesian}, this square is isomorphic to
  \[\begin{tikzcd}[ampersand replacement=\&]
      {\Sigma_i i^* \Pi_i \zeta} \&\& {\Pi_i \zeta} \\
      \\
      {\Sigma_i i^* \Pi_i \beta} \&\& {\Pi_i \beta}
      \arrow["{\varepsilon^{\text{ind}}_{\Pi_i \zeta}}", from=1-1, to=1-3]
      \arrow["{\Pi_i f}", from=1-3, to=3-3]
      \arrow["{\Sigma_i i^* \Pi_i f}"', from=1-1, to=3-1]
      \arrow["{\varepsilon^{\text{ind}}_{\Pi_i \beta}}"', from=3-1, to=3-3]
    \end{tikzcd}\]
  This gives a new diagram
  \[\begin{tikzcd}
      && {\Sigma_i i^* \Pi_i \zeta} && {\Pi_i \zeta} \\
      \\
      {\Sigma_i \gamma} && {\Sigma_i i^* \Pi_i \beta} && {\Pi_i \beta}
      \arrow["{\varepsilon^{\text{ind}}_{\Pi_i \zeta}}", from=1-3, to=1-5]
      \arrow["\delta"', from=1-3, to=3-1]
      \arrow["{\Sigma_i i^* \Pi_i f}", from=1-3, to=3-3]
      \arrow["\lrcorner"{anchor=center, pos=0.125}, draw=none, from=1-3, to=3-5]
      \arrow["{\Pi_i f}", from=1-5, to=3-5]
      \arrow["{\Sigma_i f'}"', from=3-1, to=3-3]
      \arrow["{\varepsilon^{\text{ind}}_{\Pi_i \beta}}"', from=3-3, to=3-5]
    \end{tikzcd}\]
  whose encoded bispan (going around the top of the diagram) is isomorphic to the original from \eqref{initial-exp-diagram}. The diagonal morphism $\delta$ factors as
  \[\Sigma_i i^* \Pi_i \zeta \xrightarrow{\cong} \bullet \xrightarrow{\varepsilon^{\text{coind}}_{\Sigma_i f'}} \Sigma_i \gamma,\]
  and underlies a morphism $\Sigma_i i^* \Pi_i f \to \Sigma_i f'$ in $(\C/y)/\Sigma_i i^* \Pi_i \beta$. The isomorphism in this factorization is $(\varepsilon^{\text{ind}}_{\Pi_i \beta})^*$ applied to the isomorphism \eqref{eq:magical-iso} from $\Pi_i f$ to $\Pi_{\varepsilon^{\text{ind}}_{\Pi_i \beta}} \Sigma_i f'$, and so the composite $\delta$ is the adjunct of the isomorphism \eqref{eq:magical-iso} with respect to the adjunction $(\varepsilon^{\text{ind}}_{\Pi_i \beta})^* \dashv \Pi_{\varepsilon^{\text{ind}}_{\Pi_i \beta}}$.

  Also, by \Cref{lem:slogan}, $\delta$ is $\Sigma_i$ applied to a morphism $d : i^* \Pi_i f \to f'$ in $(\C/x)/i^*\Pi_i \beta$. This morphism $d$ fits in the diagram
  \[\begin{tikzcd}[ampersand replacement=\&]
      {i^* \Pi_i \zeta} \\
      \& \gamma \&\& {i^* \Pi_i \beta} \\
      \\
      \& \zeta \&\& \beta
      \arrow["{f'}", from=2-2, to=2-4]
      \arrow["{h'}"', from=2-2, to=4-2]
      \arrow["{\varepsilon^{\mathrm{coind}}_{\beta}}", from=2-4, to=4-4]
      \arrow["f"', from=4-2, to=4-4]
      \arrow["\lrcorner"{anchor=center, pos=0.125}, draw=none, from=2-2, to=4-4]
      \arrow["d"', from=1-1, to=2-2]
      \arrow["{i^* \Pi_i f}", from=1-1, to=2-4]
    \end{tikzcd}\]
  and, by tracing through the factorization above via the proof of \Cref{prop:Hoyer-Lemma}, we have $h' \circ d = \varepsilon^{\text{coind}}_\zeta$. We conclude that
  \[N_{\varepsilon^{\text{ind}}_{\Pi_i \beta}} T_{\Sigma_i f'} = T_{\Pi_i f} N_{\varepsilon^{\text{ind}}_{\Pi_i \zeta}} R_{\Sigma_i d}.\]
  Over all, this yields
  \begin{align*}
    t_\beta T_{\Sigma_i f} R_{\Sigma_i g}
     & = N_{\varepsilon^{\mathrm{ind}}_{\Pi_i \beta}} T_{\Sigma_i f'} R_{\Sigma_i (g \circ h')}                              \\
     & = T_{\Pi_i f} N_{\varepsilon^{\mathrm{ind}}_{\Pi_i \zeta}} R_{\Sigma_i d} R_{\Sigma_i (g \circ h')}                   \\
     & = T_{\Pi_i g} N_{\varepsilon^{\mathrm{ind}}_{\Pi_i \zeta}} R_{\Sigma_i (g \circ h' \circ d)}                          \\
     & = T_{\Pi_i g} N_{\varepsilon^{\mathrm{ind}}_{\Pi_i \zeta}} R_{\Sigma_i (g \circ \varepsilon^{\text{coind}}_{\zeta})},
  \end{align*}
  which is precisely the expression we found earlier for $T_{\Pi_i f} R_{\Pi_i g} t_\alpha$.
\end{proof}

With this in place, the definition of $\omega$ is straightforward.

\begin{defn}
  We define $\omega : \U{x}(\alpha, {-}) \to \U{y}(\Sigma_i \alpha, \Pi_i {-})$ to be the composite
  \[\U{x}(\alpha, e {-}) \xrightarrow{\U{\Sigma_i}} \U{y}(\Sigma_i \alpha, \Sigma_i e {-}) \xrightarrow{t_*} \U{y}(\Sigma_i \alpha, e \Pi_i {-}),\]
  where we recall that $\U{\Sigma_i} : \U{x} \to \U{y}$ is a well-defined functor by \Cref{cor:sigma-induced-on-P}.
\end{defn}

\subsubsection{Universality}

Now we must show that $\omega$ is the initial natural transformation from $\U{x}(\alpha,{-})$ to a functor precomposed with $\Pi_i$. Thus, let $f : \A{y} \to \Set$ be an arbitrary functor, and let $\tau : \U{x}(\alpha,{-}) \to \Pi_i^* F$ be an arbitrary natural transformation. We then aim to show that there exists a unique natural transformation $\sigma : \U{y}(\Sigma_i \alpha,{-}) \to F$ such that $\sigma \Pi_i \circ \omega = \tau$.

So, suppose we are given some equivalence class of bispans
\[[\Sigma_i \alpha \xleftarrow{h} a \to \bullet \to \beta] \in \U{y}(\Sigma_i \alpha, \beta).\]
By \Cref{lem:slogan}, the map $h : a \to \Sigma_i \alpha$ can also be (canonically) expressed as $\Sigma_i h : \Sigma_i \alpha h \to \Sigma_i \alpha$, where now $h : \alpha h \to \alpha$ is a morphism in $\C/x$. In other words, elements of $\U{y}(\Sigma_i \alpha, \beta)$ can be canonically expressed in the form $T_f N_g R_{\Sigma_i h} = [\Sigma_i \alpha \xleftarrow{\Sigma_i h} \Sigma_i a \xrightarrow{g} b \xrightarrow{f} c]$ for some morphism $h : a \to \alpha$ in $\C/x$. Now, if $\sigma$ is to exist as desired, it will need to satisfy a naturality square
\[\begin{tikzcd}[ampersand replacement=\&]
    {\U{y}(\Sigma_i \alpha, b)} \&\&\&\& {F(b)} \\
    \& \textcolor{rgb,255:red,92;green,92;blue,214}{N_g R_{\Sigma_i h}} \&\& \textcolor{rgb,255:red,92;green,92;blue,214}{\sigma_b(N_gR_{\Sigma_i h})} \\
    \\
    \& \textcolor{rgb,255:red,92;green,92;blue,214}{T_f N_g R_{\Sigma_i h}} \&\& \textcolor{rgb,255:red,92;green,92;blue,214}{\begin{array}{c}F(T_f)(\sigma_b(N_gR_{\Sigma_i h})) \\ = \sigma_\beta(T_f N_g R_{\Sigma_i h})\end{array}} \\
    {\U{y}(\Sigma_i \alpha, \beta)} \&\&\&\& {F(\beta)}
    \arrow["{\sigma_b}", from=1-1, to=1-5]
    \arrow["{(T_f)_*}"', from=1-1, to=5-1]
    \arrow["{\sigma_\beta}"', from=5-1, to=5-5]
    \arrow["{F(T_f)}", from=1-5, to=5-5]
    \arrow[draw={rgb,255:red,92;green,92;blue,214}, shorten <=13pt, shorten >=13pt, maps to, from=2-2, to=2-4]
    \arrow[draw={rgb,255:red,92;green,92;blue,214}, shorten <=7pt, shorten >=7pt, maps to, from=2-2, to=4-2]
    \arrow[draw={rgb,255:red,92;green,92;blue,214}, shorten <=9pt, shorten >=9pt, maps to, from=4-2, to=4-4]
    \arrow[draw={rgb,255:red,92;green,92;blue,214}, shorten <=6pt, shorten >=6pt, maps to, from=2-4, to=4-4]
  \end{tikzcd}\]
and thus it suffices to define $\sigma$ only on morphisms of the form $N_g R_{\Sigma_i h}$. We will next argue that the behaviour of $\sigma$ on such morphisms is completely forced. To do so, we make use of the following observation:

\begin{lem}
  \[R_{\eta^{\mathrm{coind}}_b} \circ \omega(N_{g^!} R_h) = N_g R_{\Sigma_i h}\]
  where $g^! : a \to i^*b$ is the adjunct of $g : \Sigma_i a \to b$.
\end{lem}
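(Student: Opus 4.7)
The plan is to unfold all the definitions on the left-hand side, commute an $R$ past an $N$ via Beck--Chevalley, and then collapse the resulting expression using a triangle identity. First I would write out $\omega(N_{g^!}R_h)$ by plugging in the definition $\omega = (t_{-})_* \circ \U{\Sigma_i}$ together with the explicit formula for $t_{i^*b}$ from \Cref{prop:t}:
\[\omega(N_{g^!}R_h) = N_{\varepsilon^{\text{ind}}_{\Pi_i i^*b}}\, R_{\Sigma_i \varepsilon^{\text{coind}}_{i^*b}}\, N_{\Sigma_i g^!}\, R_{\Sigma_i h}.\]
Precomposing with $R_{\eta^{\text{coind}}_b}$, the task reduces to simplifying this five-term composite to $N_g R_{\Sigma_i h}$.

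The key computational step is to commute $R_{\eta^{\text{coind}}_b}$ past $N_{\varepsilon^{\text{ind}}_{\Pi_i i^* b}}$. For this I would identify the pullback of $\eta^{\text{coind}}_b : b \to \Pi_i i^* b$ along $\varepsilon^{\text{ind}}_{\Pi_i i^* b}$ with the naturality square of $\varepsilon^{\text{ind}}$ applied to the morphism $\eta^{\text{coind}}_b$, namely
\[\begin{tikzcd}[ampersand replacement=\&]
\Sigma_i i^* b \& b \\
\Sigma_i i^* \Pi_i i^* b \& \Pi_i i^* b
\arrow["{\varepsilon^{\text{ind}}_b}", from=1-1, to=1-2]
\arrow["{\Sigma_i i^* \eta^{\text{coind}}_b}"', from=1-1, to=2-1]
\arrow["{\eta^{\text{coind}}_b}", from=1-2, to=2-2]
\arrow["{\varepsilon^{\text{ind}}_{\Pi_i i^* b}}"', from=2-1, to=2-2]
\end{tikzcd}\]
which is cartesian by \Cref{prop:induction-is-cartesian}. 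This yields the Beck--Chevalley identity $R_{\eta^{\text{coind}}_b} \circ N_{\varepsilon^{\text{ind}}_{\Pi_i i^*b}} = N_{\varepsilon^{\text{ind}}_b} \circ R_{\Sigma_i i^* \eta^{\text{coind}}_b}$ in $\U{y}$.

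Substituting this into the expression, the two consecutive $R$'s collapse into $R_{\Sigma_i(\varepsilon^{\text{coind}}_{i^*b} \circ i^* \eta^{\text{coind}}_b)}$, which equals $R_{\mathrm{id}}$ by the triangle identity for the adjunction $i^* \dashv \Pi_i$. The expression then becomes $N_{\varepsilon^{\text{ind}}_b} N_{\Sigma_i g^!} R_{\Sigma_i h}$, and combining the two $N$'s gives $N_{\varepsilon^{\text{ind}}_b \circ \Sigma_i g^!} R_{\Sigma_i h} = N_g R_{\Sigma_i h}$ by the defining property $g = \varepsilon^{\text{ind}}_b \circ \Sigma_i g^!$ of the adjunct $g^!$.

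The argument is essentially mechanical once the correct cartesian square is spotted, so I do not anticipate a real obstacle; the only subtlety is bookkeeping to ensure that each of $\varepsilon^{\text{ind}}_b$ and $\varepsilon^{\text{ind}}_{\Pi_i i^*b}$ lies in $\C_m/y$ so that the $N$-morphisms are legal in $\U{y}$, but this is already covered by the pullback-stability argument used to justify well-definedness of $t_\alpha$ in \Cref{prop:t}.
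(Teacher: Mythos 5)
Your proof is correct, and it takes a genuinely simpler route than the paper's. Both arguments hinge on the same Beck--Chevalley identity
\[R_{\eta^{\mathrm{coind}}_b}\,N_{\varepsilon^{\mathrm{ind}}_{\Pi_i i^*b}} = N_{\varepsilon^{\mathrm{ind}}_b}\,R_{\Sigma_i i^* \eta^{\mathrm{coind}}_b}\]
(coming from the cartesian naturality square of $\varepsilon^{\mathrm{ind}}$ applied to $\eta^{\mathrm{coind}}_b$), but the order in which you apply it matters. The paper first commutes $R_{\Sigma_i \varepsilon^{\mathrm{coind}}_{i^*b}}$ past $N_{\Sigma_i g^!}$, which introduces auxiliary pullback data $(p,q)$; after then applying the Beck--Chevalley above, it is forced to build a second cartesian square (producing a section $s$ with $\Sigma_i q \circ s = \mathrm{id}$) and invoke the triangle identity to show the residual term is trivial. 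By applying Beck--Chevalley \emph{first}, you leave $R_{\Sigma_i i^* \eta^{\mathrm{coind}}_b}$ and $R_{\Sigma_i \varepsilon^{\mathrm{coind}}_{i^*b}}$ adjacent, where the $i^* \dashv \Pi_i$ triangle identity collapses them to an identity in one step via $R_a \circ R_b = R_{b\circ a}$, and the remaining $N$'s combine directly using $\varepsilon^{\mathrm{ind}}_b \circ \Sigma_i g^! = g$. This avoids the two auxiliary pullbacks entirely. Your remark about the legality of $N_{\varepsilon^{\mathrm{ind}}_b}$ is also well-placed: it follows from pullback-stability of $\C_m$ exactly as for $N_{\varepsilon^{\mathrm{ind}}_{\Pi_i i^*b}}$, since $\varepsilon^{\mathrm{ind}}_b$ is a pullback of the latter along $\eta^{\mathrm{coind}}_b$.
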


\begin{proof}
  By definition,
  \[R_{\eta^{\mathrm{coind}}_b} \circ \omega(N_{g^!} R_h) = R_{\eta^{\mathrm{coind}}_b} \circ t_{i^* b} \circ N_{\Sigma_i g^!} R_{\Sigma_i h} = R_{\eta^{\mathrm{coind}}_b} N_{\varepsilon^{\mathrm{ind}}_{\Pi_i i^* b}} R_{\Sigma_i \varepsilon^{\mathrm{coind}}_{i^* b}} N_{\Sigma_i g^!} R_{\Sigma_i h}.\]
  Now take a cartesian square
  \[\begin{tikzcd}[ampersand replacement=\&]
      c \& {i^* \Pi_i i^* b} \\
      a \& {i^*b}
      \arrow["{g^!}"', from=2-1, to=2-2]
      \arrow["{\varepsilon^{\text{coind}}_{i^* b}}", from=1-2, to=2-2]
      \arrow["p", from=1-1, to=1-2]
      \arrow["q"', from=1-1, to=2-1]
      \arrow["\lrcorner"{anchor=center, pos=0.125}, draw=none, from=1-1, to=2-2]
    \end{tikzcd}\]
  and apply $\Sigma_i$ (using \Cref{prop:induction-is-cartesian}) to obtain a cartesian square
  \[\begin{tikzcd}[ampersand replacement=\&]
      {\Sigma_i c} \& {\Sigma_i i^* \Pi_i i^* b} \\
      {\Sigma_i a} \& {\Sigma_i i^*b}
      \arrow["{\Sigma_i g^!}"', from=2-1, to=2-2]
      \arrow["{\Sigma_i \varepsilon^{\text{coind}}_{i^* b}}", from=1-2, to=2-2]
      \arrow["{\Sigma_i p}", from=1-1, to=1-2]
      \arrow["{\Sigma_i q}"', from=1-1, to=2-1]
      \arrow["\lrcorner"{anchor=center, pos=0.125}, draw=none, from=1-1, to=2-2]
    \end{tikzcd}\]
  showing that
  \[R_{\Sigma_i \varepsilon^{\mathrm{coind}}_{i^* b}} N_{\Sigma_i g^!} = N_{\Sigma_i p} R_{\Sigma_i q}.\]
  Thus,
  \begin{align*}
    R_{\eta^{\mathrm{coind}}_b} \circ \omega(N_{g^!} R_h)
     & = R_{\eta^{\mathrm{coind}}_b} N_{\varepsilon^{\mathrm{ind}}_{\Pi_i i^* b}} R_{\Sigma_i \varepsilon^{\mathrm{coind}}_{i^* b}} N_{\Sigma_i g^!} R_{\Sigma_i h} \\
     & = R_{\eta^{\mathrm{coind}}_b} N_{\varepsilon^{\mathrm{ind}}_{\Pi_i i^* b}} N_{\Sigma_i p} R_{\Sigma_i q} R_{\Sigma_i h}.
  \end{align*}
  Now, by \Cref{prop:induction-is-cartesian}, the naturality square
  \[\begin{tikzcd}[ampersand replacement=\&]
      {\Sigma_i i^* b} \& b \\
      {\Sigma_i i^* \Pi_i i^* b} \& {\Pi_i i^* b}
      \arrow["{\varepsilon^{\text{ind}}_{\Pi_i i^* b}}"', from=2-1, to=2-2]
      \arrow["{\eta^{\mathrm{coind}}_b}", from=1-2, to=2-2]
      \arrow["{\varepsilon^{\mathrm{ind}}_b}", from=1-1, to=1-2]
      \arrow["{\Sigma_i i^* \eta^{\mathrm{coind}}_b}"', from=1-1, to=2-1]
    \end{tikzcd}\]
  for $\varepsilon^{\mathrm{ind}}$ is cartesian. Thus,
  \[R_{\eta^{\mathrm{coind}}_b} N_{\varepsilon^{\mathrm{ind}}_{\Pi_i i^* b}} = N_{\varepsilon^{\mathrm{ind}}_b} R_{\Sigma_i i^* \eta^{\mathrm{coind}}_b},\]
  and so
  \begin{align*}
    R_{\eta^{\mathrm{coind}}_b} \circ \omega(N_{g^!} R_h)
     & = R_{\eta^{\mathrm{coind}}_b} N_{\varepsilon^{\mathrm{ind}}_{\Pi_i i^* b}} N_{\Sigma_i p} R_{\Sigma_i q} R_{\Sigma_i h}   \\
     & = N_{\varepsilon^{\mathrm{ind}}_b} R_{\Sigma_i i^* \eta^{\mathrm{coind}}_b} N_{\Sigma_i p} R_{\Sigma_i q} R_{\Sigma_i h}.
  \end{align*}
  Now, form a further pullback
  \[\begin{tikzcd}[ampersand replacement=\&]
      \bullet \& {\Sigma_i i^* b} \\
      {\Sigma_i c} \& {\Sigma_i i^* \Pi_i i^* b} \\
      {\Sigma_i a} \& {\Sigma_i i^*b}
      \arrow["{\Sigma_i g^!}"', from=3-1, to=3-2]
      \arrow["{\Sigma_i \varepsilon^{\text{coind}}_{i^* b}}", from=2-2, to=3-2]
      \arrow["{\Sigma_i p}", from=2-1, to=2-2]
      \arrow["{\Sigma_i q}"', from=2-1, to=3-1]
      \arrow["\lrcorner"{anchor=center, pos=0.125}, draw=none, from=2-1, to=3-2]
      \arrow["{\Sigma_i i^* \eta^{\mathrm{coind}}_b}", from=1-2, to=2-2]
      \arrow[from=1-1, to=1-2]
      \arrow[from=1-1, to=2-1]
      \arrow["\lrcorner"{anchor=center, pos=0.125}, draw=none, from=1-1, to=2-2]
    \end{tikzcd}\]
  By the unit-counit identities, the composite of the right column is
  \[\Sigma_i \varepsilon^{\mathrm{coind}}_{i^*b} \circ \Sigma_i i^* \eta^{\mathrm{coind}}_b = \Sigma_i (\varepsilon^{\mathrm{coind}}_{i^*b} \circ i^* \eta^{\mathrm{coind}}_b) = \Sigma_i \id_{i^*b} = \id_{\Sigma_i i^* b},\]
  and thus (up to isomorphism) this diagram is of the form
  \[\begin{tikzcd}[ampersand replacement=\&]
      {\Sigma_i a} \& {\Sigma_i i^* b} \\
      {\Sigma_i c} \& {\Sigma_i i^* \Pi_i i^* b} \\
      {\Sigma_i a} \& {\Sigma_i i^*b}
      \arrow["{\Sigma_i g^!}"', from=3-1, to=3-2]
      \arrow["{\Sigma_i \varepsilon^{\text{coind}}_{i^* b}}", from=2-2, to=3-2]
      \arrow["{\Sigma_i p}", from=2-1, to=2-2]
      \arrow["{\Sigma_i q}"', from=2-1, to=3-1]
      \arrow["\lrcorner"{anchor=center, pos=0.125}, draw=none, from=2-1, to=3-2]
      \arrow["{\Sigma_i i^* \eta^{\mathrm{coind}}_b}", from=1-2, to=2-2]
      \arrow["{\Sigma_i g^!}", from=1-1, to=1-2]
      \arrow["s"', from=1-1, to=2-1]
      \arrow["\lrcorner"{anchor=center, pos=0.125}, draw=none, from=1-1, to=2-2]
    \end{tikzcd}\]
  with $\Sigma_i q \circ s = \id_{\Sigma_i a}$. Now we have that $R_{\Sigma_i i^* \eta^{\mathrm{coind}}_b} N_{\Sigma_i p} = N_{\Sigma_i g^!} R_s$, and so
  \begin{align*}
    R_{\eta^{\mathrm{coind}}_b} \circ \omega(N_{g^!} R_h)
     & = N_{\varepsilon^{\mathrm{ind}}_b} N_{\Sigma_i g^!} R_s R_{\Sigma_i q} R_{\Sigma_i h}          \\
     & = N_{\varepsilon^{\mathrm{ind}}_b \circ \Sigma_i g^!} R_{\Sigma_i h \circ \Sigma_i q \circ s}.
  \end{align*}
  By definition of $\varepsilon^{\mathrm{ind}}$, $\varepsilon^{\mathrm{ind}}_b \circ \Sigma_i g^! = g$, and since $\Sigma_i q \circ s$ is an identity, we conclude that
  \[
    R_{\eta^{\mathrm{coind}}_b} \circ \omega(N_{g^!} R_h)
    = N_{\varepsilon^{\mathrm{ind}}_b \circ \Sigma_i g^!} R_{\Sigma_i h \circ (\Sigma_i q \circ s)}
    = N_g R_{\Sigma_i h},
  \]
  exactly as desired.
\end{proof}

Now, consider the following diagram.

\[\begin{tikzcd}[ampersand replacement=\&]
    {\U{x}(\alpha, i^* b)} \\
    {\U{y}(\Sigma_i a, \Pi_i i^* b)} \& {F \Pi_i i^* b} \\
    {\U{y}(\Sigma_a,b)} \& Fb
    \arrow["\omega", from=1-1, to=2-1]
    \arrow["{(R_{\eta^{\mathrm{coind}}_{b}})_*}", from=2-1, to=3-1]
    \arrow["{\sigma_{\Pi i^* b}}"', from=2-1, to=2-2]
    \arrow["{\sigma_b}"', from=3-1, to=3-2]
    \arrow["{F(R_{\eta^{\mathrm{coind}}_{b}})}", from=2-2, to=3-2]
    \arrow["{\tau_{i^* b}}", from=1-1, to=2-2]
  \end{tikzcd}\]

In order for $\sigma$ to be natural and satisfy $\sigma \Pi_i \circ \omega = \tau$, this diagram must commute. Now, starting with the element $N_{g!} R_h \in \U{x}(\alpha, i^*b)$, we chase
\[\begin{tikzcd}[ampersand replacement=\&]
    \textcolor{rgb,255:red,153;green,92;blue,214}{N_{g^!} R_h} \\
    \textcolor{rgb,255:red,153;green,92;blue,214}{\omega(N_{g^!} R_h)} \& \textcolor{rgb,255:red,153;green,92;blue,214}{\tau_{i^* b}(N_{g^!} R_h)} \\
    \textcolor{rgb,255:red,153;green,92;blue,214}{N_g R_{\Sigma_i h}} \& \textcolor{rgb,255:red,153;green,92;blue,214}{F(R_{\eta^{\mathrm{coind}}_b})(\tau_{i^* b}(N_{g^!} R_h))}
    \arrow[draw={rgb,255:red,153;green,92;blue,214}, maps to, from=1-1, to=2-1]
    \arrow[draw={rgb,255:red,153;green,92;blue,214}, maps to, from=2-1, to=3-1]
    \arrow[draw={rgb,255:red,153;green,92;blue,214}, maps to, from=1-1, to=2-2]
    \arrow[draw={rgb,255:red,153;green,92;blue,214}, maps to, from=2-2, to=3-2]
  \end{tikzcd}\]
and conclude that $\sigma_b(N_g R_{\Sigma_i h})$ must equal $F(R_{\eta^{\mathrm{coind}}_b})(\tau_{i^* b}(N_{g^!} R_h))$. This completely determines $\sigma$, which we can now write a complete formula for:

\begin{defn}
  $\sigma : \U{y}(\Sigma_i \alpha, {-}) \to F$ is given by the components
  \[\sigma_\beta([\Sigma_i \alpha \xleftarrow{\Sigma_i h} \Sigma_i a \xrightarrow{g} b \xrightarrow{f} \beta]) = F(T_f R_{\eta^{\mathrm{coind}}_b})(\tau_{i^* b}(N_{g^!} R_h)),\]
  where $g^! : a \to i^*b$ denotes the adjunct of $g : \Sigma_i a \to b$.
\end{defn}

We must show that $\sigma$ is natural and satisfies $\sigma \Pi_i \circ \omega = \tau$. Once that is done, the above argument implies that $\sigma$ is the unique natural transformation satisfying $\sigma \Pi_i \circ \omega = \tau$, so this will complete the proof.

First, to show naturality of $\sigma$, let $[\beta \xleftarrow{q} \zeta \xrightarrow{p} \beta']$ be an arbitrary morphism in $\A{\C/y}$. Then we wish to show that
\[\begin{tikzcd}
    {\U{y}(\Sigma_i \alpha, \beta)} && {F\beta} \\
    \\
    {\U{y}(\Sigma_i \alpha, \beta')} && {F\beta'}
    \arrow["{\sigma_\beta}", from=1-1, to=1-3]
    \arrow["{(T_p R_q)_*}"', from=1-1, to=3-1]
    \arrow["{F(T_p R_q)}", from=1-3, to=3-3]
    \arrow["{\sigma_{\beta'}}"', from=3-1, to=3-3]
  \end{tikzcd}\]
commutes. So, let $\varphi := [\Sigma_i \alpha \xleftarrow{\Sigma_i h} \Sigma_i a \xrightarrow{g} b \xrightarrow{f} \beta] \in \U{y}(\Sigma_i \alpha, \beta)$ be arbitrary. Going around the bottom-left, we have
\begin{multline*}
  \sigma_{\beta'}((T_p R_q)_*(\varphi))
  = \sigma_{\beta'}(T_p R_q T_f N_g R_{\Sigma_i h})
  = \sigma_{\beta'}(T_p T_{f'} R_{q'} N_g R_{\Sigma_i h})\\
  = \sigma_{\beta'}(T_{p \circ f'} N_{g'} R_{\Sigma_i h \circ q''})
  = \sigma_{\beta'}(T_{p \circ f'} N_{g'} R_{\Sigma_i (h \circ q'')})
  = F(T_{p \circ f'} R_{\eta^{\mathrm{coind}}_z})(\tau_{i^* z}(N_{g'^!} R_{h \circ q''}))\\
  = F(T_{p \circ f'} R_{\eta^{\mathrm{coind}}_z})(\tau_{i^* z}(N_{g'^!} R_{q''} R_h))
\end{multline*}
where $f', g', q', q''$ come from forming pullback squares
\[\begin{tikzcd}[ampersand replacement=\&]
    \&\& \bullet \& z \\
    \& {\Sigma_i a} \& b \&\& \zeta \\
    {\Sigma_i \alpha} \&\&\& \beta
    \arrow["{\Sigma_i h}"', from=2-2, to=3-1]
    \arrow["g", from=2-2, to=2-3]
    \arrow["f", from=2-3, to=3-4]
    \arrow["q"', from=2-5, to=3-4]
    \arrow["{f'}", from=1-4, to=2-5]
    \arrow["{q'}"', from=1-4, to=2-3]
    \arrow["{g'}", from=1-3, to=1-4]
    \arrow["{q''}"', from=1-3, to=2-2]
    \arrow["\lrcorner"{anchor=center, pos=0.125, rotate=-45}, draw=none, from=1-4, to=3-4]
    \arrow["\lrcorner"{anchor=center, pos=0.125, rotate=-45}, draw=none, from=1-3, to=2-3]
    \arrow["{(\text{A})}"{description}, draw=none, from=1-4, to=3-4]
  \end{tikzcd}\]

Next, going around the top-right of the square, we have
\[
  \begin{aligned}
    F(T_p R_q)(\sigma_{\beta}(\varphi))
     & = F(T_p R_q)(\sigma_{\beta}(T_f N_g R_{\Sigma_i h}))                                             & (\text{Definition of $\varphi$})               \\
     & = F(T_p R_q)(F(T_f R_{\eta^{\mathrm{coind}}_b})(\tau_{i^*b}(N_{g^!} R_h)))                       & (\text{Definition of $\sigma$})                \\
     & = F(T_p R_q T_f R_{\eta^{\mathrm{coind}}_b})(\tau_{i^*b}(N_{g^!} R_h))                           & (\text{Functoriality of $F$})                  \\
     & = F(T_p T_{f'} R_{q'} R_{\eta^{\mathrm{coind}}_b})(\tau_{i^*b}(N_{g^!} R_h))                     & (\text{(A) is cartesian})                      \\
     & = F(T_{p} \circ T_{f'} R_{\eta^{\mathrm{coind}}_b \circ q'})(\tau_{i^*b}(N_{g^!} R_h))           & (R_{a \circ b} = R_b \circ R_a)                \\
     & = F(T_{p \circ f'} R_{\eta^{\mathrm{coind}}_b \circ q'})(\tau_{i^*b}(N_{g^!} R_h))               & (T_{a \circ b} = T_a \circ T_b)                \\
     & = F(T_{p \circ f'} R_{\Pi_i i^* q' \circ \eta^{\mathrm{coind}}_z})(\tau_{i^*b}(N_{g^!} R_h))     & (\text{Naturality of $\eta^{\mathrm{coind}}$}) \\
     & = F(T_{p \circ f'} R_{\eta^{\mathrm{coind}}_z} R_{\Pi_i i^* q'})(\tau_{i^*b}(N_{g^!} R_h))       & (\text{$R_a \circ R_b = R_{b \circ a}$})       \\
     & = F(T_{p \circ f'} R_{\eta^{\mathrm{coind}}_z} \circ \Pi_i R_{i^* q'})(\tau_{i^*b}(N_{g^!} R_h)) & (\text{$\Pi_i R_a = R_{\Pi_i a}$})             \\
     & = F(T_{p \circ f'} R_{\eta^{\mathrm{coind}}_z})(F(\Pi_i R_{i^* q'})(\tau_{i^*b}(N_{g^!} R_h)))   & (\text{Functoriality of $F$})                  \\
     & = F(T_{p \circ f'} R_{\eta^{\mathrm{coind}}_z})(\tau_{i^*z}(R_{i^* q'} N_{g^!} R_h))             & (\text{Naturality of $\tau$})                  \\
  \end{aligned}
\]
Now the cartesian square
\[\begin{tikzcd}[ampersand replacement=\&]
    \bullet \& z \\
    {\Sigma_i a} \& b
    \arrow["{g'}", from=1-1, to=1-2]
    \arrow["{q'}", from=1-2, to=2-2]
    \arrow["g"', from=2-1, to=2-2]
    \arrow["{q''}"', from=1-1, to=2-1]
  \end{tikzcd}\]
above yields (by \Cref{cor:adjunct-is-cartesian}) a cartesian square
\[\begin{tikzcd}[ampersand replacement=\&]
    \bullet \& {i^* z} \\
    a \& {i^* b}
    \arrow["{g'^!}", from=1-1, to=1-2]
    \arrow["{i^* q'}", from=1-2, to=2-2]
    \arrow["{g^!}"', from=2-1, to=2-2]
    \arrow["{q''}"', from=1-1, to=2-1]
  \end{tikzcd}\]
Thus,
\[
  F(T_p R_q)(\sigma_{\beta}(\varphi))
  = F(T_{p \circ f'} R_{\eta^{\mathrm{coind}}_z})(\tau_{i^*z}(R_{i^* q'} N_{g^!} R_h))
  = F(T_{p \circ f'} R_{\eta^{\mathrm{coind}}_z})(\tau_{i^*z}(N_{g'^!} R_{q''} R_h)),
\]
as desired. This establishes the naturality of $\sigma$, and it only remains to be shown that $\sigma \Pi_i \circ \omega = \tau$.

So, let $\beta \in \A{x}$ and $[\alpha \xleftarrow{h} a \xrightarrow{g} b \xrightarrow{f} \beta] \in \U{x}(\alpha,\beta)$ be arbitrary. Then
\begin{align*}
  \omega_b(N_g R_h) = t_b N_{\Sigma_i h} R_{\Sigma_i h}
   & = N_{\varepsilon^{\mathrm{ind}}_{\Pi_i b}} R_{\Sigma_i
  \varepsilon^{\mathrm{coind}}_b} N_{\Sigma_i g} R_{\Sigma_ h}                                \\
   & = N_{\varepsilon^{\mathrm{ind}}_{\Pi_i b}} N_{\Sigma_i g'} R_{\Sigma_i e} R_{\Sigma_i h} \\
   & = N_{\varepsilon^{\mathrm{ind}}_{\Pi_i b} \circ \Sigma_i g'} R_{\Sigma_i (h \circ e)}
\end{align*}
where $g'$ and $e$ come from choosing a cartesian square
\[\begin{tikzcd}[ampersand replacement=\&]
    z \& {i^* \Pi_i b} \\
    a \& b
    \arrow["g"', from=2-1, to=2-2]
    \arrow["{\varepsilon^{\mathrm{coind}}_b}", from=1-2, to=2-2]
    \arrow["{g'}", from=1-1, to=1-2]
    \arrow["e"', from=1-1, to=2-1]
  \end{tikzcd}\]
Then
\begin{align*}
  \sigma_{\Pi_i b}(\omega_b(N_g R_h)) & = F(R_{\eta^{\mathrm{coind}}_{\Pi_i b}})(\tau_{i^* \Pi_i b}(N_{(\varepsilon^{\mathrm{ind}}_{\Pi_i b} \circ \Sigma_i g')^!} R_{h \circ e})) \\
                                      & = F(R_{\eta^{\mathrm{coind}}_{\Pi_i b}})(\tau_{i^* \Pi_i b}(N_{(\varepsilon^{\mathrm{ind}}_{\Pi_i b} \circ \Sigma_i g')^!} R_e R_h)).
\end{align*}
Where we recall that $(\varepsilon^{\mathrm{ind}}_{\Pi_i b} \circ \Sigma_i g')^!$ denotes the adjunct of $\varepsilon^{\mathrm{ind}}_{\Pi_i b} \circ \Sigma_i g'$ under the adjunction $\Sigma_i \dashv i^*$. Of course, by definition of $\varepsilon^{\mathrm{ind}}$, we have $(\varepsilon^{\mathrm{ind}}_{\Pi_i b} \circ \Sigma_i g')^! = g'$. Thus,
\[\sigma_{\Pi_i b}(\omega_b(N_g R_h)) = F(R_{\eta^{\mathrm{coind}}_{\Pi_i b}})(\tau_{i^* \Pi_i b}(N_{g'} R_e R_h)) = F(R_{\eta^{\mathrm{coind}}_{\Pi_i b}})(\tau_{i^* \Pi_i b}(R_{\varepsilon^{\mathrm{coind}}_b} N_g R_h)).\]
Then, by naturality of $\tau$, we obtain
\[\sigma_{\Pi_i b}(\omega_b(N_g R_h)) = F(R_{\eta^{\mathrm{coind}}_{\Pi_i b}})(F(\Pi_i R_{\varepsilon^{\mathrm{coind}}_b}) (\tau_b(N_g R_h))) = F(R_{\eta^{\mathrm{coind}}_{\Pi_i b}} R_{\Pi_i \varepsilon^{\mathrm{coind}}_b})(\tau_b(N_g R_h)).\]
By the unit-counit identities, $\Pi_i \varepsilon^{\mathrm{coind}}_b \circ \eta^{\mathrm{coind}}_{\Pi_i b} = \id_{\Pi_b}$. Thus, we have
\[\sigma_{\Pi_i b}(\omega_b(N_g R_h)) = \tau_b(N_g R_h).\]
Finally, by naturality of $\tau$, $\omega$, and $\sigma$, we have
\begin{align*}
  \sigma_{\Pi_i \beta}(\omega_\beta(T_f N_g R_h))
   & = \sigma_{\Pi_i \beta}((\Pi_i T_f)_* \omega_b(N_g R_h) \\
   & = F(\Pi_i T_f)(\sigma_{\Pi_i b}(\omega_b(N_g R_h)))    \\
   & = F(\Pi_i T_f)(\tau_b(N_gR_h))                         \\
   & = \tau_{\beta}(T_f N_g R_h).
\end{align*}
Since $\beta$ and $[\alpha \xleftarrow{h} a \xrightarrow{g} b \xrightarrow{f} \beta]$ were arbitrary, we conclude that $\sigma \Pi_i \circ \omega = \tau$. This completes the proof.

\subsection{Naturality in \texorpdfstring{$\alpha$}{α}}

We have now exhibited $\U{y}(\Sigma_i \alpha, \Pi_i {-})$ as the left Kan extension of $\U{x}(\alpha,{-})$ along $\Pi_i$, i.e. we have demonstrated that \eqref{eq:main-theorem-semi-square} commutes up to pointwise isomorphism. To show that this isomorphism is natural in $\alpha \in \U{x}^\op$, we need only show that, for any morphism $\varphi : \alpha \to \alpha'$ in $\U{x}$, the natural isomorphisms $\omega$ fit in a commutative square

\[\begin{tikzcd}[ampersand replacement=\&]
    {\U{x}(\alpha',{-})} \&\& {\U{y}(\Sigma_i \alpha, \Pi_i {-})} \\
    \\
    {\U{x}(\alpha,{-})} \&\& {\U{y}(\Sigma_i \alpha, \Pi_i {-})}
    \arrow["\omega", from=1-1, to=1-3]
    \arrow["{\varphi^*}"', from=1-1, to=3-1]
    \arrow["\omega"', from=3-1, to=3-3]
    \arrow["{(\Sigma_i \varphi)^*}", from=1-3, to=3-3]
  \end{tikzcd}\]

However, this is straightforward: by definition, the square above factors as

\[\begin{tikzcd}[ampersand replacement=\&]
    {\U{x}(\alpha',{-})} \& {\U{y}(\Sigma_i\alpha',\Sigma_i{-})} \& {\U{y}(\Sigma_i \alpha, \Pi_i {-})} \\
    \\
    {\U{x}(\alpha,{-})} \& {\U{y}(\Sigma_i \alpha, \Sigma_i {-})} \& {\U{y}(\Sigma_i \alpha, \Pi_i {-})}
    \arrow["{\varphi^*}"', from=1-1, to=3-1]
    \arrow["{(\Sigma_i \varphi)^*}", from=1-3, to=3-3]
    \arrow["{\Sigma_i}", from=1-1, to=1-2]
    \arrow["{t_*}", from=1-2, to=1-3]
    \arrow["{(\Sigma_i \varphi)^*}", from=1-2, to=3-2]
    \arrow["{t_*}"', from=3-2, to=3-3]
    \arrow["{\Sigma_i}"', from=3-1, to=3-2]
  \end{tikzcd}\]

Both sub-squares commute by direct computation.

This completes the proof that \eqref{eq:main-theorem-semi-square} commutes up to natural isomorphism.

\subsection{Separability and Preservation of Mackey Structure}

We have completed the proof of the first half of the theorem, and would now like to show that left Kan extension of semi-Mackey functors along $\A{\Pi_i}$ sends Mackey functors to Mackey functors. This requires some additional assumption on $i$, as shown by the following example.

\begin{exam}
  Consider the morphism $i : \varnothing \to G/e$ in $G{-}\set$. $G{-}\set/\varnothing$ is the terminal category, so $\SMack{G{-}\set/\varnothing}$ is equivalent to the terminal category -- each semi-Mackey functor indexed by $G{-}\set/\varnothing$ sends $\id_\varnothing$ to a singleton. In particular, every semi-Mackey functor indexed by $G{-}\set/\varnothing$ is Mackey.

  On the other hand, $G{-}\set/(G/e)$ is equivalent to $\set$, so $\SMack{G{-}\set/(G/e)}$ is equivalent to the category of commutative monoids.

  So, now consider the $(G{-}\set/\varnothing)$-Mackey functor $F$ represented by $\id_\varnothing$. Then $\Lan_{\Pi_i} F$ is represented by $\Pi_i \id_{\varnothing} = \id_{G/e}$. Under the equivalence $G{-}\set/(G/e) \simeq \set$, $\id_{G/e}$ corresponds to a singleton set, so $\Lan_{\Pi_i} F$ corresponds to the $\set$-semi-Mackey functor represented by a singleton, i.e. the free commutative monoid on a singleton. In other words, we have
  \[\begin{tikzcd}
      {\Mack{G{-}\set/\varnothing}} && {\SMack{G{-}\set/(G/e)}} & \CMon \\
      \textcolor{rgb,255:red,92;green,92;blue,214}{F} &&& \textcolor{rgb,255:red,92;green,92;blue,214}{\mathbb{N}}
      \arrow["{\Lan_{\Pi_i}}", from=1-1, to=1-3]
      \arrow["\cong"{marking, allow upside down}, draw=none, from=1-3, to=1-4]
      \arrow[color={rgb,255:red,92;green,92;blue,214}, maps to, from=2-1, to=2-4]
    \end{tikzcd}\]
  Since $\mathbb{N}$ is not an abelian group, we conclude that $\Lan_{\Pi_i}$ does not preserve Mackey functors in this case.
\end{exam}

It turns out that the correct assumption to place on $i$ is that it is an epimorphism. We can see that this should be the case by the following heuristic:

\begin{slogan}
  $\Lan_{\Pi_i}$ preserving Mackey functors is a categorification of $A(N_i)$ preserving additively invertible elements for all semi-Tambara functors $A$.
\end{slogan}

Indeed, we see that $A(N_i)(0) = 1$ when the domain of $i$ is an initial object. $1$ is not additively invertible at every level of an arbitrary semi-Tambara functor over an arbitrary index $\C$, so we cannot expect $\Lan_{\Pi_i}$ to preserve Mackey functors when the domain of $i$ is an initial object. On the other hand, $A(N_i)(0) = 0$ for all semi-Tambara functors $A$ whenever $i$ is an epimorphism. Thus, we should expect that $\Lan_{\Pi_i}$ preserves Mackey functors whenever $i$ is an epimorphism, and $0$ is always additively invertible.

To leverage the fact that $A(N_i)(0) = 0$ to prove that $A(N_i)$ preserves additively invertible elements, we would like to make use of a formula akin to Mazur's \cite[\S 1.4.1]{Mazur} for $G$-Tambara functors:
\begin{equation}
  \label{eq:mazur-style-formula}
  A(N_f)(a + b) = A(N_f)(a) + A(N_f)(b) + (\text{other terms})
\end{equation}
We don't actually need something quite this strong; it would suffice to have a formula like
\begin{equation}
  \label{eq:half-mazur-formula}
  A(N_f)(a+b) = A(N_f)(a) + (\text{other terms}).
\end{equation}
Then, whenever $a$ is additively invertible and $A(N_f)(0) = 0$, we would have
\[0 = A(N_f)(0) = A(N_f)(a+(-a)) = A(N_f)(a) + (\text{other terms}),\]
and so $A(N_f)(a)$ is additively invertible. Thinking about what \eqref{eq:half-mazur-formula} would say about a representable semi-Tambara functor, we find exactly that we need $\C$ to be separable.

\begin{prop}
  Let $\C$ be an index. The following are equivalent:
  \begin{enumerate}[label=(\roman*), ref=(\roman*)]
    \item $\C$ is separable;
    \item For every $\C$-semi-Tambara functor $A$, every morphism $i : x \to y$ in $\C_m$, and all elements $a,b \in A(x)$, $A(N_i)(a)$ is a summand of $A(N_i)(a+b)$;
    \item For every representable $\C$-semi-Tambara functor $A$, every morphism $i : x \to y$ in $\C_m$, every morphism $i : x \to y$ in $\C_m$, and all elements $a,b \in A(x)$, $A(N_i)(a)$ is a summand of $A(N_i)(a+b)$.
  \end{enumerate}
\end{prop}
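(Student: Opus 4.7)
My plan is to prove the implications (ii) $\Rightarrow$ (iii), (iii) $\Rightarrow$ (i), and (i) $\Rightarrow$ (ii). The first is immediate, since each representable $\U{\C}(z,-)$ is product-preserving by \Cref{product_in_U} and the Yoneda lemma, and so is a $\C$-semi-Tambara functor. The other two implications both hinge on unpacking the distributor formula $N_i \circ T_{\nabla_x} = T_p \circ N_{p^*i} \circ R_{\varepsilon^{\text{coind}}_{\nabla_x}}$, where I write $p := \Pi_i \nabla_x$ and $w$ for its source.

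For (i) $\Rightarrow$ (ii): given a semi-Tambara functor $A$ and elements $a, b \in A(x)$, note that $a+b = A(T_{\nabla_x})(a,b)$ under the identification $A(x \amalg x) \cong A(x) \times A(x)$, so the distributor formula gives $A(N_i)(a+b) = A(T_p)(A(N_{p^*i}) \circ A(R_{\varepsilon^{\text{coind}}_{\nabla_x}})(a,b))$. Separability supplies an isomorphism $p \cong \id_y \amalg k$ in $\C/y$ for some $k : q \to y$, identifying $w$ with $y \amalg q$; under $A(w) \cong A(y) \times A(q)$, the map $A(T_p)$ becomes $(u, v) \mapsto u + A(T_k)(v)$. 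It remains to identify the $y$-component $u$ of $A(N_{p^*i}) A(R_{\varepsilon^{\text{coind}}_{\nabla_x}})(a,b)$ with $A(N_i)(a)$. Pulling back $i_1^{y,q} : y \to w$ along $p^*i$ via \Cref{lem:restriction-commutes-with-sum} yields a Beck-Chevalley identity $R_{i_1^{y,q}} \circ N_{p^*i} = N_i \circ R_{i_1^{x, m'}}$ (where $m'$ is the source of the $q$-piece of $i^*p$), while \Cref{defn:separable} identifies $\varepsilon^{\text{coind}}_{\nabla_x} \circ i_1^{x, m'}$ with the first coprojection $i_1 : x \to x \amalg x$, so that $u = A(N_i)(a)$. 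Hence $A(N_i)(a+b) = A(N_i)(a) + A(T_k)(v)$.

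For (iii) $\Rightarrow$ (i): apply (iii) with $A := \U{\C}(x,-)$ and $a = b := \id_x \in \U{\C}(x,x)$. Since $\langle \id_x, \id_x \rangle$ is characterized by both projections under $R_{i_1}, R_{i_2}$ equalling $\id_x$, one checks $\langle \id_x, \id_x \rangle = R_{\nabla_x}$; the distributor formula then gives $A(N_i)(a+b) = T_p N_{p^*i} R_{i^*p}$, while $A(N_i)(a) = N_i$. Addition in $\U{\C}(x,y)$ corresponds to coproducts of bispans, so the summand assertion of (iii) translates to a bispan decomposition: the bispan $[x \xleftarrow{i^*p} m \xrightarrow{p^*i} w \xrightarrow{p} y]$ must split as the coproduct of $[x \xleftarrow{\id_x} x \xrightarrow{i} y \xrightarrow{\id_y} y]$ (which is $N_i$ in normal form) with some complementary bispan. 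Reading off the second middle object and the last map forces $w \cong y \amalg q$ over $y$ with $p$ corresponding to $\id_y \amalg k$ for some $k$; this exhibits $j_i$ as complemented, so $\C$ is separable.

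The main obstacle is keeping the normal form of $N_i$ straight: as a bispan, $N_i$ is $[x \leftarrow x \to y \to y]$ with middle map $i$, not $[x \leftarrow x \to x \to y]$ (the latter being $T_i$). With this distinction clear, both non-trivial implications amount to transporting the coproduct decomposition of $\Pi_i \nabla_x$ through the distributor formula for $N_i \circ T_{\nabla_x}$.
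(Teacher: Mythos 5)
Your proof is correct and follows the paper's argument closely: the same cycle of implications, the same reduction in (iii)$\Rightarrow$(i) to the representable $\U{\C}(x,{-})$ with $a=b=\id_x$ (via $\langle\id_x,\id_x\rangle = R_{\nabla_x}$), the same distributor diagram for $N_i \circ T_{\nabla_x}$, and the same role for separability in both nontrivial directions. If anything, your handling of the counit in (i)$\Rightarrow$(ii) is slightly tighter than the paper's: you extract only the identity $\varepsilon^{\text{coind}}_{\nabla_x}\circ i_1^{x,m'} = i_1$ and run a Beck--Chevalley exchange to isolate the $y$-component, whereas the paper's commutative diagram implicitly treats $\varepsilon^{\text{coind}}_{\nabla_x}$ as a coproduct of morphisms, which is more than separability pins down and more than is actually needed to see that $A(N_i)(a)$ is a summand.
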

\begin{proof}
  It's clear that (ii) implies (iii), so we will prove that (i) implies (ii) and that (iii) implies (i).

  First, suppose $\C$ is separable. Let $A \in \STamb{\C}$, $i : x \to y$ in $\C_m$, and $a,b \in A(x)$ be arbitrary. The element $A(N_i)(a+b)$ is produced by
  \[\begin{tikzcd}
      {A(x) \times A(x)} & {A(x \amalg x)} & {A(x)} & {A(y)} \\
      \textcolor{rgb,255:red,92;green,92;blue,214}{(a,b)} && \textcolor{rgb,255:red,92;green,92;blue,214}{a+b} & \textcolor{rgb,255:red,92;green,92;blue,214}{A(N_i)(a+b)}
      \arrow["\cong"{description}, draw=none, from=1-1, to=1-2]
      \arrow["{A(T_\nabla)}", from=1-2, to=1-3]
      \arrow["{A(N_i)}", from=1-3, to=1-4]
      \arrow[color={rgb,255:red,92;green,92;blue,214}, maps to, from=2-1, to=2-3]
      \arrow[color={rgb,255:red,92;green,92;blue,214}, maps to, from=2-3, to=2-4]
    \end{tikzcd}\]
  Now we compute $N_i \circ T_\nabla$ by forming a distributor diagram step-by-step. First, we produce the dependend product of $\nabla$ along $i$, and recall by separability of $\C$ that this decomposes as $\Pi_i \nabla = \id_y \amalg \gamma$ for some $\gamma \in \C/y$. Thus, we have
  \[\begin{tikzcd}
      && {y \amalg c} \\
      {x \amalg x} & x & y
      \arrow["{(\id_y, \gamma)}", from=1-3, to=2-3]
      \arrow["\nabla"', from=2-1, to=2-2]
      \arrow["i"', from=2-2, to=2-3]
    \end{tikzcd}\]
  Now we pull back along $i$ (recalling that $i^*$ commutes with coproducts) and attach the counit of the adjunction $i^* \dashv \Pi_i$, to get
  \[\begin{tikzcd}
      & {x \amalg c'} & {y \amalg c} \\
      {x \amalg x} & x & y
      \arrow["{i \amalg \zeta}", from=1-2, to=1-3]
      \arrow[from=1-2, to=2-1]
      \arrow["{(\id_x, \gamma')}", from=1-2, to=2-2]
      \arrow["\lrcorner"{anchor=center, pos=0.125}, draw=none, from=1-2, to=2-3]
      \arrow["{(\id_y, \gamma)}", from=1-3, to=2-3]
      \arrow["\nabla"', from=2-1, to=2-2]
      \arrow["i"', from=2-2, to=2-3]
    \end{tikzcd}\]
  By naturality of the counit, the diagonal morphism is equal to $\id_x \amalg \gamma$. Now we have that $N_f T_\nabla = T_{(\id_y, \gamma)} N_{i \amalg \zeta} R_{\id_x \amalg \gamma}$. This gives us a commutative diagram
  \[\begin{tikzcd}
      {A(x) \times A(x)} & {A(x) \times A(c')} & {A(y) \times A(c)} & {A(y) \times A(y)} \\
      {A(x \amalg x)} & {A(x \amalg c')} & {A(y \amalg c)} & {A(y \amalg y)} & {A(y)}
      \arrow["{\id \times A(R_{\gamma'})}", from=1-1, to=1-2]
      \arrow["\cong"', from=1-1, to=2-1]
      \arrow["{A(N_i) \times A(N_\zeta)}", shift left, from=1-2, to=1-3]
      \arrow["\cong"', from=1-2, to=2-2]
      \arrow["{\id \times A(T_\gamma)}", from=1-3, to=1-4]
      \arrow["\cong"', from=1-3, to=2-3]
      \arrow["\cong"', from=1-4, to=2-4]
      \arrow["{+}", from=1-4, to=2-5]
      \arrow["{A(R_{\id_x \amalg \gamma'})}"', from=2-1, to=2-2]
      \arrow["{A(N_{i \amalg \zeta})}"', from=2-2, to=2-3]
      \arrow["{A(T_{\id_y \amalg \gamma})}"', from=2-3, to=2-4]
      \arrow["{A(T_\nabla)}"', from=2-4, to=2-5]
    \end{tikzcd}\]
  Going along the top, we see that $(a,b)$ is sent to $A(N_i)(a) + \text{(stuff)}$. On the other hand, this is a factorization of $N_f T_\nabla$, so also $(a,b)$ is sent to $A(N_i)(a+b)$. Thus, $A(N_i)(a)$ is a summand of $A(N_i)(a+b)$.

  Next, suppose (iii) holds, and let $i : x \to y$ in $\C_m$ be arbitrary. Let $A$ be the $\C$-semi-Tambara functor represented by $x$. In $A(x)$, we have $\id_x + \id_x = T_\nabla R_\nabla$. We have by assumption that $A(N_i)(\id_x) = N_i$ is a summand of $A(N_i)(\id_x + \id_x) = N_i T_\nabla R_\nabla$. Now we form a distributor diagram to compute $N_i T_\nabla$:
  \[\begin{tikzcd}
      & \bullet & \bullet \\
      {x \amalg x} & x & y
      \arrow["{(\Pi_i \nabla)^* i}", from=1-2, to=1-3]
      \arrow[from=1-2, to=2-1]
      \arrow["{i^* \Pi_i \nabla}", from=1-2, to=2-2]
      \arrow["\lrcorner"{anchor=center, pos=0.125}, draw=none, from=1-2, to=2-3]
      \arrow["{\Pi_i \nabla}", from=1-3, to=2-3]
      \arrow["\nabla"', from=2-1, to=2-2]
      \arrow["i"', from=2-2, to=2-3]
    \end{tikzcd}\]
  and note by commutativity of the triangle therein that
  \[N_i T_\nabla R_\nabla = T_{\Pi_i \nabla} N_{(\Pi_i \nabla)^* i} R_{i^* \Pi_i \nabla}.\]
  Now the claim that $N_i$ is a summand of this morphism says that there is a decomposition $(\Pi_i \nabla)^* i$ (up to isomorphism) as a coproduct $i \amalg \zeta$ for some morphism $\zeta$, in such a way that the class
  \[[x \xleftarrow{i^* \Pi_i \nabla} \bullet \xrightarrow{(\Pi_i \nabla)^*i} \bullet \xrightarrow{\Pi_i \nabla} y]\]
  equals
  \[[x \xleftarrow{(\id_x, \gamma')} x \amalg c' \xrightarrow{i \amalg \zeta} y \amalg c' \xrightarrow{(\id_y, \gamma)} y]\]
  for some bispan $x \xleftarrow{\gamma'} c' \xrightarrow{\zeta} c \xrightarrow{\gamma} y$. In particular, this demonstrates that $j_i : \id_y \to \Pi_i \nabla$ is complemented. Since $i$ was arbitrary, we conclude that $\C$ is separable.
\end{proof}

So, we will now prove that $\Lan_{\Pi_i}$ preserves Mackey functors whenever $\C$ is separable and $i$ is an epimorphism.

\begin{prop}
  Let $\C$ be an index, and let $F \in \STamb{\C}$ be arbitrary. Let $i : x \to y$ be an epimorphism lying in $\C_m$. For each object $\alpha \in \C/x$, the unit map $\eta : F(\alpha) \to (\Lan_{\Pi_i} F)(\Pi_i \alpha)$ sends $0$ to $0$.
\end{prop}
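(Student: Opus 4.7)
The plan is to exploit naturality of the Kan-extension unit $\eta$ at the transfer morphism from the initial object. The first step is to recall that for any semi-Mackey functor $M$ over an index $\D$, the zero element of $M(\beta)$ is by construction $M(T_{!_\beta})(*)$, where $!_\beta : \varnothing \to \beta$ is the unique morphism and $*$ is the unique element of the singleton $M(\varnothing)$ — the value at the initial object is a singleton because semi-Mackey functors preserve products and $\varnothing$ is terminal in $\A{\D}$. Applying this description both to $F(\alpha)$ and to $(\Lan_{\Pi_i}F)(\Pi_i\alpha)$ reduces the problem to chasing elements around the naturality square for $\eta$ at the morphism $T_{!_\alpha}$ in $\A{\C/x}$.

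Concretely, naturality yields
\[\eta_\alpha \circ F(T_{!_\alpha}) \;=\; (\Lan_{\Pi_i}F)(\Pi_i(T_{!_\alpha})) \circ \eta_{\varnothing_{\C/x}}.\]
The epimorphism assumption on $i$ enters immediately: by \Cref{lem:dep-prod-of-initial}, $\Pi_i(\varnothing_{\C/x}) = \varnothing_{\C/y}$, whence $\Pi_i(T_{!_\alpha}) = T_{!_{\Pi_i\alpha}}$. Chasing the unique element $* \in F(\varnothing_{\C/x})$, we obtain $\eta_\alpha(0) = (\Lan_{\Pi_i}F)(T_{!_{\Pi_i\alpha}})(\eta_{\varnothing_{\C/x}}(*))$, which by the very definition of the zero element equals $0 \in (\Lan_{\Pi_i}F)(\Pi_i\alpha)$ provided $\eta_{\varnothing_{\C/x}}(*)$ is the unique element of $(\Lan_{\Pi_i}F)(\varnothing_{\C/y})$.

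The entire proof therefore reduces to showing $(\Lan_{\Pi_i}F)(\varnothing_{\C/y})$ is a singleton, which I expect to be the only genuine obstacle: a direct coend computation would be awkward because the relevant comma category is not evidently connected, so singleton-ness is not automatic from the pointwise Yoneda formula. The clean route is to invoke the first half of \Cref{actual-main-theorem}, already established: the natural isomorphism $\Lan_{\Pi_i}F \cong U(\Lan_{\Sigma_i}F)$ identifies $\Lan_{\Pi_i}F$ with the underlying semi-Mackey functor of a semi-Tambara functor, and every semi-Mackey functor sends $\varnothing$ to a singleton. Note that separability plays no role here; it will only enter in the subsequent step that propagates additive invertibility from $F(\alpha)$ through the transfers in $\Lan_{\Pi_i}F$.
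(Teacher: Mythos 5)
Your proof is correct and follows essentially the same route as the paper: reduce to $\alpha=\varnothing$ by naturality of $\eta$ at $T_!$, apply \Cref{lem:dep-prod-of-initial} to get $\Pi_i\varnothing=\varnothing$, and then observe that the target is a singleton. The paper leaves the singleton step tacit; invoking the already-established half of \Cref{actual-main-theorem} to see $\Lan_{\Pi_i}F\cong U(\Lan_{\Sigma_i}F)$ is a semi-Mackey functor is a perfectly valid way to fill it in.

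One aside is mistaken, though it does not affect your argument: the direct coend computation is not awkward. The comma category $(\A{\Pi_i}\downarrow\varnothing)$ is isomorphic to $\A{\C/x}$ itself, because $\varnothing$ is terminal in $\A{\C/y}$ (by \Cref{product_in_A}), so $\A{\C/y}(\Pi_i\alpha,\varnothing)$ is always a singleton. Since $\A{\C/x}$ in turn has $\varnothing$ as a terminal object, the colimit of $F$ over it is just $F(\varnothing)$, which is a singleton because $F$ preserves the empty product. Connectedness of the comma category is neither needed nor sufficient; what matters is the terminal object. So singleton-ness does follow immediately from the pointwise Kan extension formula, without appealing to the main theorem at all.
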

\begin{proof}
  We recall that the element $0 \in F(\alpha)$ is the image under $F(T_!)$ of the unique element of $F(\varnothing)$, where $! : \varnothing \to \alpha$ is a morphism from an initial object. By naturality of $\eta$, we have a commutative square
  \[\begin{tikzcd}
      {F(\varnothing)} && {(\Lan_{\Pi_i} F)(\Pi_i \varnothing)} \\
      \\
      {F(\alpha)} && {(\Lan_{\Pi_i} F)(\Pi_i \alpha)}
      \arrow["\eta", from=1-1, to=1-3]
      \arrow["{F(T_!)}"', from=1-1, to=3-1]
      \arrow["{(\Lan_{\Pi_i} F)(T_{\Pi_i !})}", from=1-3, to=3-3]
      \arrow["\eta"', from=3-1, to=3-3]
    \end{tikzcd}\]
  so we may assume $\alpha = \varnothing$. Now \Cref{lem:dep-prod-of-initial} tells us that $\Pi_i \varnothing = \varnothing$ (this is where we use that $i$ is an epimorphism), so $(\Lan_{\Pi_i} F)(\Pi_i \varnothing) = (\Lan_{\Pi_i} F)(\varnothing)$ is the singleton $\{0\}$, and we are done.
\end{proof}

\begin{prop}
  Let $\C$ be a separable index, and let $F \in \STamb{\C}$ be arbitrary. Let $i : x \to y$ be an epimorphism lying in $\C_m$. For each object $\alpha \in \C/x$, the unit map $\eta : F(\alpha) \to (\Lan_{\Pi_i} F)(\Pi_i \alpha)$ preserves invertible elements.
\end{prop}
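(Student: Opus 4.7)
The strategy is to exploit the semi-Tambara structure on $F' := \Lan_{\Sigma_i} F$ through the natural isomorphism $\Lan_{\Pi_i} UF \cong UF'$ supplied by \Cref{actual-main-theorem}. Under this identification, $(\Lan_{\Pi_i} F)(\Pi_i \alpha)$ is identified with $F'(\Pi_i \alpha)$, and I claim that the unit $\eta_\alpha$ factors as
\[F(\alpha) \xrightarrow{\mu_\alpha} F'(\Sigma_i \alpha) \xrightarrow{F'(t_\alpha)} F'(\Pi_i \alpha),\]
where $\mu$ is the unit of the Tambara-level Kan extension adjunction and $t_\alpha = N_{\varepsilon^{\mathrm{ind}}_{\Pi_i \alpha}} R_{\Sigma_i \varepsilon^{\mathrm{coind}}_\alpha}$ is the morphism constructed in \Cref{prop:t}. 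This factorization is essentially the content of the main theorem's construction: on a representable semi-Tambara functor, $\omega_\alpha(\id_\alpha) = t_\alpha$ by definition, and the general case follows by colimits.

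Now let $a \in F(\alpha)$ be additively invertible. The Tambara-level unit $\mu$ is a natural transformation between finite-product-preserving functors, so its restriction along $e: \A{\C/x} \hookrightarrow \U{\C/x}$ is a morphism of semi-Mackey functors; in particular $\mu_\alpha$ is an additive monoid homomorphism, so $\mu_\alpha(a) \in F'(\Sigma_i \alpha)$ is additively invertible. Restriction morphisms $F'(R_k)$ likewise belong to the underlying semi-Mackey structure of $F'$ and hence are additive, so $b := F'(R_{\Sigma_i \varepsilon^{\mathrm{coind}}_\alpha})(\mu_\alpha(a))$ is additively invertible in $F'(\Sigma_i i^* \Pi_i \alpha)$. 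The remaining task is to show that $F'(N_j)(b)$ is additively invertible, where $j := \varepsilon^{\mathrm{ind}}_{\Pi_i \alpha}$.

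The morphism $j$ is the pullback of $i: x \to y$ along $\Pi_i \alpha$, so it lies in $\C_m$ by pullback stability of the indexing subcategory; moreover $j$ is an epimorphism, because $(\Pi_i \alpha)^*$ is a left adjoint (with right adjoint $\Pi_{\Pi_i \alpha}$) and left adjoints preserve epimorphisms --- this is the same argument used in the proof of \Cref{lem:dep-prod-of-initial}. Applying the separability of $\C$ to the semi-Tambara functor $F'$ and the morphism $j$, we obtain that $F'(N_j)(b)$ is a summand of $F'(N_j)(b+(-b)) = F'(N_j)(0)$. A direct distributor-diagram calculation, using \Cref{lem:dep-prod-of-initial} applied to $j$ (which forces $\Pi_j \varnothing = \varnothing$), simplifies $N_j \circ T_!$ to $T_!$, and hence $F'(N_j)(0) = 0$ by precisely the argument of the previous proposition. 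Thus $F'(N_j)(b)$ is a summand of $0$, so the complementary summand serves as an additive inverse.

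The main obstacle is the identification of $\eta_\alpha$ with the factored composite $F'(t_\alpha) \circ \mu_\alpha$; once this is established by carefully tracing through the construction of the main theorem (which is reasonable because both natural transformations are determined by $t_\alpha$ on representables), the remaining steps amount to a mechanical application of separability combined with the distributor calculation from the previous proposition.
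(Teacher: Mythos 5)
Your proposal is correct, but it takes a genuinely different route from the paper's. The paper works entirely at the Mackey level: taking $s \in F(\alpha)$ invertible with inverse $s'$, it uses separability (together with an unstated base-change argument to pass from $j_i : \id_y \to \Pi_i\nabla_x$ to the coprojection $\Pi_i\alpha \to \Pi_i(\alpha \amalg \alpha)$, which is its pullback along $\Pi_i(\alpha\amalg\alpha) \to \Pi_i\nabla_x$) to split $\Pi_i(\alpha\amalg\alpha) \cong \Pi_i\alpha \amalg Q$, and then reads off $0 = \eta(0) = \eta(s+s') = \eta(s) + (\text{other terms})$ from a naturality square for $\eta$, invoking the previous proposition for $\eta(0) = 0$. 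You instead pass to the Tambara level, factor $\eta_\alpha$ through the Tambara-side unit $\mu_\alpha$ and $F'(t_\alpha) = F'(N_j) \circ F'(R_{\Sigma_i\varepsilon^{\mathrm{coind}}_\alpha})$, and apply separability to $F'(N_j)$ via the Mazur-formula characterization. Both routes ultimately rest on the same two ingredients — separability and \Cref{lem:dep-prod-of-initial} via the epimorphism hypothesis — just applied at different points. Your version is arguably more conceptual (it exhibits the result as ``norms along epimorphisms preserve invertible elements''), whereas the paper's version avoids having to justify the factorization of the unit.

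Two points to tighten. First, the factorization $\eta_\alpha = F'(t_\alpha) \circ \mu_\alpha$ really is the crux, and you should spell it out rather than defer it: on a representable $\yo\beta$ the unit of the Mackey-level Kan extension, transported through the isomorphism of \Cref{actual-main-theorem}, is literally $\omega_\alpha = (t_\alpha)_* \circ \Sigma_i$ (this is the definition of $\omega$), and the Tambara-level unit is $\Sigma_i$; the general case then follows because $\Lan_{\A{\Pi_i}}$, $\Lan_{\U{\Sigma_i}}$, the forgetful functors, and the units are all cocontinuous at the \emph{functor-category} level (not the semi-Mackey/semi-Tambara level, where colimits differ), and representables are dense. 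Second, you invoke the separability characterization for $F'$, which is a semi-Tambara functor over the index $\C/y$, so you need separability of $\C/y$, not just of $\C$. This is true — the separability condition for $\C/y$ at a morphism $g : \alpha \to \beta$ reduces, via the isomorphism $(\C/y)/\beta \cong \C/\operatorname{dom}(\beta)$ under which dependent products and the section $j_g$ are computed identically, to the separability condition for $\C$ at the underlying morphism of $\C_m$ — but since the paper never states that separability is inherited by slices, you should verify it explicitly.
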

\begin{proof}
  Let $s \in F(\alpha)$ be an arbitrary invertible element, with inverse $s'$. By separability, the inclusion $j : \Pi_i \alpha \to \Pi_i(\alpha \amalg \alpha)$ is complemented; let $k : Q \to \Pi_i(\alpha \amalg \alpha)$ be a complement. Now we have
  \[\begin{tikzcd}
      {F(\alpha \amalg \alpha)} && {(\Lan_{\Pi_i}F)(\Pi_i(\alpha \amalg \alpha))} && {(\Lan_{\Pi_i}F)(\Pi_i \alpha \amalg Q)} \\
      &&&& {(\Lan_{\Pi_i} F)(\Pi_i \alpha \amalg \Pi_i \alpha)} \\
      {F(\alpha)} && {(\Lan_{\Pi_i}F)(\Pi_i \alpha)}
      \arrow["\eta", from=1-1, to=1-3]
      \arrow["{F(T_\nabla)}"', from=1-1, to=3-1]
      \arrow["{(\Lan_{\Pi_i}F)(T_{(j,k)^{-1}})}", from=1-3, to=1-5]
      \arrow["{(\Lan_{\Pi_i} F)(T_{\Pi_i \nabla})}"', from=1-3, to=3-3]
      \arrow["{(\Lan_{\Pi_i} F)(\id \amalg T_{\Pi_i \nabla \circ k})}", from=1-5, to=2-5]
      \arrow["{(\Lan_{\Pi_i} F)(T_\nabla)}", from=2-5, to=3-3]
      \arrow["\eta"', from=3-1, to=3-3]
    \end{tikzcd}\]
  The left-hand square commutes by naturality of $\eta$, and the right-hand quadrilateral commutes because
  \[\begin{tikzcd}
      {\Pi_i(\alpha \amalg \alpha)} & {\Pi_i \alpha \amalg Q} \\
      && {\Pi_i \alpha \amalg \Pi_i (\alpha \amalg \alpha)} \\
      {\Pi_i \alpha} && {\Pi_i \alpha \amalg \Pi_i \alpha}
      \arrow["{\Pi_i \nabla}"', from=1-1, to=3-1]
      \arrow["{(j,k)}"', from=1-2, to=1-1]
      \arrow["{(\id, k)}", from=1-2, to=2-3]
      \arrow["{(\id, \Pi_i \nabla \circ k)}", from=1-2, to=3-1]
      \arrow["{(\id, \Pi_i \nabla)}", from=2-3, to=3-3]
      \arrow["\nabla", from=3-3, to=3-1]
    \end{tikzcd}\]
  commutes. Overall, we conclude that
  \[0 = \eta(0) = \eta(s+s') = \eta(s) + \text{(other terms)},\]
  and thus $\eta(s)$ is invertible.
\end{proof}

\begin{prop}\label{prop:lan-pi-preserves-mackey}
  Let $\C$ be a separable bi-incomplete index, and let $i : x \to y$ be an epimorphism lying in $\C_m$. Then $\Lan_{\A{\Pi_i}} : \SMack{\C/x} \to \SMack{\C/y}$ sends Mackey functors to Mackey functors.
\end{prop}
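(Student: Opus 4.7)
The plan is to reduce showing that $\Lan_{\A{\Pi_i}} M$ is Mackey to showing that the unit map $\eta_\alpha : M(\alpha) \to (\Lan_{\A{\Pi_i}} M)(\Pi_i \alpha)$ of the adjunction $\Lan_{\A{\Pi_i}} \dashv (\A{\Pi_i})^*$ preserves additive invertibility, and then to use the coend description of the left Kan extension to propagate this invertibility to every value $(\Lan_{\A{\Pi_i}} M)(\beta)$.

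First, I observe that although the two preceding propositions are formally stated for $F \in \STamb{\C}$, their proofs only invoke the additive (semi-Mackey) structure of $F$: each morphism appearing has the form $T_f$ (for $f$ one of $!$, $\nabla$, an isomorphism $(j,k)^{-1}$, $\Pi_i \nabla$, or $\Pi_i \nabla \circ k$), naturality of $\eta$, and preservation of finite products. All of these $f$ lie in $\C_a$, since every indexing category contains codiagonals, coproduct inclusions and isomorphisms, and $\C_a$ is closed under $\Pi_i$ by compatibility of $(\C_a, \C_m)$. Consequently the same arguments show, with $M \in \SMack{\C/x}$ in place of $UF$, that $\eta_\alpha : M(\alpha) \to (\Lan_{\A{\Pi_i}} M)(\Pi_i \alpha)$ sends $0$ to $0$ and preserves additive inverses. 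Since $M$ is Mackey, every element of $M(\alpha)$ is additively invertible, so $\eta_\alpha(m)$ is invertible in $(\Lan_{\A{\Pi_i}} M)(\Pi_i \alpha)$ for every $m$.

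Second, I use the pointwise coend formula for the left Kan extension: for each $\beta \in \A{\C/y}$, every element of $(\Lan_{\A{\Pi_i}} M)(\beta)$ is representable as $[(m, \varphi)]$ for some $\alpha \in \A{\C/x}$, $m \in M(\alpha)$, and $\varphi : \Pi_i \alpha \to \beta$ in $\A{\C/y}$, and equals $(\Lan_{\A{\Pi_i}} M)(\varphi)(\eta_\alpha(m))$. By \Cref{prop:smack-factors} the semi-Mackey functor $\Lan_{\A{\Pi_i}} M$ factors uniquely through $\CMon \to \Set$, so $(\Lan_{\A{\Pi_i}} M)(\varphi)$ is a commutative monoid homomorphism and therefore carries invertible elements to invertible elements. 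Combining the two steps, every element of $(\Lan_{\A{\Pi_i}} M)(\beta)$ is additively invertible, so $(\Lan_{\A{\Pi_i}} M)(\beta)$ is an abelian group and $\Lan_{\A{\Pi_i}} M$ is a Mackey functor.

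The only genuinely delicate point is the translation of the preceding two propositions from semi-Tambara to semi-Mackey functors; apart from bookkeeping verifications that each $T_f$ in play involves $f \in \C_a$, nothing new is required. Everything else is a straightforward unwinding of the coend formula.
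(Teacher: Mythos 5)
Your proof is correct and follows essentially the same route as the paper: use the two preceding propositions to show the unit $\eta_\alpha$ carries invertibles to invertibles, then use the pointwise coend formula to write every element of $(\Lan_{\A{\Pi_i}} M)(\beta)$ as $(\Lan_{\A{\Pi_i}} M)(\varphi)(\eta_\alpha(m))$ and invoke \Cref{prop:smack-factors} to conclude $(\Lan_{\A{\Pi_i}} M)(\varphi)$ is a monoid homomorphism. Your first step, observing that the two preceding propositions (formally stated for $F \in \STamb{\C}$, which does not even type-check against $\alpha \in \C/x$ and clearly should read $\SMack{\C/x}$ or $\STamb{\C/x}$) are really statements about the additive structure alone and therefore apply to any $M \in \SMack{\C/x}$, is a genuinely needed clarification: the paper's own proof applies those propositions directly to $F \in \Mack{\C/x}$ without comment, which is the same gap you close.
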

\begin{proof}
  Let $F \in \Mack{\C/x}$ and $\beta \in \C/y$ be arbitrary. Take an arbitrary element $t \in (\Lan_{\Pi_i} F)(\beta)$. Then there is some $\alpha \in \C/x$, some morphism $\varphi : \Pi_i \alpha \to \beta$ in $\A{\C/y}$, and some element $s \in F(\alpha)$ such that $t$ is the image of $s$ under
  \[F(\alpha) \xrightarrow{\eta} (\Lan_{\Pi_i} F)(\Pi_i \alpha) \xrightarrow{(\Lan_{\Pi_i F})(\varphi)} (\Lan_{\Pi_i F})(\beta).\]
  We know that $\eta$ sends invertible elements to invertible elements, and $(\Lan_{\Pi_i F})(\varphi)$ is a homomorphism of commutative monoids. Since $F$ is a Mackey functor, $s$ is invertible, so we are done.
\end{proof}

\section{Conclusions}
\label{section:Consequences}

We now summarize the results of the preceeding section and their consequences. In the case that $i : x \to y$ is an epimorphism in $\C_m$ for $\C$ a locally essentially small, separable index, the two commutative squares established in \Cref{actual-main-theorem} give a commutative cube

\[\begin{tikzcd}
    {\Tamb{\C/x}} && {\Tamb{\C/y}} \\
    & {\STamb{\C/x}} && {\STamb{\C/y}} \\
    {\Mack{\C/x}} && {\Mack{\C/y}} \\
    & {\SMack{\C/x}} && {\SMack{\C/y}}
    \arrow["{\mathcal{N}_i}", from=1-1, to=1-3]
    \arrow[hook, from=1-1, to=2-2]
    \arrow[from=1-1, to=3-1]
    \arrow[hook, from=1-3, to=2-4]
    \arrow[from=1-3, to=3-3]
    \arrow[from=2-4, to=4-4]
    \arrow["{\mathcal{N}_i}"'{pos=0.7}, from=3-1, to=3-3]
    \arrow[hook, from=3-1, to=4-2]
    \arrow[hook, from=3-3, to=4-4]
    \arrow["{\mathcal{N}_i}"', from=4-2, to=4-4]
    \arrow["{\mathcal{N}_i}"{pos=0.3}, crossing over, from=2-2, to=2-4]
    \arrow[crossing over, from=2-2, to=4-2]
  \end{tikzcd}\]

For emphasis, we will spell out what this tells us. Under these hypotheses, two things are true:

\begin{enumerate}
  \item The restriction functor $\mathcal{R}_i : \Tamb{\C/y} \to \Tamb{\C/x}$ has a left adjoint $\mathcal{N}_i : \Tamb{\C/x} \to \Tamb{\C/y}$.
  \item Given a Tambara functor $A \in \Tamb{\C/x}$, we can compute the underlying Mackey functor of $\mathcal{N}_i A$ by $\Lan_{\A{\Pi_i}} e^* A$, where $e^* A$ is the underlying Mackey functor of $A$.
\end{enumerate}

In particular, this result holds for naive motivic Tambara functors. The only part of the assumptions which we have not yet checked is that $\fet$ is locally essentially small, which we will now show.

\begin{prop}
  For any scheme $S$, $\fet/S$ is essentially small.
\end{prop}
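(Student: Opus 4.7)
The plan is to leverage the fact that finite \'etale morphisms are affine. Any $X \in \fet/S$ with structure map $f : X \to S$ is isomorphic as an $S$-scheme to $\Spec_S(\mathcal{A})$, where $\mathcal{A} := f_*\OO_X$ is a quasi-coherent $\OO_S$-algebra which is locally free of finite rank. Since the relative Spec construction is an anti-equivalence from quasi-coherent $\OO_S$-algebras to affine $S$-schemes, it suffices to show that the isomorphism classes of locally-free-of-finite-rank quasi-coherent $\OO_S$-algebras form a set.

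To bound this collection, I would pass to an affine open cover. The collection $I$ of all affine open subschemes of $S$ is a set, since the underlying topological space of $S$ is a set. Writing $U_\alpha = \Spec(R_\alpha)$ for $\alpha \in I$, each $\mathcal{A}$ as above restricts over $U_\alpha$ to a finite \'etale $R_\alpha$-algebra $A_\alpha$. Up to isomorphism, $\mathcal{A}$ is determined by the tuple $(A_\alpha)_{\alpha \in I}$ together with gluing isomorphisms $A_\alpha \otimes_{R_\alpha} R_{\alpha\beta} \cong A_\beta \otimes_{R_\beta} R_{\alpha\beta}$ on overlaps satisfying the cocycle condition.

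The main substantive step is showing that for any commutative ring $R$, the isomorphism classes of finite \'etale $R$-algebras form a set. This follows because any finite \'etale $R$-algebra is finitely presented, hence realizable as a quotient of $R[x_1, \ldots, x_n]$ by a finitely generated ideal for some finite $n$; the collection of such quotients is a set, bounded in cardinality by $\max(|R|, \aleph_0)$. Once this is in place, the collection of tuples $(A_\alpha)$ and the collection of gluing data between any two such tuples are both easily seen to lie in sets, so assembling everything yields a set-sized bound on the isomorphism classes of $\mathcal{A}$, and hence of $\fet/S$. The only real obstacle is the cardinality estimate for finite \'etale $R$-algebras; the rest is routine descent bookkeeping.
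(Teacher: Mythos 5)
Your proof is correct, and its overall shape matches the paper's: reduce to an affine open cover, bound the isomorphism classes of the local data over each affine, and then bound the gluing data. The difference is in the key cardinality lemma. You bound finite \'etale $R$-algebras by observing that they are finitely presented, hence quotients $R[x_1,\dots,x_n]/I$ with $I$ finitely generated; the paper instead bounds all \emph{finite} $R$-algebras by first bounding isomorphism classes of finitely generated $R$-modules (quotients of $R^n$) and then noting that the $R$-algebra structures on a fixed module form a set. The paper's lemma is slightly more general---it shows the category of schemes finite over $S$ is already essentially small, without using the local-finite-presentation part of ``\'etale''---whereas yours uses finite presentation in an essential way and so is tailored to the finite \'etale case. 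Both suffice for the statement at hand; your route via relative Spec and quasi-coherent $\mathcal{O}_S$-algebras is a clean packaging of the same descent bookkeeping the paper carries out directly in terms of scheme morphisms and gluing.
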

\begin{proof}
  In fact, the category of schemes finite over $S$ is already essentially small. Recall that a morphism $f : X \to S$ of schemes is \emph{finite} if and only if there exists an open cover $\{U_i\}_{i \in I}$ of $S$ by affine subschemes such that for each $i$, $f^{-1}(U_i)$ is affine the ring homomorphism $\mathcal{O}_S(U_i) \to \mathcal{O}_X(f^{-1}(U_i))$ is finite.

  So, a finite morphism $X \to S$ is determined up to isomorphism by the data of an affine open cover $\{U_i\}_{i \in I}$ of $S$, a set of (isomorphism classes of) finite ring homomorphisms $\{\mathcal{O}_S(U_i) \to A_i\}_{i \in I}$, and gluing data for the set of maps $\{\Spec(A_i) \to U_i \hookrightarrow S\}_{i \in I}$.

  There is a set of affine open covers of $S$. For a fixed affine open cover $\{U_i\}_{i \in I}$ of $S$ and a fixed $i \in I$, there is a set's worth of isomorphism classes of finite ring homomorphisms $\mathcal{O}_S(U_i) \to A_i$ (because there is a set's worth of isomorphism classes of finitely generated $\mathcal{O}_S(U_i)$-modules, and a set of $\mathcal{O}_S(U_i)$-algebra structures on any given finitely generated $\mathcal{O}_S(U_i)$-module). Finally, for a fixed affine open cover $\{U_i\}_{i \in I}$ of $S$ and a fixed set of finite ring homomorphisms $\{\mathcal{O}_S(U_i) \to A_i\}$, there is a set of possible gluing data for the maps $\{\Spec(A_i) \to U_i \hookrightarrow S\}_{i \in I}$. We conclude that the collection of isomorphism classes of finite $S$-schemes forms a set, and thus the category of finite $S$-schemes is essentially small.
\end{proof}

\begin{cor}\label{cor:apply-to-motivic}
  For any finite {\'e}tale cover $f : X \to S$ of schemes, the restriction functor
  \[\mathcal{R}_f : \Tamb{\fet/S} \to \Tamb{\fet/X}\]
  between categories of naive motivic Tambara functors given by
  \[(\mathcal{R}_f A)(g) = A(f \circ g)\]
  has a left adjoint
  \[\mathcal{N}_f : \Tamb{\fet/X} \to \Tamb{\fet/S}\]
  such that the value of $\mathcal{N}_f A$ on an object $h \in \fet/S$ can be computed (as an abelian group) with the coend
  \[(\mathcal{N}_f A)(h) = \int^{g \in \A{\fet/S}} \A{\fet/S}(\Pi_i g, h) \times A(g).\]
\end{cor}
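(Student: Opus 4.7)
The plan is to apply \Cref{actual-main-theorem} to the fully complete index $\C = \fet/S$ and to the morphism $f$, interpreted inside $\fet/S$ as the canonical arrow from the object $(X \xrightarrow{f} S)$ to the terminal object $(S \xrightarrow{\id} S)$. Under \Cref{slogan:slice-of-slice-is-slice}, we have $(\fet/S)/\id_S \simeq \fet/S$ and $(\fet/S)/f \simeq \fet/X$, so the slice categories appearing in the main theorem coincide with those appearing in the corollary. Moreover, $\Sigma_f$ corresponds to postcomposition with $f$ under these identifications, making precomposition with $\U{\Sigma_f}$ exactly the restriction functor $\mathcal{R}_f$.

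Next I would verify the hypotheses of \Cref{actual-main-theorem}. The category $\fet$ is LCCDC by the example in \Cref{section:Preliminaries}, so $\fet/S$ is LCCDC by \Cref{prop:slice-of-lccdc-is-lccdc}. The separability proposition for $\fet$ applies equally to $\fet/S$, since by \Cref{slogan:slice-of-slice-is-slice} any slice of $\fet/S$ is a slice of $\fet$, so complementedness of $j_f$ in $(\fet/S)/Y$ reduces to the corresponding statement in $\fet/Y$. The proposition immediately preceding this corollary establishes that $\fet/S$ is essentially small, hence locally essentially small. The main obstacle will be the verification that $f$ is an epimorphism in $\fet/S$: since a finite \'etale cover is surjective on points, $f$ is faithfully flat, and faithfully flat morphisms of schemes are epimorphisms in $\Sch$; this property then descends to the full subcategory $\fet/S$.

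With the hypotheses in hand, \Cref{actual-main-theorem} produces the left adjoint $\mathcal{N}_f := \Lan_{\U{\Sigma_f}}$ to $\mathcal{R}_f$, together with a commutative square identifying the underlying Mackey functor of $\mathcal{N}_f A$ with $\Lan_{\A{\Pi_f}}(UA)$, where $U$ denotes the forgetful functor to Mackey functors. By definition, the abelian group $(\mathcal{N}_f A)(h)$ is the value at $h$ of this underlying Mackey functor. Since left Kan extensions of $\Set$-valued functors along a functor between essentially small categories are computed pointwise by the standard coend formula, the displayed coend in the corollary follows immediately, with the abelian group structure coming for free from the ambient Mackey structure on $\Lan_{\A{\Pi_f}}(UA)$ (which is a Mackey functor, not merely a semi-Mackey functor, by \Cref{prop:lan-pi-preserves-mackey}).
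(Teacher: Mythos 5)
Your argument is correct and follows the same path as the paper's (mostly implicit) derivation of this corollary from \Cref{actual-main-theorem}: the paper explicitly verifies only local essential smallness, so your verification that $f$ is an epimorphism in $\fet/S$ usefully fills in a step the paper leaves unstated. One small imprecision there: faithful flatness alone is not known to give an epimorphism of schemes -- one also needs quasi-compactness (or local finite presentation) so that fpqc/fppf descent applies and representable presheaves are sheaves -- but finite morphisms are affine, hence quasi-compact, so the conclusion stands. Separately, your closing sentence accepts the displayed coend uncritically, but the formula as printed contains a slip (inherited from the paper's statement): the coend variable should range over $g \in \A{\fet/X}$ rather than $g \in \A{\fet/S}$, and $\Pi_i$ should read $\Pi_f$, since the Kan extension is along $\A{\Pi_f}\colon \A{\fet/X} \to \A{\fet/S}$ and $A(g)$ only makes sense for $g$ an object over $X$.
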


\Cref{cor:apply-to-motivic} was obtained by applying \Cref{actual-main-theorem} to the index $(\fet/S, \fet/S, \fet/S)$. By instead applying the theorem to the index $(\fet/S, \mathcal{O}^{\mathrm{triv}},\mathcal{O}^{\mathrm{triv}})$, we obtain an analogous theorem for motivic Green functors. The upshot is that the functor $\mathcal{N}_f$ (for either motivic Green functors or motivic Tambara functors) can be computed directly as a coend over $\A{\fet/S}$. This substantially simplifies the comptuation of $\mathcal{N}_f$, just because spans are much simpler than bispans.

Future work will leverage this result to compute Rost norms in Grothendieck-Witt rings, as well as extending this result to the kinds of motivic Tambara functors defined in \cite{Bachmann-MTF}.

\newpage

\appendix

\section{Technical Lemmas}
\label{appendix:lcc-lemmas}

\subsection[The Proof of \Cref*{prop:disjoint-coproducts-slice-products}]{The Proof of \Cref{prop:disjoint-coproducts-slice-products}}

\begin{prop*}
  Let $\C$ be a category which is locally cartesian closed and cocartesian with disjoint coproducts. Then any coproduct diagram $x \xrightarrow{i} x \amalg y \xleftarrow{j} y$ induces an equivalence of categories
  \[\C/(x \amalg y) \xrightarrow{(i^*, j^*)} \C/x \times \C/y.\]
\end{prop*}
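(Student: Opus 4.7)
The plan is to construct an explicit quasi-inverse $\Phi : \C/x \times \C/y \to \C/(x \amalg y)$ to $(i^*, j^*)$ by sending a pair $(\alpha : a \to x,\ \beta : b \to y)$ to $\Sigma_i \alpha \amalg \Sigma_j \beta$, i.e.\ the morphism $i\circ\alpha \amalg j\circ\beta : a \amalg b \to x \amalg y$. I would then verify that the two composites $(i^*, j^*) \circ \Phi$ and $\Phi \circ (i^*, j^*)$ are naturally isomorphic to the respective identities.

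First, to see $(i^*, j^*) \circ \Phi \cong \id$, the key point is that $i^*$ and $j^*$ are both left adjoints (to $\Pi_i$ and $\Pi_j$ respectively) by \Cref{prop:characterization-of-LCC}, so they preserve finite coproducts. Hence $i^*\Phi(\alpha,\beta) \cong i^*\Sigma_i\alpha \amalg i^*\Sigma_j\beta$. For the first summand, since $i$ is a monomorphism (by the disjoint coproducts hypothesis), the pullback pasting lemma gives $i^*\Sigma_i\alpha \cong \alpha$. For the second summand, pasting a pullback of $j$ along $i$ (which is $\varnothing$ by disjointness) with a pullback of $\beta$ along the resulting map $\varnothing \to y$, and then invoking \Cref{prop:lcc-slice-over-empty} to see that this second pullback remains initial, gives $i^*\Sigma_j\beta \cong \varnothing$. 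Since $\varnothing$ is initial in $\C/x$ by \Cref{slice-of-cocartesian-is-cocartesian}, we get $i^*\Phi(\alpha,\beta) \cong \alpha$, and symmetrically for $j^*$.

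Second, to see $\Phi \circ (i^*, j^*) \cong \id$, fix an object $\gamma : c \to x \amalg y$. I would use \Cref{prop:product-is-LR} to rewrite $\Sigma_i i^* \gamma \cong i \times \gamma$ and $\Sigma_j j^* \gamma \cong j \times \gamma$, both computed as products in $\C/(x\amalg y)$. Since $\C$ is LCC, $\C/(x\amalg y)$ is cartesian closed, so binary product with $\gamma$ is a left adjoint and preserves coproducts. Combined with the observation that $i \amalg j$ (as a coproduct in $\C/(x\amalg y)$, via \Cref{slice-of-cocartesian-is-cocartesian}) is exactly $\id_{x\amalg y}$, which is terminal in $\C/(x\amalg y)$, this gives
\[\Phi(i^*\gamma, j^*\gamma) \cong (i \amalg j) \times \gamma \cong \id_{x\amalg y} \times \gamma \cong \gamma.\]

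Naturality of both isomorphisms follows by routine diagram chases, using the universal properties of pullback, coproduct, and product in slice categories. The main obstacle is the second step: one must invoke cartesian closure of the slice category $\C/(x\amalg y)$ to commute the product past the coproduct, which is precisely where local cartesian closure (rather than mere local cartesianness) is essential. Disjointness of coproducts enters in the first step, and both properties are thus genuinely needed for the equivalence.
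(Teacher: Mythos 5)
Your proof is correct and follows essentially the same route as the paper's: the same quasi-inverse $(\alpha,\beta)\mapsto\Sigma_i\alpha\amalg\Sigma_j\beta$, the same use of \Cref{prop:product-is-LR} and cartesian closure of $\C/(x\amalg y)$ for one composite, and the same use of disjointness, $i$ being monic, and \Cref{prop:lcc-slice-over-empty} for the other. The only differences are cosmetic: you check the two composites in the opposite order, and you make explicit that $i^*$ preserves coproducts because it is a left adjoint (by \Cref{prop:characterization-of-LCC}), a step the paper leaves implicit.
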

\begin{proof}
  The desired quasi-inverse functor is given by the composite
  \[\C/x \times \C/y \xrightarrow{\Sigma_i \times \Sigma_j} \C/(x \amalg y) \times \C/(x \amalg y) \xrightarrow{\amalg} \C/(x \amalg y),\]
  where the second functor is the categorical coproduct described by \Cref{slice-of-cocartesian-is-cocartesian}. We will show that both composites are naturally isomorphic to the identity. First, let $\alpha \in \C/(x \amalg y)$ be arbitrary. Then we have a natural isomorphism
  \[\Sigma_i i^* \alpha \amalg \Sigma_j j^* \alpha \cong (i \times \alpha) \amalg (j \times \alpha) \cong (i \amalg j) \times \alpha\]
  in $\C/(x \amalg y)$, where the first isomorphism comes from \Cref{prop:product-is-LR} and the second isomorphism comes from the fact that ${-} \times \alpha$ is a left adjoint (since $\C/(x \amalg y)$ is cartesian closed), and thus commutes with coproducts. However, we also have that $i \amalg j \cong \id_{x \amalg y}$ (this follows simply from the characterization of coproducts in a slice category from \Cref{slice-of-cocartesian-is-cocartesian}). Since $\id_{x \amalg y}$ is terminal in $\C/(x \amalg y)$, we conclude that
  \[\Sigma_i i^* \alpha \amalg \Sigma_j j^* \alpha \cong \alpha\]
  naturally in $\alpha$. Thus, the composite
  \[\C/(x \amalg y) \to \C/x \times \C/y \to \C/(x \amalg y)\]
  is naturally isomorphic to the identity.

  Next, let $\beta \in \C/x$ and $\gamma \in \C/y$ be arbitrary. We have
  \[i^* (\Sigma_i \beta \amalg \Sigma_j \gamma) \cong i^* \Sigma_i \beta \amalg i^* \Sigma_j \gamma,\]
  so we want to show that $i^* \Sigma_j \gamma \cong \varnothing$ and $i^* \Sigma_i \beta \cong \beta$ naturally in $\beta$. Now notice that $\Sigma_j \gamma$ naturally lives as an object of $(\C/(x \amalg y))/j$, and thus, $i^* \Sigma_j \gamma$ lives as an object of $(\C/x)/i^*j$. But, since finite coproducts in $\C$ are disjoint, $i^*j$ is the initial object of $\C/x$. By \Cref{prop:lcc-slice-over-empty}, we conclude that $i^* \Sigma_j \gamma \cong \varnothing$. Next, we wish to show that $i^* \Sigma_i \beta \cong \beta$ naturally in $\beta$. By hypothesis of disjointness of coproducts, $i$ is a monomorphism, so
  \[\begin{tikzcd}[ampersand replacement=\&]
      x \& x \\
      x \& {x \amalg y}
      \arrow["{\id_x}", from=1-1, to=1-2]
      \arrow["{\id_x}"', from=1-1, to=2-1]
      \arrow["i"', from=2-1, to=2-2]
      \arrow["i", from=1-2, to=2-2]
    \end{tikzcd}\]
  is a cartesian square. This says also that $i^* \Sigma_i \id_x = i^* i \cong \id_x$ (with this isomorphism being unique since $\id_x$ is terminal in $\C/x$). Now we view $\Sigma_i \beta$ as an object of $(\C/(x \amalg y))/i$ and pull back along $i$ to obtain
  \[\begin{tikzcd}[ampersand replacement=\&]
      \bullet \& \bullet \\
      x \& x \\
      x \& {x \amalg y}
      \arrow["{\id_x}", from=2-1, to=2-2]
      \arrow["{\id_x}"', from=2-1, to=3-1]
      \arrow["i"', from=3-1, to=3-2]
      \arrow["i", from=2-2, to=3-2]
      \arrow[from=1-1, to=1-2]
      \arrow["\beta", from=1-2, to=2-2]
      \arrow["{\Sigma_i \beta}", curve={height=-24pt}, from=1-2, to=3-2]
      \arrow["{i^* \Sigma_i \beta}"', curve={height=24pt}, from=1-1, to=3-1]
      \arrow[from=1-1, to=2-1]
    \end{tikzcd}\]
  where the composite square is cartesian, thus exhibiting $i^* \Sigma_i \beta$ as an object of $(\C/x)/\id_x$. Since the lower square and composite square are cartesian, the upper square is also cartesian. This forces the diagram to be of the form
  \[\begin{tikzcd}[ampersand replacement=\&]
      \bullet \& \bullet \\
      x \& x \\
      x \& {x \amalg y}
      \arrow["{\id_x}", from=2-1, to=2-2]
      \arrow["{\id_x}"', from=2-1, to=3-1]
      \arrow["i"', from=3-1, to=3-2]
      \arrow["i", from=2-2, to=3-2]
      \arrow["{\id_x}", from=1-1, to=1-2]
      \arrow["\beta", from=1-2, to=2-2]
      \arrow["{\Sigma_i \beta}", curve={height=-24pt}, from=1-2, to=3-2]
      \arrow["{i^* \Sigma_i \beta}"', curve={height=24pt}, from=1-1, to=3-1]
      \arrow["\beta"', from=1-1, to=2-1]
    \end{tikzcd}\]
  Commutativity of this diagram now implies that $i^* \Sigma_i \beta \cong \id_x \circ \beta = \beta$.

  We conclude that $i^*(\Sigma_i \beta \amalg \Sigma_j \gamma) \cong \beta$ naturally in $\beta$. Symmetrically, $j^*(\Sigma_i \beta \amalg \Sigma_j \gamma) \cong \gamma$ naturally in $\gamma$. Thus, the composite
  \[\C/x \times \C/y \to \C/(x \amalg y) \to \C/x \times \C/y\]
  is naturally isomorphic to the identity.
\end{proof}

\subsection[The Proof of \Cref*{lem:restriction-commutes-with-sum}]{The Proof of \Cref{lem:restriction-commutes-with-sum}}

\begin{lem*}
  Let $\C$ be an LCCDC category and let $f : x \to y$ and $g : x' \to y'$ be morphisms in $\C$. Let $i_x : x \to x \amalg x'$ and $i_y : y \to y \amalg y'$ be the canonical inclusions. Then
  \[\begin{tikzcd}[ampersand replacement=\&]
      x \& y \\
      {x \amalg x'} \& {y \amalg y'}
      \arrow["f", from=1-1, to=1-2]
      \arrow["{i_x}"', from=1-1, to=2-1]
      \arrow["{f \amalg g}"', from=2-1, to=2-2]
      \arrow["{i_y}", from=1-2, to=2-2]
    \end{tikzcd}\]
  is a cartesian square.
\end{lem*}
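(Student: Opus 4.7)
The plan is to leverage \Cref{prop:disjoint-coproducts-slice-products}, which provides an equivalence $(i_x^*, i_{x'}^*) : \C/(x \amalg x') \xrightarrow{\sim} \C/x \times \C/x'$ (where $i_{x'} : x' \to x \amalg x'$ denotes the other coprojection, and similarly $i_{y'}$). Showing the given square is cartesian amounts to showing $(f \amalg g)^* i_y \cong i_x$ as objects of $\C/(x \amalg x')$, and since the equivalence is in particular conservative, it suffices to check this after applying $i_x^*$ and $i_{x'}^*$.

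First I would compute the image of $(f \amalg g)^* i_y$ under $(i_x^*, i_{x'}^*)$. By pullback pasting, $i_x^* (f \amalg g)^* i_y \cong ((f \amalg g) \circ i_x)^* i_y = (i_y \circ f)^* i_y = f^*(i_y^* i_y)$. Since $i_y$ is monic (by disjointness of coproducts), $i_y^* i_y \cong \id_y$, so the first coordinate is $\id_x$. Analogously, $i_{x'}^* (f \amalg g)^* i_y \cong g^*(i_{y'}^* i_y)$; disjointness of coproducts gives $i_{y'}^* i_y \cong \varnothing$, and since $g^*$ is a left adjoint (to $\Pi_g$, by \Cref{prop:characterization-of-LCC}) it preserves initial objects, so the second coordinate is $\varnothing$. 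Thus $(f \amalg g)^* i_y$ corresponds to $(\id_x, \varnothing)$ under the equivalence.

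Next I would do the same for $i_x \in \C/(x \amalg x')$: since $i_x$ is monic, $i_x^* i_x \cong \id_x$, and disjointness gives $i_{x'}^* i_x \cong \varnothing$. So $i_x$ also corresponds to $(\id_x, \varnothing)$, and the equivalence supplies an isomorphism $(f \amalg g)^* i_y \cong i_x$ over $x \amalg x'$, as desired.

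The main subtlety is bookkeeping rather than mathematical content: I must verify that the isomorphism produced this way is compatible with the given map $f : x \to y$ sitting atop the square, so that the displayed square itself (with its specified morphisms) is what is exhibited as cartesian, rather than some square isomorphic to it. This will follow by tracing through naturality of the identifications $i_y^* i_y \cong \id_y$ and $i_x^* i_x \cong \id_x$ (both arising from the universal property of the pullback of a monomorphism along itself) and observing that the top edge, after this identification, is precisely $f$ --- since $(f \amalg g) \circ i_x = i_y \circ f$ factors the corner-to-corner composite through $y$ in the way required.
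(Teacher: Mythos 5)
Your argument is correct and is essentially the same as the paper's: both reduce the claim to an isomorphism in $\C/(x \amalg x')$, check it component-wise via the equivalence of \Cref{prop:disjoint-coproducts-slice-products} using pullback pasting, monicity of the coprojections, disjointness, and preservation of initial objects by $g^*$, and then observe that monicity of $i_y$ forces the top edge of the resulting cartesian square to be $f$. The only cosmetic difference is that the paper first posits an abstract pullback $t$ and then identifies it, whereas you compute $(f \amalg g)^* i_y$ directly.
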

\begin{proof}
  First, we note that the square commutes, simply by definition of coproduct. Since $\C$ is locally cartesian, we know that there exists some cartesian square
  \[\begin{tikzcd}[ampersand replacement=\&]
      \bullet \& y \\
      {x \amalg x'} \& {y \amalg y'}
      \arrow["{i_y}", from=1-2, to=2-2]
      \arrow[from=1-1, to=1-2]
      \arrow["t"', from=1-1, to=2-1]
      \arrow["\lrcorner"{anchor=center, pos=0.125}, draw=none, from=1-1, to=2-2]
      \arrow["{f \amalg g}"', from=2-1, to=2-2]
    \end{tikzcd}\]
  and we wish to show that this square is isomorphic to the original square in the statement of the lemma. To do so, we claim that it will suffice to show that $t$ is isomorphic (as an object of $\C/(x \amalg x')$) to $i_x$ -- then we will have a commutative diagram
  \[\begin{tikzcd}[ampersand replacement=\&]
      x \& \bullet \& y \\
      {x \amalg x'} \& {x \amalg x'} \& {y \amalg y'}
      \arrow["{f \amalg g}"', from=2-2, to=2-3]
      \arrow["{i_y}", from=1-3, to=2-3]
      \arrow[from=1-2, to=1-3]
      \arrow["t"', from=1-2, to=2-2]
      \arrow["\lrcorner"{anchor=center, pos=0.125}, draw=none, from=1-2, to=2-3]
      \arrow["\cong", from=1-1, to=1-2]
      \arrow["{i_x}"', from=1-1, to=2-1]
      \arrow["\id"', from=2-1, to=2-2]
    \end{tikzcd}\]
  The left-hand square is cartesian because it commutes and its horizontal arrows are isomorphisms, whence the composite square is also cartesian. This gives a cartesian square
  \[\begin{tikzcd}[ampersand replacement=\&]
      \bullet \& y \\
      {x \amalg x'} \& {y \amalg y'}
      \arrow["{f \amalg f'}"', from=2-1, to=2-2]
      \arrow["{i_y}", from=1-2, to=2-2]
      \arrow[from=1-1, to=1-2]
      \arrow["{i_x}"', from=1-1, to=2-1]
      \arrow["\lrcorner"{anchor=center, pos=0.125}, draw=none, from=1-1, to=2-2]
    \end{tikzcd}\]
  We then note that replacing the top morphism with $f$ would make the square commute, and $i_y$ is a monomorphism (by the LCCDC hypothesis on $\C$). Thus, the top morphism equals $f$, and we have the desired cartesian square.

  So, we have left to show that $t$ is isomorphic to $i_x$ in $\C/(x \amalg x')$, and for this purpose we make use of \Cref{prop:disjoint-coproducts-slice-products} -- letting $i_{x'} : x \to x \amalg x'$ denote the canonical inclusion, it will suffice to show that $i_x^* t \cong i_x^* i_x$ and $i_{x'}^* t \cong i_{x'}^* i_x$. We will tackle these two isomorphisms in order. First, we form a further pullback
  \begin{equation}\label{eq:restriction-commutes-with-sum:1}\begin{tikzcd}[ampersand replacement=\&]
      \bullet \& \bullet \& y \\
      x \& {x \amalg x'} \& {y \amalg y'}
      \arrow["{f \amalg g}"', from=2-2, to=2-3]
      \arrow["{i_y}", from=1-3, to=2-3]
      \arrow[from=1-2, to=1-3]
      \arrow["t"', from=1-2, to=2-2]
      \arrow["\lrcorner"{anchor=center, pos=0.125}, draw=none, from=1-2, to=2-3]
      \arrow["{i_x}"', from=2-1, to=2-2]
      \arrow["{i_x^*t}"', from=1-1, to=2-1]
      \arrow[from=1-1, to=1-2]
      \arrow["\lrcorner"{anchor=center, pos=0.125}, draw=none, from=1-1, to=2-2]
    \end{tikzcd}\end{equation}
  Now the bottom row composes to give $(f \amalg g) \circ i_x = i_y \circ f$, and so the composite square can also be formed by pulling back $i_y$ first along $i_y$ and then along $f$. Since $\C$ is LCCDC, $i_y$ is monic, i.e.
  \[\begin{tikzcd}[ampersand replacement=\&]
      y \& y \\
      y \& {y \amalg y'}
      \arrow["{i_y}", from=1-2, to=2-2]
      \arrow["\id", from=1-1, to=1-2]
      \arrow["{i_y}"', from=2-1, to=2-2]
      \arrow["\id"', from=1-1, to=2-1]
      \arrow["\lrcorner"{anchor=center, pos=0.125}, draw=none, from=1-1, to=2-2]
    \end{tikzcd}\]
  is cartesian. So the composite square of \eqref{eq:restriction-commutes-with-sum:1} is also the composite square in
  \[\begin{tikzcd}[ampersand replacement=\&]
      x \& y \& y \\
      x \& y \& {y \amalg y'}
      \arrow["{i_y}", from=1-3, to=2-3]
      \arrow["\id", from=1-2, to=1-3]
      \arrow["{i_y}"', from=2-2, to=2-3]
      \arrow["\id"', from=1-2, to=2-2]
      \arrow["\lrcorner"{anchor=center, pos=0.125}, draw=none, from=1-2, to=2-3]
      \arrow["f", from=1-1, to=1-2]
      \arrow["f"', from=2-1, to=2-2]
      \arrow["\id"', from=1-1, to=2-1]
      \arrow["\lrcorner"{anchor=center, pos=0.125}, draw=none, from=1-1, to=2-2]
    \end{tikzcd}\]
  In particular, we have $i_x^* t \cong \id_x$, and since $i_x$ is monic we also have $i_x^* i_x \cong \id_x$. Thus $i_x^* t \cong i_x^* i_x$.

  We only have left to show that $i_{x'}^* t \cong i_{x'}^* i_x$, and so we consider the pullback
  \[\begin{tikzcd}[ampersand replacement=\&]
      \bullet \& \bullet \& y \\
      {x'} \& {x \amalg x'} \& {y \amalg y'}
      \arrow["{f \amalg g}"', from=2-2, to=2-3]
      \arrow["{i_y}", from=1-3, to=2-3]
      \arrow[from=1-2, to=1-3]
      \arrow["t"', from=1-2, to=2-2]
      \arrow["\lrcorner"{anchor=center, pos=0.125}, draw=none, from=1-2, to=2-3]
      \arrow["{i_{x'}}"', from=2-1, to=2-2]
      \arrow["{i_{x'}^*t}"', from=1-1, to=2-1]
      \arrow[from=1-1, to=1-2]
      \arrow["\lrcorner"{anchor=center, pos=0.125}, draw=none, from=1-1, to=2-2]
    \end{tikzcd}\]
  The bottom row composes to give $(f \amalg g) \circ i_{x'} = i_{y'} \circ g$, where $i_{y'} : y' \to y \amalg y'$ is the canonical inclusion. So, the composite square can also be obtained by pulling back $i_y$ first along $i_{y'}$ and then along $g$. Since $\C$ is LCCDC, this first step of pulling back $i_y$ along $i_{y'}$ gives us $\varnothing$. Then pulling back further still yields $\varnothing$ (for example, since $g^*$ is a left adjoint and therefore preserves initial objects). Thus, $i_{x'}^* t \cong \varnothing$. Since $\C$ is LCCDC, we also have $i_{x'}^* i_x \cong \varnothing$, and so indeed $i_{x'}^* t \cong i_{x'}^* i_x$.
\end{proof}

\subsection[The Proof of \Cref*{prop:smack-factors}]{The Proof of \Cref{prop:smack-factors}}

\begin{prop*}
  If $F : \A{\C} \to \Set$ is a semi-Mackey functor, then $F$ factors uniquely through the forgetful functor $\CMon \to \Set$. This unique factorization is given by endowing each output set $F(x)$ with the binary operation $+_{F,x}$.
\end{prop*}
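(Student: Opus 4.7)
The plan is to prove existence and uniqueness separately. For existence, since each $(F(x), +_{F,x})$ is already known to be a commutative monoid, it remains to show that every $F(\varphi)$ is a monoid homomorphism. Because every morphism in $\A{\C}$ factors as $T_g \circ R_f$, it suffices to verify the homomorphism property for morphisms of the forms $T_g$ and $R_f$ individually.

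For $R_f$ with $f : z \to x$, \Cref{cor:fold-square-is-cartesian} supplies a cartesian square yielding the Beck-Chevalley relation $R_f \circ T_{\nabla_x} = T_{\nabla_z} \circ R_{f \amalg f}$ in $\A{\C}$. Combined with the identification $F(R_{f \amalg f}) \cong F(R_f) \times F(R_f)$ under the product isomorphisms — which is checked by composing with the projections $F(R_{i_j})$ and using that $R$ is contravariantly functorial — this immediately gives $F(R_f)(a+b) = F(R_f)(a) + F(R_f)(b)$. Preservation of $0$ follows from the analogous cartesian square obtained by pulling $\varnothing \to x$ back along $f$, which is again $\varnothing$ since $f^*$ is a left adjoint. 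For $T_g$ with $g : z \to y$, the identity $T_g \circ T_{\nabla_z} = T_{\nabla_y} \circ T_{g \amalg g}$ holds automatically because $g \circ \nabla_z = \nabla_y \circ (g \amalg g)$ in $\C$, and the analogous identification $F(T_{g \amalg g}) \cong F(T_g) \times F(T_g)$ now uses \Cref{lem:restriction-commutes-with-sum} to supply the Beck-Chevalley relation against the coprojections. Preservation of $0$ is direct from $T_g \circ T_{!_z} = T_{!_y}$.

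For uniqueness, any lift $\tilde{F} : \A{\C} \to \CMon$ of $F$ must preserve finite products, since $F$ does and the forgetful functor $\CMon \to \Set$ is faithful and reflects isomorphisms. Each object $x \in \A{\C}$ carries a canonical commutative monoid object structure, with multiplication $T_{\nabla_x} : x \times x \to x$ and unit $T_{!_x} : 1 \to x$ (using the products from \Cref{product_in_A}); the functor $\tilde{F}$ transports this to a commutative monoid object structure on $\tilde{F}(x)$ in $\CMon$. By the Eckmann-Hilton argument, the multiplication of any commutative monoid object in $\CMon$ must agree with the ambient monoid operation, so the monoid structure on $\tilde{F}(x)$ is forced to be the one with binary operation $F(T_{\nabla_x}) = +_{F,x}$ and identity $F(T_{!_x})(\ast) = 0_{F,x}$. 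The main obstacle is the finicky bookkeeping involved in identifying $F$ applied to coproduct morphisms such as $R_{f \amalg f}$ and $T_{g \amalg g}$ with products of $F(R_f)$ and $F(T_g)$ under the product isomorphisms, but in each case this reduces to a single Beck-Chevalley square already established in the paper.
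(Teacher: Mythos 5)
Your proof is correct and follows essentially the same strategy as the paper's: establish existence by checking the homomorphism property separately for $R$-type and $T$-type morphisms via the same two cartesian squares (\Cref{cor:fold-square-is-cartesian} and \Cref{lem:restriction-commutes-with-sum}), and establish uniqueness by Eckmann--Hilton. The only cosmetic differences are that you handle preservation of $0$ case-by-case rather than via the single observation that $\varnothing$ is initial in $\A{\C}$, and you phrase the uniqueness step in terms of commutative monoid objects in $\CMon$ rather than elementwise, but both are the same Eckmann--Hilton argument.
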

\begin{proof}
  The existence part of this claim amounts to checking that, for each morphism $\varphi : x \to y$ in $\A{\C}$, $F(\varphi) : F(x) \to F(y)$ is a monoid homomorphism with respect to $+_{F,x}$ and $+_{F,y}$. First, let $! : \varnothing \to x$ be the unique morphism in $\C$ from a chosen initial object to $x$. Then $\varphi \circ T_!$ is a morphism $\varnothing \to y$ in $\A{\C}$, but $\varnothing$ is initial in $\A{\C}$ (it is terminal by \Cref{product_in_A} and $\A{\C}$ is self-dual), and so $\varphi \circ T_! = T_{!!}$, where $!! : \varnothing \to y$ is the unique morphism in $\C$. Thus $F(T_{!!})$ (which picks out the identity element of $(F(y),+)$) is equal to $F(\varphi) \circ F(T_!)$. In other words, $F(\varphi)$ sends the identity element of $(F(x),+)$ to the identity element of $(F(y),+)$. Next, we must check that $F(\varphi)$ commutes with the binary operation $+$. We will show this separately for morphisms of type $T$ and $R$.

  First, suppose $\varphi = T_f$ for some $f : x \to y$ in $\C$. Then consider the diagram
  \begin{equation*}\begin{tikzcd}[ampersand replacement=\&]
      {F(x) \times F(x)} \& {F(x \amalg x)} \& {F(x)} \\
      {F(y) \times F(y)} \& {F(y \amalg y)} \& {F(y)}
      \arrow["\cong", from=1-1, to=1-2]
      \arrow["{F(T_\nabla)}", from=1-2, to=1-3]
      \arrow["{F(T_f)}"', from=1-3, to=2-3]
      \arrow["{F(T_{f \amalg f})}"', from=1-2, to=2-2]
      \arrow["{F(T_\nabla)}"', from=2-2, to=2-3]
      \arrow["{F(T_f) \times F(T_f)}"', from=1-1, to=2-1]
      \arrow["\cong", from=2-1, to=2-2]
    \end{tikzcd}\end{equation*}
  The right-hand square commutes because $f \circ \nabla_x = \nabla_y \circ (f \amalg f)$ in $\C$. To check that the left-hand square commutes, we invert the isomorphisms on top and bottom to obtain
  \[\begin{tikzcd}[ampersand replacement=\&]
      {F(x) \times F(x)} \&\& {F(x \amalg x)} \\
      {F(y) \times F(y)} \&\& {F(y \amalg y)}
      \arrow["{F(T_f) \times F(T_f)}"', from=1-1, to=2-1]
      \arrow["{(F(R_{i_1}), F(R_{i_2}))}"', from=1-3, to=1-1]
      \arrow["{(F(R_{i_1}), F(R_{i_2}))}", from=2-3, to=2-1]
      \arrow["{F(T_{f \amalg f})}"', from=1-3, to=2-3]
    \end{tikzcd}\]
  The two composites we must show are equal are morphisms $F(x \amalg x) \to F(y) \times F(y)$, so we check the two components separately. For the first component, we must compare $F(T_f) \circ F(R_{i_1})$ with $F(R_{i_1}) \circ F(T_{f \amalg f})$. So, it suffices to prove $T_f \circ R_{i_1} = R_{i_1} \circ T_{f \amalg f}$. For this, it suffices to prove that
  \[\begin{tikzcd}[ampersand replacement=\&]
      x \& y \\
      {x \amalg x} \& {y \amalg y}
      \arrow["f", from=1-1, to=1-2]
      \arrow["{i_1}"', from=1-1, to=2-1]
      \arrow["{f \amalg f}"', from=2-1, to=2-2]
      \arrow["{i_1}", from=1-2, to=2-2]
    \end{tikzcd}\]
  is cartesian in $\C$, but this is just a special case of \Cref{lem:restriction-commutes-with-sum}. Thus, the composite square commutes, which says exactly that $F(T_f)$ commutes with $+$.

  Now suppose $\varphi = R_g$ for some $g : y \to x$ in $\C$. Then
  \[\begin{tikzcd}[ampersand replacement=\&]
      {F(x) \times F(x)} \&\& {F(x \amalg x)} \\
      {F(y) \times F(y)} \&\& {F(y \amalg y)}
      \arrow["{F(R_g) \times F(R_g)}"', from=1-1, to=2-1]
      \arrow["{(F(R_{i_1}), F(R_{i_2}))}"', from=1-3, to=1-1]
      \arrow["{(F(R_{i_1}), F(R_{i_2}))}", from=2-3, to=2-1]
      \arrow["{F(R_{g \amalg g})}"', from=1-3, to=2-3]
    \end{tikzcd}\]
  commutes, because $(g \amalg g) \circ i_1 = i_1 \circ g$ and $(g \amalg g) \circ i_2 = i_2 \circ g$ in $\C$. Inverting the horizontal arrows, we get that
  \[\begin{tikzcd}[ampersand replacement=\&]
      {F(x) \times F(x)} \&\& {F(x \amalg x)} \\
      {F(y) \times F(y)} \&\& {F(y \amalg y)}
      \arrow["{F(R_g) \times F(R_g)}"', from=1-1, to=2-1]
      \arrow["\cong", from=1-1, to=1-3]
      \arrow["\cong"', from=2-1, to=2-3]
      \arrow["{F(R_{g \amalg g})}"', from=1-3, to=2-3]
    \end{tikzcd}\]
  commutes. We only have left to show that
  \[\begin{tikzcd}[ampersand replacement=\&]
      {F(x \amalg x)} \& {F(x)} \\
      {F(y \amalg y)} \& {F(y)}
      \arrow["{F(T_\nabla)}", from=1-1, to=1-2]
      \arrow["{F(R_g)}", from=1-2, to=2-2]
      \arrow["{F(R_{g \amalg g})}"', from=1-1, to=2-1]
      \arrow["{F(T_\nabla)}"', from=2-1, to=2-2]
    \end{tikzcd}\]
  commutes, for which it suffices to show that $R_g \circ T_\nabla = T_\nabla \circ R_{g \amalg g}$. For this claim, it suffices to show that
  \[\begin{tikzcd}[ampersand replacement=\&]
      {y \amalg y} \& {x \amalg x} \\
      y \& x
      \arrow["{g \amalg g}", from=1-1, to=1-2]
      \arrow["\nabla", from=1-2, to=2-2]
      \arrow["\nabla"', from=1-1, to=2-1]
      \arrow["g"', from=2-1, to=2-2]
    \end{tikzcd}\]
  is cartesian. This is \Cref{cor:fold-square-is-cartesian}.

  We have now established that $F : \A{\C} \to \Set$ factors through $\CMon$. For uniqueness, suppose we have some other factorization, i.e. on each $F(x)$ we have a commutative monoid operation $\cdot$ such that $F(\varphi)$ commutes with $\cdot$ for all morphisms $\varphi \in \A{\C}$. Then $+ = F(T_\nabla) \circ (F(R_{i_1}), F(R_{i_2}))^{-1} : F(x) \times F(x) \to F(x)$ is a monoid homomorphism with respect to $\cdot$. The Eckmann-Hilton argument now shows that $\cdot = +$.
\end{proof}

\subsection{Hoyer's Lemma 2.3.5}

Now we build up to an important technical lemma, generalizing \cite[Lemma 2.3.5]{Hoyer}, which is key to the proof of \Cref{actual-main-theorem}.

\begin{lem}
  \label{lem:slogan}
  Let $\C$ be a category, and let $i : x \to y$ be a morphism in $\C$. For any object $\alpha \in \C/x$, the functor $\Sigma_i / \alpha : (\C/x)/\alpha \to (\C/y)/\Sigma_i \alpha$ is an isomorphism.
\end{lem}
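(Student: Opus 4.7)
The plan is to prove this by directly invoking \Cref{slogan:slice-of-slice-is-slice}: both $(\C/x)/\alpha$ and $(\C/y)/\Sigma_i\alpha$ are canonically isomorphic to $\C/a$, where $a$ is the domain of $\alpha$, and under these identifications the functor $\Sigma_i/\alpha$ becomes the identity.

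More precisely, write $\alpha : a \to x$. An object of $(\C/x)/\alpha$ is a morphism $\varphi : \gamma \to \alpha$ in $\C/x$, i.e. a morphism $f : c \to a$ in $\C$ together with an object $\gamma : c \to x$ satisfying $\alpha \circ f = \gamma$; since $\gamma$ is forced by $f$, such an object is the same data as a morphism $f : c \to a$, and morphisms in $(\C/x)/\alpha$ are morphisms in $\C$ over $a$. So the assignment $\varphi \mapsto f$ gives an isomorphism $\Phi_x : (\C/x)/\alpha \to \C/a$. The analogous construction for $\Sigma_i\alpha = i \circ \alpha : a \to y$ yields an isomorphism $\Phi_y : (\C/y)/\Sigma_i\alpha \to \C/a$: an object is a morphism $\psi : \delta \to \Sigma_i\alpha$ in $\C/y$, i.e. $f : c \to a$ with $\delta = i \circ \alpha \circ f$ forced.

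The first step is to verify that $\Phi_y \circ (\Sigma_i/\alpha) = \Phi_x$, which is immediate on objects: $\Sigma_i/\alpha$ sends an object $(\gamma \xrightarrow{f} \alpha)$ of $(\C/x)/\alpha$ to $(\Sigma_i\gamma \xrightarrow{f} \Sigma_i\alpha)$, and both $\Phi_x$ and $\Phi_y \circ (\Sigma_i/\alpha)$ record this as the same morphism $f : c \to a$ in $\C/a$. On morphisms, $\Sigma_i$ acts as the identity on underlying arrows of $\C$, and both $\Phi_x$ and $\Phi_y$ also act as the identity on underlying arrows, so the equality holds trivially. Since $\Phi_x$ and $\Phi_y$ are isomorphisms, so is $\Sigma_i/\alpha$.

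There is no real obstacle here; the proof is an unpacking of definitions plus the observation that $\Sigma_i$ acts as the identity on morphisms. The only thing to be careful about is making explicit the bijective correspondence between objects of $(\C/x)/\alpha$ and morphisms into $a$ in $\C$, since this is precisely the content of the slogan being applied twice.
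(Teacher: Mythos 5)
Your proof is correct and takes essentially the same approach as the paper: the paper's proof simply notes that both $(\C/x)/\alpha$ and $(\C/y)/\Sigma_i\alpha$ are canonically isomorphic to $\C/\operatorname{dom}(\alpha)$ and that $\Sigma_i/\alpha$ factors as the identity under these identifications, which is exactly what you spell out with $\Phi_x$ and $\Phi_y$.
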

\begin{proof}
  This is essentially another rephrasing of \Cref{slogan:slice-of-slice-is-slice}. Both $(\C/x)/\alpha$ and $(\C/y)/\Sigma_i \alpha$ are canonically isomorphic to $\C/\operatorname{dom}(\alpha)$, and via these isomorphisms $\Sigma_i / \alpha$ factors as the identity.
\end{proof}

\begin{lem}
  \label{lem:dep-sum-pres-refl-pbs}
  Let $\C$ be a locally cartesian category and let $i : x \to y$ be a morphism in $\C$. Then $\Sigma_i : \C/x \to \C/y$ preserves and reflects pullbacks.
\end{lem}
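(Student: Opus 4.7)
The plan is to leverage the trivial observation that $\Sigma_i$ acts as the identity on underlying objects and morphisms. Writing $U_x : \C/x \to \C$ and $U_y : \C/y \to \C$ for the forgetful functors (which simply discard the structure morphism to the base), we have the literal equality $U_y \circ \Sigma_i = U_x$, since $\Sigma_i$ only modifies the structure morphism by postcomposing with $i$.

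First I would recall the standard fact that the forgetful functor $U_x : \C/x \to \C$ creates pullbacks. Explicitly, given a cospan $\alpha \to \gamma \leftarrow \beta$ in $\C/x$ with underlying cospan $a \to c \leftarrow b$ in $\C$, the pullback $a \times_c b$ (which exists since $\C$ is locally cartesian) acquires a canonical morphism to $x$ via either projection --- the two agree by commutativity over $x$ --- and the resulting object of $\C/x$ is readily checked to be a pullback of $\alpha \to \gamma \leftarrow \beta$ in $\C/x$. In particular $U_x$ both preserves and reflects pullbacks, and the analogous statement holds for $U_y$.

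With this in hand, the lemma follows by a short chain of equivalences: a commutative square $S$ in $\C/x$ is cartesian iff $U_x(S)$ is cartesian in $\C$ (by the preservation-and-reflection property of $U_x$), iff $U_y(\Sigma_i S)$ is cartesian in $\C$ (by the identity $U_y \circ \Sigma_i = U_x$), iff $\Sigma_i S$ is cartesian in $\C/y$ (by the preservation-and-reflection property of $U_y$). This gives both preservation and reflection simultaneously.

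There is no real obstacle here; the only point demanding verification is the creation-of-pullbacks claim for slice forgetful functors, which is entirely routine and depends only on the local cartesian hypothesis for the existence of the relevant pullbacks in $\C$.
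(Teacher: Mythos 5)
Your proof is correct and takes a slightly different route from the paper's, so a brief comparison is in order. The paper proves this via its Slogan ``a slice of a slice is a slice'': it shows (\Cref{lem:slogan}) that $\Sigma_i/\delta : (\C/x)/\delta \to (\C/y)/\Sigma_i\delta$ is an isomorphism, and then uses the reformulation ``a commutative square with bottom-right corner $\delta$ is cartesian iff the corresponding span is a product diagram in the slice over $\delta$.'' You instead exploit the strict equality $U_y \circ \Sigma_i = U_x$ of forgetful functors to $\C$, together with the standard fact that $U_x$ and $U_y$ create pullbacks (being forgetful functors from slice categories, and $\C$ having all pullbacks). Both arguments are morally the same observation --- everything is computed in $\C$ --- packaged differently: the paper reduces cartesianness to a product condition in a double slice, while you reduce it to cartesianness in the base. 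Your version is arguably more elementary since it avoids the ``cartesian square = product in slice over the corner'' translation, but the paper's version has the small advantage that \Cref{lem:slogan} is a reusable ingredient invoked again later (e.g.\ in the universality argument of \Cref{actual-main-theorem}). Either is a complete proof; the only point you flag as needing verification --- creation of pullbacks by slice forgetful functors --- is indeed routine and correct.
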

\begin{proof}
  Consider an arbitrary commutative square (A) in $\C/x$
  \[\begin{tikzcd}
      \alpha & \beta \\
      \gamma & \delta
      \arrow["e", from=1-1, to=1-2]
      \arrow[""{name=0, anchor=center, inner sep=0}, "f"', from=1-1, to=2-1]
      \arrow[""{name=1, anchor=center, inner sep=0}, "g", from=1-2, to=2-2]
      \arrow["h"', from=2-1, to=2-2]
      \arrow["{(A)}"{description}, draw=none, from=0, to=1]
    \end{tikzcd}\]
  and let (B) be its image under $\Sigma_i$:
  \[\begin{tikzcd}
      {\Sigma_i \alpha} & {\Sigma_i \beta} \\
      {\Sigma_i \gamma} & {\Sigma_i \delta}
      \arrow["{\Sigma_i e}", from=1-1, to=1-2]
      \arrow[""{name=0, anchor=center, inner sep=0}, "{\Sigma_i f}"', from=1-1, to=2-1]
      \arrow["{\Sigma_i h}"', from=2-1, to=2-2]
      \arrow[""{name=1, anchor=center, inner sep=0}, "{\Sigma_i g}", from=1-2, to=2-2]
      \arrow["{(B)}"{description}, draw=none, from=0, to=1]
    \end{tikzcd}\]
  Note that (A) is cartesian iff $\gamma \xleftarrow{f} \alpha \xrightarrow{e} \beta$ is a product diagram in $(\C/x)/\delta$. By \Cref{lem:slogan}, $\Sigma_i/\delta$ is an isomorphism, so this happens iff $\Sigma_i \gamma \xleftarrow{\Sigma_i f} \Sigma_i \alpha \xrightarrow{\Sigma_i e} \Sigma_i \beta$ is a product diagram in $(\C/y)/\Sigma_i \delta$. This happens iff (B) is cartesian.
\end{proof}

\begin{prop}
  \label{prop:induction-is-cartesian}
  Let $\C$ be a locally cartesian category and let $i : x \to y$ be a morphism in $\C$. Then:
  \begin{enumerate}
    \item $\Sigma_i$ and $i^*$ preserve cartesian squares;
    \item Each naturality square for the unit and counit of the adjunction is cartesian.
  \end{enumerate}
\end{prop}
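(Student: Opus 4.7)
Part (1) is essentially immediate from what is already in place. The statement about $\Sigma_i$ is just a restatement of \Cref{lem:dep-sum-pres-refl-pbs}. For $i^*$, note that $i^* \dashv \Pi_i$ (this uses LCC, but the statement of the proposition only requires locally cartesian, under which $i^*$ is a right adjoint to $\Sigma_i$ by definition); right adjoints preserve limits, and pullbacks are limits.

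For part (2), the argument for the counit of $\Sigma_i \dashv i^*$ should come first and is a direct pasting argument. For $f : \alpha \to \beta$ in $\C/y$, the objects $i^*\alpha$ and $i^*\beta$ are by definition the pullbacks of $\alpha$ and $\beta$ along $i$, and $\varepsilon^{\mathrm{ind}}_\alpha$, $\varepsilon^{\mathrm{ind}}_\beta$ are the ``top'' edges of those pullback squares. Stacking the naturality square for $\varepsilon^{\mathrm{ind}}$ on top of the defining pullback square for $i^*\beta$ produces a rectangle whose outer boundary is, using $\beta \circ f = \alpha$, precisely the defining pullback square for $i^*\alpha$. The lower square and the outer rectangle are cartesian, so by the pullback pasting lemma the top square — the naturality square for $\varepsilon^{\mathrm{ind}}$ — is cartesian. (Here I work in $\C$; the forgetful $\C/y \to \C$ preserves and reflects pullbacks, so working underlyingly is harmless.)

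The unit case then bootstraps off the counit case via the triangle identity. Given $g : \alpha \to \alpha'$ in $\C/x$, consider the horizontal composite of two naturality squares in $\C$:
\[
\begin{tikzcd}
a \ar[r,"\eta^{\mathrm{ind}}_\alpha"] \ar[d,"g"'] & i^*\Sigma_i \alpha \ar[r,"\varepsilon^{\mathrm{ind}}_{\Sigma_i\alpha}"] \ar[d,"i^*\Sigma_i g"] & a \ar[d,"g"] \\
a' \ar[r,"\eta^{\mathrm{ind}}_{\alpha'}"'] & i^*\Sigma_i \alpha' \ar[r,"\varepsilon^{\mathrm{ind}}_{\Sigma_i\alpha'}"'] & a'
\end{tikzcd}
\]
By the triangle identity $\varepsilon^{\mathrm{ind}}_{\Sigma_i(-)} \circ \Sigma_i\eta^{\mathrm{ind}}_{(-)} = \mathrm{id}$ (applied underlyingly, where $\Sigma_i$ is post-composition), each row composes to the identity, so the outer rectangle is trivially cartesian. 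The right-hand square is the naturality square for $\varepsilon^{\mathrm{ind}}$ at $\Sigma_i g : \Sigma_i\alpha \to \Sigma_i\alpha'$, hence cartesian by the previous paragraph. Applying the pasting lemma once more, the left-hand square — the naturality square for $\eta^{\mathrm{ind}}$ — is cartesian.

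There is no real ``hard part'' here; the only minor care needed is to remember that $\Sigma_i$ does not alter underlying morphisms and that the naturality squares can be checked at the level of $\C$. The proof is essentially two invocations of the pullback pasting lemma, once cold and once bootstrapped from the first.
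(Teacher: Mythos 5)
Your proof is correct and follows essentially the same route as the paper's. For part (1) you invoke the same reflection lemma for $\Sigma_i$ and the right-adjointness of $i^*$; for the counit you do the same paste of the naturality square against the defining pullbacks of $i^*\alpha$ and $i^*\beta$ (the paper draws this as a triangular prism, you draw it as a stacked rectangle — the same pasting); and for the unit you bootstrap off the counit case via the triangle identity exactly as the paper does. The one cosmetic difference is that the paper applies $\Sigma_i$ to the unit square, pastes, and then invokes that $\Sigma_i$ \emph{reflects} pullbacks, whereas you work throughout with underlying squares in $\C$ and observe that $\Sigma_i$ leaves underlying morphisms unchanged — these are two phrasings of the same fact (Lemma~\ref{lem:dep-sum-pres-refl-pbs}), and yours bypasses one explicit citation. (Minor stylistic note: the opening sentence about $i^* \dashv \Pi_i$ and LCC, followed by the self-correction, should just be deleted — you only need that $i^*$ is a right adjoint to $\Sigma_i$, which you say anyway.)
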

\begin{proof}
  By \Cref{lem:dep-sum-pres-refl-pbs}, $\Sigma_i$ preserves pullbacks, and $i^*$ preserves pullbacks because it is a right adjoint. Next we must check that the naturality squares for the unit and counit are cartesian.

  We begin with the counit. Let $f : \alpha \to \beta$ be an arbitrary morphism in $\C/y$, where $\alpha : a \to y$ and $\beta : b \to y$ are any objects. Then pull back along $i$ to get the commuting triangular prism below.
  \[\begin{tikzcd}
      {i^* a} && {i^* b} \\
      && a && b \\
      & x \\
      &&& y
      \arrow["i", from=3-2, to=4-4]
      \arrow["\alpha"', from=2-3, to=4-4]
      \arrow["\beta"', from=2-5, to=4-4]
      \arrow["{i^* \alpha}"', from=1-1, to=3-2]
      \arrow["{i^* \beta}"'{pos=0.7}, from=1-3, to=3-2]
      \arrow[crossing over, from=1-1, to=2-3]
      \arrow[from=1-3, to=2-5]
      \arrow["f", from=2-3, to=2-5]
      \arrow["{i^*f}", from=1-1, to=1-3]
    \end{tikzcd}\]
  The ``left and right faces''
  \(\begin{tikzcd}
    {i^*a} & a \\
    x & y
    \arrow[from=1-1, to=1-2]
    \arrow["{i^* \alpha}"', from=1-1, to=2-1]
    \arrow["i"', from=2-1, to=2-2]
    \arrow["\alpha", from=1-2, to=2-2]
  \end{tikzcd}\)
  and
  \(\begin{tikzcd}
    {i^*b} & b \\
    x & y
    \arrow[from=1-1, to=1-2]
    \arrow["\beta", from=1-2, to=2-2]
    \arrow["i"', from=2-1, to=2-2]
    \arrow["{i^*\beta}"', from=1-1, to=2-1]
  \end{tikzcd}\)
  are cartesian by construction. Noe the ``top face''
  \(\begin{tikzcd}
    {i^*a} & {i^*b} \\
    a & b
    \arrow["{i^*f}", from=1-1, to=1-2]
    \arrow[from=1-1, to=2-1]
    \arrow[from=1-2, to=2-2]
    \arrow["f", from=2-1, to=2-2]
  \end{tikzcd}\)
  is then cartesian, because pasting with the right face yields the left face. The naturality square for the counit of the adjunction and the morphism $f$ is then cartesian, because it is essentially the aforementioned top face:
  \[\begin{tikzcd}
      {i^* a} && {i^* b} \\
      && a && b \\
      \\
      &&& y
      \arrow["\alpha"'{pos=0.3}, from=2-3, to=4-4]
      \arrow["\beta"', from=2-5, to=4-4]
      \arrow[from=1-1, to=2-3]
      \arrow[from=1-3, to=2-5]
      \arrow["{i^*f}", from=1-1, to=1-3]
      \arrow["{\Sigma_i i^* \alpha}"', from=1-1, to=4-4]
      \arrow["{\Sigma_i i^* \beta}"{pos=0.6}, from=1-3, to=4-4]
      \arrow["f", crossing over, from=2-3, to=2-5]
    \end{tikzcd}\]
  Next, we will show that the naturality squares for the unit of the adjunction are cartesian.

  Let $f : \alpha \to \beta$ now be an arbitrary morphism in $\C/x$. The naturality square in question is
  \[\begin{tikzcd}[ampersand replacement=\&]
      \alpha \& {i^* \Sigma_i \alpha} \\
      \beta \& {i^*\Sigma_i\beta}
      \arrow["{\eta_\alpha}", from=1-1, to=1-2]
      \arrow[""{name=0, anchor=center, inner sep=0}, "f"', from=1-1, to=2-1]
      \arrow["{\eta_\beta}"', from=2-1, to=2-2]
      \arrow[""{name=1, anchor=center, inner sep=0}, "{i^*\Sigma_i f}", from=1-2, to=2-2]
      \arrow["{(A)}"{description}, draw=none, from=0, to=1]
    \end{tikzcd}\]
  The image of $(A)$ under $\Sigma_i$ is
  \[\begin{tikzcd}[ampersand replacement=\&]
      {\Sigma_i\alpha} \& {\Sigma_i i^* \Sigma_i \alpha} \\
      {\Sigma_i \beta} \& {\Sigma_i i^*\Sigma_i\beta}
      \arrow["{\Sigma_i \eta_\alpha}", from=1-1, to=1-2]
      \arrow[""{name=0, anchor=center, inner sep=0}, "{\Sigma_i f}"', from=1-1, to=2-1]
      \arrow["{\Sigma_i \eta_\beta}"', from=2-1, to=2-2]
      \arrow[""{name=1, anchor=center, inner sep=0}, "{\Sigma_i i^*\Sigma_i f}", from=1-2, to=2-2]
      \arrow["{(A')}"{description}, draw=none, from=0, to=1]
    \end{tikzcd}\]
  By the triangle identities, $(A')$ fits in the commutative diagram
  \[\begin{tikzcd}[ampersand replacement=\&]
      {\Sigma_i\alpha} \& {\Sigma_i i^* \Sigma_i \alpha} \& {\Sigma_i \alpha} \\
      {\Sigma_i \beta} \& {\Sigma_i i^*\Sigma_i\beta} \& {\Sigma_i \beta}
      \arrow["{\Sigma_i \eta_\alpha}", from=1-1, to=1-2]
      \arrow[""{name=0, anchor=center, inner sep=0}, "{\Sigma_i f}"', from=1-1, to=2-1]
      \arrow["{\Sigma_i \eta_\beta}"', from=2-1, to=2-2]
      \arrow[""{name=1, anchor=center, inner sep=0}, "{\Sigma_i i^*\Sigma_i f}", from=1-2, to=2-2]
      \arrow["{\varepsilon_{\Sigma_i \alpha}}", from=1-2, to=1-3]
      \arrow["{\Sigma_i f}", from=1-3, to=2-3]
      \arrow["{\varepsilon_{\Sigma_i \beta}}"', from=2-2, to=2-3]
      \arrow["{\operatorname{id}}", curve={height=-24pt}, from=1-1, to=1-3]
      \arrow["{\operatorname{id}}"', curve={height=24pt}, from=2-1, to=2-3]
      \arrow["{(A')}"{description}, draw=none, from=0, to=1]
    \end{tikzcd}\]
  The right-hand square is the naturality square of the counit $\eta$, which we showed above is cartesian. The composite square is the ``identity square'' of $\Sigma_i f$, which is also cartesian. We conclude that $(A')$ is cartesian. By \Cref{lem:dep-sum-pres-refl-pbs}, $\Sigma_i$ reflects pullbacks, so $(A)$ is cartesian, as desired.
\end{proof}

\begin{cor}
  \label{cor:adjunct-is-cartesian}
  A commutative square (A) is cartesian if and only if its adjunct (B) is.
  \[\begin{tikzcd}[ampersand replacement=\&]
      {\Sigma_ia} \& c \& a \& {i^* c} \\
      {\Sigma_i b} \& d \& b \& {i^* d}
      \arrow[from=1-1, to=1-2]
      \arrow[""{name=0, anchor=center, inner sep=0}, "{\Sigma_i p}"', from=1-1, to=2-1]
      \arrow[from=2-1, to=2-2]
      \arrow[""{name=1, anchor=center, inner sep=0}, "q", from=1-2, to=2-2]
      \arrow[""{name=2, anchor=center, inner sep=0}, "p"', from=1-3, to=2-3]
      \arrow[from=1-3, to=1-4]
      \arrow[from=2-3, to=2-4]
      \arrow[""{name=3, anchor=center, inner sep=0}, "{i^* q}", from=1-4, to=2-4]
      \arrow["{(A)}"{description}, draw=none, from=0, to=1]
      \arrow["{(B)}"{description}, draw=none, from=2, to=3]
    \end{tikzcd}\]
\end{cor}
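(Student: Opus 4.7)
The plan is to prove both directions by a pasting argument, using \Cref{prop:induction-is-cartesian} (the naturality squares of $\eta$ and $\varepsilon$ are cartesian) together with the fact that $\Sigma_i$ and $i^*$ both preserve pullbacks (by \Cref{lem:dep-sum-pres-refl-pbs} and \Cref{prop:induction-is-cartesian}, respectively). The key observation is the standard fact that if $f : \Sigma_i a \to c$ is a morphism in $\C/y$ with adjunct $f^! : a \to i^* c$ in $\C/x$, then $f^! = i^* f \circ \eta_a$ and $f = \varepsilon_c \circ \Sigma_i f^!$ (and similarly for $g$ and $g^!$). So the horizontal arrows of (A) and (B) are related by pre-/post-composition with unit/counit components.

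For the direction $(A) \Rightarrow (B)$: I would first apply $i^*$ to the square $(A)$, which produces a cartesian square since $i^*$ preserves pullbacks. Then I would paste this cartesian square on the right of the naturality square of $\eta$ at the morphism $p : a \to b$ (which is cartesian by \Cref{prop:induction-is-cartesian}), obtaining a cartesian rectangle. Computing the horizontal composites, the top edge is $i^* f \circ \eta_a = f^!$ and the bottom edge is $i^* g \circ \eta_b = g^!$, so the rectangle is exactly $(B)$.

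For the direction $(B) \Rightarrow (A)$: symmetrically, I would apply $\Sigma_i$ to $(B)$ to obtain a cartesian square, then paste it on the left of the naturality square of $\varepsilon$ at the morphism $q : c \to d$ (again cartesian by \Cref{prop:induction-is-cartesian}). Using the triangle identity in the form $f = \varepsilon_c \circ \Sigma_i f^!$ (and similarly for $g$), the pasted rectangle is exactly $(A)$.

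There is really no obstacle here beyond bookkeeping: the whole proof is two applications of the pasting lemma for pullbacks, together with recognizing the composites as the original adjunct pairs via the standard formulas. The only thing that requires a moment's care is making sure the naturality square of $\eta$ (resp. $\varepsilon$) is applied at the correct morphism $p$ (resp. $q$) so that pasting produces exactly the vertical arrows of the target square, but this is automatic from the shape of $(A)$ and $(B)$.
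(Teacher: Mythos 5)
Your proposal is correct. The paper states this corollary without proof, immediately after \Cref{prop:induction-is-cartesian}, and your argument is precisely the natural one that the position of the corollary implies: paste the (cartesian) naturality square of $\eta$ at $p$ with $i^*$ applied to $(A)$ to recover $(B)$, and paste $\Sigma_i$ applied to $(B)$ with the (cartesian) naturality square of $\varepsilon$ at $q$ to recover $(A)$, identifying the horizontal composites via the adjunction formulas $f^! = i^*f \circ \eta_a$ and $f = \varepsilon_c \circ \Sigma_i f^!$. Both directions use only the elementary pasting lemma (two cartesian squares compose to a cartesian rectangle) together with the preservation statements already established, so there is no gap.
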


\begin{lem}
  \label{lem:slice-adjunction}
  Let $\D$ and $\D'$ be categories such that $\D$ admits all (binary) pullbacks. If $F : \D \to \D'$ is left adjoint to $G : \D' \to \D$ and $d \in \D$ is some object, then $F/d$ is left adjoint to $\eta_d^* \circ (G/Fd)$, where $\eta_d : d \to GFd$ is the unit of $F \dashv G$.
\end{lem}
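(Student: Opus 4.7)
The plan is to exhibit a natural bijection between Hom-sets
\[(\D'/Fd)(F/d(\alpha), \beta) \cong (\D/d)(\alpha, \eta_d^*(G/Fd)(\beta))\]
for $\alpha : a \to d$ in $\D/d$ and $\beta : b \to Fd$ in $\D'/Fd$, by transporting the adjunction $F \dashv G$ to the slice categories and then invoking the universal property of pullback. There should be no major obstacle here --- this is essentially a formal manipulation of adjunction data --- so the work is just in bookkeeping.

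First, I would unwind the left-hand side: a morphism $F\alpha \to \beta$ in $\D'/Fd$ is a morphism $f : Fa \to b$ in $\D'$ satisfying $\beta \circ f = F\alpha$. Applying the adjunction $F \dashv G$, such $f$ are in bijection with morphisms $\hat f : a \to Gb$ in $\D$, via $\hat f = Gf \circ \eta_a$. The key observation is that the commutativity condition $\beta \circ f = F\alpha$ translates, under this bijection, into the condition $G\beta \circ \hat f = \eta_d \circ \alpha$. Indeed, applying $G$ and precomposing with $\eta_a$ gives $G\beta \circ Gf \circ \eta_a = GF\alpha \circ \eta_a$, and by naturality of $\eta$ we have $GF\alpha \circ \eta_a = \eta_d \circ \alpha$; conversely, given the latter identity one recovers the former by taking adjuncts.

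Next, the identity $G\beta \circ \hat f = \eta_d \circ \alpha$ says precisely that the square with top edge $\hat f : a \to Gb$, right edge $G\beta : Gb \to GFd$, bottom edge $\eta_d : d \to GFd$, and left edge $\alpha : a \to d$ commutes. By the universal property of the pullback defining $\eta_d^*(G\beta) \in \D/d$, such data are in bijection with morphisms $\alpha \to \eta_d^*(G\beta)$ in $\D/d$. Composing these two bijections produces the desired natural bijection
\[(\D'/Fd)(F\alpha, \beta) \cong (\D/d)(\alpha, \eta_d^*(G/Fd)(\beta)).\]

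Finally, I would check naturality in $\alpha$ and $\beta$. Naturality in $\alpha$ follows from the naturality of the $F \dashv G$ hom-bijection together with the functoriality of pullback in its base object; naturality in $\beta$ follows from functoriality of $G$ and of pullback. Both checks reduce to straightforward diagram chases, essentially because each constituent bijection (the adjunction isomorphism and the universal property of pullback) is already natural. This completes the construction of the adjunction $F/d \dashv \eta_d^* \circ (G/Fd)$.
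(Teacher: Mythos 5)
Your proof takes essentially the same route as the paper's: both identify $\Hom_{\D/d}(\alpha, \eta_d^*(G/Fd)\beta)$, via the universal property of the pullback along $\eta_d$, with the set of maps $a \to Gb$ making the appropriate square over $\eta_d$ commute, and then use the adjunction bijection $\Hom_\D(a,Gb) \cong \Hom_{\D'}(Fa,b)$ to transport that condition to the commuting-square condition defining $\Hom_{\D'/Fd}((F/d)\alpha,\beta)$. The only cosmetic difference is that you run the bijection in the opposite direction and spell out the naturality-of-$\eta$ computation that the paper leaves implicit; the substance is identical.
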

\begin{proof}
  Consider arbitrary objects $\alpha : a \to d$ in $\D/d$ and $\beta : b \to Fd$ in $\D'/Fd$. Then
  \begin{multline*}
    \Hom_{\D/d}(\alpha, \eta_d^* (G/Fd) \beta)\\
    = \left\{ f \in \Hom_\D(a, Gb) \;\middle| \begin{tikzcd}[ampersand replacement=\&] a \rar["f"] \dar["\alpha"] \& Gb \dar["G \beta"] \\ d \rar["\eta_d"] \& GFd \end{tikzcd} \;\text{commutes} \right\} \\
    \cong \left\{ g \in \Hom_{\D'}(Fa,b) \;\middle| \begin{tikzcd}[ampersand replacement=\&] Fa \rar["g"] \dar["F \alpha"] \& b \dar["\beta"] \\ Fd \rar["\id_{Fd}"] \& Fd \end{tikzcd} \;\text{commutes} \right\} \\
    = \Hom_{\D'/Fd}((F/d)\alpha,\beta)
  \end{multline*}
  and this bijection is natural in $\alpha$ and $\beta$.
\end{proof}

\begin{lem}
  \label{lem:comp-of-slice-functors}
  Let $F : \D \to \D'$ and $G : \D' \to \D''$ be functors, and let $d \in D$ be an object. Then $(G \circ F)/d = (G/Fd) \circ (F/d)$.
\end{lem}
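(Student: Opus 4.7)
The proof plan is essentially an unwinding of the definition of the slice of a functor. The plan is to verify equality on objects and on morphisms separately, in each case observing that both sides reduce to the same expression once the notation is unpacked.

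First, I would recall what $F/d : \D/d \to \D'/Fd$ means: it sends an object $\alpha : a \to d$ to $F\alpha : Fa \to Fd$, and sends a morphism $f : \alpha \to \beta$ (i.e.\ a morphism $f : a \to b$ in $\D$ with $\beta \circ f = \alpha$) to $Ff : F\alpha \to F\beta$ (which is well-defined because $F\beta \circ Ff = F(\beta \circ f) = F\alpha$). On an object $\alpha : a \to d$ of $\D/d$, the composite $(G/Fd) \circ (F/d)$ produces $G(F\alpha) : GFa \to GFd$, while $(G \circ F)/d$ produces $(GF)\alpha : GFa \to GFd$. These are literally equal.

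Next, on a morphism $f : \alpha \to \beta$ in $\D/d$, both sides yield the morphism whose underlying arrow in $\D''$ is $G(Ff) = (GF)f$, viewed as an arrow from $GF\alpha$ to $GF\beta$ in $\D''/GFd$. Since both sides agree on objects and morphisms, the two functors are equal (not merely isomorphic).

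The main obstacle here is, honestly, no obstacle at all: the lemma is a direct notational identity, and its only role is to enable a clean statement of how slicing interacts with composition of functors (useful when combined with \Cref{lem:slice-adjunction}). Thus my ``proof'' would amount to a single short paragraph unravelling the definitions and remarking that no choices or universal properties are involved.
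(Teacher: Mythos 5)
Your proof is correct and matches the paper's approach exactly: the paper's entire proof is ``Unravel the definitions,'' and you have simply carried out that unraveling explicitly on objects and on morphisms. Nothing more needs to be said.
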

\begin{proof}
  Unravel the definitions.
\end{proof}

\begin{lem}
  \label{lem:pullback-along-eps-ind}
  For all objects $\beta \in \C/y$, we have $(\varepsilon_\beta^{\text{ind}})^* = (\Sigma_i \circ i^*)/\beta$.
\end{lem}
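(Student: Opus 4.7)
The plan is to verify the equality by unpacking both functors on an arbitrary object of $(\C/y)/\beta$, and observing that the naturality squares of $\varepsilon^{\text{ind}}$ provide precisely the required pullbacks.

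First I would recall what each side does. An object of $(\C/y)/\beta$ is a morphism $f : \alpha \to \beta$ in $\C/y$. By \Cref{lem:comp-of-slice-functors}, $(\Sigma_i \circ i^*)/\beta$ sends $f$ to $\Sigma_i i^* f : \Sigma_i i^* \alpha \to \Sigma_i i^* \beta$. On the other hand, $(\varepsilon^{\text{ind}}_\beta)^*$ sends $f$ (viewed as an object over $\beta$) to the pullback of $f$ along $\varepsilon^{\text{ind}}_\beta : \Sigma_i i^* \beta \to \beta$ in $\C/y$.

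The key input is \Cref{prop:induction-is-cartesian}: the naturality square
\[\begin{tikzcd}[ampersand replacement=\&]
\Sigma_i i^* \alpha \ar[r,"\Sigma_i i^* f"] \ar[d,"\varepsilon^{\text{ind}}_\alpha"'] \& \Sigma_i i^* \beta \ar[d,"\varepsilon^{\text{ind}}_\beta"] \\
\alpha \ar[r,"f"'] \& \beta
\end{tikzcd}\]
is cartesian. This directly exhibits $\Sigma_i i^* f$ (as a morphism into $\Sigma_i i^* \beta$) as the pullback of $f$ along $\varepsilon^{\text{ind}}_\beta$, so $(\varepsilon^{\text{ind}}_\beta)^*(f) = \Sigma_i i^* f$ on objects. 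For a morphism $h$ from $f : \alpha \to \beta$ to $g : \gamma \to \beta$ in $(\C/y)/\beta$, both functors act by $\Sigma_i i^* h$, with the compatibility over $\Sigma_i i^* \beta$ guaranteed by functoriality of $\Sigma_i i^*$.

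The main (and only) subtle point is that the statement is phrased as an equality of functors, which tacitly requires that the pullback functor $(\varepsilon^{\text{ind}}_\beta)^*$ be chosen so that pullbacks along $\varepsilon^{\text{ind}}_\beta$ are computed via these naturality squares. With that choice the equality holds on the nose; without it, one has a canonical natural isomorphism, which is all that will ever be needed downstream. I would include a brief sentence to this effect and then the proof reduces to citing \Cref{prop:induction-is-cartesian} and \Cref{lem:comp-of-slice-functors}.
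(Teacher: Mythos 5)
Your proof is correct and is essentially the paper's argument, phrased one level higher. The paper's proof redoes the underlying pullback-pasting by hand — observing that $\varepsilon^{\text{ind}}_\beta$ is the top edge of the cartesian square defining $i^*\beta$, stacking a further pullback of $\gamma$ on top, and reading off the outer rectangle — whereas you invoke \Cref{prop:induction-is-cartesian} directly, whose proof is precisely that pasting argument for the counit's naturality square. Your caveat about needing to choose $(\varepsilon^{\text{ind}}_\beta)^*$ so that pullbacks along $\varepsilon^{\text{ind}}_\beta$ are computed via these squares is apt and applies just as much to the paper's own statement, which implicitly reads ``equals'' as ``is a valid choice for.''
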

\begin{proof}
  Let $b$ be the domain of $\beta$, so that $i^* \beta$ fits in the pullback square
  \[\begin{tikzcd}
      {i^*b} & b \\
      x & y
      \arrow["i", from=2-1, to=2-2]
      \arrow["{i^*\beta}"', from=1-1, to=2-1]
      \arrow["\beta", from=1-2, to=2-2]
      \arrow[from=1-1, to=1-2]
      \arrow["\lrcorner"{anchor=center, pos=0.125}, draw=none, from=1-1, to=2-2]
    \end{tikzcd}\]
  By definition, $\Sigma_i i^* \beta = i \circ i^* \beta$, so we can notice that $\varepsilon_b^{\text{ind}} : \Sigma_i i^* \beta \to \beta$ is actually the top morphism in the above square. Now we let $\gamma \in (\C/y)/\beta$ be arbitrary and form the further pullback
  \[\begin{tikzcd}
      {(\varepsilon_b^{\text{ind}})^*c} & c \\
      {i^*b} & b \\
      x & y
      \arrow["i", from=3-1, to=3-2]
      \arrow["{i^*\beta}"', from=2-1, to=3-1]
      \arrow["\beta", from=2-2, to=3-2]
      \arrow["{\varepsilon_b^{\text{ind}}}", from=2-1, to=2-2]
      \arrow["\lrcorner"{anchor=center, pos=0.125}, draw=none, from=2-1, to=3-2]
      \arrow["\gamma", from=1-2, to=2-2]
      \arrow[from=1-1, to=1-2]
      \arrow["{(\varepsilon_b^{\text{ind}})^* \gamma}"', from=1-1, to=2-1]
      \arrow["\lrcorner"{anchor=center, pos=0.125}, draw=none, from=1-1, to=2-2]
    \end{tikzcd}\]
  Now since both squares are pullbacks, the composite rectangle is a pullback as well. This says precisely that $((\Sigma_i \circ i^*)/\beta)\gamma$ equals $(\varepsilon_b^{\text{ind}})^* \gamma$ as an object of $(\C/y)/\Sigma_i i^* \beta$.
\end{proof}

\begin{prop}[cf. \cite{Hoyer}, Lemma 2.3.5]
  \label{prop:Hoyer-Lemma}
  The functors $\Pi_{\varepsilon^\text{ind}_b} \circ (\Sigma_i / i^* b)$ and $(\eta^\text{coind}_b)^* \circ (\Pi_i / i^*b)$ are naturally isomorphic for all objects $b \in \C/y$.
\end{prop}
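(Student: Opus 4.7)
The plan is to prove the isomorphism by computing the functor of points (Hom-functor into a test object) of both sides and exhibiting a natural bijection; Yoneda then concludes. Both functors go from $(\C/x)/i^*b$ to $(\C/y)/b$, so I will fix an arbitrary test object $c \to b$ in $(\C/y)/b$, an arbitrary $f : a \to i^*b$ in $(\C/x)/i^*b$, and identify
\[\text{Hom}_{(\C/y)/b}\!\bigl(c, \Pi_{\varepsilon^{\text{ind}}_b}(\Sigma_i f)\bigr) \;\text{and}\; \text{Hom}_{(\C/y)/b}\!\bigl(c, (\eta^{\text{coind}}_b)^*(\Pi_i f)\bigr)\]
with a common third set, naturally in both variables.

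For the left-hand side, the adjunction $(\varepsilon^{\text{ind}}_b)^* \dashv \Pi_{\varepsilon^{\text{ind}}_b}$ rewrites the Hom as $\text{Hom}_{(\C/y)/\Sigma_i i^*b}\!\bigl((\varepsilon^{\text{ind}}_b)^*c, \Sigma_i f\bigr)$. The key computation is that $(\varepsilon^{\text{ind}}_b)^*c \cong i^*c$: since $\varepsilon^{\text{ind}}_b$ is (by definition/\Cref{prop:induction-is-cartesian}) the top edge of the cartesian square expressing $i^*b$ as the pullback of $b$ along $i$, pasting a pullback of $c \to b$ along $\varepsilon^{\text{ind}}_b$ onto that square yields a composite pullback that computes $i^*c$, and the induced map to $i^*b$ is just $i^*(c \to b)$. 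Remembering that $\Sigma_i$ is the identity on morphisms, a map in $(\C/y)/\Sigma_i i^*b$ is a map of underlying $\C$-objects over $i^*b$, so this Hom is canonically the set $S := \{\,h : i^*c \to a \text{ in } \C/x \mid f \circ h = i^*(c \to b)\,\} = \text{Hom}_{(\C/x)/i^*b}(i^*c, f)$.

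For the right-hand side, the universal property of pullback rewrites the Hom as the set of maps $c \to \Pi_i a$ in $\C/y$ whose post-composition with $\Pi_i f$ equals $\eta^{\text{coind}}_b \circ (c \to b)$. The adjunction $i^* \dashv \Pi_i$ turns this into maps $i^*c \to a$ in $\C/x$ whose post-composition with $f$ equals the adjunct of $\eta^{\text{coind}}_b \circ (c \to b)$. That adjunct is $\varepsilon^{\text{coind}}_{i^*b} \circ i^*\eta^{\text{coind}}_b \circ i^*(c \to b)$, which by the triangle identity collapses to $i^*(c \to b)$. So this Hom also equals $S$, matching the LHS description on the nose.

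Naturality in $f$ and $c$ is automatic, since every step is either an instance of an adjunction's Hom-set bijection, a pullback's universal property, or a functorial application of $i^*$; all of these are natural. The only place anything nontrivial happens is the pullback-pasting identification $(\varepsilon^{\text{ind}}_b)^*c \cong i^*c$ and the triangle-identity collapse $\varepsilon^{\text{coind}}_{i^*b} \circ i^*\eta^{\text{coind}}_b = \id$, and the mild bookkeeping of confirming that certain ``compatibility with structure maps to $y$'' conditions are forced by the pullback/adjunction data rather than being additional constraints. The main obstacle, really, is just carefully tracking underlying objects versus slice data: $\Sigma_i$ changes the structure map without changing the underlying morphism, and one must not double-count or drop the compatibility between $\varepsilon^{\text{ind}}_b$ and the structure map of $\Sigma_i i^*b$ down to $y$. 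Once those identifications are made, the Yoneda lemma delivers the desired natural isomorphism $\Pi_{\varepsilon^{\text{ind}}_b}(\Sigma_i f) \cong (\eta^{\text{coind}}_b)^*(\Pi_i f)$ in $(\C/y)/b$.
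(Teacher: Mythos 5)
Your proof is correct and takes essentially the same route as the paper's: both identify each side, via the Yoneda lemma, with the common Hom-set $\Hom_{(\C/x)/i^*b}\bigl((i^*/b)c,\,f\bigr)$. The only difference is presentational — the paper factors the argument through four small reusable lemmas (the slice adjunction $F/d \dashv \eta_d^* \circ (G/Fd)$, the isomorphism $\Sigma_i/\alpha$, compatibility of slicing with composition, and the identification $(\varepsilon^{\text{ind}}_\beta)^* = (\Sigma_i \circ i^*)/\beta$), whereas you unpack and verify those same facts inline (the pullback-pasting identification, the observation that $\Sigma_i$ acts as the identity on underlying morphisms, and the adjunction/triangle-identity computation for the right-hand side).
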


\begin{proof}
  We have isomorphisms
  \begin{align*}
     & \Hom_{(\C/y)/b}(c,(\eta_b^{\text{coind}})^* (\Pi_i/i^*b) a)                                &                                                                                   \\
     & \quad \cong \Hom_{(\C/x)/i^*b}((i^*/b)c,a)                                                 & \text{(\Cref{lem:slice-adjunction})}                                              \\
     & \quad \cong \Hom_{(\C/y)/\Sigma_i i^* b}((\Sigma_i / i^*b) (i^*/b) c, (\Sigma_i/i^* b) a)  & \text{(\Cref{lem:slogan})}                                                        \\
     & \quad = \Hom_{(\C/y)/\Sigma_i i^* b}(((\Sigma_i \circ i^*)/b) c, (\Sigma_i/i^* b) a)       & \text{(\Cref{lem:comp-of-slice-functors})}                                        \\
     & \quad = \Hom_{(\C/y)/\Sigma_i i^* b}((\varepsilon_b^{\text{ind}})^* c, (\Sigma_i/i^* b) a) & \text{(\Cref{lem:pullback-along-eps-ind})}                                        \\
     & \quad \cong \Hom_{(\C/y)/b}(c, \Pi_{\varepsilon_b^{\text{ind}}} (\Sigma_i/i^* b) a)        & \text{($(\varepsilon_b^{\text{ind}})^* \dashv \Pi_{\varepsilon_b^{\text{ind}}}$)}
  \end{align*}
  natural in $a \in (\C/x)/i^*b$ and $c \in (\C/y)/b$.
\end{proof}

\newpage
\printbibliography[heading=bibintoc]

\end{document}